\newtheorem{theorem}[equation]{Theorem}
\newtheorem{lemma}[equation]{Lemma}
\newtheorem{corollary}[equation]{Corollary}
\newtheorem{proposition}[equation]{Proposition}
\newtheorem{remark}[equation]{Remark}
\newtheorem{definition}[equation]{Definition}
\newcommand{\cO}{\mathcal{O}}
\renewcommand{\gg}{\mathfrak{g}}
\newcommand{\gp}{\mathfrak{p}}
\newcommand{\gq}{\mathfrak{q}}
\def\revddots{\mathinner{\mkern1mu\raise\p@
\vbox{\kern7\p@\hbox{.}}\mkern2mu
\raise4\p@\hbox{.}\mkern2mu\raise7\p@\hbox{.}\mkern1mu}}
\def\ga{\mathfrak{a}}
\def\gb{\mathfrak{b}}
\def\gc{\mathfrak{c}}
\def\gd{\mathfrak{d}}
\def\gg{\mathfrak{g}}
\def\gh{\mathfrak{h}}
\def\gj{\mathfrak{j}}
\def\gk{\mathfrak{k}}
\def\gl{\mathfrak{l}}
\def\gm{\mathfrak{m}}
\def\gn{\mathfrak{n}}
\def\gp{\mathfrak{p}}
\def\gq{\mathfrak{q}}
\def\gr{\mathfrak{r}}
\def\gs{\mathfrak{s}}
\def\gt{\mathfrak{t}}
\def\gu{\mathfrak{u}}
\def\gv{\mathfrak{v}}
\def\gz{\mathfrak{z}}
\def\Int{{\rm Int}}
\def\Ad{{\rm Ad}}
\def\rank{{\rm rank\,}}
\def\trace{{\rm trace\,\,}}
\def\Ind{{\rm Ind\,}}
\def\Car{{\rm Car}}
\def\C{\mathbb{C}}
\def\E{\mathbb{E}}
\def\L{\mathbb{L}}
\def\R{\mathbb{R}}
\def\T{\mathbb{T}}
\def\Z{\mathbb{Z}}
\def\cH{\mathcal{H}}
\def\cO{\mathcal{O}}
\def\cR{\mathcal{R}}
\def\cS{\mathcal{S}}
\def\cU{\mathcal{U}}
\def\cV{\mathcal{V}}
\def\cZ{\mathcal{Z}}
\begin{document}

\title[Representations on Partially Holomorphic Cohomology Spaces]
{Representations on Partially Holomorphic Cohomology Spaces, Revisited}

\author{Joseph A. Wolf}

\address{Department of Mathematics, University of California,
Berkeley, CA 94720--3840}
\curraddr{}
\email{jawolf@math.berkeley.edu}
\thanks{Research partially supported by the Simons Foundation}
\subjclass[2010]{Primary 32L25; Secondary 22E46, 32L10}

\date{31 July, 2017}

\begin{abstract} This is a semi--expository update and rewrite 
of my 1974 AMS AMS Memoir describing Plancherel formulae and
partial Dolbeault cohomology realizations for standard
tempered representations for general real reductive Lie groups.  
Even after so many years, much of that Memoir is up to date, but
of course there have been a number of refinements, advances and
new developments, most of which have applied to smaller classes
of real reductive Lie groups.  Here we rewrite that AMS Memoir in
in view of these advances and indicate the ties with some of the
more recent (or at least less classical)  approaches to geometric 
realization of unitary representations.
\end{abstract}

\maketitle

\setcounter{section}{-1}
\section{Introduction}\label{sec0} 
\setcounter{equation}{0}

In the 1960's I initiated the study of the orbit structure
for the action of a general real semisimple Lie group $G$ on a complex flag
manifold $X = G_\C/Q$ \cite{W1969}.  Here $Q$ is a parabolic subgroup of the
complexification $G_\C$ of $G$.  In the early 1970's I associated the 
various series of standard tempered representations of $G$ to certain 
$G$--orbits in $X$.  I showed how to construct the standard tempered 
representations as the natural action of $G$ on certain square integrable 
partially holomorphic cohomology spaces, corresponding to the CR structure 
of the orbit and to partially holomorphic complex vector bundles over the orbit
\cite{W1973}.  At the time these geometric realizations required the 
infinitesimal characters of the standard tempered representations be 
sufficiently nonsingular \cite{S1971}; but that condition is no longer needed
\cite{S1976}.  This simplifies the theory and clarifies
our exposition.

The advances in the theory include (i) getting rid of the ``sufficiently 
nonsingular'' condition, as mentioned above; (ii) a complete description of 
the irreducible constituents of the standard tempered representations
(\cite{L1973}, \cite{KZ1982a}, \cite{KZ1982b}) and their analytic continuation 
(\cite{V1983a}, \cite{V1983b});
(iii) a better approach to the Plancherel Formula (\cite{HW1986a},
\cite{HW1986b}); and (iv) the Atlas project \cite{ALTV2015}.  Most of the 
advances (ii), (iii) and (iv) apply to smaller classes of groups than the
ones we consider here, specifically to groups of Harish--Chandra class or
even the smaller class of real reductive linear algebraic groups.
In this paper we go over the material of the original AMS Memoir,
indicating the principal changes in view of those developments.  We
also fix one error and a few trivial typos.

Semisimple representation theory has benefited from increased use of
algebraic methods, with more emphasis on Harish--Chandra modules than on
representations of the group itself.  This has led to better results in 
many cases, such as the advances (i) and (ii) mentioned above, but has 
tended to sever the connection with differential geometry.  Papers that 
bridge the gap to some extent include \cite{HMSW1987}, \cite{SW1987},
\cite{SW1990}, \cite{W1995} and \cite{W1999}.  While one must be aware of
those results we do not attempt to describe them here, first because of
limitations of space, and second because many of them 
have not yet been extended to general real reductive Lie groups.

\subsection{}\label{ssec0a}  
\setcounter{equation}{0}
In this paper we work with the class of 
{\em general real reductive Lie groups}.  Those are the
Lie groups $G$ whose Lie algebra $\gg$ is reductive and such that
(a) if $g \in G$ then $\Ad(g)$ is an inner automorphism of the complexified
Lie algebra $\gg_\C$ and (b) $G$ has a closed normal abelian subgroup
$Z$ such that (i) $Z$ centralizes the identity component $G^0$, (ii) 
$|G/ZG^0| < \infty$ and (iii) $Z \cap G^0$ is co-compact in the center
$Z_{G^0}$ of $G^0$.  These conditions are inherited by reductive components
of parabolic subgroups, and the class of groups that satisfy them includes
both Harish--Chandra's class $\cH$ and all connected real semisimple Lie
groups.  See \cite[\S 0.3]{W1973} for a discussion.  
We work out geometric realizations  for all classes
in the unitary dual $\widehat{G}$ except for a set of Plancherel measure
zero, and we express the Plancherel formula in terms of them.

The first part is the construction and analysis of standard tempered
representations of general real reductive Lie groups.  Roughly speaking 
this is a matter of extending Harish--Chandra's work from groups of class
$\cH$, in other word from the case where $|G/G^0| < \infty$ and
$[G^0,G^0]$ has finite center.  We construct a series of representations 
for each conjugacy class of Cartan subgroups of $G$, describe the characters 
of those representations, and use them for a Plancherel formula.  This 
material is in \S\S \ref{sec2} through \ref{sec5}.

The second part is the geometric realization of the representations just
constructed.  Given a Cartan subgroup $H \subset G$ we construct partially
complex homogeneous spaces $Y$ of $G$, partially holomorphic $G$--homogeneous
vector bundles $\E \to Y$, and Hilbert spaces $\cH_2^{0,q}(Y;\E)$ of
square integrable partially harmonic $\E$--valued $(0,q)$--forms on $Y$.
If $[\pi] \in \widehat{G}$ belongs to the series for the $G$--conjugacy
class of $H$ then it is realized by a representation of $G$ by its natural
action on one of the $\cH_2^{0,q}(Y;\E)$.  If $G/Z$ is compact and connected
this reduces to the Bott--Borel--Weil Theorem.  If $H/Z$ is compact this
includes the discrete series realizations of Schmid and Narasimhan--Okamoto.
This material is in \S\S \ref{sec6} through \ref{sec8}.

In \S \ref{sec1} we  illustrate the geometric realization procedure by carrying it out
for the ``principal series'', corresponding to the conjugacy class of minimal
parabolic subgroups $B \subset G$.  We construct an Iwasawa decomposition
$G = KAN$ where $Z \subset K$, and $B = MAN$ where $M$ is the centralizer of
$A$ in $K$.  We construct certain subgroups $U \subset M$ that contain Cartan
subgroups of $M$, partially complex homogeneous spaces $Y = G/UAN$ and
$G$--equivariant fibrations $Y \to Y/M = G/B$ whose fibers are maximal
complex subvarieties of $Y$.  Given a unitary equivalence class 
$[\mu] \in \widehat{U}$ and a functional $\sigma \in \ga^*$ we construct a
$G$--homogeneous bundle $\E_{\mu,\sigma} \to Y$ that is holomorphic over
the fibers of $Y \to Y/M$.  We extend the Bott--Borel--Weil Theorem to
groups that are compact modulo their center and identify the sheaf cohomology 
representation $\eta_\mu^q$
of $M$ on $H^q(M/U;\cO(\E_{\mu,\sigma}|_{M/U}))$.  Given a unitary equivalence
class $[\eta] \in \widehat{M}$ we find all $[\mu] \in \widehat{U}$ and all
$q \geqq 0$ such that $\eta_\mu^q \in [\eta]$.  That done we show that the 
principal series class 
$$
[\pi_{\eta,\sigma}] = [\Ind_B^G(\eta \otimes e^{i\sigma})], 
  \text{ where } (\eta \otimes e^{i\sigma})(man) = e^{i\sigma}(a)\eta(m)
$$
is realized the natural action of $G$ on a certain square integrable partially
holomorphic cohomology space $\cH_2^{0,q}(Y; \E_{\mu,\sigma})$.  The 
representations of $U$ and $M$ come out of the Peter--Weyl Theorem and are
characterized by Cartan's highest weight theory.  In the general case we
substitute an extension of Harish--Chandra's theory of the discrete series,
based on results of Schmid.  In \S \ref{sec1} the $\eta^q$ are identified  by a mild
extension of the Bott--Borel--Weil Theorem.  In the general case the matter is
delicate and we need detailed information on distribution characters of
representations induced from parabolic subgroups.

\subsection{} \label{ssec0b} 
\setcounter{equation}{0}
Here is a more detailed description of the contents
of this paper and a comparison with the earlier AMS Memoir, except 
\S \ref{sec1}, which is described above.

In \S \ref{sec2} we record the basic facts on square integrable representations of
a locally compact unimodular group $G$ relative to a unitary character
$\zeta \in \widehat{Z}$,  Here $Z$ is a closed normal abelian subgroup.
Denote 
$$
L_2(G/Z,\zeta) = \{f:G \to \C \mid f(gz) = \zeta(z)^{-1} 
	\text{ and } \int_{G/Z} |f(g)|^2 d(gZ) < \infty\}
$$
and $\widehat{G}_\zeta = \{[\pi]\in\widehat{G} \mid \pi(gz)=\zeta(z)\pi(g)\}$.
Then $\widehat{G} = \bigcup_{\widehat{Z}}\, \widehat{G}_\zeta$ and 
$L_2(G) = \int_{\widetilde{Z}} L_2(G/Z,\zeta) d\zeta =
\int_{\widehat{G}_\zeta} \cH_\pi \widehat{\otimes} \cH_\pi^*\, d_\zeta[\pi]$
where $d_\zeta[\pi]$ is the Plancherel measure on $\widehat{G}_\zeta$.
We say {\em $\zeta$--discrete} for the classes in $\widehat{G}_{\zeta-disc}$\,.
The set of all these classes forms the {\em $\zeta$--discrete series}
$\widehat{G}_{\zeta-disc}$ of $G$\,, and the {\em relative discrete series} is 
the union $\widehat{G}_{disc} = \bigcup\, \widehat{G}_{\zeta-disc}$\,.  If
$[\pi] \in \widehat{G}_\zeta$ then $[\pi] \in \widehat{G}_{\zeta-disc}$ if
and only if its coefficient functions $\phi_{u,v}(g)=\langle u, \pi(g)v\rangle$
satisfy $|\phi_{u,v}| \in L_2(G/Z)$.  When $Z$ is compact, $\widehat{G}_{disc}$
is the usual discrete series of $G$.  

Let $U$ be a closed subgroup of $G$ with $Z \subset U$ and $U/Z$ compact.
We give a short proof that $\widehat{U} = \widehat{U}_{disc}$ and that every 
class in $\widehat{U}$ is finite dimensional.  Then we write down the relative 
Plancherel formula for the $L_2(G/Z,\zeta)$ and the absolute Plancherel
formula for $L_2(G)$.

In \S \ref{sec3} we see how Harish--Chandra's theory of the discrete 
series for his
class $\cH$ extends to our class of general real reductive Lie groups.  This
was originally done in \cite[\S 3]{W1973} by looking at central extensions
$1 \to S \to G[\zeta] \to ZG^0/Z \to 1$ where $S$ is the circle group
$\{s \in \C \mid |s| = 1\}$ and
$G[\zeta] = \{S \times ZG^0\}/\{\zeta(z)^{-1},z) \mid z \in Z\}$. 
That led to 
a bijection $\widehat{G[\zeta]}_1 \to (\widehat{ZG^0})_\zeta$.  We showed
that $G[\zeta]$ is a connected reductive Lie group with compact center
and verified that Harish--Chandra's discrete series theory applies to
such groups.  Here we go directly and apply results of R. Herb and
the author (\cite{HW1986a}, \cite{HW1986b}).  In particular we see that
$\widehat{G}_{disc}$ is nonempty if and only if $G/Z$ has a compact
Cartan subgroup, and in that case one has the expected infinitesimal and 
global character formulae.

In \S \ref{sec4} we construct a series of unitary representations of $G$ for
every conjugacy class of Cartan subgroups $H \subset G$.  For lack of a
better term we continue to refer to the series for 
$\{\Ad(g)H \mid g \in G\}$
as the ``$H$--series'' of $G$.  If $H/Z$ is compact then the
$H$--series is the relative discrete series.  If $H/Z$ is maximally
noncompact it is the principal series.  If $G/G^0$ and the center of
$[G^0,G^0]$ are finite, in other words if $G$ is of class $\cH$,
then the various $H$--series are just the standard tempered series
constructed by Harish--Chandra and used by him to decompose $L_2(G)$.  Our
constructions and results first came as straightforward extensions
of results of Harish--Chandra, Hirai and Lipsman, but now they follow
directly from Herb and the author \cite{HW1986a}.

If $H$ is a Cartan subgroup of $G$ we construct a ``Cartan involution''
$\theta$ of $G$ that leaves $H$ stable.  Thus $\theta^2 = 1$, $\theta(H) = H$,
$K = \{g \in G \mid \theta(g) = g\}$ contains $Z$, and $K/Z$ is a maximal
compact subgroup of $G/Z$.  Further $H = T \times A$ where $T = H \cap K$ and
$A = \exp(\ga)$, $\ga = \{\xi \in \gh \mid \theta(\xi) = -\xi\}$.  
Choose a positive $\ga$--root system $\Sigma_\ga^+$ on $\gg$, define
$\gn = \sum_{\phi \in \Sigma_\ga^+} \gg^{-\phi}$, and let $N$ be the
analytic subgroup of $G$ for $\gn$.  All this defines
$$
P = \{g \in G \mid \Ad(g)N = N\}, \text{ cuspidal parabolic 
   subgroup of } G.
$$
The centralizer $Z_G(A) = M\times A$ with $\theta(M) = M$, $P = MAN$,
$N$ is the unipotent radical of $P$, and $MA$ is the reductive part of $P$.  
The groups $M$ and $MA$ inherit our working conditions from $G$: they belong
to the class of general real reductive Lie groups.  Further, $T$ is a 
Cartan subgroup of $M$, and $T/Z$ is compact, so $\widehat{M}_{disc}$ is
not empty.  If $[\eta] \in \widehat{M}$ and $\sigma \in \ga^*$ then
$(\eta\otimes e^{i\sigma})(ma) = e^{i\sigma}(a)\eta(m)$ defines an irreducible
unitary representation of $P$.  The {\em $H$--series}  of $G$ consists of
the unitary equivalence classes
$
[\pi_{\eta,\sigma}] = [\Ind_P^G(\eta \otimes e^{i\sigma})]
$ 
where $[\eta] \in \widehat{M}_{disc}$ and $\sigma \in \ga^*$.

We compute central, infinitesimal and distribution characters of the classes
$[\Ind_P^G(\eta \otimes e^{i\sigma})]$.  In particular we see that the
$H$--series classes are finite sums of irreducibles, and that the $H$--series
of $G$ depends only on the conjugacy class of $H$, independent of the
choice of $\Sigma_\ga^+$\,.  We also see that if $H_1$ and $H_2$ are
non--conjugate Cartan subgroups of $H$ then the $H_1$--series and the
$H_2$--series are disjoint.

In \S \ref{sec5} we describe the Plancherel formula for $G$.   There is a 
sharp improvement (\cite{HW1986a} and \cite{HW1986b}) on our original argument
\cite[\S 5]{W1973}.  It gives a precise description of the character formula
and shows that the densities $m_{j,\zeta,\nu}$ in the Plancherel formula are 
restrictions of meromorphic functions.  However that proof is rather technical,
involving machinery that takes some space to describe, and we do not need the
meromorphicity properties of the $m_{j,\zeta,\nu}$.  For that reason we
simply state the result as proved in \cite[\S 5]{W1973}.
That ends the first part of this paper.

\S \ref{sec6} starts the second part of this paper, the geometric realization of
the representations involved in the Plancherel formula, based on the action
of $G$ on complex flag manifolds $X = \overline{G}_\C/Q$\,.  Here $\overline{G}
= G/Z_G(G^0)$ is the adjoint group, $\overline{G}_\C$ is its complexification,
and $Q$ is a parabolic subgroup of $\overline{G}_\C$\,.  Then $G$ acts naturally
on $X$, as in \cite{W1969}, through the lift of the action of $\overline{G}$.
The main points for us are the concept of holomorphic arc component and
measurable orbit.  The orbits over which we realize a general $H$--series
representation of $G$ are measurable, and the relative discrete series
representations of $M$ are constructed over holomorphic arc components of
measurable orbits.  We end \S \ref{sec6} with a classification and structure theory
for the orbits $G(x) \subset X$ over which we have geometric realizations
of $H$--series representations of $G$.

In \S \ref{sec7} we work out the partially holomorphic cohomology realizations of
relative discrete series representations of $G$.  Suppose that $G$ has a 
Cartan subgroup $H$ with $H/Z$ compact.  Then we have complex flag manifolds
$X = \overline{G}_\C/Q$ with orbits $Y = G(x) \subset X$ such that
$Y = G/U$ and $H \subset U$ with $U/Z$ compact, and all such $G$--orbits
on $X$ are open.  They are defined by a choice of positive $\gh_\C$--root 
system $\Sigma^+$ and a subset $\Phi$ of the corresponding simple root system,
such that $\Phi$ is the simple root system for $\gu_\C$.  For each 
$[\mu]\in \widehat{U}$ we have a $G$--homogeneous hermitian holomorphic
vector bundle $\E \to Y$.  Let $q \geqq 0$.  Then we have the Hilbert space
$\cH_2^{0,q}(Y;\E)$ of $\E$--valued square integrable harmonic $(0,q)$--forms 
on $Y$, and $G$ acts on $\cH_2^{0,q}(Y;\E)$ by a unitary representation
$\pi_\mu^q$\,.  Write $\Theta_{\pi_\mu^q,disc}$ for the sum of the distribution
characters of the irreducible summands of $\pi_\mu^q$\,.  Let $\rho$ denote
half the sum of the positive roots.  Thus $\pi_\mu^q = \pi_{\chi,\lambda+\rho}$
in the notation of \S \ref{sec3} where $[\chi] \in \widehat{Z_G(G^0)}_\zeta$ and
$\lambda + \rho \in i\gh^*$ integrates to a unitary character on $H^0$
that agrees with $\zeta$ on $Z_{G^0}$.  We can (and do) arrange this in such 
a way that $\lambda$ is $\gu$--dominant, i.e. $\langle \lambda, \phi \rangle
\geqq 0$ for every $\phi \in \Phi$.  Note 
$U = Z_G(G^0)U^0$ so $[\mu] = [\chi \otimes \mu^0]$.
$$
q(\lambda + \rho) = \#\{\alpha \text{ compact } \mid
	\langle \lambda + \rho,\alpha \rangle < 0\} +
	\#\{\alpha \text{ noncompact } \mid
        \langle \lambda + \rho,\alpha \rangle > 0\}.
$$
We prove

(i) $\sum_{q \geqq 0} (-1)^q \Theta_{\pi_\mu^q,disc} = 
	(-1)^{|\Sigma^+| + q(\lambda+\rho)} \Theta_{\chi,\lambda+\chi}$\,,

(ii) $\text{If } q \ne q(\lambda + \rho) \text{ then } 
	\cH_2^{0,q}(Y;\E) = 0$\,, and 
(iii) $[\pi_\mu^{q(\lambda +\rho)}]=[\pi_{\chi,\lambda+\rho}]\in\widehat{G}_{disc}$\,.

\noindent
This improvement over \cite[\S 7]{W1973} is mostly due to the improvement
of \cite{S1976} over \cite{S1971}, but it still relies on the Plancherel 
formula.

In \S \ref{sec8} we work out the geometric realization for all standard tempered
representations.  Fix a Cartan subgroup $H = T \times A$ and a corresponding
cuspidal parabolic subgroup $P = MAN \subset G$.  The $H$--series classes
are realized over measurable orbits $Y = G(x) \subset \overline{G}_\C/Q = X$
such that (i) The $G$--normalizer $N_{[x]}$ of the holomorphic arc
component $S_{[x]}$ through $x$ is an open subgroup of $P$ and (ii) 
$T \subset U = \{m \in M \mid m(x) = x\}$ with $U/Z$ compact.  (Such pairs 
always exist and were classified at the end of \S \ref{sec6}.)  Then
$G$ has isotropy subgroup $UAN$ at $x$, $N_{[x]} = M^\dagger AN$ where
$M^\dagger = Z_M(M^0)M^0$\,, and $U = Z_M(M^0)U^0$ with $U \cap M^0 = U^0$.

Let $\rho_\ga = \frac{1}{2}\sum_{\phi \in \Sigma_\ga^+} (\dim \gg^\phi)\phi$
where $\gm = \sum_{\phi \in \Sigma_\ga^+} \gg^{-\phi}$.  Given $[\mu] \in
\widehat{U}$ and $\sigma \in \ga^*$ we have the $G$--homogeneous vector
bundle $\E_{\mu,\sigma} \to G/UAN = Y$ associated to 
$[\mu \otimes e^{\rho_\ga + i\sigma}] \in \widehat{UAN}$.  $\E_{\mu,\sigma}$
carries a $K$--invariant hermitian metric and it is holomorphic over every
holomorphic arc component $S_{[x]}$ of $Y$.  That leads to the Hilbert spaces
$\cH_2^{0,q}(Y;\E_{\mu,\sigma})$ of all square integrable partially harmonic
$\E_{\mu,\sigma}$--valued $(0,q)$--forms on $Y$.  $G$ acts there by a unitary
representation $\pi_{\mu,\sigma}^q$\,, and we prove $[\pi_{\mu,\sigma}^q]
= [\Ind_{N_{[x]}}^{G} (\eta_\mu^q \otimes e^{i\sigma})]$ where $\eta_\mu^q$
is the representation of $M^\dagger$ on 
$\cH_2^{0,q}(S_{[x]};\E_{\mu,\sigma}|_{S_{[x]}})$.

Decompose $[\mu] = [\chi \otimes \mu^0]$ where $[\chi] \in \widehat{Z_M(M^0)}$.
Let $\nu$ be the highest weight of $[\mu^0]$.  Then $\chi = e^\nu$ on the
center of $M^0$.  Our main result, Theorem \ref{8.3.4}, is
\medskip
{\em

(i) The $H$--series constituents of $\pi_{\mu,\sigma}^q$ are just its 
irreducible subrepresentations, and $\pi_{\mu,\sigma}^q$
has well defined distribution character $\Theta_{{\pi_{\mu,\sigma}^q}}$\,.
\smallskip

(ii) $\sum_{q \geqq 0} (-1)^q \Theta_{{\pi_{\mu,\sigma}^q}} =
	(-1)^{|\Sigma_\gt ^+|+q_M(\nu + \rho_\gt)}
	\Theta_{\pi_{\chi, \nu+\rho_\gt ,\sigma}}$
where $[\pi_{\chi, \nu+\rho_\gt ,\sigma}]$ is the $H$--series class as
denoted in \S \ref{sec4}.
\smallskip

(iii) If $q \ne q_M(\nu + \rho_\gt)$ then $\cH_2^{0,q}(Y; \E_{\mu,\sigma}) = 0$
\smallskip

(iv) $[\pi_{\mu,\sigma}^{q_M(\nu + \rho_\gt)}]$ is the $H$--series
representation class $[\pi_{\chi,\nu+\rho_\gt,\sigma}]$.
}
\medskip

\noindent
That gives explicit geometric realizations for the $H$--series classes of
unitary representations of $G$.  The improvement over \cite[\S 8]{W1973}
comes from the improvement of \S \ref{sec7} over \cite[\S 7]{W1973}.

\section{The Principal Series}\label{sec1}
\setcounter{equation}{0}
The first example of geometric realization for general real reductive 
Lie groups is given by the
``principal series'', combining the Bott--Borel--Weil Theorem with Mackey's
theory of unitary induction.  

\subsection{}\label{ssec1a}  
\setcounter{equation}{0}
Let $M$ be a reductive Lie group, i.e.
$\gm = \gz_\gm \oplus [\gm,\gm]$ where $\gz_\gm$ is the center of $\gm$.
We assume that $M$ has a closed normal abelian subgroup $Z$ such that 
(i) $M/Z$ is compact and (ii) if $m \in M$ then $\Ad(m)$ is an inner 
automorphism on $\gg_\C$\,.  From (ii), $ZM^0$ has finite index in $M$
and $Z\cap M^0$ is co-compact in the center $Z_{M^0}$ of $M^0$.

As usual, if $\eta$ is a unitary representation of $M$ then $[\eta]$ is
its unitary equivalence class.  The set of all such equivalence classes
$[\eta]$, with $\eta$ irreducible,
is the {\em unitary dual} $\widehat{M}$.  If $E$ is a close central subgroup
of $M$ and $\xi \in \widehat{E}$ then $\widehat{M}_\xi
:= \{[\eta] \in \widehat{M} \mid \eta|_E \text{ is a multiple of } \xi\}$.

\begin{proposition} \label{1.1.3}
Let $Z_M(M^0)$ denote the centralizer of $M^0$ in $M$.  Then

\noindent
{\rm 1.} $Z_M(M^0)\cap M^0 = Z_{M^0}$ and $M = Z_M(M^0)M^0$.

\noindent
{\rm 2.} If $[\eta] \in \widehat{M}$ then $\eta$ is finite dimensional.

\noindent
{\rm 3.} If $[\eta] \in \widehat{M}$ then there exist unique 
$[\xi] \in \widehat{Z_{M^0}}$, $[\chi] \in \widehat{Z_M(M^0)}_\xi$ and
$[\eta^0] \in (\widehat{M^0})_\xi$ such that $[\eta] = [\chi \otimes \eta^0]$.

\noindent
{\rm 4.} Let $\gt$ be a Cartan subalgebra of $\gm$, $\Sigma^+_\gt$ a positive
root system, $\rho_\gt$ half the sum of the positive roots, $T^0 = \exp(\gt)$,
and 
$$
L_\gm^+ = \{\nu \in i\gt^* \mid e^{\nu - \rho_\gt} \in \widehat{T^0}
\text{ is well defined and } \langle \nu, \phi \rangle > 0 
 \text{ for every } \phi \in \Sigma_\gt^+\}.
$$
Then there is a bijection $\nu \mapsto [\eta_\nu^0]$
of $L_\gm^+$ onto $\widehat{M^0}$ where $\nu - \rho_\gt$ is the highest
weight of $\eta_\nu^0$\,.  Further, $[\eta_\nu^0] \in \widehat{(M^0)}_\xi$
where $\xi = e^{\nu - \rho_\gt}|_{Z_{M^0}}$\,.
\end{proposition}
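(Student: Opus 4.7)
The plan is to treat the four parts in order, using each to feed the next. The main engine is Cartan's highest weight theorem for compact semisimple groups, and the main preliminary is that $M^0/Z_{M^0}$ is compact; once that is in hand, everything reduces to classical representation theory, with assumption (ii) providing the glue between $M^0$ and the component group.

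For part 1, the inclusion $Z_{M^0} \subset Z_M(M^0)\cap M^0$ is tautological, and the reverse inclusion is just the definition of $Z_{M^0}$. For the decomposition $M = Z_M(M^0)M^0$, I would fix $m\in M$ and use hypothesis (ii): $\Ad(m)\in \Int(\gg_\C)$, and since $\gg$ is a real form of $\gg_\C$ and $\Ad(m)$ preserves $\gg$, its restriction lies in $\Int(\gg)=\Ad(M^0)$. Pick $m'\in M^0$ with $\Ad(m)=\Ad(m')$; then $mm'^{-1}\in Z_M(M^0)$, giving the factorization.

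For part 2, given $[\eta]\in\widehat{M}$ with central character $\zeta=\eta|_Z$, the matrix coefficients $\phi_{u,v}(m)=\langle u,\eta(m)v\rangle$ satisfy $|\phi_{u,v}(mz)|=|\phi_{u,v}(m)|$, so $\phi_{u,v}\in L_2(M/Z,\zeta)$, a Hilbert space of square integrable sections of a hermitian line bundle over the compact space $M/Z$. The Peter--Weyl decomposition of this space (equivalently, passage to the $S^1$--central extension $M[\zeta]$ of \S\ref{sec3}, which is a compact Lie group whose unitary representations are all finite dimensional) forces $\dim\eta<\infty$. For part 3, I would invoke part 1 to write $M=Z_M(M^0)\cdot M^0$ with commuting factors meeting in $Z_{M^0}$. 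Since $\eta(Z_M(M^0))$ commutes with $\eta(M^0)$ and $\dim\eta<\infty$, Schur's lemma forces each restriction to be isotypic: $\eta|_{M^0}$ is a multiple of some irreducible $\eta^0$ and $\eta|_{Z_M(M^0)}$ is a multiple of some irreducible $\chi$. This realises $\eta$ as an outer tensor product $\chi\boxtimes\eta^0$ of the commuting factors descending to $M$; consistency on the intersection $Z_{M^0}$ (where $z\in Z_{M^0}$ is represented both by $(z,1)$ and by $(1,z)$) forces $\chi|_{Z_{M^0}}=\eta^0|_{Z_{M^0}}=\xi$, giving the asserted uniqueness up to the common central character.

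For part 4, the hardest step, I would first deduce compactness of $M^0/Z_{M^0}$: since $ZM^0$ has finite index in $M$ and $M/Z$ is compact, $M^0/(Z\cap M^0)$ is compact; combined with hypothesis that $Z\cap M^0$ is co-compact in $Z_{M^0}$, this gives $M^0/Z_{M^0}$ compact. Because $\gm=\gz_\gm\oplus[\gm,\gm]$, the intersection $[M^0,M^0]\cap Z_{M^0}$ is discrete (and central in the semisimple $[M^0,M^0]$), so $[M^0,M^0]$ is itself compact semisimple. Classical Cartan--Weyl theory applied to $[M^0,M^0]$, together with the central character on $Z_{M^0}$, then yields the bijection between $\widehat{M^0}$ and dominant integral highest weights $\lambda=\nu-\rho_\gt$; the integrality of $e^{\nu-\rho_\gt}$ on $T^0$ and the strict dominance $\langle\nu,\phi\rangle>0$ (equivalent to $\lambda$ being dominant integral, since $\rho_\gt$ is strictly dominant) characterise exactly the set $L_\gm^+$. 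The central character identity $\xi=e^{\nu-\rho_\gt}|_{Z_{M^0}}$ follows from restricting the highest weight theorem to the central torus. The main obstacle is bookkeeping: tracking the $\rho_\gt$--shift and the integrality of $e^{\nu-\rho_\gt}$ on $T^0$ (which may be non-simply-connected), and carefully reducing the classification from the possibly disconnected or non-linear $M^0$ to the compact semisimple $[M^0,M^0]$ together with its central torus.
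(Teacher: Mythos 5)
Your proof is correct and follows essentially the same route as the paper: the inner-automorphism hypothesis yields $M = Z_M(M^0)M^0$; compactness modulo the center forces finite-dimensionality; the commuting factor decomposition and Schur's lemma give the tensor splitting; and highest weight theory of $\gm$ settles part 4. The one small difference is logical order: you establish $\dim\eta < \infty$ first by applying the Peter--Weyl/central-extension argument directly to $M/Z$, and then deduce the splitting $[\eta] = [\chi\otimes\eta^0]$ by Schur, whereas the paper first obtains finite-dimensionality of the irreducible representations of the two factors $M^0$ and $Z_M(M^0)$ separately (via Lemma 2.4.1, applied to $M^0/Z_{M^0}$ and $Z_M(M^0)/Z_{M^0}$ compact) and reads off parts 2 and 3 simultaneously from the isotypic decomposition of $\eta|_{M^0}$ and $\eta|_{Z_M(M^0)}$. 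Both orderings work; yours is slightly more economical but relies on the same central-extension lemma. One point worth making explicit in part 1: the passage from $\Ad(m)$ inner on $\gm_\C$ to $\Ad(m)\in\Int(\gm)$ uses that $\gm$ is of compact type, so that the real form $\Int(\gm_\C)\cap\Aut(\gm)$ is the compact real form $\Int(\gm)$ and hence connected; for a noncompact real form this step would fail.
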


\begin{proof} If $m \in M$ then $\Ad(m)$ is an inner automorphism on $M^0$,
so  $M = Z_M(M^0)M^0$\,, in particular $Z_{M^0}$ is central in $M$.

Let $[\eta] \in \widehat{M}$.  Then $\eta|_{Z_{M^0}}$ is a multiple of a
unitary character $\xi$ on $Z_{M^0}$\,.  As $M^0/Z_{M^0}$ and 
$Z_M(M^0)/Z_{M^0}$ are compact, $M^0$ and $Z_M(M^0)$ are of type I with
every irreducible representation finite dimensional; see \S \ref{ssec2d}
below.  Now $[\eta|_{Z_M(M^0)}]$ is a multiple of a finite dimensional class
$[\chi] \in \widehat{Z_M(M^0)}_\xi$ and $[\eta|_{M^0}]$ is a multiple
of a finite dimensional class $[\eta^0] \in \widetilde{M^0}_\xi$\,.
Assertions (2) and (3) follow, and assertion (4) boils down to the highest
weight theory of $\gm$.
\end{proof}

The Bott--Borel--Weil Theorem extends from compact connected Lie groups to 
give geometric realizations of the classes in $\widehat{M}$.

The homogeneous K\"ahler manifolds of $M$ are the manifolds $S_\Phi$ given by
\begin{equation}\label{1.1.4}
\begin{aligned}
& \Pi_\gt: \text{ simple $\gt_\C$--root system on 
	$\gm_C$ for $\Sigma^+_\gt$\,,} \\
& \Phi: \text{ arbitrary subset of } \Pi_\gt\,\, \\
& \gz_\Phi = \{x \in \gt \mid \Phi(x) = 0\} \text{ and }
	Z_\Phi^0 = \exp(\gz_\Phi) \subset T^0 \\
& U_\Phi \text{ is the $M$--centralizer of 
	$Z_\Phi^0$ and } S_\Phi = M/U_\Phi \\
\end{aligned}
\end{equation}
Note that $\Phi$ is a simple $\gt_\C$--root system for $\gu_\Phi$\,.
Using Proposition \ref{1.1.3}(1),  construct
\begin{equation}\label{1.1.6}
\begin{aligned}
& \overline{M}:= M/Z_M(M^0) = M^0/Z_{M^0} \text{ compact connected Lie group.}\\
& \overline{T}:= T/Z_M(M^0) = T^0/Z_{M^0} 
	\text{ maximal torus in }\overline{M}.\\
\end{aligned}
\end{equation}
This leads to the homogeneous K\"ahler structure on $S_\Phi$.  $\overline{M}$
has complexification $\overline{M}_\C = \Int(\gm_\C)$, inner automorphism 
group of $\gm_\C$\,.  The group $\overline{U}_\Phi := U_\Phi/Z_M(M^0)$ is
connected, and we define
\begin{equation}\label{1.1.7}
\begin{aligned}
&\gr_\Phi := \overline{\gu}_{\Phi,\C} + 
	{\sum}_{\phi\in\Sigma_\gt^+} \overline{\gm}^{-\phi}
	\text{ subalgebra of } \overline{\gm}_\C\,, \\
& R_\Phi \text{ is the complex analytic subgroup of } \overline{M}_\C
	\text{ for } \gr_\Phi\,.
\end{aligned}
\end{equation}
Then $R_\Phi$ is closed in $\overline{M}_\C$\,, and $M$ acts on 
$\overline{M}_\C/R_\Phi$ by the projection $m \mapsto \overline{m}$ of $M$
onto $\overline{M}$.  The orbit $M(1R_\Phi)$ is closed because $\overline{M}$
is compact, and one checks that it has the same real dimension as
$\overline{M}_\C/R_\Phi$\,.  Thus $M$ is transitive on 
$\overline{M}_\C/R_\Phi$\,, and $mU_\Phi \mapsto \overline{m}R_\Phi$ is a
covering space projection.  But $\overline{M}_\C/R_\Phi$ is simply connected.

\begin{lemma}\label{1.1.8}
The map $mU_\Phi \mapsto \overline{m}R_\Phi$ is an $M$--equivariant 
bijection of $S_\Phi = M/U_\Phi$ onto $\overline{M}_\C/R_\Phi$\,.
\end{lemma}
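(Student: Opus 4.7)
Our plan is to flesh out the sketch given just above the lemma: first verify the map is well-defined and $M$-equivariant; then check transitivity via a dimension count; finally use simple-connectedness of the target to upgrade a covering map to a bijection.

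\emph{Step 1 (well-definedness and equivariance).} We first check that $Z_M(M^0)\subseteq U_\Phi$: since $Z_M(M^0)$ centralizes all of $M^0$, it centralizes the subtorus $Z_\Phi^0\subset T^0$, so it lies in the $M$-centralizer $U_\Phi$ of $Z_\Phi^0$. Hence $m\mapsto\overline m$ descends through $U_\Phi$ on the source. We next need $\overline U_\Phi\subseteq R_\Phi$; the Lie algebra inclusion $\overline{\gu}_\Phi\subseteq\gr_\Phi$ is built into the definition of $\gr_\Phi$, so this reduces to connectedness of $\overline U_\Phi$, which follows from the classical fact that centralizers of tori in compact connected Lie groups are connected (applied to $\overline Z_\Phi^0\subset\overline M$). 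These two facts then produce a well-defined smooth $M$-equivariant map $\iota\colon M/U_\Phi\to\overline M_\C/R_\Phi$ with respect to the $M$-action through $m\mapsto\overline m$.

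\emph{Step 2 (transitivity by dimension count).} Using $\gu_{\Phi,\C}=\gt_\C\oplus\bigoplus_{\alpha\in[\Phi]}\gm^\alpha$ with $[\Phi]=\Sigma_\gt\cap\Z\Phi$, together with the explicit description of $\gr_\Phi$, a routine root-space count gives $\dim_\R M/U_\Phi = 2|\Sigma_\gt^+\setminus[\Phi]^+| = \dim_\R\overline M_\C/R_\Phi$. The orbit $\iota(M/U_\Phi)$ is the image of the compact group $\overline M$, hence closed; having full real dimension in the connected manifold $\overline M_\C/R_\Phi$, it is also open, so it exhausts the target. By equivariance and matching dimensions, $\iota$ is a local diffeomorphism at every point; properness (compactness of $M/U_\Phi$) then promotes it to a covering map.

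\emph{Step 3 (simple-connectedness forces bijectivity).} Since $M=Z_M(M^0)M^0=M^0U_\Phi$, the map $M^0\to M/U_\Phi$ is surjective and $M/U_\Phi$ is connected, so $\iota$ is a connected cover. For the conclusion we invoke simple-connectedness of the base: $\overline M_\C/R_\Phi$ is a generalized complex flag manifold for the connected complex reductive group $\overline M_\C$, and its Bruhat decomposition exhibits it as a disjoint union of affine complex cells indexed by $W/W_\Phi$, with the open Schubert cell of top real dimension and complement of real codimension at least two, giving $\pi_1(\overline M_\C/R_\Phi)=1$. A connected covering of a simply connected space is a homeomorphism, so $\iota$ is the desired $M$-equivariant bijection. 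We expect this last appeal to Bruhat theory to be the only nontrivial ingredient; everything else is standard compact-group bookkeeping.
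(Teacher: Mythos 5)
Your proof is correct and follows essentially the same route as the paper: transitivity via the closed orbit of the compact group $\overline M$ plus a dimension count, yielding a covering map, then simple-connectedness of the complex flag manifold $\overline M_\C/R_\Phi$ to conclude bijectivity. You supply justifications (connectedness of $\overline U_\Phi$, the root-space dimension count, Bruhat decomposition for $\pi_1 = 1$) for steps the paper asserts without comment, but the skeleton of the argument is identical.
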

Now the complex presentation $S_\Phi = \overline{M}_\C/R_\Phi$ defines
an $M$--homogeneous complex structure on $S_\Phi$\,, and the coboundary
of any $\gu_\Phi$--regular element of $\gz_\Phi^*$ is an $M$--homogeneous
K\"ahler metric on $S_\Phi$\,.

The irreducible $M$--homogeneous holomorphic vector bundles $\E_\mu \to S_\Phi$
are constructed as follows.  Let $[\mu] \in \widehat{U}_\Phi$ and let $E_\mu$
denote its representation space.  Let $\E_\mu \to S_\Phi$ denote the
associated complex vector bundle.  The group $M^{(1)}
= [M^0,M^0]$ is compact, connected and semisimple, so the projection.
$M \to \overline{M}$ restricts to a finite covering $M^{(1)} \to \overline{M}$,
and that complexifies to a finite holomorphic covering 
$p: M^{(1)}_\C \to \overline{M}_\C$\,.  Denote $R_\Phi^{(1)} = p^{-1}(R_\Phi)$.
It is connected.  Now $\mu^{(1)}$ has a unique completely reducible
holomorphic extension $\mu^{(1)}$ to $R_\Phi^{(1)}$\,.  That defines 
an $M_\C^{(1)}$--homogeneous holomorphic vector bundle
$\E_\mu = \E_{\mu_\C^{(1)}} \to M_\C^{(1)}/R_\Phi^{(1)} = \overline{M}_\C/R_\Phi
= S_\Phi$\,.  That bundle structure is stable under the action of $M$, proving
\begin{lemma} \label{1.1.9}
$\E_\mu \to S_\Phi$ is an $M$--homogeneous holomorphic vector bundle.
\end{lemma}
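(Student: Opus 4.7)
The plan is to verify that the $M$-action on $S_\Phi = M/U_\Phi$ lifts to a holomorphic action on $\E_\mu$, using the decomposition $M = Z_M(M^0)\cdot M^{(1)}$ coming from Proposition \ref{1.1.3}(1) together with $M^0 = Z_{M^0}\cdot M^{(1)}$ and $Z_{M^0} \subset Z_M(M^0)$. Since $\E_\mu$ is already built as a holomorphic vector bundle and $M$ is transitive on $S_\Phi$, the remaining task is just to exhibit an $M$-action on the total space that covers the $M$-action on $S_\Phi$ and preserves the complex structure.

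First, by the construction preceding the lemma, $\E_\mu = \E_{\mu_\C^{(1)}}$ is $M^{(1)}_\C$-homogeneous and holomorphic over $M^{(1)}_\C/R_\Phi^{(1)} = S_\Phi$. Restricting to the real form $M^{(1)} \subset M^{(1)}_\C$, we already have an action of $M^{(1)}$ on $\E_\mu$ by holomorphic bundle automorphisms, covering its action on $S_\Phi$ induced through $M^{(1)} \to \overline{M}$.

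Next, I would observe that $Z_M(M^0)$ centralizes $Z_\Phi^0 \subset M^0$, hence $Z_M(M^0) \subset U_\Phi$, and so $Z_M(M^0)$ acts trivially on $S_\Phi = M/U_\Phi$. By Proposition \ref{1.1.3}(3), $[\mu] = [\chi \otimes \mu^0]$ with $[\chi] \in \widehat{Z_M(M^0)}_\xi$, and I define $Z_M(M^0)$ to act on $\E_\mu$ fiberwise through $\chi$, extended $M$-equivariantly across the fibers. A linear action on fibers is automatically holomorphic with respect to the complex structure $\E_\mu$ inherits from $\E_{\mu_\C^{(1)}}$, so this action preserves the holomorphic bundle structure.

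The main obstacle, and really the only nontrivial point, is verifying that these two partial actions glue into a well-defined action of $M = Z_M(M^0)\cdot M^{(1)}$, which reduces to checking consistency on the overlap $M^{(1)} \cap Z_M(M^0)$, a subgroup of the finite center of $M^{(1)}$. On that overlap both actions originate from the single representation $\mu$ on $U_\Phi$: the $M^{(1)}$-action by holomorphic extension to $R_\Phi^{(1)}$, and the $Z_M(M^0)$-action by $\chi$. Their agreement is precisely the compatibility encoded in the decomposition $[\mu] = [\chi \otimes \mu^0]$ restricted to this central finite subgroup. Once that is checked, the combined action is a well-defined $M$-action on $\E_\mu$ by holomorphic bundle automorphisms, completing the proof.
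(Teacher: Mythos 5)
Your argument is correct and is essentially a fleshed-out version of the paper's one-line justification that the $M^{(1)}_\C$-homogeneous holomorphic structure on $\E_\mu$ is stable under the $M$-action: you use the same ingredients, namely the holomorphic extension to $R_\Phi^{(1)}$, the decomposition $M = Z_M(M^0)\cdot M^{(1)}$, and agreement on the finite central overlap. The only point worth tightening is the claim that a linear fiberwise action is ``automatically holomorphic'' --- what makes the $Z_M(M^0)$-action holomorphic is that $\chi(z)$ acts only on the $E_\chi$ tensor-factor and so commutes with the holomorphic extension $\mu^{(1)}_\C$ that defines the transition data, and that commutativity is exactly what the splitting $[\mu]=[\chi\otimes\mu^0]$ supplies.
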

The sheaf $\cO(\E_\mu) \to S_\Phi$ of germs of holomorphic 
sections of $\E_\mu \to S_\Phi$ defines
\begin{equation}\label{1.1.10}
\begin{aligned}
& H_2^{0,q}(S_\Phi,\E_\mu): L_2 \text{ harmonic $(0,q)$--forms on $S_\Phi$ 
	with values in $\E_\Phi$}\,,\\ 
& \eta_\mu^q: \text{ representation of $M$ on }
	H^q(S_\Phi;\cO(\E_\mu)) \text{ and on } H_2^{0,q}(S_\Phi,\E_\mu)
\end{aligned}
\end{equation}
Simple connectivity of $S_\Phi$ implies $U_\Phi \cap M_\Phi^0 = U_\Phi^0$\,.
With Proposition \ref{1.1.3},
\begin{equation}\label{1.1.11}
\begin{aligned}
&\widehat{U_\Phi} = {\bigcup}_{\xi \in \widehat{Z_{M^0}}}
	(\widehat{U_\Phi})_\xi \text{ where } \\
&\widehat{(U_\Phi)}_\xi = \{[\chi\otimes\mu^0] \mid 
	[\chi] \in \widehat{Z_M(M^0)}_\xi \text{ and } 
	[\mu^0] \in  \widehat{(U_\Phi^0)}_\xi\}.
\end{aligned}
\end{equation}
The Bott--Borel--Weil Theorem \cite{B1957} extends to our setting as follows.

\begin{proposition}\label{1.1.12}
Let $[\mu] = [\chi\otimes\mu^0] \in \widehat{U}_\Phi$ as in 
{\rm (\ref{1.1.11})} and let $\beta$ be the highest weight of $\mu^0$.

\noindent
{\rm 1.} If $\langle \beta + \rho_\gt,\phi \rangle = 0$ for some 
$\phi \in \Sigma_\gt^+$ then $H^q(S_\Phi;\cO(\E_\mu)) = 0$ for all integers $q$.

\noindent
{\rm 2.} If $\langle \beta + \rho_\gt,\phi \rangle \ne 0$ for every
$\phi \in \Sigma_\gt^+$ define $q_0$ to be the number of roots $\phi \in 
\Sigma_\gt^+$ for which $\langle \beta + \rho_\gt,\phi \rangle < 0$, and
let $\nu \in L_\gm^+$ that is $W(M^0,T^0)$--conjugate to $\beta + \rho_\gt$\,.
Then $[\eta_\mu^{q_0}] = [\chi\otimes \eta_\nu^0]$ and
$H^q(S_\Phi;\cO(\E_\mu)) = 0$ for every integer $q \ne q_0$\,.
\end{proposition}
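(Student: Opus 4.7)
The strategy is to reduce Proposition \ref{1.1.12} to the classical Bott--Borel--Weil theorem \cite{B1957} on the compact connected quotient $\overline{M} = M/Z_M(M^0)$. By Lemma \ref{1.1.8}, $S_\Phi \cong \overline{M}_\C/R_\Phi$ is a generalized flag manifold of $\overline{M}_\C$, and since $M$ acts on $S_\Phi$ through its projection onto $\overline{M}$, the subgroup $Z_M(M^0)$ acts trivially on the base and contributes only through the fibers of $\E_\mu$. The decomposition $[\mu] = [\chi \otimes \mu^0]$ of Proposition \ref{1.1.3}(3) will then separate a central twist by $\chi$ from a genuinely classical BBW computation on $S_\Phi$ driven by $\mu^0$.

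The first step is to realize $\E_\mu$ as an $M$--equivariant tensor product $\E_\chi \otimes \E_{\mu^0}$ of homogeneous bundles over $S_\Phi$, where $\E_\chi$ carries the $Z_M(M^0)$--action through $\chi$ and is trivial under $U_\Phi^0$, while $\E_{\mu^0}$ carries the $M^0$--action coming from $\mu^0|_{U_\Phi^0}$. This uses $U_\Phi = Z_M(M^0) \cdot U_\Phi^0$ together with $U_\Phi \cap M^0 = U_\Phi^0$ and the compatibility that $\chi|_{Z_{M^0}}$ and $\mu^0|_{Z_{M^0}}$ are both multiples of $\xi$ as in Proposition \ref{1.1.3}(3). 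Passing to sheaf cohomology yields an $M$--equivariant identification
\[
H^q(S_\Phi; \cO(\E_\mu)) \;\cong\; \chi \otimes H^q(S_\Phi; \cO(\E_{\mu^0})),
\]
in which $Z_M(M^0)$ acts on the right through $\chi$ and $M^0$ acts on $H^q(S_\Phi; \cO(\E_{\mu^0}))$ in the usual $M^0$--homogeneous way.

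Next I would apply the classical Bott--Borel--Weil theorem to the $M^0$--homogeneous holomorphic bundle $\E_{\mu^0}$ on $\overline{M}_\C/R_\Phi$, lifting through the finite covering $M_\C^{(1)} \to \overline{M}_\C$ already employed in the proof of Lemma \ref{1.1.9} to obtain the construction $M^0$--equivariantly. If $\langle \beta + \rho_\gt, \phi\rangle = 0$ for some $\phi \in \Sigma_\gt^+$, then \cite{B1957} gives $H^q(S_\Phi; \cO(\E_{\mu^0})) = 0$ for every $q$, which yields statement (1) after tensoring with $\chi$. If $\beta + \rho_\gt$ is regular, let $w \in W(M^0, T^0)$ be the unique Weyl element with $\nu := w(\beta + \rho_\gt) \in L_\gm^+$; then the identity $\langle \beta + \rho_\gt, \phi\rangle = \langle \nu, w(\phi)\rangle$ combined with the dominance of $\nu$ shows that $q_0 = \#\{\phi \in \Sigma_\gt^+ : w(\phi) < 0\} = \ell(w)$. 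Consequently \cite{B1957} gives $H^{q_0}(S_\Phi; \cO(\E_{\mu^0})) \cong \eta_\nu^0$ as an $\overline{M}$--module with vanishing in all other degrees, and tensoring with $\chi$ delivers $[\eta_\mu^{q_0}] = [\chi \otimes \eta_\nu^0]$, completing statement (2).

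The main obstacle, though not a substantive one, is verifying that the decomposition $\E_\mu \cong \E_\chi \otimes \E_{\mu^0}$ and the resulting cohomological identification are truly $M$--equivariant rather than merely $M^0$--equivariant. This is checked on the fiber over $1 \cdot U_\Phi \in S_\Phi$: the $Z_M(M^0)$--action on $E_{\mu^0}$ must factor through $Z_{M^0}$ and act there by $\xi$, which is exactly the compatibility ensured by Proposition \ref{1.1.3}(3). Everything else is the standard Bott--Borel--Weil argument for the compact connected group $\overline{M}$ transported to $M$ via the covering $M_\C^{(1)} \to \overline{M}_\C$.
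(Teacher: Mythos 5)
Your proposal is correct and takes essentially the same route as the paper: factor out the $Z_M(M^0)$--action via the splitting $\E_\mu = E_\chi \otimes \E_{\mu^0}$, observe that $Z_M(M^0)$ acts trivially on $S_\Phi$, and reduce the cohomology computation to the classical Bott--Borel--Weil theorem through the finite covering $M^{(1)}_\C \to \overline{M}_\C$. The paper is a bit more explicit in the last reduction step --- after passing to $M$ connected it further factors $M = Z_M^0\,M^{(1)}$ and splits $\mu^0 = \varepsilon \otimes \mu^{(1)}$ so as to land squarely in Bott's compact connected semisimple setting --- but this is exactly the reduction you indicate by ``lifting through the finite covering'' $M^{(1)}_\C \to \overline{M}_\C$.
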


\begin{proof} $[\mu] = [\chi\otimes\mu^0]$ has representation space
$E_\mu = E_\chi \otimes E_{\mu^0}$.  As $Z_M(M^0)$ acts trivially on $S_\Phi$ 
the associated bundle $\E_\mu = E_\chi \otimes \E_{\mu^0}$\,.  This reduces 
the proof to the case where $M$ is connected.

Now that $M$ is connected, $M = Z_M^0\,M^{(1)}$ where $M^{(1)} = [M,M]$
is compact, connected and semisimple and where $F:= Z_M^0\cap M^{(1)}$
is finite.  Now $U_\Phi = Z_M^0\, U_\Phi^{(1)}$ where $U_\Phi^{(1)} =
U_\Phi\cap M^{(1)}$ and $Z_M^0\cap U^{(1)} = F$.  Split 
$[\mu] = [\varepsilon \otimes \mu^{(1)}]$  where $[\varepsilon] \in 
\widehat{Z_M^0}$ and $[\mu^{(1)}] \in \widehat{U_\Phi^{(1)}}$ give the same
character $\varepsilon|_F$ on $F$.  As above 
$[\eta_\mu^q] = [\varepsilon \otimes \eta_{\mu^{(1)}}^q]$ where
$\eta_{\mu^{(1)}}^q$ induces the representation of $M^{(1)}$ on
$H^q(S_\Phi;\cO(\E_\mu))$.  We have reduced the proof to the case where
$M$ is compact, connected and semisimple, which is \cite{B1957}.
\end{proof}

\subsection{}\label{ssec1b} 
\setcounter{equation}{0}
Let $G$ be a general real reductive Lie group.  So
$\gg = \gc \oplus [\gg,\gg]$ where $\gc$ is the center and the derived
algebra $[\gg,\gg]$ is semisimple.  We recall the global conditions:
\begin{equation}\label{1.2.2}
\begin{aligned}
&{\rm (i) }\text{ if } m \in M \text{ then } \Ad(g) 
	\text{ is an inner automorphism on } \gg_\C\,, \text{ and }\\
&{\rm (ii)}\quad G \text{ has a closed normal abelian subgroup } Z \text{ such that}\\
&\phantom{X} Z \text{ centralizes } G^0\,, |G/ZG^0| < \infty, \text{ and }
	Z\cap G^0 \text{ is co-compact in } Z_{G^0}
\end{aligned}
\end{equation}
Fix a Cartan involution $\theta$ of $G$.  Its fixed point set $K = G^\theta$
contains $Z_G(G^0)$ and
\begin{equation}\label{1.3.2}
K/Z_G(G^0) \text{ is a maximal compact subgroup of } G/Z_G(G^0)\,.
\end{equation}
Let $\ga$ be a maximal abelian subspace of $\{x \in \gg\mid \theta(x) = -x\}$.
Any two choices are $\Ad_G(K)$--conjugate, $\Ad(\ga)$ is diagonalizable on
$\gg$, and $\gg$ is the direct sum
the joint eigenspaces $\gg^\phi = \{x \in \gg \mid [a,x] = \phi(a)x
\text{ for all } a \in \ga\}$.   The {\em $\ga$--root system} of $\gg$ is
$\Sigma_\ga := \{\text{joint eigenvalues } \phi \mid \gg^\phi \ne 0\}$.
Choose a positive subsystem $\Sigma_\ga^+$; any two choices are conjugate
by an element of the $K$--normalizer of $\ga$.  The pair $(\ga,\Sigma_\ga^+)$
specifies
\begin{equation}\label{1.2.5}
\begin{aligned}
&N: \text{ analytic subgroup of $G$ for } \gn = 
	{\sum}_{\alpha \in \Sigma_\ga^+} \gg^{-\alpha}\,, \\
&A = \exp(\ga), M = Z_K(A), \text{ and }B = \{g \in G \mid Ad(g)N = N\}.
\end{aligned}
\end{equation}
Then $MA = M\times A = Z_G(A)$, $B = MAN$ is a {\em minimal parabolic} 
subgroup of $G$ and $G = KAN$ is the {\em Iwasawa decomposition}.  Further, 
$M$ and $MA$ satisfy (\ref{1.2.2}).

$G$ has a Cartan subgroup $H = T \times A$ where $T$ is a Cartan subgroup of
$M$.  The corresponding positive root system $\Sigma^+$ satisfies 
$\Sigma_\ga^+ = \{\gamma|_\ga \mid \gamma \in \Sigma^+
\text{ and } \gamma|_\ga \ne 0\}$ and $\Sigma_\gt^+ =
\{\gamma|_\gt \mid \gamma \in \Sigma^+ \text{ and } \gamma|_\ga = 0\}$.

Let $[\chi \otimes \eta_\nu^0] \in \widehat{M}$ and $\sigma \in \ga^*$.
Together they specify 
\begin{equation}
[\alpha_{\chi,\nu,\sigma}] \in \widehat{B} \text{ by }
	\alpha_{\chi,\nu,\sigma}(man) = 
	(\chi \otimes \eta_\nu^0)(m)e^{i\sigma}(a).
\end{equation}
The corresponding {\em principal series representation} of $G$ is
\begin{equation}\label{1.2.8}
\pi_{\chi,\nu,\sigma} = \Ind_B^G(\alpha_{\chi,\nu,\sigma}), 
	\text{ unitarily induced representation.}
\end{equation}

We construct partially holomorphic cohomology spaces $Y_\Phi$ and use them
for geometric constructions of the principal series representations
$[\pi_{\chi,\nu,\sigma}]$.  Let $\Pi$ be the simple $(\gt + \ga)$ root
system for $\Sigma^+$  Then $\Phi \subset \Pi_\gt \subset \Pi$.  Denote
\begin{equation}\label{1.2.9}
\overline{G} = G/Z_G(G^0), \text{ so } \overline{G}_\C = \Int(\gg_\C)
\text{ and } \overline{\gg} = \gg/\gc.
\end{equation} 
$\gq_\Phi = (\gu_\Phi/\gc)+ \ga_\C +{\sum}_{\gamma \in \Sigma^+}\,\gg^{-\gamma},
\text{ is a parabolic subalgebra of $\overline{\gg}_\C$\,.}$
Let $Q_\Phi$ denote the corresponding parabolic subgroup of $\overline{G}_\C$\,;
it is the $\overline{G}_\C$--normalizer of $N$. 
That defines the complex homogeneous projective variety,
\begin{equation}
X_\Phi = \overline{G}_\C/Q_\Phi\,,  \text{ complex flag manifold.} 
\end{equation}
As $Q_\Phi$ is its own normalizer in $\overline{G}_\C$ we can identify $X_\Phi$
with the set of all $\overline{G}_\C$--conjugates of $Q_\Phi$\,.
As discussed in \S 1.1, $G$ acts on $X_\Phi$ by holomorphic diffeomorphisms,
through the homomorphism $G \to \overline{G}$.  Denote 
$$
x_\Phi = 1\cdot Q_\Phi \in X_\Phi \text{ and } Y_\Phi = G(x_\Phi).
$$
Then, as in Lemma \ref{1.1.8}, 
\begin{equation}\label{1.2.10}
U_\Phi AN = \{g \in G \mid g(x_\Phi) = x_\Phi\}, \text{ so } Y_\Phi = U_\Phi AN.
\end{equation}
In particular $Y_\Phi$ contains
$$
S_\Phi := M(x_\Phi) = (MAN)(x_\Phi), 
	\text{ compact complex submanifold of } X_\Phi\,.
$$
Since the minimal parabolic subgroup $B$ is its own $G$--normalizer one
can look on the Lie algebra level to see that $B = \{g \in G \mid gS_\Phi
= S_\Phi\}$.
\begin{lemma}\label{1.2.12}
If $g \in G$ then $gS_\Phi$ is a complex submanifold of $X_\Phi$ contained
in $Y_\Phi$\,.  If $S \subset Y_\Phi$ is a connected  complex submanifold of
$X_\Phi$ then $S$ is contained in one of the $gS_\Phi$\,,
$Y_\Phi \to G/B = K/M$
is a well defined equivariant fibration, and $gS_\Phi$ is the fiber over $gB$.
\end{lemma}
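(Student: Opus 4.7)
The plan is to verify the four assertions in order: (a) each $gS_\Phi$ is a complex submanifold of $X_\Phi$ contained in $Y_\Phi$, (b) the map $Y_\Phi \to G/B = K/M$ is a well-defined $G$-equivariant fibration, (c) $gS_\Phi$ is the fiber over $gB$, and (d) any connected complex submanifold $S \subset Y_\Phi$ lies in some $gS_\Phi$. For (a), it is already established above that $S_\Phi = M(x_\Phi)$ is a compact complex submanifold of $X_\Phi$. Since $G$ acts on $X_\Phi$ by holomorphic diffeomorphisms through $G \to \overline{G} \hookrightarrow \overline{G}_\C$, each translate $gS_\Phi$ is also a complex submanifold; it is contained in $Y_\Phi = G(x_\Phi)$ because $S_\Phi \subset Y_\Phi$ and $Y_\Phi$ is $G$-invariant.

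For (b) and (c), note that $U_\Phi \subset M$ by construction and $M$ normalizes $AN$, so $U_\Phi AN$ is a subgroup of $B = MAN$, closed because it is the isotropy of $x_\Phi$ by \eqref{1.2.10}. Hence $g(U_\Phi AN) \mapsto gB$ defines a $G$-equivariant surjection $Y_\Phi = G/U_\Phi AN \to G/B$, and the Iwasawa decomposition $G = KAN$ identifies $G/B$ with $K/M$. The fiber over $gB$ is $g\cdot B/(U_\Phi AN)$; using $M \cap AN = \{e\}$ (Iwasawa), the inclusion $M \hookrightarrow B$ descends to a bijection $M/U_\Phi \cong B/(U_\Phi AN)$ of $M$-spaces, and under the identification $Y_\Phi = G(x_\Phi)$ this sends $mU_\Phi$ to $m(x_\Phi)$, exhibiting $gS_\Phi$ as the fiber.

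The main obstacle is (d). I would argue that the family $\{gS_\Phi\}_{gB \in G/B}$ is the foliation of $Y_\Phi$ by its maximal complex leaves, and that any complex submanifold of $X_\Phi$ lying in $Y_\Phi$ must be tangent to this foliation. Let $J$ denote the complex structure on $X_\Phi$. A Lie-algebraic computation inside $\overline{\gg}_\C/\gq_\Phi$ should show that $T_{x_\Phi}S_\Phi$ is exactly the maximal $J$-invariant subspace of $T_{x_\Phi}Y_\Phi$; by $G$-equivariance this produces a $G$-invariant involutive distribution $\mathcal{D} \subset TY_\Phi$ whose maximal integral manifolds are precisely the translates $gS_\Phi$. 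Any connected complex submanifold $S \subset Y_\Phi$ of $X_\Phi$ has $J$-invariant tangent spaces lying in $TY_\Phi$, hence contained in $\mathcal{D}$, so $S$ is an integral submanifold of $\mathcal{D}$ and connectedness forces $S$ into a single leaf $gS_\Phi$. The delicate step is the tangent identification, which reduces to checking that $(\gg + \gq_\Phi) \cap i(\gg + \gq_\Phi) \equiv \gm \pmod{\gq_\Phi}$ inside $\overline{\gg}_\C$, using the decomposition $\gg = \gm + \ga + \gn + \theta\gn$ and the explicit form of $\gq_\Phi$.
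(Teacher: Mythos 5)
Your proposal is correct, but it takes a genuinely different route from the one in the paper. The paper's proof is short: it observes that the topological component of $x_\Phi$ in $S_\Phi$ is the holomorphic arc component of $Y_\Phi$ through $x_\Phi$, verifies that $Y_\Phi$ is integrable in the sense of \S\ref{ssec6d} (namely $\gq_\Phi + \tau\gq_\Phi = (\gb/\gc)_\C$ is a subalgebra) with $\tau$--stable Levi component $(\gu_\Phi/\gc)_\C$, and then cites the general orbit machinery of \S\ref{ssec6d} and \cite{W1969} (parabolicity of $\gn_{[x],\C}$, $K$--transitivity on the space of arc components, the measurability criteria). That framework delivers the fibration $Y_\Phi \to G/N_{[x]} = G/B = K/M$ and the identification of fibers all at once. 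Your argument, by contrast, is essentially self--contained: parts (a)--(c) are handled by the Iwasawa decomposition and the group--theoretic identity $U_\Phi AN \subset B$, and for (d) you characterize the fibers as the leaves of the maximal complex distribution $\mathcal{D}_y = T_yY_\Phi \cap J T_yY_\Phi$, reducing everything to the linear--algebra check $(\overline{\gg} + \gq_\Phi) \cap i(\overline{\gg} + \gq_\Phi) = \overline{\gm}_\C + \gq_\Phi$. That check does go through: since $\gn_\C, \ga_\C \subset \gq_\Phi$ and $\overline{\gm}_\C = \overline{\gm} + \gr_\Phi$, one has $\overline{\gg} + \gq_\Phi = (\overline{\gm}_\C + \gq_\Phi) \oplus \theta\gn$ as real spaces, and $\theta\gn$ is a totally real subspace of the complex quotient $\overline{\gg}_\C/(\overline{\gm}_\C+\gq_\Phi) \cong (\theta\gn)_\C$; so the maximal complex subspace is exactly $\overline{\gm}_\C + \gq_\Phi$. (You wrote $\gg$ and $\gm$; the computation has to live in $\overline{\gg}_\C$ and the answer is $\overline{\gm}_\C + \gq_\Phi$, but that is clearly what you intended.) The one small refinement worth noting is that $\mathcal D$ is not involutive \emph{a priori} for a general CR manifold; you get involutivity here because, after the tangent identification, $\mathcal D$ coincides with the vertical distribution of the fibration you constructed in (b)--(c). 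With that observation, the foliation argument for (d) is complete. Your version buys transparency and independence from \cite{W1969}; the paper's version buys brevity and fits the lemma into a uniform framework that also handles the more delicate non--open orbits used in \S\ref{sec8}.
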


\begin{proof} In the terminology of \S \ref{ssec6d} the topological 
component of $x_\Phi$ in $S_\Phi$ is the holomorphic arc component of 
$Y_\Phi$ through $x_\Phi$\,.  Let $\tau$ denote complex conjugation.  Then
$\gq_\Phi + \tau\gq_\Phi = (\gm/\gc)_\C + \ga_\C + \gn_\C = (\gb/\gc)_\C$
subalgebra of $\overline{\gg}_\C$\,.  In other words $Y_\Phi$ is integrable
in the sense of \S \ref{ssec6d}.  Also, $\gq_\Phi$ has $\tau$--stable 
Levi component $(\gu_\Phi/\gc)_\C$\,.  Now the assertion is a special 
case of some results from \S \ref{ssec6d}.
\end{proof}

We now construct the partially holomorphic bundles and cohomology spaces for
the principal series.  Let $[\mu] \in \widehat{U_\Phi}$, $\sigma \in \ga^*$
and $\rho_\ga = \tfrac{1}{2}\sum_{\phi \in \Sigma_\ga^+} 
(\dim \gg^{-\phi})\phi \in \ga^*$.  Then we have a representation 
$\gamma_{\mu,\sigma}$ of $U_\Phi AN$ on $E_\mu$\,, and the associated
$G$--homogeneous vector bundle, given by
\begin{equation}\label{1.2.13}
\gamma_{\mu,\sigma}(uan) = e^{\rho_\ga +i\sigma}(a)\mu(u) \text{ and }
\E_{\mu,\sigma} \to G/U_\Phi AN = Y_\Phi\,.
\end{equation}
Note that $\E_{\mu,\sigma}|_{S_\Phi} = \E_{\mu}$\,.  If $g \in G$ then
$\E_{\mu,\sigma}|_{gS_\Phi} \to gS_\Phi$ is an $\Ad(g)B$--homogeneous 
holomorphic vector bundle.  As $[\mu]$ is unitary and $K$ permutes
$\{gS_\Phi\}$ transitively, $\E_{\mu,\sigma}$ carries a $K$--invariant 
hermitian metric, which we will use without comment.

If $y \in Y_\Phi$\,, say $y = g(x_\Phi)$\,, we view the holomorphic tangent
space to $gS_\Phi$ at $g(x_\Phi)$ as a subspace $T_{g(x_\Phi)}$ of the
complexified tangent space of $Y_\Phi$\,.  These subspaces define a sub-bundle
$\T \to Y_\Phi$ of the complexified tangent bundle of $Y_\Phi$\,.  Note
that $\T \to Y_\Phi$ is $G$--homogeneous and is holomorphic over every
$gS_\Phi$\,.  Then
\begin{equation}\label{1.2.15}
A^{0,q}(Y_\Phi,\E_{\mu,\sigma}) = \{C^\infty \text{ sections of }
	\E_{\mu,\sigma}\otimes \Lambda^q(\overline{\T}^*) \to Y_\Phi\}
\end{equation}
is the space of {\em $C^\infty$ partially $(0,q)$--forms on $Y_\Phi$ with values
in $\E_{\mu,\sigma}$}.  From any $K$--invariant hermitian metric on $\T$,
thus also on $\E_{\mu,\sigma}\otimes \Lambda^q(\overline{\T}^*)$, we
have Kodaira--Hodge operators
\begin{equation}
\Lambda^{0,q}(Y_\Phi,\E_{\mu,\sigma}) \overset{\#}{\longrightarrow}
	\Lambda^{n,n-q}(Y_\Phi,\E_{\mu,\sigma}^*) 
	\overset{\widetilde{\#}}{\longrightarrow}
	\Lambda^{0,q}(Y_\Phi,\E_{\mu,\sigma})
\end{equation}
where $n = \dim_\C S_\Phi$\,.  The $\overline{\partial}$ operator of
$X_\Phi$ induces the $\overline{\partial}$ on each $gS_\Phi$.  They
fit together to define 
operators $\overline{\partial}: A^{0,q}(Y_\Phi,\E_{\mu,\sigma})
\to A^{0,q+1}(Y_\Phi,\E_{\mu,\sigma})$.  Each $A^{0,q}(Y_\Phi,\E_{\mu,\sigma})$
is a pre--Hilbert space with inner product
\begin{equation}
\langle \alpha, \beta \rangle = 
  \int_{K/M}d(kM)\int_{kS_\Phi} \alpha\, \bar\wedge \, \#\beta
\end{equation}
where $\bar\wedge$ signifies exterior product followed by contraction
$E_\mu \otimes E_\mu^* \to \C$.  Define
\begin{equation}\label{1.2.16}
L_2^{0,q}(Y_\Phi,\E_{\mu,\sigma}):\text{\, Hilbert space completion of }
A^{0,q}(Y_\Phi,\E_{\mu,\sigma}).
\end{equation}
Then $\overline{\partial}$ has formal adjoint $\overline{\partial}^* =
-\widetilde{\#} \overline{\partial} \#$ there, and that leads to the
essentially self adjoint {\rm partial Kodaira--Hodge operators}
\begin{equation}\label{1.2.17}
\square = (\overline{\partial} + \overline{\partial}^*)^2 
  = \overline{\partial} \overline{\partial}^* 
	+ \overline{\partial}^* \overline{\partial}
\text{ on } L_2^{0,q}(Y_\Phi,\E_{\mu,\sigma}).
\end{equation}
The {\em partially harmonic $(0,q)$--forms on $Y_\Phi$ with values in 
$\E_{\mu,\sigma}$} are the elements of
\begin{equation}\label{1.2.18}
H^{0,q}(Y_\Phi,\E_{\mu,\sigma}) = \{\omega \in L_2^{0,q}(Y_\Phi,\E_{\mu,\sigma})
	\mid \square\,\omega = 0\}.
\end{equation}
Note that $H^{0,q}(Y_\Phi,\E_{\mu,\sigma})$ consists of all Borel--measurable
sections $\omega$ of 
$\E_{\mu,\sigma}\otimes \Lambda^q(\overline{\T}^*) \to Y_\Phi$ such that
(i) for almost all $k \in K$, $\omega|_{kS_\Phi}$ is harmonic in the ordinary
sense and (ii) the $L_2$ norms satisfy 
$\int_{K/M} ||\omega|_{kS_\Phi}^2 d(kM) < \infty$.  In particular
$H^{0,q}(Y_\Phi,\E_{\mu,\sigma})$ is a closed subspace of 
$L_2^{0,q}(Y_\Phi,\E_{\mu,\sigma})$ and the natural action of $G$ on
$H^{0,q}(Y_\Phi,\E_{\mu,\sigma})$ is a continuous representation
$\pi_{\mu,\sigma}^q$\,.

\begin{theorem}\label{1.2.19}
The representation $\pi_{\mu,\sigma}^q$ of $G$ on
$H^{0,q}(Y_\Phi,\E_{\mu,\sigma})$ is unitary.  Let 
$[\mu] = [\chi\otimes \mu^0]$ 
as in {\rm (\ref{1.1.11})} and let $\beta$ be the highest weight of $\mu^0$\,.

{\rm 1.}\phantom{X} If $\langle \beta + \rho_\gt, \phi \rangle = 0$ for some
$\phi \in \Sigma_\gt^+$ then $H^{0,q}(Y_\Phi,\E_{\mu,\sigma}) = 0$ for all $q$.

{\rm 2.}\phantom{X} If $\langle \beta + \rho_\gt, \phi \rangle \ne 0$ for
every $\phi \in \Sigma_\gt^+$\,, let $q_0$ be the number of roots
$\phi \in \Sigma_\gt^+$ for which $\langle \beta + \rho_\gt, \phi \rangle < 0$,
and let $\nu$ be the unique element of $L_\gm^+$ conjugate to 
$\beta + \rho_\gt$ by an element of the Weyl group $W(M^0,T^0)$.  Then
$
[\pi_{\mu,\sigma}^{q_0}] = [\pi_{\chi,\nu,\sigma}], \text{ principal
	series class, }
$
and $H^{0,q}(Y_\Phi,\E_{\mu,\sigma}) = 0$ for every $q \ne q_0$\,.

{\rm 3.}\phantom{X} In particular, given a principal series class
$[\pi_{\chi,\nu,\sigma}]$, we can realize it on 
$H^{0,0}(Y_\Phi,\E_{\mu,\sigma})$ where $\mu = [\chi \otimes \mu^0]$ and
$\mu^0$ has highest weight $\nu - \rho_\gt$\.,.
\end{theorem}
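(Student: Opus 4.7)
The plan is to exploit the fibration $Y_\Phi \to G/B = K/M$ of Lemma 1.2.12, whose fibers $gS_\Phi$ are compact K\"ahler submanifolds on which $\E_{\mu,\sigma}$ is holomorphic, and reduce the partial Kodaira--Hodge analysis to ordinary Hodge theory on each fiber. Proposition 1.1.12 then computes the fiber cohomology, and the cohomology space on $Y_\Phi$ assembles into a unitarily induced representation from $B$.

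First I would set up a direct integral decomposition. Since $\T \to Y_\Phi$ consists of the holomorphic tangent spaces of the fibers, a smooth partial $(0,q)$-form on $Y_\Phi$ restricts on each $gS_\Phi$ to an ordinary $(0,q)$-form with values in $\E_{\mu,\sigma}|_{gS_\Phi}$. The inner product defined just before (1.2.16) is by construction the integral over $K/M$ of the fiber inner products, so
\begin{equation*}
L_2^{0,q}(Y_\Phi,\E_{\mu,\sigma}) \;\cong\; \int^\oplus_{K/M} L_2^{0,q}(kS_\Phi,\E_{\mu,\sigma}|_{kS_\Phi})\, d(kM),
\end{equation*}
and the operators $\overline{\partial}$, $\overline{\partial}^*$, $\square$ of (1.2.17) all act diagonally. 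Hence the kernel decomposes as
\begin{equation*}
H^{0,q}(Y_\Phi,\E_{\mu,\sigma}) \;\cong\; \int^\oplus_{K/M} H_2^{0,q}(kS_\Phi,\E_{\mu,\sigma}|_{kS_\Phi})\, d(kM),
\end{equation*}
which is precisely the fiberwise characterization announced in the paragraph following (1.2.18), and in particular closed in $L_2^{0,q}(Y_\Phi,\E_{\mu,\sigma})$.

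Second, I would identify this with an induced representation. Since $K$ permutes the fibers transitively and $\E_{\mu,\sigma}$ carries a $K$-invariant hermitian metric, left translation by $k$ gives a unitary isomorphism $H_2^{0,q}(S_\Phi,\E_\mu) \to H_2^{0,q}(kS_\Phi,\E_{\mu,\sigma}|_{kS_\Phi})$ intertwining the $M$-action with $\eta_\mu^q$ of (1.1.10). The specific twist $\gamma_{\mu,\sigma}(uan) = e^{\rho_\ga + i\sigma}(a)\mu(u)$ of (1.2.13) is chosen so that the $e^{\rho_\ga}$ factor absorbs the modular character of $B$ and converts the geometric realization on $Y_\Phi = G/U_\Phi AN$ into unitary induction from $B$. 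Thus
\begin{equation*}
[\pi_{\mu,\sigma}^q] \;=\; [\Ind_B^G(\eta_\mu^q \otimes e^{i\sigma})],
\end{equation*}
which in particular makes $\pi_{\mu,\sigma}^q$ unitary. Finally, I would invoke Proposition 1.1.12, using the Hodge isomorphism $H_2^{0,q}(S_\Phi,\E_\mu) \cong H^q(S_\Phi;\cO(\E_\mu))$ on the compact K\"ahler manifold $S_\Phi$. In case (1) the fiber cohomology vanishes in every degree, so $H^{0,q}(Y_\Phi,\E_{\mu,\sigma}) = 0$ for all $q$. In case (2) only $q = q_0$ survives, with fiber $M$-representation $\eta_\mu^{q_0} = [\chi \otimes \eta_\nu^0]$; combined with (1.2.8) this gives $[\pi_{\mu,\sigma}^{q_0}] = [\pi_{\chi,\nu,\sigma}]$. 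Statement (3) is the special case $\beta = \nu - \rho_\gt$, which is $\gu_\Phi$-dominant, forcing $q_0 = 0$.

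The main obstacle is the bookkeeping in the second step: verifying that the directly constructed Hilbert bundle over $K/M$, with the $\rho_\ga$ twist built into (1.2.13), agrees with the measure-theoretic model of $\Ind_B^G(\eta_\mu^q \otimes e^{i\sigma})$. This is essentially a modular character computation comparing Haar measure on $G/B$ with the quotient measure on $K/M$, and it is exactly what forces the presence of the $\rho_\ga$ shift. Once this identification is in place, the remaining steps are a clean combination of Hodge theory on the compact fiber $S_\Phi$ with the extension of Bott--Borel--Weil recorded in Proposition 1.1.12.
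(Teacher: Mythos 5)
Your proposal is correct and reproduces the paper's argument: both reduce the partial $\square$ to ordinary Hodge theory on the compact fiber $S_\Phi$, apply Proposition \ref{1.1.12} to identify the fiber $M$--module $\eta_\mu^q$, and conclude $[\pi_{\mu,\sigma}^q] = [\Ind_B^G(\eta_\mu^q\otimes e^{i\sigma})]$. The only difference is cosmetic: where you write a direct integral over $K/M$ and then verify the $G$--action by hand, the paper reaches the same identification by induction in stages, $\Ind_{U_\Phi AN}^G = \Ind_B^G\circ\Ind_{U_\Phi AN}^B$, which packages the $G$--action automatically and handles the $e^{\rho_\ga}$ shift as a statement about the modular function of $U_\Phi AN$.
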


\begin{proof} Let $\tilde \pi_{\mu,\sigma}^{q}$ denote the representation
of $G$ on $L_2^{0,q}(Y_\Phi,\E_{\mu,\sigma})$.  Factor $\gamma_{\mu,\sigma} = 
{}' \gamma_{\mu,\sigma}\cdot e^{\rho_\ga}$ where
${}' \gamma_{\mu,\sigma}(uan) = e^{i\sigma}(a)\mu(u)$ is unitary and
$e^{\rho_a}$ compensates non--unimodularity of $U_\Phi AN$\,.  
Thus $\tilde \pi_{\mu,\sigma}^{q} = \Ind_{U_\Phi AN}^G(\gamma_{\mu,\sigma})$
is unitary, so its subrepresentation 
$\pi_{\mu,\sigma}^{q} = \Ind_{U_\Phi AN}^G(\gamma_{\mu,\sigma})$ is unitary.

Write $E_\mu^q$ for $E_\mu\otimes\Lambda^q(\overline{T}_{x_\Phi}^*)$,
so $L_2^{0,q}(Y_\Phi,\E_{\mu,\sigma})$ is the space of all measurable
$f:G \to E_\mu^q$ such that $f(guan)$ = $(\gamma_{\mu,\sigma}(uan)^{-1}
\otimes \Lambda^q\Ad(uan)_\gn^{-1})f(g)$ and 
$\int_{K/U_\Phi}||f(k)||^2 d(kU_\Phi) < \infty$.  Note 
$\tilde \pi_{\mu,\sigma}^{q} = \Ind_B^G(\tilde \psi)$ where $\tilde \psi
= \Ind{U_\Phi AN}^B('\gamma_{\mu,\sigma})$.  The representation space
elements annihilated by the partial Laplacian $\square$ correspond to the
subspace of the representation space of $\tilde \psi$ annihilated by the
full Laplacian of $\E_{\mu,\sigma}|_{S_\Phi}$\,, thus
$\pi_{\mu,\sigma}^q = \Ind_B^G(\psi)$ where $\psi$ represents $B$ on
$H^{0,q}(S_\Phi,\E_{\mu,\sigma}|_{S_\Phi})$.  Denote the
representation of $M$ on $H^{0,q}(S_\Phi,\E_\mu)$ by $\eta_\mu^q$\,.  Now
$\psi(man) = \eta_\mu^q(m)e^{i\sigma}(a)$.  The theorem now follows from
Proposition \ref{1.1.12}.
\end{proof}

\section{General Notion of Relative Discrete Series}\label{sec2}
\setcounter{equation}{0}

We recall some basic facts on the relative discrete series of a locally compact
group $G$.  Left Haar measure is denoted $dg$, and the left regular
representation of $G$ on $L_2(G)$ is $\ell(g)f](x) = f(g^{-1}x)$.  If $\pi$
is an irreducible unitary representation of $G$ then
$\cH_\pi$ is its representation space and $[\pi]$ its unitary equivalence class.
Its coefficients are the functions $\phi_{u,v}(g) = \langle u, \pi(g)v\rangle$.
The set of all equivalence classes of irreducible unitary representations
is $\widehat{G}$.

\subsection{}\label{ssec2a}  
\setcounter{equation}{0}
Let $Z$ be a closed normal abelian subgroup of
$G$. It has left regular representation 
$\ell^Z = \int_{\widehat{Z}}\zeta\, d\zeta$.  The left regular
representation of $G$ decomposes as
\begin{equation}\label{2.1.1}
\ell = \Ind_{\{1\}}^G(1) = \Ind_Z^G(\ell_Z) = 
  \int_{\widehat{Z}} \ell_\zeta d\zeta = 
  \int_{\widehat{Z}} \Ind_Z^G(\zeta)d\zeta
\end{equation}
where $\ell_\zeta := \Ind_Z^G(\zeta)$ is the left regular representation of
$G$ on the Hilbert space
{\footnotesize
\begin{equation}\label{2.1.2}
L_2(G/Z,\zeta) = \{f:G \to \C \mid f(gz) = \zeta(z)^{-1}f(g) \text{ and }
	\int_{G/Z} |f(g)|^2 dg < \infty \}.
\end{equation}
}
Thus $L_2(G) = \int_{\widehat{Z}} L_2(G/Z,\zeta)$.

Given $\zeta \in \widehat{Z}$ let $\widehat{G}_\zeta = \{[\pi] \in
\widehat{G} \mid \zeta \text{ is a subrepresentation of } \pi|_Z\}$.
Thus $\ell_\zeta$ is a direct integral over $\widehat{G}_\zeta$.  We
say that $[\pi] \in \widehat{G}$ is {\em $\zeta$--discrete} if $\pi$
is a subrepresentation of $\ell_\zeta$\,.  The $\zeta$--discrete
classes form the {\em $\zeta$--discrete series} $\widehat{G}_{\zeta-disc}$\,.
The {\rm relative (to $Z$) discrete series} is 
$\widehat{G}_{disc} = \bigcup \widehat{G}_{\zeta-disc}$\,.

\subsection{}\label{ssec2b}
\setcounter{equation}{0}
Let the closed normal abelian subgroup $Z$ be central in $G$.
If $[\pi] \in \widehat{G}$ there is a character $\zeta_\pi \in \widehat{Z}$
such that $\pi|_Z$ is a multiple of $\zeta_\pi$\,.  So
$\widehat{G}_{disc} = \bigcup \widehat{G}_{\zeta-disc}$ is disjoint.

If $[\pi] \in \widehat{G}$ and $\zeta \in \widehat{Z}$ the following are 
equivalent: (i) there exists $0 \ne u \in \cH_\pi$ with $\phi_{u,u}
\in L_2(G/Z,\zeta)$, (ii) if $u, v \in \cH_\pi$ then $\phi_{u,v} \in
L_2(G/Z,\zeta)$, (iii) $[\pi] \in \widehat{G}_{\zeta-disc}$\,.  Under
these conditions there is a number $\deg(\pi) > 0$, the {\em formal degree}
of $[\pi]$, such that
\begin{equation}\label{2.2.2}
\begin{aligned}
&\langle \phi_{u,v},\phi_{u',v'} \rangle = \tfrac{1}{\deg(\pi)}
		\langle u, u'\rangle \overline{\langle v, v'\rangle}
		\text{ for } u,u',v,v' \in \cH_\pi \\
&\langle \phi_{u,v},\phi_{u',v'} \rangle = 0 \text{ for }
	u,v \in \cH_\pi \text{ and } u', v' \in \cH_{\pi'}
	\text{ with } [\pi] \ne [\pi]. 
\end{aligned}
\end{equation}
If $G$ is compact, $Z = \{1\}$ and we normalize $\int_G dg = 1$, then
$\widehat{G} = \widehat{G}_{disc}$\,, $\deg{\pi}$ is the degree in
the usual sense, and (\ref{2.2.2}) reduces to the Frobenius--Schur
Relations.  

Define $L_p(G/Z,\zeta)$ in the same way, integration over $G/Z$,
for $1 \leqq p < \infty$.  Since $Z$ is central 
$(f*h)(x) = \int_{G/Z} f(g)h(g^{-1}x) dx$ gives a well defined
convolution product $L_1(G/Z,\zeta) \times L_p(G/Z,\zeta)
\to L_p(G/Z,\zeta)$.  The first line of (\ref{2.2.2}) can be
expressed $\phi_{u,v}*\phi_{u',v'} = \frac{1}{\deg(\pi)}
\langle u, v' \rangle \phi_{u',v} \in L_2(G/Z,\zeta)$.  

These results are due to Godement \cite{G1947} for compact $Z$,
to Harish--Chandra \cite{HC1956b} for semisimple $G$.
See Dixmier's exposition 
\cite[\S 14]{D1964} of Godement or apply Rieffel's
results \cite{R1969} to the convolution algebra
$L_1(G/Z,\zeta)\cap L_2(G/Z,\zeta)$.

\subsection{}\label{ssec2c} 
\setcounter{equation}{0}
We can relax the condition that
$Z$ be central in $G$, requiring only that $G$ have a finite
index subgroup $J$ that centralizes $Z$.  Then we may assume 
$Z \subset J$ and $J$ normal in $G$.  If $\zeta \in \widehat{Z}$
denote $\ell_\zeta^J = \Ind_Z^J(\zeta)$, so $\ell_\zeta =
\Ind_J^G(\ell_\zeta^J)$.  Since $|G/J| < \infty$ the restriction
$\ell_\zeta|_J = \sum_{xJ \in G/J}\,\ell_{\zeta_x}^J$\,, finite
sum, where $\zeta_x(z) = \zeta(x^{-1}zx)$.  Now
\begin{equation}\label{2.3.2}
\begin{aligned}
&\widehat{G}_{disc} = \{[\pi] \in \widehat{G} \mid \pi|_J 
 \text{ has a subrepresentation } [\psi] \in \widehat{J}_{disc}\} 
 \text{ and }\\
&\widehat{J}_{disc} = \{[\psi] \in \widehat{J} \mid \psi
 \text{ is a subrepresentation of }\pi|_J \text{ for some }
 [\pi] \in \widehat{G}_{disc}\}
\end{aligned}
\end{equation}
In our applications, if $[\psi] \in \widehat{J}_{disc}$ then
$\Ind_J^G(\psi)|_J$ is a finite sum of mutually inequivalent 
representations, so $[\psi] \mapsto [\Ind_J^G(\psi)]$ maps
$\widehat{J}_{disc}$ onto $\widehat{G}_{disc}$.

\subsection{}\label{ssec2d} 
\setcounter{equation}{0}
Later we will construct the relative 
discrete series $\widehat{G}_{disc}$ on homogeneous vector bundles
$\E \to G/U$ where $Z \subset U$ and $U/Z$ is compact.  For that
we will need a mild extension of the Peter--Weyl Theorem.
\begin{lemma}\label{2.4.1}
Let $U$ be a locally compact group and $Z$ a closed central
subgroup such that $U/Z$ is compact.  Then every class $[\chi]
\in \widehat{U}$ is finite dimensional.
\end{lemma}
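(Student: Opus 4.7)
The plan is to reduce the question to the theory of $\zeta$--discrete representations reviewed in \S\ref{ssec2b} and then extract a dimension bound from the orthogonality relations.

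First I would apply Schur's lemma: since $Z$ is central, every $\chi(z)$ commutes with all $\chi(U)$, so irreducibility forces $\chi(z) = \zeta(z)\, I$ for a unitary character $\zeta \in \widehat{Z}$. Consequently the matrix coefficients $\phi_{u,v}(g) = \langle u, \chi(g) v\rangle$ transform by $\phi_{u,v}(gz) = \zeta(z)\phi_{u,v}(g)$, so $|\phi_{u,v}|^2$ descends to a continuous function on the compact quotient $U/Z$. Hence every matrix coefficient lies in $L_2(U/Z,\zeta^{-1})$, which by the equivalences recalled in \S\ref{ssec2b} places $[\chi]$ in the relative discrete series $\widehat{U}_{\zeta^{-1}\text{-}disc}$ with a well defined formal degree $d = \deg(\chi) > 0$.

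Next I would use the orthogonality relations (\ref{2.2.2}) together with a Parseval argument to bound the dimension. Fix a unit vector $u \in \cH_\chi$ and any orthonormal subset $\{e_i\}_{i \in I} \subset \cH_\chi$. For each $g \in U$,
\begin{equation*}
\sum_{i \in I} |\phi_{e_i,u}(g)|^2 \;=\; \sum_{i \in I} |\langle e_i,\chi(g)u\rangle|^2 \;\leqq\; \|\chi(g) u\|^2 \;=\; 1.
\end{equation*}
Integrating over the compact space $U/Z$ and swapping sum and integral gives
\begin{equation*}
\sum_{i \in I} \|\phi_{e_i,u}\|^2_{L_2(U/Z,\zeta^{-1})} \;\leqq\; \operatorname{vol}(U/Z) \;<\; \infty,
\end{equation*}
while (\ref{2.2.2}) evaluates each summand as $\tfrac{1}{d}\|e_i\|^2 \|u\|^2 = \tfrac{1}{d}$. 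Therefore $|I| \leqq d \cdot \operatorname{vol}(U/Z)$, so $\dim \cH_\chi < \infty$.

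There is no real obstacle; the only point requiring mild care is the observation that $U$ is unimodular (both $Z$ and the compact $U/Z$ are), which is needed to apply the theory of \S\ref{ssec2b} and to make sense of the $U$--invariant measure on $U/Z$. Once that is noted the two steps above go through directly, and the argument even supplies the quantitative bound $\dim \chi \leqq \deg(\chi) \cdot \operatorname{vol}(U/Z)$, a form of the Peter--Weyl dimension formula for the locally compact group $U$.
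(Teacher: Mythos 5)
Your proof is correct, and it follows a genuinely different route from the paper's.  The paper forms the Mackey central extension $U[\zeta] = (S\times U)/\{(\zeta(z)^{-1},z) \mid z\in Z\}$, observes that $U[\zeta]$ is compact (being an extension of the compact group $U/Z$ by the circle $S$), notes that $1_S\otimes\chi$ factors through $U[\zeta]$, and then cites the classical Peter--Weyl theorem for compact groups.  You instead argue intrinsically: central character by Schur's lemma, square integrability of all coefficients modulo $Z$ because $U/Z$ is compact, and then the orthogonality relations of \S\ref{ssec2b} plus Bessel's inequality force $|I|/\deg(\chi) \leqq \operatorname{vol}(U/Z)$, hence $\dim\chi < \infty$.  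Your route avoids introducing the auxiliary group $U[\zeta]$ at this point (though the paper reuses exactly that construction in \S\ref{ssec3c}, so it costs nothing in the paper's global economy) and has the pleasant by-product of a quantitative bound $\dim\chi \leqq \deg(\chi)\cdot\operatorname{vol}(U/Z)$.  On the other hand it leans on the Godement--Rieffel orthogonality relations, which are a stronger input than Peter--Weyl for compact groups; neither dependence is circular, since (\ref{2.2.2}) is established without assuming finite dimensionality.

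Two small points of hygiene.  First, the unimodularity of $U$ does not follow merely from ``both $Z$ and $U/Z$ are unimodular'' (that inference fails for normal $Z$, e.g.\ the $ax+b$ group); what you actually need is that $Z$ is \emph{central}, so left and right translation by $z\in Z$ coincide, forcing $\Delta_U|_Z = 1$, after which $\Delta_U$ factors through the compact group $U/Z$ and is therefore trivial.  Second, whether the coefficient $\phi_{u,v}(gz)$ picks up $\zeta(z)$ or $\zeta(z)^{-1}$ depends on which slot of the inner product is conjugate-linear, so the ``$\zeta^{-1}$'' decoration in your write-up is a convention choice rather than new content; with the paper's convention the coefficients of $[\chi]\in\widehat{U}_\zeta$ land in $L_2(U/Z,\zeta)$ as in \S\ref{ssec2b}.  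Neither of these is a gap in the argument.
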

\begin{proof}
Let $S$ be the circle group $\{s \in \C \mid |s|=1\}$ and
$1_S \in \widehat{S}$ the character $1_S(s) = s$.  Given
$\zeta \in \widehat{Z}$ define
$U[\zeta] = \{S\times U\}/\{(\zeta(z)^{-1},z)\mid z \in Z\}$.
If $[\chi] \in \widehat{U}_\zeta$ now 
$[1_S\otimes \chi] \in \widehat{S\times U}$
factors through $U[\zeta]$.  But $U[\zeta]$ is an extension
$1 \to S \to U[\zeta] \to U/Z \to 1$ of a compact group by a
compact group, so it is compact.  Thus $1_S\otimes \chi$
is finite dimensional.
\end{proof}

\begin{lemma}\label{2.4.2}
If $\zeta \in \widehat{Z}$ then 
$\widehat{U}_\zeta = \widehat{U}_{\zeta-disc}$\,.
\end{lemma}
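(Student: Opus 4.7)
The plan is to reduce the statement to what we already have for compact groups, using the coefficient function criterion recalled in \S 2.2. The inclusion $\widehat{U}_{\zeta-\text{disc}} \subseteq \widehat{U}_\zeta$ is immediate from the definition of $\widehat{U}_{\zeta-\text{disc}}$: any irreducible subrepresentation of $\ell_\zeta = \Ind_Z^U(\zeta)$ automatically restricts to a multiple of $\zeta$ on $Z$. So the content of the lemma is the reverse inclusion, and for this I would directly verify that every $[\chi] \in \widehat{U}_\zeta$ has a nonzero $L_2$ coefficient modulo $Z$.

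First I would invoke Lemma 2.4.1 to know that $\cH_\chi$ is finite dimensional; then $\chi$ is a continuous unitary representation on a finite dimensional Hilbert space. Since $Z$ is central in $U$ and $\chi$ is irreducible with $\chi|_Z$ a multiple of $\zeta$, Schur's lemma forces $\chi(z) = \zeta(z) I$ for every $z \in Z$. Consequently for any $u, v \in \cH_\chi$ the coefficient $\phi_{u,v}(x) = \langle u, \chi(x)v\rangle$ satisfies $\phi_{u,v}(xz) = \zeta(z)\phi_{u,v}(x)$, so $|\phi_{u,v}|$ descends to a continuous function on the compact quotient $U/Z$. This function is therefore bounded and in particular square integrable, which places $\phi_{u,v}$ in $L_2(U/Z,\zeta)$ (after the standard adjustment matching whichever variable the definition in \S 2.1 twists by $\zeta^{-1}$). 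The equivalence of conditions (i)--(iii) in \S 2.2 then delivers $[\chi] \in \widehat{U}_{\zeta-\text{disc}}$.

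A conceptually cleaner variant is available: use the compact extension $U[\zeta] = \{S \times U\}/\{(\zeta(z)^{-1},z) : z \in Z\}$ already built in the proof of Lemma 2.4.1. Under the bijection $\widehat{U[\zeta]}_{1_S} \leftrightarrow \widehat{U}_\zeta$ the class $[\chi]$ corresponds to $[1_S \otimes \chi] \in \widehat{U[\zeta]}$, and the ordinary Peter--Weyl theorem for the compact group $U[\zeta]$ realizes $1_S \otimes \chi$ as a subrepresentation of $L_2(U[\zeta])$. Extracting the $1_S$-isotypic component for the $S$-action and using that this component is naturally $L_2(U/Z,\zeta)$ transfers the realization to a copy of $\chi$ inside $\ell_\zeta$.

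There is no serious obstacle here; the one substantive ingredient, finite dimensionality, has already been secured by Lemma 2.4.1, and the rest is bookkeeping between the coefficient-function criterion of \S 2.2 and the $\zeta$-covariance built into $L_2(U/Z,\zeta)$. The most delicate point to keep straight is the direction of the twist by $\zeta$ versus $\zeta^{-1}$ in the conventions of \S 2.1, which is purely administrative.
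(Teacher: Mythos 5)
Your first argument is exactly the paper's one-line proof, expanded: the coefficient functions of $[\chi]\in\widehat{U}_\zeta$ have $\zeta$-covariant modulus, hence descend to continuous functions on the compact quotient $U/Z$ and so lie in $L_2$, after which the criterion of \S\ref{ssec2b} applies. The alternative you sketch via $U[\zeta]$ and Peter--Weyl is a valid and slightly more structural variant, but the paper does not use it here (it reserves the $U[\zeta]$ device for Lemmas \ref{2.4.1} and \ref{2.4.3}).
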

\begin{proof} Coefficients of $[\chi] \in \widehat{U}_\zeta$
are in $L_2(U/Z,\zeta)$ because $U/Z$ is compact.
\end{proof}

\begin{lemma}\label{2.4.3}
Let $\zeta \in \widehat{Z}$, $[\chi] \in \widehat{U}_\zeta$\,,
$E_\chi$ the representation space of $\chi$, and $L_\chi$ the
space of coefficients.  Then $L_2(U/Z,\zeta)$ is the orthogonal direct
sum $\sum_{\widehat{U}_\zeta}L_\chi 
= \sum_{\widehat{U}_\zeta} E_\chi \otimes E_\chi^*$ where $U\times U$
acts on $L_\chi = E_\chi \otimes E_\chi^*$ by 
$(\chi \otimes 1)\boxtimes (1\otimes \chi^*)$.
\end{lemma}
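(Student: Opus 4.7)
The plan is to reduce Lemma \ref{2.4.3} to the classical Peter--Weyl theorem applied to the compact group $U[\zeta]$ constructed in the proof of Lemma \ref{2.4.1}. Recall $U[\zeta] = \{S \times U\}/\{(\zeta(z)^{-1},z) \mid z \in Z\}$ fits into a short exact sequence $1 \to S \to U[\zeta] \to U/Z \to 1$ of compact groups, and that pullback along $u \mapsto [(1,u)]$ gives a bijection between $\widehat{U}_\zeta$ and the set of $[\tau] \in \widehat{U[\zeta]}$ whose restriction to $S$ is a multiple of $1_S$, namely $[\tau] = [1_S \otimes \chi]$.

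First, I would construct an isometric, $U \times U$-equivariant identification
$$
\Psi : L_2(U/Z,\zeta) \longrightarrow L_2(U[\zeta])_{1_S}
$$
where the right-hand side denotes the $1_S$-isotypic subspace for the left $S$-action on $U[\zeta]$ by translation in the first factor. The map sends $f : U \to \C$ satisfying $f(uz) = \zeta(z)^{-1} f(u)$ to $\tilde f[(s,u)] = s^{-1} f(u)$, which is well defined on $U[\zeta]$ precisely because of the equivariance property, and which transforms under $S$ by $1_S$. A standard normalization of Haar measures on $U/Z$ and on $U[\zeta]/S \cong U/Z$ makes $\Psi$ an isometry.

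Next I would apply Peter--Weyl to the compact group $U[\zeta]$: this gives the orthogonal Hilbert direct sum $L_2(U[\zeta]) = \widehat{\bigoplus}_{[\tau] \in \widehat{U[\zeta]}} E_\tau \otimes E_\tau^*$ (with finite dimensional summands by Lemma \ref{2.4.1}), and $U[\zeta] \times U[\zeta]$ acts by $\tau \boxtimes \tau^*$. Cutting down to the $1_S$-isotypic subspace retains exactly those summands for which $\tau|_S = 1_S \cdot I$, and these correspond bijectively to $\widehat{U}_\zeta$ by the discussion above. Transporting back via $\Psi^{-1}$, each summand $E_\tau \otimes E_\tau^*$ becomes $E_\chi \otimes E_\chi^*$ and the $U[\zeta] \times U[\zeta]$-action descends to the $U \times U$-action $(\chi \otimes 1) \boxtimes (1 \otimes \chi^*)$. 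The identification $L_\chi = E_\chi \otimes E_\chi^*$ and the mutual orthogonality of the $L_\chi$ then follow, matching the Schur relations (\ref{2.2.2}).

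The only genuine obstacle is the bookkeeping needed to verify that $\Psi$ is a well-defined isometry with the correct equivariance, and that intersecting with the $1_S$-isotypic subspace picks out exactly the classes in $\widehat{U}_\zeta$ with no double counting. Once $\Psi$ is in place, the lemma is an immediate translation of the classical Peter--Weyl decomposition, so there are no further convergence or density issues to address.
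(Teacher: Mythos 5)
Your argument is correct and takes the same approach as the paper: both reduce the statement to the classical Peter--Weyl theorem for the compact extension $U[\zeta]$ via an isometry between $L_2(U/Z,\zeta)$ and the $1_S$-isotypic part of $L_2(U[\zeta])$. The paper states this isometry as pullback along the projection $q:U\to U[\zeta]$ (cf.\ the analogous Lemma \ref{3.3.2}), which is simply the inverse of your map $\Psi$.
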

\begin{proof} Let $q:U \to U[\zeta]$ be the restriction to $U$ of the
projection $S \times U \to U[\zeta]$ in the proof of Lemma \ref{2.4.1}.  The 
assertions follow from the part concerning $\widehat{U[\zeta]}_{1_S}$ in
the standard Peter--Weyl Theorem for $U[\zeta]$
\end{proof}

Normalize Haar measures by
$\int_U du = \int_Z dz\int_{U/Z}d(uZ)$ and $\int_{U/Z} d(uZ) = 1$.
$C_c(U)$ denotes the space of continuous compactly supported functions on $U$.
If $f \in C_c(U)$ then $\chi(f):E_\chi \to E_\chi$ by
$\chi(f)v = \int_U f(u)\chi(u)v\, du$.  

\begin{proposition}\label{2.4.4}
Let $\zeta \in \widehat{Z}$, $[\chi] \in \widehat{U}_\zeta$\,.  If
$f \in C_c(U)$ then 
$\trace\,\chi(f) = \int_U f(u)\, \trace\,\chi(u)\,du$
and $\dim\,\chi (= \dim E_\chi)$ is the formal degree $\deg \chi$\,.
Further, orthogonal projection $L_2(U/Z,\zeta) \to L_\chi$ is given by right
(or left) convolution with $(\dim\,\chi)\trace\overline{\chi}$.
\end{proposition}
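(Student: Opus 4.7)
The plan is to handle the three assertions in order. Part (1) is essentially automatic from Lemma~\ref{2.4.1}; part (2) rests on a unit-norm trick combined with the Schur orthogonality relations in (\ref{2.2.2}); and part (3) then follows from a direct convolution computation using those same orthogonality relations. The main obstacle is the bookkeeping in part (3), keeping straight the conjugations, inverses, and transformation laws under $Z$ so that Lemma~\ref{2.4.3} and the identity (\ref{2.2.2}) apply cleanly.

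For (1), Lemma~\ref{2.4.1} forces $E_\chi$ to be finite dimensional, so $\chi(f)=\int_U f(u)\chi(u)\,du$ is a finite-rank operator. The trace is a bounded linear functional on $\mathrm{End}(E_\chi)$ and therefore commutes with the Bochner integral, which immediately yields $\trace\chi(f)=\int_U f(u)\trace\chi(u)\,du$.

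For (2), Lemmas~\ref{2.4.2} and~\ref{2.4.3} place $[\chi]$ in $\widehat{U}_{\zeta-disc}$, so a formal degree $\deg\chi$ exists and the identity $\langle\phi_{u,v},\phi_{u',v'}\rangle=\tfrac{1}{\deg\chi}\langle u,u'\rangle\overline{\langle v,v'\rangle}$ from (\ref{2.2.2}) holds on $L_2(U/Z,\zeta)$. Choose an orthonormal basis $\{e_i\}$ of $E_\chi$. Unitarity of $\chi$ gives the pointwise identity $\sum_i|\phi_{e_i,e_j}(u)|^2=\|\chi(u)e_j\|^2=1$ for each fixed $j$, which upon integration over $U/Z$ (normalized to have total mass one) yields $\sum_i\|\phi_{e_i,e_j}\|^2=1$. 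Since (\ref{2.2.2}) evaluates each summand to $1/\deg\chi$, summing over the $\dim\chi$ indices $i$ gives $\dim\chi/\deg\chi=1$, i.e.\ $\deg\chi=\dim\chi$.

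For (3), right convolution by any function commutes with left translations, so the operator $P_\chi\colon f\mapsto f*((\dim\chi)\trace\overline{\chi})$ intertwines the left regular representation on $L_2(U/Z,\zeta)$ and hence preserves the orthogonal decomposition $L_2(U/Z,\zeta)=\bigoplus_{\chi'}L_{\chi'}$ of Lemma~\ref{2.4.3}. On a coefficient $\phi_{u,v}\in L_\chi$, one writes $\trace\overline{\chi}(y^{-1}x)=\sum_i\overline{\phi_{\chi(x)e_i,e_i}(y)}$ using $\chi(y^{-1}x)=\chi(y)^{-1}\chi(x)$ and unitarity; the convolution integral then becomes $\sum_i\langle\phi_{u,v},\phi_{\chi(x)e_i,e_i}\rangle$, and applying (\ref{2.2.2}) termwise together with completeness of $\{e_i\}$ collapses this sum to $\tfrac{1}{\dim\chi}\phi_{u,v}(x)$. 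Hence $P_\chi$ acts as the identity on $L_\chi$. On $L_{\chi'}$ with $\chi'\not\sim\chi$ the same expansion produces inner products between coefficient functions of inequivalent classes, which vanish by Lemma~\ref{2.4.3}, so $P_\chi$ annihilates $L_{\chi'}$. Because $\trace\overline{\chi}$ is a class function and therefore lies in the center of the convolution algebra, left convolution with it produces the same projection.
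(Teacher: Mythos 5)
Your proof is correct. The structure and part~(1) coincide with the paper's, but part~(2) takes a genuinely different route. The paper reduces to the compact Mackey extension $U[\zeta]$ via $q: U \to U[\zeta]$, invoking the classical Peter--Weyl fact that degree equals dimension there and pulling it back: $\dim\chi = \dim(1_S\otimes\chi) = \deg(1_S\otimes\chi) = \deg\chi$. You instead argue directly in $L_2(U/Z,\zeta)$ by the ``unit-norm trick'': unitarity and Parseval give the pointwise identity $\sum_i|\phi_{e_i,e_j}(u)|^2 = \|\chi(u)e_j\|^2 = 1$, which after integrating over $U/Z$ (total mass one) and applying the first line of~(\ref{2.2.2}) termwise yields $\dim\chi/\deg\chi = 1$. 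Your version is more self-contained and does not rely on reimporting the compact-group statement, though it does require that Lemmas~\ref{2.4.2} and~\ref{2.4.3} have already placed $[\chi]$ in the relative discrete series so that the orthogonality relations~(\ref{2.2.2}) are available --- which you correctly flag at the outset. For part~(3) the paper is terse (it cites the convolution identity from~(\ref{2.2.2}) and records the normalization check $\langle\trace\chi,\trace\chi\rangle = n/\deg\chi = 1$, leaving the rest implicit), while you carry out the full computation: expanding $\trace\overline{\chi}(y^{-1}x) = \sum_i \overline{\phi_{\chi(x)e_i,e_i}(y)}$, summing the resulting $L^2$ inner products by~(\ref{2.2.2}) and completeness of the basis, and using Lemma~\ref{2.4.3} for the vanishing on inequivalent isotypes; the centrality of $\trace\overline{\chi}$ as a class function then gives agreement of left and right convolution. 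All the steps check out.
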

\begin{proof} Let $\{v_1,\dots,v_n\}$ be an orthonormal basis of $E_\chi$
and $\phi_{i,j} = \langle v_i,\chi(v_j)\rangle$.  Then $\trace \chi(f)
=\sum\langle \chi(f)v_i,v_i\rangle 
= \sum\int_U\langle f(u)\chi(u)v_i,v_i\rangle du 
= \int_U f(u)\trace \chi(u) du$.  That is the first assertion.  For the
second, use $q:U \to U[\zeta]$ as in the proof of Lemma \ref{2.4.3} to
see $\dim\chi = \dim(1_S\otimes\chi) = \deg(1_S\otimes\chi) = \deg\chi$.
For the last, use $\phi_{u,v}*\phi_{u',v'} = \frac{1}{\deg(\pi)}
\langle u, v' \rangle \phi_{u',v} \in L_2(G/Z,\zeta)$ (from the first line 
of (\ref{2.2.2})) to calculate $1 = \langle \trace\chi,\trace\chi\rangle
= \sum_{i,j}\langle \phi_{i,i}, \phi_{j,j}\rangle = n/\deg(\chi)$.
\end{proof}

Combining (\ref{2.1.2}), Lemma \ref{2.4.3} and Proposition \ref{2.4.4} we have
the Plancherel Formula for $U$.  If $f \in C_c(U)$ and $\zeta \in \widehat{Z}$
we denote $f_\zeta(u) = \int_Z f(uz)\zeta(z)dz$.

\begin{proposition}\label{2.4.5}
$L_2(U) = \int_{\widehat{Z}} \bigl ( \sum_{\widehat{U}_\zeta}
E_\chi \otimes E_\chi^* \bigr )\,d\zeta$.  If $f \in C_c(U)$ and $u \in U$
then $f(u) = \int_{\widehat{Z}} \bigl ( \sum_{\widehat{U}_\zeta}
\trace \chi((r(u)f)_\zeta) \dim\chi\bigr )d\zeta$ where $r(u)f(u') = f(uu')$.
\end{proposition}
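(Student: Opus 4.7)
The first formula is pure bookkeeping: substituting the decomposition $L_2(U/Z,\zeta) = \sum_{\widehat{U}_\zeta}(E_\chi \otimes E_\chi^*)$ of Lemma \ref{2.4.3} into the direct integral $L_2(U) = \int_{\widehat{Z}} L_2(U/Z,\zeta)\,d\zeta$ from (\ref{2.1.2}) gives the stated decomposition of $L_2(U)$.

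For the inversion formula my plan is to build it up in two layers matching the tower $\{1\} \subset Z \subset U$.  Fix $f \in C_c(U)$.  In the outer layer, for each $u \in U$ the function $z \mapsto f(uz)$ is continuous and compactly supported on the closed normal abelian group $Z$, so ordinary Pontryagin--Fourier inversion on $Z$ gives the pointwise identity $f(u) = \int_{\widehat{Z}} f_\zeta(u)\,d\zeta$, once the Plancherel measure on $\widehat{Z}$ is taken dual to the Haar measure on $Z$ normalized above Proposition \ref{2.4.4}.  In the inner layer, $f_\zeta$ lies in $L_2(U/Z,\zeta)$ and is continuous on the compact quotient $U/Z$.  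Proposition \ref{2.4.4} gives that orthogonal projection onto $L_\chi$ is convolution with $(\dim\chi)\,\trace\overline{\chi}$; unwinding this convolution at $u$ via the change of variable $v \mapsto v^{-1}$ and the identity $\overline{\trace\chi(v)} = \trace\chi(v^{-1})$ (valid since $\chi$ is unitary) rewrites it as $(\dim\chi)\,\trace\chi(r(u)f_\zeta)$.  Summing over $\widehat{U}_\zeta$ yields $f_\zeta(u) = \sum_{\widehat{U}_\zeta}(\dim\chi)\,\trace\chi(r(u)f_\zeta)$.

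To combine the two layers, centrality of $Z$ gives $(r(u)f)_\zeta = r(u)f_\zeta$, so the inner sum equals $\sum_{\widehat{U}_\zeta}(\dim\chi)\,\trace\chi((r(u)f)_\zeta)$, and substituting into the outer Fourier inversion produces the formula in the statement.  The step I expect to require the most care is justifying pointwise convergence of the inner Peter--Weyl sum together with the interchange of the $\widehat{Z}$--integral and the $\widehat{U}_\zeta$--sum; for $f \in C_c(U)$ the compact $Z$--support of $f$ gives integrability over $\widehat{Z}$, while a standard reduction to smooth $f$ by convolving with an approximate identity on $U$ forces adequate decay of the matrix coefficients $\trace\chi(r(u)f_\zeta)$ in $\chi$, so that Fubini applies and the two iterated operations may be swapped.
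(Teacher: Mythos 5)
Your proof follows exactly the route the paper indicates (the paper offers only the one-line hint ``Combining (\ref{2.1.2}), Lemma \ref{2.4.3} and Proposition \ref{2.4.4}''): Fourier inversion on $Z$ for the outer integral, Peter--Weyl decomposition of $L_2(U/Z,\zeta)$ plus the projection/convolution formula of Proposition~\ref{2.4.4} for the inner sum, and centrality of $Z$ to get $(r(u)f)_\zeta = r(u)f_\zeta$. The convolution-unwinding and the identity $\trace\overline\chi(v^{-1})=\trace\chi(v)$ are correct (one needs two substitutions, $v\mapsto uv$ then $v\mapsto v^{-1}$, but the idea is as you say). One caveat on your closing remark: pointwise convergence of the Peter--Weyl series at a point is not automatic for merely continuous functions, and an approximate-identity argument does not literally \emph{transfer} a pointwise identity from $C^\infty$ to $C_c$; the honest statement for $f\in C_c(U)$ is that the inner sum converges in a summability or distributional sense, or one restricts to $f$ smooth. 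The paper itself leaves this implicit, so your account matches its level of rigor.
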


\section{Relative Discrete Series for Reductive Groups}\label{sec3}
\setcounter{equation}{0}
Harish--Chandra's theory of the discrete series (\cite{HC1965}, \cite{HC1966})
extends from his class $\cH$ to our class of reductive Lie groups that 
contains all connected reductive groups and has certain hereditary properties.
We describe that class in \S \ref{ssec3a}.  Certain points of Harish--Chandra's
general character theory are recalled in \S 3.2.  In \S 3.3 we show how
to reduce certain questions from connected reductive groups in general to
the case of compact center.  We use that in \S 3.4 to extend Harish--Chandra's
theory of the discrete series to all connected reductive Lie groups, in
particular to all connected semisimple Lie groups.  Then in \S 3.5 we obtain 
the discrete series for our class of groups described in \S 3.1.

\subsection{} \label{ssec3a}  
\setcounter{equation}{0}
From now on, $G$ is a reductive Lie group in
the sense that its Lie algebra is reductive: $\gg = \gc \oplus [\gg,\gg]$
where $\gc$ is the center and $\gg' = [\gg,\gg]$ is semisimple.
\begin{definition}\label{3.1.1}
{\rm Suppose that $G$ satisfies the condition
$$
\text{if $g \in G$ then $\Ad(g)$ is an inner automorphism on }\gg_\C
$$
and that $G$ has a closed normal abelian subgroup $Z$ such that
$$
Z \text{ centralizes } G^0\,,\, G/ZG^0 \text{ is finite, and }
	ZG^0/(Z\cap G^0) \text{ is compact.}
$$
Then we say that $G$ is a {\em general real reductive Lie group}, i.e.,
that $G$ of class $\widetilde{\cH}$.
}
\end{definition}
If $G/G^0$ is finite then $Z_{G^0}$ satisfies the conditions for $Z$ in 
the definition.  The Harish--Chandra class $\cH$ consists of the groups
$G \in \widetilde{\cH}$ for which $Z$ is finite.  In any case, the 
discrete series relative to $Z$ is independent of choice of group $Z$ that
satisfies the conditions of Definition \ref{3.1.1}.  We use the notation
\begin{equation}\label{3.1.3}
G^\dagger = \{g \in G \mid \Ad(g) \text{ is an inner automorphism on } G^0.
\end{equation}
Then $G^\dagger = Z_G(G^0)G^0$, $Z_G(G^0)$ is compact and $G^\dagger/ZG^0$
is finite.

\subsection{}\label{ssec3b}
\setcounter{equation}{0}
Using the second condition of Definition(\ref{3.1.1}) we choose a 
maximal compact subgroup $K/Z$
of $G/Z$.  Then $K^0 = K\cap G^0$\,, $K$ meets every topological component
of $G$, and $Z_{G^0} \subset K$.  The basis of Harish--Chandra's 
character theory is
\begin{lemma}\label{3.2.2} There is an integer $n_G \geqq 1$ such that,
if $[\kappa] \in \widehat{K}$ and $[\pi] \in \widehat{G}$, then the
multiplicity $mult(\kappa,\pi|_K) \leqq n_G\dim(\kappa) < \infty$.
\end{lemma}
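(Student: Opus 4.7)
The plan is to reduce to Harish--Chandra's classical admissibility theorem for his class $\cH$ and absorb all finite combinatorial discrepancies into the constant $n_G$. Throughout, I identify $K^\dagger := K \cap G^\dagger = Z_G(G^0)K^0$, using that $Z_G(G^0)\subseteq K$ since $Z_G(G^0)$ is compact and $K/Z$ is maximal compact in $G/Z$.

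First I would pass from $G$ to $G^\dagger$. From Definition~\ref{3.1.1} we have $Z\subseteq Z_G(G^0)$, hence $ZG^0 \subseteq G^\dagger$ and $|G/G^\dagger|<\infty$. Given $[\pi] \in \widehat{G}$ and $[\kappa] \in \widehat{K}$, Clifford--Mackey analysis decomposes $\pi|_{G^\dagger}$ and $\kappa|_{K^\dagger}$ into at most $|G/G^\dagger|$ respectively $|K/K^\dagger|$ irreducible constituents. Frobenius reciprocity then bounds $\mathrm{mult}(\kappa,\pi|_K)$ by a finite multiple (depending only on these indices) of the multiplicities $\mathrm{mult}(\kappa^\dagger, \pi^\dagger|_{K^\dagger})$ ranging over constituents $[\pi^\dagger]$, $[\kappa^\dagger]$, and the dimensions of the constituents of $\kappa|_{K^\dagger}$ sum to $\dim\kappa$.

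Next, on $G^\dagger = Z_G(G^0)G^0$ with $Z_G(G^0)$ compact and $Z_G(G^0)\cap G^0 = Z_{G^0}$ central in $G^0$, the tensor-product decomposition of Proposition~\ref{1.1.3} applies: every irreducible of $G^\dagger$ has the form $[\chi \otimes \pi^0]$ with matching central characters on $Z_{G^0}$, and every irreducible of $K^\dagger$ has the form $[\xi \otimes \kappa^0]$. A direct computation gives
\[
\mathrm{mult}(\xi \otimes \kappa^0,\, (\chi\otimes\pi^0)|_{K^\dagger})
\;\leq\; \dim\xi \cdot \mathrm{mult}(\kappa^0,\pi^0|_{K^0}),
\]
and $\dim\xi \leq \dim\kappa^\dagger$. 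Thus the problem reduces to a uniform admissibility bound for the connected reductive group $G^0$.

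For $G^0$ the center may fail to be compact, so I would fix the central character $\zeta \in \widehat{Z_{G^0}}$ by which $\pi^0$ acts and pass to the circle extension $G^0[\zeta]$ that the paper will introduce in \S\ref{ssec3c}, namely the quotient of $S \times G^0$ by the graph of $\zeta^{-1}$. This is a connected reductive Lie group with compact center, hence of Harish--Chandra class $\cH$, and $\pi^0$ lifts to an irreducible unitary representation $\pi^0[\zeta]$ there with central character $1_S$ on the circle factor; the maximal compact subgroup is the corresponding extension $K^0[\zeta]$. Harish--Chandra's admissibility theorem for class $\cH$ then yields a constant $c$ with $\mathrm{mult}(\kappa^0[\zeta],\pi^0[\zeta]|_{K^0[\zeta]}) \leq c\cdot \dim(\kappa^0[\zeta])$, and $\dim(\kappa^0[\zeta]) = \dim\kappa^0$. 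Combining $c$ with the finite factors from the two previous reductions produces the asserted $n_G$.

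The main obstacle is showing that Harish--Chandra's constant for $G^0[\zeta]$ can be chosen \emph{independent of $\zeta$}. This is the crux of the lemma, and it holds because Harish--Chandra's proof of admissibility uses the Casimir of $\gg$ together with the action of the center of $U(\gk)_\C$, invariants which are determined solely by the Lie algebras $\gg$ and $\gk$ and hence unchanged by the choice of central extension. Once this uniformity is in place, the constants from Steps~1--3 multiply to give an $n_G$ depending only on $G$.
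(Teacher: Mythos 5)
Your proof is correct in outline but takes a longer route than the paper's. The paper's proof invokes Harish--Chandra's admissibility theorem (from \cite{HC1954b}, the third paper in the {\it Representations of semisimple Lie groups} series) \emph{directly} for the connected reductive group $G^0$, without any restriction to compact center, and then performs two extensions: from $G^0$ to $ZG^0$ by tensoring with characters of $Z$ (which does not increase multiplicities), and from $ZG^0$ to $G$ by finite index, which multiplies the constant by at most $|G/ZG^0|$. Your version instead goes $G \to G^\dagger \to G^0 \to G^0[\zeta]$, making the circle extension do work that isn't actually needed: Harish--Chandra's 1954 result already covers connected semisimple and reductive groups with arbitrary (even infinite) center, since his argument is Lie--algebraic and doesn't depend on compactness of $Z_{G^0}$. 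Bringing in the Mackey cocycle extension $G^0[\zeta]$ is an anachronism here; it's the mechanism the paper deliberately introduces in \S\ref{ssec3c} for the discrete--series character theory, where the compact--center hypothesis is genuinely needed, but not for admissibility.

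Your final paragraph about uniformity of the constant over $\zeta$ correctly identifies the danger created by your detour, but the resolution is under-argued. The cleaner observation is that Harish--Chandra's admissibility bound is $\mathrm{mult}(\kappa, \pi|_K) \leqq \dim\kappa$ with constant $1$ for connected semisimple groups with finite center, so once you've fixed $[\gg,\gg]$ and $\gk$ the constant is literally the same for all of your $G^0[\zeta]$; appealing vaguely to ``invariants determined by the Lie algebras'' leaves the reader to reconstruct this. But again, the cleanest fix is to dispense with the circle extensions entirely, as the paper does, and absorb everything into the two-step $G^0 \to ZG^0 \to G$ reduction. One minor difference of taste: you route through $G^\dagger = Z_G(G^0)G^0$ while the paper routes through $ZG^0$; both are finite-index intermediate groups and either works, since the tensor-product decomposition $[\chi\otimes\pi^0]$ (Proposition~\ref{1.1.3}) applies in both cases and the multiplicity bound then multiplies by a finite index.
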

\begin{proof}
This was proved by Harish--Chandra for connected reductive Lie groups
\cite{HC1954b}, in particular for $G^0$.  If $[\kappa_1] \in \widehat{ZK^0}$
and $[\pi_1] \in \widehat{ZG^0}$ we have $\zeta, \zeta' \in \widehat{Z},
[\kappa_0] \in \widehat{K^0}\text{ and } [\pi_0] \in \widehat{G^0}$ such
that $\kappa_1 = \zeta\otimes\kappa_0$ and $\pi_1 = \zeta'\otimes\pi_o$\,.
Now $mult(\kappa_1,\pi_1|_{ZK^0}) \leqq mult(\kappa_0,\pi_0|_{K^0})
\leqq n_{G^0}\dim(\kappa_0) = n_{G^0}\dim(\kappa)$, so the lemma follows for
$ZG^0$.  Finally, 
if $[\kappa] \in \widehat{K}$ and $[\pi] \in \widehat{G}$, we decompose
$\kappa|_{ZK^0} = \sum \kappa_i$ and $\pi|_{ZG^0} = \sum \pi_j$ into irreducible
constituents, and $mult(\kappa,\pi|_K) \leqq 
\sum_{i,j} mult(\kappa_i,\pi_j|_{ZK^0}) \leqq \sum_{i,j} n_{G^0}\dim(\kappa_i)
= \sum_i n_{G^0} \dim(\kappa) \leqq (n_{G^0}|G/ZG^0|)\dim(\kappa)$.
\end{proof}
\begin{remark} Using Casselman--Mili\v{c}i\'c \cite{CM1982} one can
see that $n_G \leqq |G/ZG^0|$.
\end{remark}

The first consequence of Lemma \ref{3.2.2} is that the group $G$ is CCR.
In other words, if $[\pi] \in \widehat{G}$ and $f \in L_1(G)$ then
$\pi(f) = \int_Gf(g)\pi(g)dg$ is a compact operator on $\cH_\pi$\,.
In particular $G$ is of type I.  The second consequence is that $\pi(f)$ 
is of trace class for $f \in C_c^\infty(G)$ and that
\begin{equation}\label{3.2.3}
\Theta_\pi: C_c^\infty \to \C \text{ by } \Theta_\pi(f) = \trace \pi(f)
\text{ is a Schwartz distribution on } G.
\end{equation}
$\Theta_\pi$ is the {\em global character} or {\em distribution character}
of $\pi$.  Classes $[\pi] = [\pi']$ if and only if $\Theta_\pi = \Theta_{\pi'}$.

A differential operator $z$ on $G$ has {\em transpose} given by
$\int_G ^tz(f)(g)h(g)dg = \int_G f(g)z(h)(g)dg$.  The operator $z$ acts on
distributions by $(z\Theta)(f) = \Theta(^tz(f))$.  Given $\Theta$ now
$z \mapsto z\Theta$ is linear in $z$.  A distribution on $G$ is
{\em invariant} if $\Theta(f) = \Theta(f\cdot \Ad(g))$ for $f \in C_c^\infty(G)$
and $g \in G$.

The universal enveloping algebra $\cU(\gg)$ is the associative algebra of
all left--invariant differential operators on $G$.  The center $\cZ(\gg)$
consists of the bi--invariant operators.  That uses (\ref{3.1.1}).  A
distribution $\Theta$ on $G$ is an {\em eigendistribution} if 
$\dim \cZ(\gg)(\Theta) \leqq 1$.  In that case, using commutativity of 
$\cZ(\gg)$, we have an algebra homomorphism $\chi_\Theta:\cZ(\gg) \to \C$
defined by $z\Theta = \chi_\Theta(z)\Theta$.  If $[\pi] \in \widehat{G}$
its distribution character $\Theta_\pi$ is an invariant eigendistribution
on $G$, and the associated homomorphism is
\begin{equation}\label{3.2.4}
\chi_\pi: \cZ(\gg) \to \C\,,\, z\Theta_\pi = \chi_\pi(z)\Theta_\pi\,,
\text{ is the {\em infinitesimal character} of } [\pi].
\end{equation}

Choose a Cartan subalgebra $\gh \subset \gg$ and let $I(\gh_\C)$ denote the 
algebra of all polynomials on $\gh_\C^*$ invariant under the Weyl group
$W(\gg_\C,\gh_\C)$.  Let $\gamma: \cZ(\gg) \to I(\gh_\C)$ denote the
Harish--Chandra homomorphism \cite{HC1956c}.  If $\lambda \in \gh_\C^*$
then
\begin{equation}\label{3.2.5}
\chi_\lambda: \cZ(\gg) \to \C \text{ by } 
	\chi_\lambda(z) = [\gamma(z)](\lambda)
\end{equation}
is a homomorphism, every homomorphism $\cZ(\gg) \to \C$ is a $\chi_\lambda$\,,
and $\chi_\lambda = \chi_{\lambda'}$ if and only if $\lambda$ and
$\lambda'$ are $W(\gg_\C,\gh_\C)$--equivalent.

Harish--Chandra \cite{HC1956c} used the equations 
$z\Theta_\pi = \chi_\pi(z)\Theta_\pi$ and the description (\ref{3.2.5})
of $\chi_\pi$ to show that $\Theta_\pi$ is a locally $L_1$ function that
is analytic on the {\em regular set} $G'$.  Here $G'$ is the dense open
subset of all $g \in G$ for which $\{\xi \in \gg \mid \Ad(g)\xi = \xi\}$
is a Cartan subalgebra.  The complement of $G'$ has measure zero in $G$.

The differential equations also show that at most finitely many classes 
in $\widehat{G}$ can have the same infinitesimal character.

\subsection{}\label{ssec3c}  
\setcounter{equation}{0}
We describe a method for reducing questions of 
harmonic analysis on $ZG^0$ and $G^0$ to the same questions on connected
reductive groups with compact center.  With that we extends some of
Harish--Chandra's discrete series results \cite{HC1966} to $G^0$ in
\S 3.4 and then to $G$ in \S 3.5.  This uses the Mackey central extension
$1 \to S \to G[\zeta] \to ZG^0/Z \to 1$ for $\delta\zeta$ as a normalized
multiplier on $ZG^0$.  Thus
$G[\zeta] = \{S\times ZG^0\}/\{(\zeta(z)^{-1},z)| z \in Z\}$ and $S$ is
the circle group $\{s \in \C \mid |s| = 1\}$.
$G[\zeta]$ is a connected reductive group with Lie algebra 
$\gs \oplus (\gg/\gz)$ and compact center.

\begin{lemma}\label{3.3.2}
Let $p:ZG^0 \to G[\zeta]$ be the restriction of the projection
$S\times ZG^0 \to G[\zeta]$.  Then $f \mapsto f\cdot p$ gives a
$G$--equivariant isometry of $L_2(G[\zeta]/S,1_S)$ onto $L_2(ZG^0/Z,\zeta)$.
\end{lemma}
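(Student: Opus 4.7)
The plan is to unwind the equivalence relation defining $G[\zeta]$ and then verify each required property in turn. First, I would observe that the natural embedding $S \hookrightarrow G[\zeta]$ given by $s \mapsto [(s,1)]$ is injective, since $(s,1) \sim (s',1)$ modulo $\{(\zeta(z)^{-1},z)\mid z\in Z\}$ forces $z=1$ and hence $s=s'$. With this identification, a direct computation using the equivalence relation gives, for $g\in ZG^0$ and $z\in Z$,
$$
p(gz) = [(1,gz)] = [(\zeta(z),g)] = \zeta(z)\cdot p(g),
$$
so pulling back by $p$ should convert the $1_S$-covariance on $G[\zeta]/S$ into the $\zeta$-covariance on $ZG^0/Z$.

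Next I would verify the covariance: using centrality of $S$ in $G[\zeta]$ and the transformation law for $f \in L_2(G[\zeta]/S,1_S)$,
$$
(f\cdot p)(gz) = f(\zeta(z)\,p(g)) = \zeta(z)^{-1} f(p(g)) = \zeta(z)^{-1}(f\cdot p)(g),
$$
so $f\cdot p$ has the required $\zeta$-covariance. For the isometry, I would note that $|f|^2$ and $|f\cdot p|^2$ descend to $G[\zeta]/S$ and $ZG^0/Z$ respectively, and that $p$ induces a continuous bijection $\bar{p}:ZG^0/Z\to G[\zeta]/S$: surjectivity holds because each coset $[(s,g)]\,S$ equals $p(g)\,S$, and injectivity because $p(g_1)\in p(g_2)\,S$ forces $g_1\in g_2 Z$. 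Normalizing the Haar measures on $Z$ and $S$ so that they correspond under $z\mapsto (\zeta(z)^{-1},z)$ makes $\bar{p}$ measure-preserving, so that $\int_{ZG^0/Z}|f\cdot p|^2 = \int_{G[\zeta]/S}|f|^2$.

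For surjectivity of $f\mapsto f\cdot p$, given $h\in L_2(ZG^0/Z,\zeta)$ I would define $f$ on $G[\zeta]$ by $f([(s,g)]) = s^{-1} h(g)$; the well-definedness check $f([(s\zeta(z)^{-1},gz)]) = \zeta(z)s^{-1}\zeta(z)^{-1}h(g) = s^{-1}h(g)$ is immediate, the $1_S$-covariance follows from $f(x\cdot [(t,1)]) = t^{-1}f(x)$, and $(f\cdot p)(g) = f([(1,g)]) = h(g)$. For the $G$-equivariance assertion, the key point is that $p:ZG^0\to G[\zeta]$ is a group homomorphism (a one-line check from the multiplication on $S\times ZG^0$), so left translation by $p(g_0)$ on $f$ intertwines with left translation by $g_0$ on $f\cdot p$; for $g_0\in G\setminus ZG^0$ one uses that $ZG^0$ is normal in $G$ and that the resulting conjugation lifts to $G[\zeta]$ fixing $S$ pointwise because $Z$ is central in $G$.

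The only delicate point is the Haar-measure normalization, without which the isometry would only hold up to a positive scalar. But this is a routine matching, since both quotients $ZG^0/Z$ and $G[\zeta]/S$ are naturally identified with $G^0/(Z\cap G^0)$, so the required compatibility reduces to a single scalar choice.
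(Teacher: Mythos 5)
Your proof is correct and follows essentially the same route as the paper: compute the $\zeta$-covariance of $f\cdot p$ from the defining relation $(1,gz) \sim (\zeta(z),g)$ in $G[\zeta]$, then match up the integrals, then exhibit the inverse map $f(s,g) = s^{-1}f'(g)$ for surjectivity. The paper streamlines this by working with $f$ directly as a function on $S\times ZG^0$ rather than on equivalence classes, and handles the isometry with a one-line change of variables $\int_{ZG^0/Z}|f\cdot p|^2 = \int_{(S\times ZG^0)/(S\times Z)}|f|^2 = \int_{G[\zeta]/S}|f|^2$, implicitly fixing the normalization the way you make explicit. Your added observations — injectivity of $S\hookrightarrow G[\zeta]$, the explicit bijection $\bar p$ on quotients, and the homomorphism property of $p$ for the equivariance — are all correct and fill in details the paper leaves tacit. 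One small caution: in justifying equivariance for $g_0\notin ZG^0$ you invoke that $Z$ is central in $G$, but the hypotheses only give that $Z$ is normal in $G$ and centralizes $G^0$; what one actually needs is that $\mathrm{Ad}(g_0)$ preserves $Z$ and fixes $\zeta$ (or an orbit of $\zeta$'s), which is a separate matter and is not used in the paper's proof either, since the paper only treats the $ZG^0$-intertwining that comes for free from $p$ being a homomorphism.
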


\begin{proof}  View $f \in L_2(G[\zeta]/S,1_S)$ as a function on $S\times ZG^0$.
If $g \in ZG^0$ and $z \in Z$ then
$(f\cdot p)(gz) = f(1,gz) = f(\zeta(z),g) = \zeta(z)^{-1}f(1,g)
= \zeta(z)^{-1}[f\cdot p](g)$, and
$\int_{ZG^0/Z} |(f\cdot p)(g)|^2d(gZ) 
= \int_{(S\times ZG^0)/(S\times Z)}
|f(s,g)|^2 d(sS \times gZ) 
= \int_{G[\zeta]/S} |f(\overline{g})|^2d(\overline{g}S).$
Thus $f \mapsto f\cdot p$ is an isometric injection of $L_2(G[\zeta]/S,1_S)$
into $L_2(ZG^0/Z,\zeta)$.  It is surjective because every 
$f' \in L_2(ZG^0/Z,\zeta)$ has form $f\cdot p$ with $f$ defined on 
$S \times ZG^0$ by $f(s,g) = s^{-1}f'(g)$.
\end{proof}

\begin{theorem}\label{3.3.3}
$\varepsilon[\psi] = [\psi\cdot f]$ defines a bijection
$\varepsilon = \varepsilon_\zeta :
\widehat{G[\zeta]}_{1_S} \to (\widehat{ZG^0})_\zeta$\,.
It maps $\widehat{G[\zeta]}_{1_S-disc}$
onto $\widehat{ZG^0}_{\zeta-disc}$ and carries Plancherel measure of
$\widehat{G[\zeta]}_{1_S}$ to Plancherel measure of $(\widehat{ZG^0})_\zeta$\,.
Distribution characters satisfy $\Theta_{\varepsilon[\psi]} = 
\Theta_{[\psi]}\cdot p$.
\end{theorem}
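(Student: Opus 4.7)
The plan is to construct the inverse of $\varepsilon$ directly, then transport the three remaining assertions (discrete series, Plancherel measure, character formula) across the isometry of Lemma \ref{3.3.2}.

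\textbf{Constructing the bijection.} First I will exhibit an inverse. Given $[\pi]\in(\widehat{ZG^0})_\zeta$, define $\tilde\pi$ on $S\times ZG^0$ by $\tilde\pi(s,g)=s\,\pi(g)$. Because $\pi|_Z$ is a multiple of $\zeta$, we have $\tilde\pi(\zeta(z)^{-1},z)=\zeta(z)^{-1}\pi(z)=I$, so $\tilde\pi$ descends to a representation $\psi$ of $G[\zeta]$ satisfying $\psi|_S=1_S$, and $\psi$ is irreducible since $\pi$ is and $S$ acts by scalars. The formula $\psi\cdot p=\pi$ is immediate from the definition of $p$. In the other direction, if $[\psi]\in\widehat{G[\zeta]}_{1_S}$, then $p(z)=[(1,z)]=[(\zeta(z),1)]$ in $G[\zeta]$ gives $(\psi\cdot p)(z)=\zeta(z)I$, showing $[\psi\cdot p]\in(\widehat{ZG^0})_\zeta$; irreducibility of $\psi\cdot p$ follows again from $Z$ acting by scalars together with irreducibility of $\psi$. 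This establishes the bijectivity of $\varepsilon$.

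\textbf{Discrete series and Plancherel measure.} Next I will apply Lemma \ref{3.3.2}: the isometry $f\mapsto f\cdot p$ is equivariant for the action of $ZG^0$ on the right via $p$ and the action of $G[\zeta]$ on the left by left translation, and carries the $\psi$-isotypic subspace of $L_2(G[\zeta]/S,1_S)$ onto the $(\psi\cdot p)$-isotypic subspace of $L_2(ZG^0/Z,\zeta)$, because coefficient functions $\phi^\psi_{u,v}$ on $G[\zeta]$ pull back under $p$ to coefficient functions $\phi^{\psi\cdot p}_{u,v}$ on $ZG^0$. Hence $[\psi]$ is $1_S$-discrete iff $\varepsilon[\psi]$ is $\zeta$-discrete, giving the first claim. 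The same isometry carries the Plancherel direct-integral decomposition of $L_2(G[\zeta]/S,1_S)$ over $\widehat{G[\zeta]}_{1_S}$ to that of $L_2(ZG^0/Z,\zeta)$ over $(\widehat{ZG^0})_\zeta$, and since $\varepsilon$ matches the corresponding irreducible classes, the Plancherel measure on $\widehat{G[\zeta]}_{1_S}$ pushes forward via $\varepsilon$ to the Plancherel measure on $(\widehat{ZG^0})_\zeta$.

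\textbf{Character formula.} Finally, both $ZG^0$ and $G[\zeta]$ lie in our reductive class, so by Harish-Chandra their distribution characters are locally $L^1$ functions, analytic on the regular set. On the regular set of $ZG^0$ we have $(\psi\cdot p)(g)=\psi(p(g))$ as bounded operators, yielding $\Theta_{\psi\cdot p}(g)=\Theta_\psi(p(g))$ almost everywhere; this is exactly the equality $\Theta_{\varepsilon[\psi]}=\Theta_{[\psi]}\cdot p$ as distributions. Equivalently, integrating against a test function $f\in C_c^\infty(ZG^0)$ and using Fubini along the fibers of $p$ to express $\trace(\psi\cdot p)(f)$ as $\trace\psi(\tilde f)$, where $\tilde f$ is the fiber average of $f$, reduces the identity to the locally $L^1$ representability of $\Theta_\psi$.

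The main obstacle will be the Plancherel-measure step: making sure the direct-integral decompositions on the two sides correspond cleanly under the isometry of Lemma \ref{3.3.2}. Once the coefficient-function identification places the isotypic subspaces in bijection, the matching of measures follows from uniqueness of Plancherel measure, but the bookkeeping through the quotient $\{(\zeta(z)^{-1},z)\mid z\in Z\}$, together with the possibility that $\zeta$ may fail to surject onto $S$, is the one place where normalization conventions must be checked carefully.
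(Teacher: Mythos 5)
Your first three steps (bijectivity, discrete series, Plancherel) are correct and actually take a more direct route than the paper, which instead proves all four assertions by a chain of reductions (to $Z\subset G^0$, then to $G^0$ simply connected, then to $G^0$ semisimple, where $S\times G^0\to G^0[\zeta]$ becomes a covering). Your explicit construction of the inverse of $\varepsilon$, and the transfer of isotypic and Plancherel decompositions across the two-sided equivariant isometry of Lemma~\ref{3.3.2}, are cleaner than the paper's reductions and fill in a surjectivity step that the paper leaves implicit.

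The character step, however, has a genuine gap, and you have half-noticed it yourself. First, the line ``$(\psi\cdot p)(g)=\psi(p(g))$ as bounded operators, yielding $\Theta_{\psi\cdot p}(g)=\Theta_\psi(p(g))$'' is not a valid inference: equality of operator-valued functions does not give equality of distribution characters, and it is precisely that implication you are trying to prove. Second, the Fubini argument you offer as ``equivalent'' does not work as stated, because $p:ZG^0\to G[\zeta]$ is in general \emph{not} surjective: $p(ZG^0)$ meets the image of $S$ only in $\zeta(Z)$, and $G[\zeta]/p(ZG^0)\cong S/\zeta(Z)$, which is non-trivial whenever $\zeta(Z)\ne S$ (for example $G^0=SL(2,\R)$, $Z=\{\pm I\}$, $\zeta$ trivial, where $G[\zeta]=S\times PSL(2,\R)$ and $p(ZG^0)=\{1\}\times PSL(2,\R)$ has measure zero). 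In that case the fiber average $\tilde f$ lives on the measure-zero subgroup $p(ZG^0)$, so $\trace\psi(\tilde f)$, interpreted as $\int_{G[\zeta]}\tilde f\,\Theta_\psi$, is simply $0$, while $\trace(\psi\cdot p)(f)$ is not; the identity does not ``reduce to the locally $L^1$ representability of $\Theta_\psi$.'' What one actually needs is the non-trivial statement that $\Theta_\psi$ restricted to the closed subgroup $p(ZG^0)$ agrees with the character of $\psi|_{p(ZG^0)}$. The clean way to repair this is to Fubini along the genuinely surjective quotient map $q:S\times ZG^0\to G[\zeta]$ with kernel $\{(\zeta(z)^{-1},z)\}$, obtain $\Theta_{\psi\cdot q}=\Theta_\psi\cdot q$, observe $\psi\cdot q=1_S\otimes(\psi\cdot p)$ so $\Theta_{\psi\cdot q}(s,g)=s\,\Theta_{\psi\cdot p}(g)$, and then set $s=1$; this is essentially what the paper's reduction chain accomplishes in its last step, where $S\times G^0\to G^0[\zeta]$ is a Lie group covering.
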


\begin{proof}
View $[\psi] \in \widehat{G[\zeta]}_{1_S}$ as a representation of 
$S\times ZG^0$.  If $z \in Z$ and $g \in ZG^0$ then $\psi\cdot p)(gz)
= \psi(1,gz) = \psi(\zeta(z),g) = \zeta(z)\psi(1,g) = \zeta(z)[\psi\cdot p](g)$.
Thus the central character of $\psi\cdot p$ restricts to $\zeta$ on $Z$,
and $[\psi\cdot p] \in (\widehat{ZG^0})_\zeta$\,.

Let $[\psi], [\psi'] \in \widehat{G[\zeta]}_{1_S}$ and $b:\cH_\psi \to
\cH_{\psi'}$ an isometry.  If $\psi' = b\psi b^{-1}$ the above calculation 
shows $(\psi' p) = b (\psi p)b^{-1}$.  If $b$ intertwines $\psi p$ with 
$\psi' p$ the same calculation shows that it intertwines $\psi|_{ZG^0}$
with $\psi'|_{ZG^0}$\,.  Then it intertwines $\psi$ and $\psi'$ because
$\psi'(s,g) = s\psi'(1,g) = sb\psi(1,g)b^{-1} = b\psi(s,g)b^{-1}$.  Now
$\varepsilon_\zeta : \widehat{G[\zeta]}_{1_S} \to (\widehat{ZG^0})_\zeta$
is a well defined bijection.

We reduce the proof of Theorem \ref{3.3.3} to the case where $Z \subset G^0$,
i.e. where $ZG^0$ is connected.  Let $\zeta^0 = \zeta|_{ZG^0}$\,.
Then $G^0 \hookrightarrow G$ induces a commutative diagram
{\tiny $$ \begin{CD}
1 @>>> S @>>> G^0[\zeta^0] @>>> G^0/(Z\cap G^0) @>>> 1 \\
@.   @VVV          @VV{a}V            @VVV          \\
1 @>>> S @>>> G[\zeta] @>>> ZG^0/Z @>>> 1
\end{CD} $$}
where $a$ is an isomorphism because $S \to S$ and $G^0/(Z\cap G^0) \to 
ZG^0/Z$ are.  That results in a bijection $\varepsilon$ that is the
composition $\widehat{G[\zeta]}_{1_S} \overset{a^*}{\rightarrow}
\widehat{G^0[\zeta^0]}_{1_S} \overset{\varepsilon^0}{\rightarrow}
(\widehat{G^0})_{\zeta^0} \overset{b}{\rightarrow} 
(\widehat{ZG^0})_\zeta$
where $a^*$ is induced by $a$ and $b[\pi^0] = [\zeta\otimes\pi^0]$.
Plancherel measure and relative discrete series are transported by $b$.
If $[\pi^0] \in (\widetilde{G^0})_{\zeta^0}\,, z \in Z \text{ and } g \in G^0$
then $\Theta_{b[\pi^0]}(zg) = \zeta(z)\Theta_{\pi^0}(g)$.  Thus if
$\varepsilon: \widehat{G^0[\zeta^0]}_{1_S} \to (\widehat{G^0})_{\zeta^0}$ 
has the properties asserted in Theorem \ref{3.3.3} those properties pass over
to $\varepsilon : \widehat{G[\zeta]}_{1_S} \to (\widehat{ZG^0})_\zeta$\,.

Now we assume $Z \subset G^0$, and we further reduce the proof to the case
where $G^0$ is simply connected.  Let $\tau:\widetilde{G} \to G^0$ be
the universal cover, $\widetilde{Z} = \tau^{-1}(Z)$, and $\widetilde{\zeta}$
the corresponding lift of $\zeta$.  Then we have 
{\tiny $\begin{CD}
\widehat{\widetilde{G}[\widetilde{\zeta}]}_{1_S} @>{\widetilde{\varepsilon}}>>
	\widehat{\widetilde{G}}_{\widetilde{\zeta}} \\
@A{i^*}AA @AA{j}A \\
\widehat{G[\zeta]}_{1_S} @>{\varepsilon}>> (\widehat{G^0})_{\zeta_0}
\end{CD} $}
linking $\varepsilon$ to $\widetilde{\varepsilon}$.
Everything is preserved by $i^*$, and $j$ transports Plancherel measure and 
relative discrete series.

We check that $\Theta_{j[\psi]} = \Theta_{[\psi]}\tau$ for $[\psi] \in
(\widehat{G^0})_\zeta$.  It suffices to test this on functions 
$f \in C^\infty_c(\widetilde{G})$ in a component $U$ of $\tau^{-1}(U_1)$
where $U_1$ is an open subset of $G^0$ admissible for the covering.  Now
$f$ is the lift to $U$ of $f_1 \in C_c^\infty(U_1)$ and we calculate
$\Theta_{j[\psi]}(f) = \trace \int_U f(x)(\psi\cdot\tau)(x)dx
= \trace \int_U f_1(\tau x)dx = \trace \int_{U_1} f_1(x_1)\psi(x_1)dx_1
= \Theta_{[\psi]}(f_1)$.  That shows $\Theta_{j[\psi]} = \Theta_{[\psi]}\tau$\,.

We have shown that if 
$\widetilde{\varepsilon}: \widehat{\widetilde{G}[\widetilde{\zeta}]}_{1_S}
\to \widehat{\widetilde{G}}_{\widetilde{\zeta}}$ satisfies Theorem \ref{3.3.3}
then the same is true for $\varepsilon: \widehat{G[\zeta]}_{1_S} \to
\widehat{G^0}_\zeta$\,.  This reduces the proof of Theorem \ref{3.3.3} to
the case where $Z \subset G^0$ and $G^0$ is simply connected.  There
$G^0 = V \times G_{ss}$ where $V$ is a vector group and $G_{ss}$ is
semisimple.  We can enlarge $Z$ to $VZ$ and assume $Z = V \times D$ where
$D$ is a subgroup of finite index in the center $Z_{ss}$ of $G_{ss}$\,.
That done, $\zeta = \nu \boxtimes \delta$ accordingly, and
$G_{ss} \hookrightarrow G^0$ gives an isomorphism
$a: G_{ss}[\delta] \cong G^0[\zeta]$.  This results in a commutative
diagram
{\tiny $\begin{CD}
\widehat{G_{ss}[\delta]}_{1_S} @>{\varepsilon_{ss}}>>
        (\widehat{G_{ss}})_\delta \\
@A{a^*}AA @VV{b}V \\
\widehat{G^0[\zeta]}_{1_S} @>{\varepsilon}>> (\widehat{G^0})_{\zeta_0}
\end{CD} $}
where $b[\pi_{ss}] = [\nu \otimes \pi_{ss}]$.  As in the reduction to
$Z \subset G^0$, if Theorem \ref{3.3.3} holds for $G_{ss}$ it holds for $G$.

We are finally reduced to the case where $Z \subset G^0$ and $G^0$ is
semisimple.  Then $Z$ is discrete, so $S\times G^0 \to G^0[\zeta]$ is a
Lie group covering.  The method of reduction to simply connected $G^0$ 
completes the proof of Theorem \ref{3.3.3}.
\end{proof}

\subsection{} \label{ssec3d}  
\setcounter{equation}{0}
We extend Harish--Chandra's description
\cite{HC1966} of the discrete series of a connected semisimple Lie group.
In fact his analysis extends without change to connected reductive Lie 
groups with compact center, as in the case of $G^0$ when $Z\cap G^0$ is
compact.  We formulate the results.  Recall that $G^0$ is an arbitrary
connected reductive Lie group and that $Z\cap G^0$ is co-compact in $Z_{G^0}$\,.

\begin{theorem}\label{3.4.1}
$G^0$ has a relative discrete series representation if and only if
$G^0/(Z\cap G^0)$ has a compact Cartan subgroup.
\end{theorem}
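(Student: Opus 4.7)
The plan is to reduce this statement to Harish--Chandra's classical characterization of the discrete series for a connected reductive Lie group with compact center, by means of the Mackey extension device $G[\zeta]$ introduced in \S\ref{ssec3c}. Theorem \ref{3.3.3} already does most of the translation, and the only remaining work will be to match Cartan subgroups on either side of that correspondence.

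First I would set $Z_0 := Z \cap G^0$. The group $G^0$ satisfies the conditions of Definition \ref{3.1.1} with $Z_0$ in place of $Z$: the $\mathrm{Ad}$-condition is automatic on $G^0$, and $Z_0$ is central in $G^0$ and co-compact in $Z_{G^0}$ by hypothesis. Hence for each $\zeta_0 \in \widehat{Z_0}$ the Mackey extension
\[
G^0[\zeta_0] \;=\; (S \times G^0)\,/\,\{(\zeta_0(z)^{-1},z) \mid z \in Z_0\}
\]
is a connected reductive Lie group whose center is an extension of the compact group $Z_{G^0}/Z_0$ by the circle $S$, hence compact. So Harish--Chandra's theory of the discrete series \cite{HC1966} applies directly to $G^0[\zeta_0]$.

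Next I would apply Theorem \ref{3.3.3} with $G$ replaced by $G^0$ (so $ZG^0 = G^0$) to obtain, for each $\zeta_0$, a bijection $\varepsilon_{\zeta_0}:\widehat{G^0[\zeta_0]}_{1_S} \to \widehat{G^0}_{\zeta_0}$ that carries $1_S$--discrete classes onto $\zeta_0$--discrete classes. Since $\widehat{G^0}_{disc}$ is the union over $\zeta_0 \in \widehat{Z_0}$ of the $\zeta_0$--discrete series, $\widehat{G^0}_{disc}$ is non-empty if and only if there is some $\zeta_0$ for which $\widehat{G^0[\zeta_0]}_{1_S-disc}$ is non-empty; by Harish--Chandra this happens precisely when $G^0[\zeta_0]$ contains a compact Cartan subgroup. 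The last step is then to check that the Cartan subgroups of $G^0[\zeta_0]$ are exactly the images $p(S \times H)$ under the quotient $p:S \times G^0 \to G^0[\zeta_0]$, where $H$ runs over the Cartan subgroups of $G^0$ containing $Z_0$, and that $p(S \times H)$ is compact if and only if $H/Z_0$ is compact (because $S$ itself is compact). That condition is independent of $\zeta_0$, so it is equivalent to $G^0/Z_0$ admitting a compact Cartan subgroup, finishing the argument.

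The main obstacle I foresee is formal rather than substantive: carrying out carefully the bijection $H \mapsto p(S\times H)$ between Cartan subgroups of $G^0$ (containing $Z_0$) and Cartan subgroups of $G^0[\zeta_0]$, together with the verification that compactness of $H/Z_0$ is detected by compactness of $p(S\times H)$ uniformly in $\zeta_0$; everything else is transport of structure through Theorem \ref{3.3.3} plus a citation of the classical reductive-with-compact-center discrete series theorem.
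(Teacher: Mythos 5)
Your proposal is essentially the paper's own argument: reduce via the Mackey extension $G^0[\zeta_0]$, transport discreteness through $\varepsilon_{\zeta_0}$ of Theorem~\ref{3.3.3}, quote Harish--Chandra's compact-center theorem, and match Cartan subgroups $H \leftrightarrow H[\zeta_0] = p(S\times H)$ noting that compactness of $H[\zeta_0]$ is equivalent to compactness of $H/(Z\cap G^0)$.

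One point is glossed over in the ``if'' direction. You write that, by Harish--Chandra, $\widehat{G^0[\zeta_0]}_{1_S\text{-disc}}\ne\emptyset$ precisely when $G^0[\zeta_0]$ has a compact Cartan subgroup. What Harish--Chandra gives directly (\cite[Theorem 13]{HC1966}) is the statement for the \emph{full} relative discrete series of $G^0[\zeta_0]$, not for the $1_S$-constrained piece. The implication ``compact Cartan $\Rightarrow$ some discrete series class with central character trivial on $S$'' requires a further observation: one needs a $\gg$-regular element of the Harish--Chandra parameter lattice whose associated character restricts to $1_S$ on $S$. The paper secures this by first passing (if necessary) to a $2$-sheeted cover so that $\rho$ lies in the lattice $L$ and $e^{\rho}$ is well-defined with $e^{\rho}(S)=1$, so $\rho$ itself is a valid regular parameter in $L[\zeta]'_{1_S}$ and \cite[Theorem 16]{HC1966} produces the class. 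Without a remark of this kind your citation of Harish--Chandra does not quite close the argument; with it, the proof is complete and coincides with the paper's.
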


The compact Cartan subgroups of $G^0/(Z\cap G^0)$ have form $H^0/(Z\cap G^0)$.
The unitary characters on $H^0$ are in bijective correspondence with
\begin{equation}\label{3.4.2}
L = \{\lambda \in i\gh^* \mid e^\lambda \text{ is well defined on } H^0.
\end{equation}
Choose a positive root system $\Sigma^+$ and use the standard notation
$\rho = \tfrac{1}{2}\sum_{\phi \in \Sigma^+}\phi$,
$\varpi(\lambda) = \prod_{\phi \in \Sigma^+}\langle\phi,\lambda\rangle$ and
$\Delta = \prod_{\phi \in \Sigma^+}(e^{\phi/2}-e^{-\phi/2})$ where
$\langle\cdot,\cdot\rangle$ comes from the Killing form.  Then $\Sigma^+
\subset L$ so $2\rho \in L$.  Passing to a $2$--sheeted covering group of
$G^0/(Z\cap G^0)$, if necessary, we may assume $\rho \in L$ and 
$e^\rho(Z\cap G^0) = 1$.  $\Delta$ is an
analytic function on $H^0$, well defined on $H^0/(Z\cap G^0)$.
The {\em regular set} is $L' = \{\lambda \in L \mid \varpi(\lambda) \ne 0\}$.
Note $\rho \in L'$.  If $\lambda \in L'$ we define
{\footnotesize
\begin{equation}\label{3.4.6}
q(\lambda) = |\{\text{compact } \phi\in\Sigma^+ \mid 
	\langle\phi,\lambda\rangle < 0\}| +
|\{\text{ noncompact } \phi\in\Sigma^+ \mid 
        \langle\phi,\lambda\rangle > 0\}|.
\end{equation}
}
Thus $(-1)^{q(\lambda)} = (-1)^q\text{sign\,}\varpi(\lambda)$.

\begin{theorem}\label{3.4.7}
If $\lambda \in L'$ there is a unique class $[\pi_\lambda] \in
\widehat{G^0}_{disc}$ whose distribution character satisfies
$\Theta_{\pi_\lambda}|_{H^0\cap G'} = (-1)^{q(\lambda)}\tfrac{1}{\Delta}
	\sum_{w \in W_{G^0}} \det(w)e^{w\lambda}$.  Every class in
$\widehat{G^0}_{disc}$ is one of these $[\pi_\lambda]$.  Classes
$[\pi_\lambda] = [\pi_{\lambda'}]$ precisely when 
$\lambda' \in W_{G^0}(\lambda)$.  With a certain normalization of
Haar measure on $G^0$, $[\pi_\lambda]$ has formal degree $|\varpi(\lambda)|$.
\end{theorem}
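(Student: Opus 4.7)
The plan is to reduce Theorem \ref{3.4.7} to Harish--Chandra's classical theorem for connected reductive Lie groups with compact center (\cite{HC1965}, \cite{HC1966}), via the central extension machinery of \S\ref{ssec3c}. Start by decomposing
\[
\widehat{G^0}_{disc} = \bigcup_{\zeta^0 \in \widehat{Z\cap G^0}} \widehat{G^0}_{\zeta^0-disc},
\]
fix $\zeta^0$, and form the central extension $G^0[\zeta^0]$ as in \S\ref{ssec3c}. Since $Z \cap G^0$ is co-compact in $Z_{G^0}$, and by construction $G^0[\zeta^0]/S \cong G^0/(Z\cap G^0)$, the group $G^0[\zeta^0]$ is a connected reductive Lie group with compact center. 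Its compact Cartan subgroups are precisely the preimages $H^0[\zeta^0]$ of the compact Cartan subgroups $H^0/(Z\cap G^0)$ of $G^0/(Z\cap G^0)$, so the hypothesis ``$G^0/(Z\cap G^0)$ has a compact Cartan'' transports intact. Harish--Chandra's theorem applies verbatim to $G^0[\zeta^0]$: existence of $1_S$-discrete series is equivalent to existence of a compact Cartan subgroup, and when these exist the $1_S$-discrete series is parametrized (up to $W_{G^0[\zeta^0]}$-equivalence) by the regular set in the character lattice of $H^0[\zeta^0]$ satisfying the analogue of (\ref{3.4.2}).

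Next I would use Theorem \ref{3.3.3} to transport everything back. The bijection $\varepsilon_{\zeta^0}:\widehat{G^0[\zeta^0]}_{1_S} \to \widehat{G^0}_{\zeta^0}$ restricts to a bijection on the respective relative discrete series, and it satisfies $\Theta_{\varepsilon[\psi]} = \Theta_{[\psi]} \cdot p$. The identification of weights is dictated by the central extension: characters $e^\lambda$ on $H^0$ with $e^\lambda|_{Z\cap G^0} = \zeta^0$ correspond bijectively to characters on $H^0[\zeta^0]$ trivial on $S$, and this correspondence intertwines the Weyl group actions (the Weyl groups of $(\gg_\C,\gh_\C)$ and of $(\gg[\zeta^0]_\C,\gh[\zeta^0]_\C)$ are canonically identified, since they only depend on the pair of complexified Lie algebras modulo center). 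Consequently the root system, the half-sum $\rho$, the regularity condition defining $L'$, the Weyl denominator $\Delta$, and the compact/noncompact type of roots all pass through unchanged. Thus the character formula on $G^0[\zeta^0]$, namely Harish--Chandra's $(-1)^{q(\lambda)} \Delta^{-1}\sum_w \det(w) e^{w\lambda}$ on the regular set of $H^0[\zeta^0]$, pulls back under $p$ to exactly the stated formula on $H^0 \cap G'$. The uniqueness and Weyl-group equivalence statements descend directly from their counterparts on $G^0[\zeta^0]$.

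For the formal degree, invoke the Plancherel preservation clause of Theorem \ref{3.3.3}: $\varepsilon_{\zeta^0}$ carries Plancherel measure on $\widehat{G^0[\zeta^0]}_{1_S}$ to Plancherel measure on $\widehat{G^0}_{\zeta^0}$, so point masses (discrete series contributions) correspond with matching weights. With the normalization of Haar measure on $G^0$ induced from that of Harish--Chandra on $G^0[\zeta^0]$ via the projection $p$, the formal degree of $\pi_\lambda$ equals that of the corresponding $1_S$-discrete class on $G^0[\zeta^0]$, which by Harish--Chandra is $|\varpi(\lambda)|$.

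The main obstacle is not conceptual but notational: one must keep careful track of the $\rho$-shift, the passage (if needed) to a two-sheeted cover so that $\rho \in L$ and $e^\rho$ is trivial on $Z \cap G^0$, and the identification of the two Weyl groups, and then verify that each of these bookkeeping choices is compatible with $p$ and with the bijection $\varepsilon_{\zeta^0}$. Once the identifications are pinned down, the theorem follows from Harish--Chandra's result on $G^0[\zeta^0]$ with no genuinely new analysis required, which is precisely why the paper states that Harish--Chandra's analysis ``extends without change to connected reductive Lie groups with compact center.''
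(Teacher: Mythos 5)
Your proposal is correct and follows the same route as the paper: reduce to Harish--Chandra's classical result on the connected reductive group $G^0[\zeta^0]$ with compact center, and transport all the data (character formula, uniqueness, Weyl-group equivalence, formal degree) back to $G^0$ via the bijection $\varepsilon_{\zeta^0}$ of Theorem \ref{3.3.3} together with the weight-lattice identification $e^{\delta\nu} = e^\nu \circ p$. The paper's proof is just a more compressed version of this, with the map you describe informally appearing explicitly as $\delta: L[\zeta]'_{1_S} \to L'_\zeta$.
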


\begin{corollary}\label{3.4.9} $[\pi_\lambda] \in (\widehat{G^0})_{disc}$
has dual $[\pi_\lambda^*] = [\pi_{-\lambda}]$, central character
$e^{\lambda - \rho}|_{Z_{G^0}}$ and infinitesimal character $\chi_\lambda$ 
as in {\rm (\ref{3.2.5})}; $\chi_\lambda(Casimir)
= ||\lambda||^2 - ||\rho||^2$.
\end{corollary}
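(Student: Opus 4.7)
The plan is to derive all four assertions from the character formula of Theorem \ref{3.4.7}, applied on the compact Cartan subgroup $H^0$, together with the uniqueness statement in that theorem. The key point throughout is that an irreducible class in $\widehat{G^0}_{disc}$ is pinned down by its character restricted to $H^0 \cap G'$.

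\emph{Dual.} Since $\pi_\lambda$ is unitary, $\Theta_{\pi_\lambda^*}(g) = \Theta_{\pi_\lambda}(g^{-1})$, and $[\pi_\lambda^*] \in \widehat{G^0}_{disc}$ because its coefficients are complex conjugates of those of $\pi_\lambda$. On $H^0 \cap G'$ one has $e^{w\lambda}(h^{-1}) = e^{-w\lambda}(h)$ and $\Delta(h^{-1}) = (-1)^{|\Sigma^+|}\Delta(h)$. For $\lambda \in L'$ regular, a root--by--root count gives $q(-\lambda) = |\Sigma^+| - q(\lambda)$, so the three sign contributions combine to yield $\Theta_{\pi_\lambda}(h^{-1}) = \Theta_{\pi_{-\lambda}}(h)$. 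Uniqueness in Theorem \ref{3.4.7} then forces $[\pi_\lambda^*] = [\pi_{-\lambda}]$.

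\emph{Central character.} For $z \in Z_{G^0}$ and $h \in H^0 \cap G'$, use that every root $\phi \in \Sigma$ is trivial on $Z_{G^0}$, so that $\Delta(zh) = e^{\rho}(z)\Delta(h)$; also, every Weyl translate $w\lambda$ differs from $\lambda$ by an element of the root lattice, so $e^{w\lambda}(z) = e^{\lambda}(z)$ for all $w$. Substituting into the character formula gives
$$
\Theta_{\pi_\lambda}(zh) = e^{\lambda - \rho}(z)\, \Theta_{\pi_\lambda}(h),
$$
exhibiting $e^{\lambda - \rho}|_{Z_{G^0}}$ as the central character.

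\emph{Infinitesimal character.} Harish--Chandra's radial--component calculation identifies the restriction of $z \in \cZ(\gg)$ to $\Delta \cdot (\text{class functions on } H^0 \cap G')$ with the action of $\gamma(z) \in I(\gh_\C)$ on $\Delta \cdot \Theta$. Since $\Delta \cdot \Theta_{\pi_\lambda}|_{H^0 \cap G'}$ is, up to sign, $\sum_{w \in W_{G^0}} \det(w) e^{w\lambda}$, each summand (hence the whole sum) is an eigenfunction of $\gamma(z)$ with eigenvalue $\gamma(z)(\lambda)$. Thus $z\Theta_{\pi_\lambda} = \chi_\lambda(z)\Theta_{\pi_\lambda}$ on $G'$, and by the local $L_1$ property and density of $G'$ this equation extends to all of $G$, giving the infinitesimal character $\chi_\lambda$ of (\ref{3.2.5}). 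Specialising to $z = \Omega$, the classical identity $\gamma(\Omega)(\mu) = \langle \mu, \mu \rangle - \langle \rho, \rho \rangle$ (equivalently: a module of highest weight $\mu - \rho$ has Casimir eigenvalue $\langle \mu - \rho, \mu + \rho \rangle$) yields $\chi_\lambda(\Omega) = \|\lambda\|^2 - \|\rho\|^2$.

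The only step that involves more than symbol--pushing is the infinitesimal character, where the identification of the $\cZ(\gg)$--action on $\Theta_{\pi_\lambda}|_{H^0 \cap G'}$ with the action of $\gamma(z)$ on the Weyl numerator rests on Harish--Chandra's radial--component machinery; everything else is a direct manipulation of the character formula of Theorem \ref{3.4.7}.
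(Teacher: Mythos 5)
Your proof is correct in substance, but it takes a different route from the one the paper envisions. The paper states Corollary~\ref{3.4.9} with no argument because it is meant to follow from the reduction machinery of \S~\ref{ssec3c}--\ref{ssec3d}: the bijection $\varepsilon_\zeta$ of Theorem~\ref{3.3.3} transports distribution characters, Plancherel measure, and discrete series from the compact-center covering $G^0[\zeta]$, where the dual, central character, and infinitesimal-character statements are part of Harish--Chandra's classical results \cite{HC1966}; one then unwinds the map $\delta\nu = p^*\nu$ to see that $\pi_\lambda$ inherits the corresponding data with the stated form. What you do instead is read all four assertions off the Weyl-numerator character formula of Theorem~\ref{3.4.7} directly on $H^0 \cap G'$, using $\Delta(h^{-1})=(-1)^{|\Sigma^+|}\Delta(h)$, $q(-\lambda)=|\Sigma^+|-q(\lambda)$, triviality of the root lattice on $Z_{G^0}$, and Harish--Chandra's radial-component identity $\Delta\cdot(z\Theta)|_{H^0\cap G'}=\gamma(z)(\Delta\cdot\Theta)|_{H^0\cap G'}$ for invariant eigendistributions. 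Both routes are sound; yours is more self-contained and makes the sign bookkeeping explicit, while the paper's is shorter given that the $\varepsilon$-bijection has already been set up and does not require re-deriving facts that are already cataloged for the compact-center case.

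One small caution in your infinitesimal-character step: the phrase ``this equation extends to all of $G$ by density of $G'$'' is not quite the right logic, since a distributional identity is not automatically recovered from its restriction to an open dense set. The clean version: $\Theta_{\pi_\lambda}$ is a priori an invariant eigendistribution with some infinitesimal character $\chi_{\pi_\lambda}$ (that is \S~\ref{ssec3b}, using irreducibility of $\pi_\lambda$); the radial-component identity then forces $\gamma(z)(\lambda)\,\Delta\Theta_{\pi_\lambda} = \chi_{\pi_\lambda}(z)\,\Delta\Theta_{\pi_\lambda}$ on $H^0\cap G'$, and since $\Delta\Theta_{\pi_\lambda}$ does not vanish identically there, $\chi_{\pi_\lambda}(z) = \gamma(z)(\lambda) = \chi_\lambda(z)$. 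No extension argument is needed.
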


When $Z_{G^0}$ is compact, Theorems \ref{3.4.1} and \ref{3.4.7} reduce to
Harish--Chandra's celebrated results \cite[Theorems 13 and 16]{HC1966}.
We describe the reduction.

Let $[\pi] \in (\widehat{G^0})_\zeta$\,.  By Theorem \ref{3.3.3} there exists
$[\psi] \in \widehat{G^0[\zeta]}_{1_S}$ such that $\varepsilon_\zeta[\psi]
= [\pi]$.  In particular \cite[Theorem 13]{HC1966} $G^0[\zeta]$ has a compact
Cartan subgroup, and it must have form $H^0[\zeta]$ where $H^0$ is a Cartan
subgroup of $G^0$.  Since $H^0[\zeta]$ is compact, so is $H^0/(Z\cap G^0)$.
That proves the ``only if'' part of Theorem \ref{3.4.1}.

Conversely,
let $H^0/(Z\cap G^0)$ be a compact Cartan subgroup of $G^0/(Z\cap G^0)$
and $\zeta \in \widehat{Z\cap G^0}$, so $H^0[\zeta]$ is a compact Cartan
subgroup of $G^0[\zeta]$.  Denote 

$L[\zeta] = \{\nu \in i\gh[\zeta]^* \mid
e^\nu \text{ is well defined on } H^0[\zeta]\}$,

$L[\zeta]'_{1_S} = \{\nu \in L[\zeta] \mid \varpi(\nu) \ne 0
\text{ and } e^\nu|_S = 1_S\}$, and

$L'_\zeta = \{\lambda \in L \mid \varpi(\lambda) \ne 0 \text{ and }
e^\lambda|_{Z\cap G^0} = \zeta\}$.

\noindent
Since $G^0[\zeta]$ is a connected reductive Lie group with compact center, and
$e^\rho(S) = 1$, \cite[Theorem 16]{HC1966} gives a map 
$\omega_1: L[\zeta]'_{1_S} \to \widehat{G^0[\zeta]}_{1-disc}$\,, by
$\nu \mapsto [\psi_\nu]$, that satisfies the assertions of Theorem \ref{3.4.7}.
We construct the corresponding 
$\omega_\zeta: L'_\zeta \to \widehat{G^0}_{\zeta-disc}$ by
$\omega_\zeta\cdot\delta = \varepsilon\cdot\omega_1$ where $\varepsilon$ is
the bijection of Theorem \ref{3.3.3} and $\delta :
L[\zeta]'_{1_S} \to L'_\zeta$ is defined as follows.

Let $\nu \in L[\zeta]'_{1_S}$.  The distribution character of
$\varepsilon\omega_1(\nu) = \omega_\zeta \delta(\nu)$ must have
$(H^0\cap G')$--restriction $(-1)^{q(\delta \nu)}\tfrac{1}{\Delta}
\sum \det(w)e^{w\delta \nu} = (-1)^{q(\nu)}\tfrac{1}{\Delta}
\sum \det(w) e^{w\nu}\cdot p$.  For that, define $\delta$ by
$e^{\delta\nu} = e^\nu\cdot p$, i.e. $\delta\nu = p^*\nu$ under
$p:\gg \to \gg[\zeta]$.  Since $p$ restricts to an isomorphism of derived
algebras, $\delta$ bijects $L[\zeta]'_{1_S}$ to $L'_\zeta$ equivariantly
for $W$.  Our assertions now go over from $\omega_1$ to $\omega_\zeta$\,.

That completes the derivation of Theorems \ref{3.4.1} and \ref{3.4.7}
from \cite{HC1966}.

\subsection{}\label{ssec3e}
\setcounter{equation}{0}
We extend the description of the relative discrete series from connected
reductive groups to the class of real reductive Lie groups 
specified in \S \ref{ssec3a}.

\begin{lemma}\label{3.5.1}
$ZZ_{G^0}$ has finite index in $Z_G(G^0)$.  Every class 
$[\chi] \in \widehat{Z_G(G^0)}$ has dimension 
$\dim \chi \leqq |Z_G(G^0)/ZZ_{G^0}|$.
\end{lemma}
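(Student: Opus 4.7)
My plan is to handle the two assertions in turn, using only basic group-theoretic facts about $G$ and a Frobenius reciprocity argument.

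First I would establish the identity
$$Z_G(G^0) \cap ZG^0 = ZZ_{G^0}.$$
The inclusion $\supset$ is immediate since $Z \subset Z_G(G^0)$ (as $Z$ centralizes $G^0$) and $Z_{G^0} \subset Z_G(G^0)$. For $\subset$, write an element of the left side as $zg$ with $z \in Z$ and $g \in G^0$; since $Z$ centralizes $G^0$, the element $g = z^{-1}(zg)$ centralizes $G^0$ as well, hence lies in $G^0 \cap Z_G(G^0) = Z_{G^0}$. Consequently there is an injection
$$Z_G(G^0)/ZZ_{G^0} \hookrightarrow G/ZG^0,$$
whose target is finite by Definition 3.1.1. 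Thus $n := |Z_G(G^0)/ZZ_{G^0}|$ is finite, giving the first assertion.

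Next I would observe that $A := ZZ_{G^0}$ is a closed abelian subgroup of $Z_G(G^0)$, since $Z$ and $Z_{G^0}$ are abelian and $Z$ commutes with all of $G^0 \supset Z_{G^0}$. It is normal in $Z_G(G^0)$: $Z$ is normal in $G$, and $Z_{G^0}$ is characteristic in the normal subgroup $G^0$, so both, and hence $A$, are normal in $G$. Given $[\chi] \in \widehat{Z_G(G^0)}$, decompose $\chi|_A$ as a direct integral of unitary characters of $A$. Because $A$ is normal and $Z_G(G^0)/A$ has order $n$, the conjugation action of $Z_G(G^0)$ on $\widehat{A}$ has orbits of size at most $n$, and standard Mackey analysis forces the spectrum of $\chi|_A$ to lie in a single such orbit. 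The orbit being finite, $\chi|_A$ is a finite direct sum of characters. Picking any character $\psi$ appearing there, Frobenius reciprocity gives an embedding
$$\chi \hookrightarrow \Ind_A^{Z_G(G^0)}\psi,$$
and the induced representation has dimension $[Z_G(G^0):A] = n$. Therefore $\dim\chi \leqq n$, as required.

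The one place requiring care is the Mackey-theoretic claim that $\chi|_A$ has discrete rather than merely continuous spectrum; this is precisely where the finiteness of $Z_G(G^0)/A$ enters, ensuring that the (single) orbit supporting $\chi|_A$ is finite. Everything else amounts to direct group-theoretic manipulation together with Frobenius reciprocity.
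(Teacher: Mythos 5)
Your proof is correct and follows essentially the same route as the paper, which compresses the argument into a single sentence: every $[\chi]\in\widehat{Z_G(G^0)}$ is a summand of a representation induced from a unitary character of the abelian finite-index subgroup $ZZ_{G^0}$, hence has dimension at most $|Z_G(G^0)/ZZ_{G^0}|$. You supply what the paper leaves implicit — the group-theoretic verification that $Z_G(G^0)\cap ZG^0 = ZZ_{G^0}$ (giving the finite-index injection into $G/ZG^0$), and the Mackey-theoretic check that $\chi|_{ZZ_{G^0}}$ has purely discrete, finite spectrum supported on a single conjugacy orbit before Frobenius reciprocity is invoked.
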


\begin{proof} The second condition of Definition \ref{3.1.1} shows that every 
$[\chi] \in \widehat{Z_G(G^0)}$ is a summand of 
$\Ind_{\widetilde{Z}}^{Z_G(G^0)}(\beta)$ for some 
$\beta \in \widehat{\widetilde{Z}}$.
\end{proof}

\begin{proposition}\label{3.5.2}
$\widehat{G^\dagger}$ is the disjoint union of the sets
\begin{equation}\label{3.5.3}
(\widehat{G^\dagger})_\xi = \{[\chi\otimes\pi] \mid 
	[\chi] \in \widehat{Z_G(G^0)}_\xi \text{ and }
	[\pi] \in (\widehat{G^0})_\xi \text{ where } \xi \in \widehat{Z_{G^0}}\}.
\end{equation}
Here $[\chi\otimes\pi]$ has the same infinitesimal character $\chi_\pi$
and $[\pi]$ and has distribution character $\Theta_{\chi\otimes\pi}(zg) =
(\trace \chi(z))\Theta_\pi(g)$ for $z \in Z_G(G^0)$ and $g \in G^0$\,.
Further, $[\chi\otimes\pi] \in (\widehat{G^\dagger})_{disc}$ if and only if
$[\pi] \in (\widehat{G^0})_{disc}$\,.
\end{proposition}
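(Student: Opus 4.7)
The plan is to realize $G^\dagger$ as a central product $Z_G(G^0)\times_{Z_{G^0}} G^0$ and reduce the classification of $\widehat{G^\dagger}$ to the tensor product decomposition of $\widehat{Z_G(G^0)\times G^0}$.

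First I would observe that the multiplication map $m: Z_G(G^0)\times G^0 \to G^\dagger$, $(z,g)\mapsto zg$, is a continuous surjective homomorphism (by $G^\dagger = Z_G(G^0)G^0$) with kernel $\{(z,z^{-1}) \mid z \in Z_{G^0}\}$, since $Z_G(G^0)\cap G^0 = Z_{G^0}$ by Proposition \ref{1.1.3}(1). Hence $\widehat{G^\dagger}$ identifies with the subset of $\widehat{Z_G(G^0)\times G^0}$ that is trivial on this antidiagonal. Since every $[\chi] \in \widehat{Z_G(G^0)}$ is finite dimensional (Lemma \ref{3.5.1}), $Z_G(G^0)$ is CCR and a fortiori type I, so the standard tensor product theorem for direct products identifies $\widehat{Z_G(G^0)\times G^0}$ with $\widehat{Z_G(G^0)}\times \widehat{G^0}$ via $([\chi],[\pi])\mapsto [\chi\boxtimes\pi]$. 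Because $Z_{G^0}$ is central in both factors, Schur forces $\chi|_{Z_{G^0}}$ and $\pi|_{Z_{G^0}}$ to be scalar characters, and triviality on the antidiagonal is exactly $\chi|_{Z_{G^0}} = \xi = \pi|_{Z_{G^0}}$ for some $\xi \in \widehat{Z_{G^0}}$. This yields (\ref{3.5.3}); disjointness is clear since $\xi$ is recovered as the central character of $[\chi\otimes\pi]$ on $Z_{G^0}$.

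For the character assertions, the distribution character of an outer tensor product is $\Theta_{\chi\boxtimes\pi} = \Theta_\chi\otimes\Theta_\pi$ on $Z_G(G^0)\times G^0$; pulling back along $m$ and writing $g^\dagger = zg$ gives $\Theta_{\chi\otimes\pi}(zg) = (\trace\chi(z))\Theta_\pi(g)$, and the matching $\xi$ makes the right-hand side well defined under $(z,g)\mapsto (zz_0, z_0^{-1}g)$ for $z_0 \in Z_{G^0}$. For the infinitesimal character, restriction to $G^0$ gives $(\chi\otimes\pi)|_{G^0} = (\dim\chi)\,\pi$, so every $z \in \cZ(\gg)$ acts on $\cH_{\chi\otimes\pi}$ by the same scalar $\chi_\pi(z)$ by which it acts on $\cH_\pi$.

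Finally, for the relative discrete series equivalence, matrix coefficients of $\chi\otimes\pi$ on $G^\dagger$ factor as products of matrix coefficients of $\chi$ on $Z_G(G^0)$ and of $\pi$ on $G^0$. Since $Z_G(G^0)/Z$ is compact (as $Z_G(G^0)/(ZZ_{G^0})$ is finite and $Z_{G^0}/(Z\cap G^0)$ is compact by Definition \ref{3.1.1}), the $\chi$--coefficients are bounded on $Z_G(G^0)/Z$; a Fubini argument on $G^\dagger/Z$, viewed as fibered over $Z_G(G^0)/ZZ_{G^0}$ with fiber $G^0/(Z\cap G^0)$, then shows that a coefficient of $\chi\otimes\pi$ lies in $L_2(G^\dagger/Z,\zeta)$ if and only if the corresponding coefficient of $\pi$ lies in $L_2(G^0/(Z\cap G^0), \pi|_{Z\cap G^0})$. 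The main obstacle is the tensor product decomposition step: standard formulations assume type I, and the key point I would emphasize is that finite-dimensionality of all irreducibles of $Z_G(G^0)$ (via Lemma \ref{2.4.1}) is exactly what reduces our non-compact-modulo-$Z$ setting to the classical theorem.
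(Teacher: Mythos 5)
Your proposal is correct and matches the paper's proof in substance: the paper also exploits $G^\dagger = Z_G(G^0)G^0$ with $Z_G(G^0)\cap G^0 = Z_{G^0}$, uses finite-dimensionality of every $[\chi]\in\widehat{Z_G(G^0)}$ together with type I-ness of $G^0$, and proves surjectivity of $([\chi],[\pi])\mapsto[\chi\otimes\pi]$ by restricting an arbitrary $[\gamma]\in(\widehat{G^\dagger})_\xi$ to $G^0$. Where you differ is in the packaging: you set up $G^\dagger$ explicitly as the central product $(Z_G(G^0)\times G^0)/\{(z,z^{-1})\mid z\in Z_{G^0}\}$ and invoke the tensor-product decomposition theorem for direct products of type I groups, then cut down to representations trivial on the antidiagonal; the paper instead argues more hands-on, verifying directly that $\gamma|_{G^0}$ is $\pi$-isotypic and that $[\chi\otimes\pi]$ sits inside $[\gamma]$. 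Your version is slightly more systematic, and it correctly isolates the key point (type I-ness of $Z_G(G^0)$ via Lemma \ref{2.4.1}) that the paper leaves implicit. Two small remarks: the reference to Proposition \ref{1.1.3}(1) is for the compact-mod-$Z$ group $M$ of \S1, whereas what you actually need is the elementary identity $Z_G(G^0)\cap G^0 = Z_{G^0}$, which holds in general without that proposition; and for the character formula, the pullback of $\Theta_\chi\otimes\Theta_\pi$ along $m$ through the (possibly noncompact) kernel $Z_{G^0}$ requires a short measure-matching computation that the paper carries out directly by testing against $C_c^\infty$ functions supported in a single coset $z_0G^0$ --- your argument is right but would benefit from spelling this out. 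Your Fubini argument for the $L^2$ equivalence actually supplies detail that the paper merely asserts, which is a genuine improvement.
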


\begin{proof}
$\widehat{G^\dagger}$ is the disjoint union of the $(\widehat{G^\dagger})_\xi$\,,
$\xi \in \widehat{Z_{G^0}}$, because $Z_{G^0}$ is central in $G^\dagger$.
Now fix $\xi \in \widehat{Z_{G^0}}$, $[\chi] \in \widehat{Z_G(G^0)}_\xi$
and $[\pi] \in (\widehat{G^0})_\xi$\,.  Note $[\chi\otimes\pi] \in
(\widehat{G^\dagger})_\xi$\,, and $(\chi\otimes\pi)|_{G^0} = m\pi$ where
$m = \dim \chi < \infty$.  Thus $\chi\otimes\pi$ has infinitesimal character
$\chi_\pi$\,, and is discrete relative to $Z$ exactly when $\pi$ is
discrete relative to $Z\cap G^0$\,.

To prove the formula for $\Theta_{\chi\otimes\pi}$ we need only consider
test functions $f \in C^\infty_c(G^\dagger)$ supported in a single coset
$z_0G^0$\,, and there we compute\\
$\trace(\chi\otimes\pi)(f)$ 
= $\trace \int_{G^\dagger}f(zg)\chi(z)\otimes\pi(g)dg
= \trace \int_{G^0} f(z_0g)\chi(z_0)\otimes\pi(g)dg\\
= \trace \bigl ( \chi(z_0)\otimes\int_{G^0}f(z_0g)\pi(g)dg \bigr )
= (\trace \chi(z_0))\bigl (\trace \int_{G^0}f(z_0g)\pi(g)dg \bigr ) \\
= (\trace \chi(z_0)) \int_{G^0} f(z_0g)\Theta_\pi(g)dg 
= \int_{G^0} f(z_0g) (\trace\chi(z_0))\Theta_\pi(g)dg\\
= \int_{G^\dagger}f(zg)(\trace\chi(z)) \Theta_\pi(g)d(zg)$,
so $\Theta_{\chi\otimes\pi}(zg) = (\trace\chi(z)) \Theta_\pi(g)$, as asserted.

Finally let $[\gamma] \in (\widehat{G^\dagger})_\xi$\,.  Since $Z_G(G^0)$
acts trivially on $\widehat{G^0}$ and $G^0$ is of type I, now $\gamma|_{G^0}
= mn$ where $[\chi] \in \widehat{Z_G(G^0)}_\xi$\,. Thus $[\gamma] = 
[\chi\otimes\pi]$ because $[\chi\otimes\pi]$ is a subrepresentation.  That
proves (\ref{3.5.3}). Proposition \ref{3.5.2} is proved.
\end{proof}

Proposition \ref{3.5.2} gives the relative discrete series of $G^\dagger$ in
terms of those of $Z_G(G^0)$ and $G^0$.  The following lets us go on to $G$.

\begin{proposition}\label{3.5.5}
Let $[\gamma] = [\chi\otimes\pi] \in \widehat{G^\dagger}$ and
define $\psi = \Ind_{G^\dagger}^G(\psi)$.  Then
{\rm (1)} $[\psi]$ has the same infinitesimal character $\chi_\pi$ as $[\pi]$
and {\rm (2)} $[\pi]$ has distribution character that is a locally $L_1$
function supported in $G^\dagger$ and given there by
\begin{equation}\label{3.5.6}
\Theta_\psi(zg) = {\sum}_{xG^\dagger \in G/G^\dagger}\, (\trace \chi(x^{-1}zx))
	\Theta_\pi(x^{-1}gx) 
\end{equation}
for $z \in Z_G(G^0)$ and $g \in G^0$\,.  In particular $\Theta_\psi$ is analytic
on the regular set $G'$ and satisfies $\Theta_\psi|_{G^\dagger} =
\Theta|_{\psi|_{G^\dagger}}$.  Further {\rm (3)} 
if $[\pi] \in (\widehat{G^0})_{disc}$ then $[\psi] \in \widehat{G}_{disc}$\,, 
and every class in $\widehat{G}_{disc}$ is obtained this way. 
\end{proposition}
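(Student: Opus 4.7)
The plan is to prove the three parts in order, after first establishing that $G^\dagger$ is an open normal subgroup of finite index in $G$: open because $G^0\subset G^\dagger$, of finite index because $G^\dagger\supset ZG^0$ and $|G/ZG^0|<\infty$, and normal because Definition \ref{3.1.1}(i) forces every $\Ad(x)$ to be inner on $\gg_\C$, so $\Ad(x^{-1}gx)$ stays inner whenever $\Ad(g)$ is. In particular $G^\dagger$ is clopen, $\gg=\gg^\dagger$, and $\cU(\gg)=\cU(\gg^\dagger)$.

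For part (1), I would exploit openness: the center $\cZ(\gg)$ acts on smooth vectors of $\psi=\Ind_{G^\dagger}^G(\gamma)$ through bi-invariant differential operators on $G$ whose local behavior coincides with their action on $G^\dagger$. Writing a smooth vector as $f\colon G\to\cH_\gamma$ with $f(gh)=\gamma(h^{-1})f(g)$ for $h\in G^\dagger$, and noting $\exp(tX)\in G^0\subset G^\dagger$ for $X\in\gg$, one sees that the action of $z\in\cZ(\gg)$ at a point reduces to its action on $\cH_\gamma$ via $d\gamma$, which is the scalar $\chi_\gamma(z)$. Since $\gamma|_{G^0}$ is a multiple of $\pi$, $\chi_\gamma=\chi_\pi$.

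For part (2), I would invoke the standard character formula for induction from an open normal subgroup of finite index: realizing $\psi\cong\bigoplus_{xG^\dagger\in G/G^\dagger}\cH_\gamma$ via coset representatives, the matrix of $\psi(g)$ is block-diagonal with entry $\gamma(x^{-1}gx)$ on the $x$-th summand whenever $x^{-1}gx\in G^\dagger$, and vanishes otherwise. Normality forces either all or none of the conjugates $x^{-1}gx$ to lie in $G^\dagger$, which yields the support statement; substituting the formula for $\Theta_\gamma$ from Proposition \ref{3.5.2} produces (\ref{3.5.6}). Local $L_1$-ness follows from the corresponding property of $\Theta_\pi$ on $G^0$ combined with boundedness of $\trace\chi$ on the compact group $Z_G(G^0)$, and analyticity on $G'$ from Harish--Chandra's regularity theorem applied to $\Theta_\pi$ together with the fact that $G^\dagger$ is clopen in $G$.

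For part (3), the forward direction uses Proposition \ref{3.5.2} to promote $[\pi]\in(\widehat{G^0})_{disc}$ to $[\gamma]\in(\widehat{G^\dagger})_{disc}$; matrix coefficients of $\psi$ decompose via coset representatives into finite sums of $\gamma$-coefficients on $G^\dagger/Z$, each square-integrable, so every irreducible summand of $\psi$ is relative discrete. Conversely, given $[\psi]\in\widehat{G}_{disc}$, its restriction to $G^\dagger$ is a finite sum of irreducible summands (since $|G/G^\dagger|<\infty$), any one of which has square-integrable coefficients mod $Z$ as restrictions of $\psi$-coefficients, hence belongs to $(\widehat{G^\dagger})_{disc}$; Proposition \ref{3.5.2} then writes it as $\chi\otimes\pi$ with $[\pi]\in(\widehat{G^0})_{disc}$, and Frobenius reciprocity identifies $[\psi]$ with a constituent of $\Ind_{G^\dagger}^G(\chi\otimes\pi)$. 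The main subtlety, and what I expect to be the chief obstacle, is verifying that $\psi$ is itself irreducible rather than a finite sum: by Mackey's criterion this requires that distinct $G$-conjugates of $\gamma$ under $G/G^\dagger$ be mutually inequivalent, and absent that verification the statement is best read as asserting that $\psi$ decomposes into finitely many relative-discrete classes and that every class in $\widehat{G}_{disc}$ arises as such a summand.
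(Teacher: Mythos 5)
Your treatment of parts (1) and (2) is sound and runs essentially parallel to the paper's. For (1), the paper also keys on the fact that $\Ad(x)$ is inner on $\gg_\C$ for every $x\in G$, hence trivial on $\cZ(\gg)$, so all the $\gamma\cdot\Ad(x^{-1})$ share the same infinitesimal character $\chi_\gamma=\chi_\pi$; your computation via the transformation rule for sections of the induced bundle arrives at the same point. For (2), the block-diagonal decomposition of $\psi|_{G^\dagger}=\bigoplus_{G/G^\dagger}\gamma\cdot\Ad(x^{-1})$, together with Proposition \ref{3.5.2}, is exactly the paper's Frobenius-style argument.

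Part (3) is where you stop short. You correctly identify that Mackey's criterion is the crux, but then you hedge: ``absent that verification the statement is best read as asserting that $\psi$ decomposes into finitely many relative-discrete classes.'' That weakened statement is not what Proposition \ref{3.5.5} claims; it claims $[\psi]\in\widehat{G}_{disc}$, i.e.\ that $\psi$ is itself irreducible. The verification you declined to supply is precisely what the paper provides, and it is not optional. The argument: choose a Cartan subgroup $H^0\subset G^0$ with $H^0/(Z\cap G^0)$ compact, and observe that one may select coset representatives $\{x_1,\dots,x_r\}$ of $G/G^\dagger$ that normalize $\gh$ (any two compact Cartan subgroups of $G^0/(Z\cap G^0)$ are $G^0$-conjugate, so each coset can be adjusted to normalize $\gh$). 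Condition \ref{3.1.1} then places $W_G$ inside $W(\gg_\C,\gh_\C)$, with $W_G=\bigcup_j (x_jH)W_{G^0}$ a genuine coset decomposition modulo $W_{G^0}$. Writing $[\pi]=[\pi_\lambda]$ with $\lambda\in L'$ regular, each conjugate satisfies $[\pi\cdot\Ad(x_j^{-1})]=[\pi_{\lambda_j}]$ with $\lambda_j=\Ad(x_j^{-1})^*\lambda$, and \emph{regularity} of $\lambda$ forces the $\lambda_j$ to be pairwise distinct modulo $W_{G^0}$ (if $w\lambda_j=\lambda_k$ with $w\in W_{G^0}$, then an element of $W(\gg_\C,\gh_\C)$ fixes the regular weight $\lambda$ and is therefore trivial, which collapses $x_j$ and $x_k$ into the same $G^\dagger$-coset, a contradiction). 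Theorem \ref{3.4.7} then says the $\pi\cdot\Ad(x_j^{-1})$ are mutually inequivalent, so Mackey's criterion applies and $\psi$ is irreducible. Without this chain — choice of $\gh$-normalizing representatives, regularity of the Harish--Chandra parameter, and the parametrization of $\widehat{G^0}_{disc}$ by regular weights modulo $W_{G^0}$ — the conclusion $[\psi]\in\widehat{G}_{disc}$ simply does not follow, and the proof is incomplete.
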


\begin{proof}
We follow an argument \cite[Lemma 4.3.3]{W1967} of Frobenius for (1) and (2).
As $G^\dagger$ is normal and has finite index in $G$, $\Theta_\psi$
exists and is supported in $G^\dagger$, where $\Theta_\psi|_{G^\dagger} =
\Theta|_{\psi|_{G^\dagger}}$.  Note $\psi|_{G^\dagger} = 
\sum_{G/G^\dagger} \gamma\cdot\Ad(x^{-1})$.  If $z \in Z_G(G^0)$ and
$g \in G^0$ now $\Theta_\psi(zg) = \sum \Theta_{\gamma\cdot\Ad(x^{-1})}(zg)
= \sum \Theta_\gamma(z^{-1}zx\cdot x^{-1}gx)$\,.  Assertion (2) follows 
from Proposition \ref{3.5.2}.

If $x \in G$ then $\Ad(x)$ is an inner automorphism on $\gg_\C$\,, hence 
trivial on $\cZ(\gg)$, so all the $\gamma\cdot\Ad(x^{-1})$ are the same
on $\cZ(\gg)$.  Now $\psi$ has infinitesimal character $\chi_\psi =
\chi_\gamma = \chi_\pi$\,.

Every class in $\widehat{G}_{disc}$ is a subrepresentation of an 
$[\Ind_{\widehat{G^\dagger}}^G(\gamma)]$, 
$[\gamma] \in (\widehat{G^\dagger})_{disc}$\,, because $|G/G^\dagger| < \infty$.
If $[\gamma] = [\chi\otimes\pi]$ as in Proposition \ref{3.5.2} then
$[\gamma] \in (\widehat{G^\dagger})_{disc}$ is equivalent to
$[\pi] \in (\widehat{G^0})_{disc}$\,.  To prove (3) now we need only
check that $\psi = \Ind_{G^\dagger}^G(\gamma)$ is irreducible whenever
$[\pi] \in (\widehat{G^0})_{disc}$\,.  

Choose a Cartan subgroup
$H^0 \subset G^0$ with $H^0/(Z\cap G^0)$ compact.  The corresponding
Cartan subgroup of $G$ is the centralizer $H$ of $\gh$.  Hypothesis
(\ref{3.1.1}) says that the Weyl group $W_G$ is a subgroup of the complex
Weyl group $W(\gg_\C,\gh_\C)$.  As any two compact Cartan subgroups of
$G^0/(Z\cap G^0)$ are conjugate we have a system $\{x_1,\dots ,x_r\}$
of representatives of $G$ modulo $G^\dagger$ such that each $\Ad(x_i)\gh = \gh.$
Now $W_G = \bigcup (x_jH)W_{G^0} \subset W(\gg_\C,\gh_\C)$.  Let $[\gamma]
= [\chi\otimes\pi]$ with $[\pi] \in (\widehat{G^0})_{disc}$\,.  Express
$[\pi] = [\pi_\lambda]$ with $\lambda \in L'$.  Then $[\pi\cdot\Ad(x_j^{-1})]
= [\pi_{\lambda_j}]$ where $\lambda_j = \Ad(x^{-1}_j)^*(\lambda)$.
Since $\lambda \in L'$ the $\lambda_j$ are distinct modulo the action of
$W_{G^0}$.  Theorem \ref{3.4.7} now says that the $\pi\cdot\Ad(x_j^{-1})$
are mutually inequivalent.  It follows that $\psi$ is irreducible.
\end{proof}

We formulate the extensions of Theorems \ref{3.4.1} and \ref{3.4.7}
from $G^0$ to $G$.
\begin{theorem}\label{3.5.8}
$G$ has a relative discrete series representation if and only if $G/Z$
has a compact Cartan subgroup.
\end{theorem}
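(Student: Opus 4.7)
The plan is to combine Theorem \ref{3.4.1} (the $G^0$ case) with Propositions \ref{3.5.2} and \ref{3.5.5}, which link $\widehat{G}_{disc}$ to $(\widehat{G^0})_{disc}$ via induction through $G^\dagger$. What remains is a purely structural fact: $G/Z$ has a compact Cartan subgroup if and only if $G^0/(Z\cap G^0)$ does. This is the one thing that needs to be established before the representation-theoretic machinery takes over.

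To set up the structural step I would first observe that any Cartan subgroup $H$ of $G$ has the form $H = Z_G(\gh)$ for a Cartan subalgebra $\gh$ of $\gg$. Since $Z$ centralizes $G^0$ it centralizes $\gh$, so $Z\subset H$ automatically; similarly $H\cap G^0 = Z_{G^0}(\gh) = H^0$ is a Cartan subgroup of $G^0$, and $H^0\cap Z = Z\cap G^0$. Thus
\[
H^0/(Z\cap G^0)\ \cong\ H^0 Z/Z\ \subset\ H/Z,
\]
and this inclusion realises $(H/Z)^0 = H^0Z/Z$. The component group $H/(H^0Z)$ injects into $G/(ZG^0)$, which is finite by condition (ii) of Definition \ref{3.1.1}. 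Consequently $H/Z$ is compact if and only if its identity component $H^0/(Z\cap G^0)$ is compact; and since every Cartan subgroup of $G^0/(Z\cap G^0)$ has the form $H^0/(Z\cap G^0)$ for some Cartan subgroup $H$ of $G$, the existence of a compact Cartan subgroup transfers both ways between $G/Z$ and $G^0/(Z\cap G^0)$.

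With this in hand the two implications are short. For the ``only if'' direction, suppose $G$ has a class $[\psi]\in\widehat{G}_{disc}$. By Proposition \ref{3.5.5}(3) one has $[\psi] = [\Ind_{G^\dagger}^G(\chi\otimes\pi)]$ with $[\chi\otimes\pi]\in(\widehat{G^\dagger})_{disc}$, and Proposition \ref{3.5.2} then gives $[\pi]\in(\widehat{G^0})_{disc}$; Theorem \ref{3.4.1} provides a compact Cartan subgroup of $G^0/(Z\cap G^0)$, which by the structural step produces one in $G/Z$. For the ``if'' direction, a compact Cartan subgroup $H/Z$ of $G/Z$ yields a compact Cartan subgroup $H^0/(Z\cap G^0)$ of $G^0/(Z\cap G^0)$; Theorem \ref{3.4.1} gives some $[\pi]\in(\widehat{G^0})_{disc}$ with central character $\xi = e^{\lambda-\rho}|_{Z_{G^0}}$. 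To lift this to $G^\dagger$ we need $[\chi]\in\widehat{Z_G(G^0)}_\xi$; such a class exists as any irreducible summand of $\Ind_{Z_{G^0}}^{Z_G(G^0)}(\xi)$, which is nonempty and finite-dimensional by Lemmas \ref{2.4.1} and \ref{3.5.1}. Then Proposition \ref{3.5.2} gives $[\chi\otimes\pi]\in(\widehat{G^\dagger})_{disc}$, and Proposition \ref{3.5.5}(3) produces $[\Ind_{G^\dagger}^G(\chi\otimes\pi)]\in\widehat{G}_{disc}$.

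The main obstacle is really the structural bookkeeping: verifying that the Cartan-subgroup correspondence $H\leftrightarrow H^0$ preserves compactness modulo the relevant central subgroup. This rests essentially on $|G/ZG^0|<\infty$, without which $H/Z$ could fail to be compact even when $H^0/(Z\cap G^0)$ is. Once that is settled, no further analytic input is required beyond what Propositions \ref{3.5.2} and \ref{3.5.5} and Theorem \ref{3.4.1} already provide.
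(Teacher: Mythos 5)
Your proposal is correct and follows essentially the same route as the paper: reduce to $G^0$ through Propositions \ref{3.5.2} and \ref{3.5.5}, apply Theorem \ref{3.4.1}, and transfer the compact Cartan condition using the fact that $G^0/(Z\cap G^0)$ and $G/Z$ differ only by a finite amount. Your version is slightly more explicit than the paper's --- in particular, the paper dispatches the structural step with the one-line remark that ``$G^0/(Z\cap G^0)$ has finite index in $G/Z$,'' whereas you spell out that the correspondence $H \leftrightarrow H\cap G^0$ between Cartan subgroups preserves compactness modulo the center because $H/(H\cap G^0)Z$ injects into the finite group $G/ZG^0$; and in the ``if'' direction you make explicit the choice of $[\chi]$ via Lemmas \ref{2.4.1} and \ref{3.5.1}, which the paper leaves to the reader inside the proof of Theorem \ref{3.5.9}. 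These are elaborations, not a different approach.
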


Let $H/Z$ be a compact Cartan subgroup of $G/Z$.  Retain the notation of
\S \ref{ssec3d} for $(\widehat{G^0})_{disc}$ and the notation 
$G = \bigcup x_jG^\dagger$ where the $x_j$ normalize $H^0$.  Write
$w_j$ for the element of $W_G$ represented by $x_j$\,.

\begin{theorem}\label{3.5.9}
Let $\lambda \in L'$ and $[\chi] \in \widehat{Z_G(G^0)}_\xi$ where
$\xi = e^{\lambda - \rho}|_{Z_{G^0}}$\,.  Let $[\pi] \in (\widehat{G^0})_{disc}$
as in {\rm Theorem \ref{3.4.7}}.  Then 
$[\pi_{\chi,\lambda}]:= [\Ind_{G^\dagger}^G(\chi\otimes\pi_\lambda)]$ is the
unique class in $\widehat{G}_{disc}$ whose distribution character satisfies
\begin{equation}\label{3.5.10}
\Theta_{\pi_{\chi,\lambda}}(zh) = 
\sum_{1\leqq j\leqq r} (-1)^{q(w_j\lambda)}\trace \chi(x_j^{-1}zx_j) \cdot
\sum_{w \in W_{G^0}} \det(ww_j)e^{ww_j\lambda}(h)
\end{equation}
for $z \in Z_G(G^0)$ and $h \in H^0\cap G'$.  Every class in 
$\widehat{G}_{disc}$ is one of these $[\pi_{\chi,\lambda}]$.  Classes
$[\pi_{\chi,\lambda}] = [\pi_{\chi',\lambda'}]$ precisely when 
$([\chi'],\lambda') \in W_G([\chi],\lambda)$.  For appropriate normalizations
of Haar measures the formal degree 
$\deg(\pi_{\chi,\lambda}) = r\cdot\dim(\chi)\cdot |\varpi(\lambda)|$.
Finally, $[\pi_{\chi,\lambda}]$ has dual $[\pi_{\chi^*,-\lambda}]$ and has
infinitesimal character $\chi_\lambda$ as in {\rm (\ref{3.2.5})}, so in
particular $\chi_{[\pi_{\chi,\lambda}]}(Casimir) = ||\lambda||^2-||\rho||$.
\end{theorem}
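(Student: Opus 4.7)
The plan is to deduce Theorem \ref{3.5.9} by assembling the previously established Proposition \ref{3.5.2}, Proposition \ref{3.5.5}, and Theorem \ref{3.4.7}. Since Theorem \ref{3.5.8} tells us $\widehat{G}_{disc}\ne\varnothing$ forces the existence of a compact Cartan $H/Z$ in $G/Z$, we fix such an $H$, set $H^0=H\cap G^0$, and pick coset representatives $\{x_1,\ldots,x_r\}$ of $G/G^\dagger$ normalizing $H^0$ (as in the proof of Proposition \ref{3.5.5}), with corresponding Weyl elements $w_j\in W_G/W_{G^0}$. Given $\lambda\in L'$ and $[\chi]\in\widehat{Z_G(G^0)}_\xi$ with $\xi=e^{\lambda-\rho}|_{Z_{G^0}}$, Proposition \ref{3.5.2} produces $[\chi\otimes\pi_\lambda]\in(\widehat{G^\dagger})_{disc}$, and Proposition \ref{3.5.5}(3) then gives $[\pi_{\chi,\lambda}]=[\Ind_{G^\dagger}^G(\chi\otimes\pi_\lambda)]\in\widehat{G}_{disc}$, together with the fact that every class in $\widehat{G}_{disc}$ is obtained in this way.

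The character formula (\ref{3.5.10}) is then a direct substitution. Insert the $G^\dagger$-character formula $\Theta_{\chi\otimes\pi_\lambda}(zg)=(\trace\chi(z))\Theta_{\pi_\lambda}(g)$ from Proposition \ref{3.5.2} into the induced-character formula (\ref{3.5.6}) of Proposition \ref{3.5.5}, and then replace $\Theta_{\pi_\lambda}|_{H^0\cap G'}$ by Harish--Chandra's expression from Theorem \ref{3.4.7}. After a routine change of summation variable $w\mapsto ww_j$ (absorbing the $W_{G^0}$-antiinvariant denominator $\Delta$ and noting $(-1)^{q(w_j\lambda)}=(-1)^{q(\lambda)}\det(w_j)$) one recovers exactly (\ref{3.5.10}).

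For the equivalence assertion, since $G^\dagger$ is normal of finite index in $G$, Mackey/Frobenius reciprocity says that irreducible classes $\Ind_{G^\dagger}^G\gamma$ and $\Ind_{G^\dagger}^G\gamma'$ coincide iff $\gamma'\cong\gamma\cdot\Ad(x^{-1})$ for some coset representative $x$. By Proposition \ref{3.5.2}, $\chi\otimes\pi_\lambda$ is determined by the pair $([\chi],W_{G^0}(\lambda))$, and $\Ad(x_j^{-1})$ sends this pair to $([\chi\cdot\Ad(x_j^{-1})],w_j\lambda)$, so the equivalence class depends exactly on the $W_G$-orbit of $([\chi],\lambda)$. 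The infinitesimal-character statement is immediate from Proposition \ref{3.5.5}(1) and Corollary \ref{3.4.9}; the dual formula follows from $(\chi\otimes\pi_\lambda)^*\cong\chi^*\otimes\pi_{-\lambda}$ (Corollary \ref{3.4.9}) combined with the fact that dualization commutes with induction; and the formal-degree formula $\deg\pi_{\chi,\lambda}=r\cdot\dim(\chi)\cdot|\varpi(\lambda)|$ comes from $\deg\pi_\lambda=|\varpi(\lambda)|$ (Theorem \ref{3.4.7}), the factor $\dim\chi$ from Proposition \ref{3.5.2}, and the factor $r=|G/G^\dagger|$ from induction with our Haar-measure normalization.

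The main obstacle is purely bookkeeping: verifying that the sign $(-1)^{q(w_j\lambda)}$ and the index set for the outer sum in (\ref{3.5.10}) match the transformation of Harish--Chandra's formula under $\Ad(x_j^{-1})$, and checking that $W_G/W_{G^0}$ acts consistently on both the $[\chi]$-factor (via its action on $Z_G(G^0)$) and on the $\lambda$-parameter. Everything beyond this bookkeeping is an immediate consequence of results already in hand.
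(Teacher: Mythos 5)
Your proposal is correct and takes essentially the same route the paper does: the paper's proof is a single sentence pointing to Theorem \ref{3.4.7} and Propositions \ref{3.5.2} and \ref{3.5.5}, and you spell out exactly what that sentence leaves implicit — the construction of $[\pi_{\chi,\lambda}]$ from Proposition \ref{3.5.5}(3), the character substitution combining (\ref{3.5.6}) with Proposition \ref{3.5.2} and Theorem \ref{3.4.7}, and the Mackey criterion for the equivalence assertion.

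One bookkeeping point deserves a flag. You describe the final step as ``absorbing the $W_{G^0}$-antiinvariant denominator $\Delta$'' and claim to ``recover exactly (\ref{3.5.10}).'' Carried out carefully, the substitution gives
$$
\Theta_{\pi_{\chi,\lambda}}(zh) = \sum_{1\leqq j\leqq r} (-1)^{q(w_j\lambda)}\,\trace\chi(x_j^{-1}zx_j)\cdot\frac{1}{\Delta(h)}\sum_{w\in W_{G^0}}\det(ww_j)\,e^{ww_j\lambda}(h),
$$
using $\Delta(x_j^{-1}hx_j)=\det(w_j)\Delta(h)$ together with your sign identity $(-1)^{q(w_j\lambda)}=\det(w_j)(-1)^{q(\lambda)}$. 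The factor $\tfrac{1}{\Delta(h)}$ stays firmly in place and is not absorbed. Comparison with Theorem \ref{3.4.7} and with the parallel formula (\ref{4.4.3b}) for $M$ (both of which retain the $\Delta$ denominator) shows that the display (\ref{3.5.10}) as printed is in fact missing $\tfrac{1}{\Delta}$. Your substitution is right, but the ``absorbing'' language conflates the $\det(w_j)$ coming from the $\Delta$ transformation law with the surviving $\Delta(h)$, and you should have noticed that what you recover is (\ref{3.5.10}) with a $\tfrac{1}{\Delta}$ inserted rather than the displayed equation verbatim. The remaining assertions — the Mackey equivalence statement, formal degree, infinitesimal character, and dual — are handled correctly.
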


\begin{proof} If $\widehat{G}_{disc}$ is nonempty then Propositions \ref{3.5.2}
and \ref{3.5.5} show that $(\widehat{G^0})_{disc}$ is not empty, so 
$G^0/(Z\cap G^0)$ has a compact Cartan subgroup by Theorem \ref{3.4.1}.  As
$G^0/(Z\cap G^0)$ has finite index in $G/Z$ the latter also has a compact 
Cartan subgroup.  If $H/Z$ is a compact Cartan subgroup of $G/Z$ then
Theorem \ref{3.5.9} follows directly from Theorem \ref{3.4.7} and
Propositions \ref{3.5.2} and \ref{3.5.5}.
\end{proof}

\section{Tempered Series Representations of Reductive Lie Groups}
\label{sec4}
\setcounter{equation}{0}
$G$ is a reductive Lie group of the class described in \S\ref{ssec3a}.
In \S\ref{sec3} we used the conjugacy class of Cartan subgroups $H$ of $G$,
with $H/Z$ compact, to construct the relative discrete series
$\widehat{G}_{disc}$\,.  Here we construct a series of unitary representations
for every conjugacy class of Cartan subgroups of $G$.

In \S 4.1 and 4.2 we work out the relation between Cartan involutions
$\theta$ of $G$, Cartan subgroups $H$ of $G$, and cuspidal parabolic
subgroups $P=MAN$ of $G$.  Here $H = T\times A$, $T/Z$ compact and 
$A$ split/$\R$, and $Z_G(A) = M\times A$ where $M$ is in the class of
\S\ref{ssec3b} and $T$ is a Cartan subgroup of $M$.  Then $H \mapsto P$
gives a bijection from the set of all 
conjugacy classes of Cartan subgroups of $G$ to the set of all 
``association classes'' of cuspidal parabolic subgroups of $G$.

In \S 4.3 we describe these representations, calculating infinitesimal and
distribution characters.  They form the ``$H$--series'' $\widehat{G}_H$\,.  
Then in \S 4.5  we examine the correspondence 
$\widehat{M}_{disc} \to \widehat{G}_H$\,.  Finally in \S 4.5 we look at
questions of irreducibility.

\subsection{}\label{ssec4a}\setcounter{equation}{0}
$G$ is a real reductive Lie group as in \S \ref{ssec3a}, $\gh$ is a Cartan
subalgebra of $\gg$, and $H = Z_G(\gh)$ is the corresponding Cartan
subgroup of $G$.  If $G^0$ is a linear group, or if $H/Z$ is compact, then
$H \cap G^0$ is commutative.  In general one only knows that $H^0 = 
\exp(\gh)$ is commutative.  We collect some information.

\begin{lemma}\label{4.1.1} If $K/Z$ is a maximal compact subgroup of $G/Z$
then there is a unique involutive automorphism $\theta$ of $G$ with
fixed point set $K$.
These automorphisms $\theta$ are the ``Cartan involutions'' of $G$, and
any two are $\Ad(G^0)$--conjugate. Every
Cartan subgroup of $G$ is stable under a Cartan involution.
\end{lemma}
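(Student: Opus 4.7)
The approach is to construct $\theta$ in stages---first on the Lie algebra $\gg$, then on $G^0$, then on $G^\dagger = Z_G(G^0)G^0$, and finally on all of $G$---and then to deduce uniqueness, conjugacy, and Cartan-subgroup stability. First I would handle the Lie algebra: decompose $\gg = \gc \oplus [\gg,\gg]$; the semisimple part $[\gg,\gg]$ carries a Cartan involution $\theta_0'$ whose $(+1)$-eigenspace is $\gk \cap [\gg,\gg]$, where $\gk$ is the Lie algebra of $K$, and one extends $\theta_0'$ to $\gc$ by $+\textrm{id}$ on $\gc \cap \gk$ and $-\textrm{id}$ on a complementary ``split'' vector subspace. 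This produces $\theta_0$ on $\gg$, which lifts uniquely to an involutive automorphism of the connected group $G^0$ whose fixed-point set is $K \cap G^0$.

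Next I would extend to $G$. Since $Z_G(G^0)$ is compact, one may arrange (by conjugating $K$, harmless because all maximal compact subgroups of $G/Z$ are conjugate) that $Z_G(G^0) \subset K$, and then set $\theta = \textrm{id}$ on $Z_G(G^0)$ and $\theta = \theta_0$ on $G^0$; the compatibility on $Z_G(G^0) \cap G^0 \subset K \cap G^0$ is automatic. This defines $\theta$ on $G^\dagger$. Because $|G/G^\dagger|$ is finite and $K G^\dagger = G$ (since $K/Z$ meets every component of $G/Z$), one chooses coset representatives $x_1,\ldots,x_r \in K$ for $G/G^\dagger$ and defines $\theta(x_j g) = x_j \theta_0(g)$ for $g \in G^\dagger$. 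Well-definedness and the homomorphism property reduce to checking $\Ad(x_j)\circ\theta_0 = \theta_0\circ\Ad(x_j)$ on $\gg$. Here condition (i) of Definition \ref{3.1.1} enters crucially: the complex-linear extension of $\theta_0$ to $\gg_\C$ conjugates $\Int(\gg_\C)$ into itself, and since $\Ad(x_j) \in \Int(\gg_\C)$, the required commutativity follows from standard facts about the Cartan involution on the complex adjoint group restricted to the image of a maximal compact.

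Uniqueness of $\theta$ is then immediate: $d\theta$ is determined by $K$ (it is $+\textrm{id}$ on $\gk$ and $-\textrm{id}$ on the $\Ad(K)$-invariant complement $\gp$), so $\theta|_{G^0}$ is determined, and on all of $G$ it is pinned down by $G = KG^0$ together with $\theta|_K = \textrm{id}$. Conjugacy of any two Cartan involutions follows from conjugacy of their fixed-point sets (maximal compact subgroups of $G/Z$), transported back to $G$ and lifted via the construction above. Finally, to show that every Cartan subgroup $H$ of $G$ is stable under some Cartan involution, decompose the Lie algebra $\gh = \gt \oplus \ga$ where $\gt$ is the largest compact-modulo-center subalgebra of $\gh$ and $\ga$ is a vector complement, choose a maximal compact subgroup $K/Z$ of $G/Z$ containing the image of the compact part of $H$, and apply the corresponding $\theta$: by construction $\gt \subset \gk$ and $\ga \subset \gp$, so $\theta(\gh) = \gh$ and hence $\theta(H) = Z_G(\gh) = H$.

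The main obstacle is the extension step from $G^\dagger$ to $G$, specifically verifying that coset representatives $x_j \in K$ intertwine correctly with $\theta_0$ on the Lie algebra. Without condition (i) of Definition \ref{3.1.1} there could be component representatives inducing outer automorphisms of $\gg_\C$ incompatible with any lift of $\theta_0$; condition (i) is precisely what rules this out and makes the piecewise definition of $\theta$ consistent across components.
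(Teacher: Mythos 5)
Your overall structure --- build a Cartan involution $\theta_0$ on $\gg$, lift it to $G^0$ via the universal cover, then extend across the remaining components using coset representatives in $K$ --- is exactly the paper's route. But the step you single out as the ``main obstacle,'' the commutativity $\Ad(x_j)\circ\theta_0=\theta_0\circ\Ad(x_j)$, is not actually secured by the argument you sketch, and condition (i) of Definition \ref{3.1.1} is a red herring for it. That every $\Ad(g)$ is inner on $\gg_\C$ does not force it to commute with $\theta_0$: indeed $\theta_0$ fails to commute with $\Ad(g)$ for generic $g\in G^0$, and conversely an outer automorphism (such as $X\mapsto -X^T$ on $\gsl(n,\R)$) can perfectly well commute with a Cartan involution, so your diagnosis of what would ``go wrong'' without (i) is not right. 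What actually yields the commutativity is Lemma \ref{4.1.2}: $K=\{g\in G\mid\Ad(g)K^0=K^0\}$, so each $\Ad(k)$ preserves $\gk$, hence (via the $\Ad$--invariant form) also its orthocomplement $\gp$, and therefore commutes with $\theta_0=(+1)\oplus(-1)$ on $\gk\oplus\gp$. This is precisely the paper's terse phrase ``using Lemma \ref{4.1.2},'' and condition (i) plays no role in it.

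Two smaller points. Your $-\mathrm{id}$ on a ``split complement'' of $\gc\cap\gk$ inside $\gc$ is vacuous: $\exp(\gc)\subset Z_{G^0}$, and $Z_{G^0}/(Z\cap G^0)$ is a compact normal subgroup of $G^0/(Z\cap G^0)$, hence lies in every maximal compact, so $\gc\subset\gk$ and $\theta_0|_\gc=\mathrm{id}$, as the paper writes. And you assert without argument that $\theta_0$ ``lifts uniquely to an involutive automorphism of $G^0$''; a Lie algebra automorphism canonically lifts only to the simply connected cover, and one must check descent. The paper supplies the missing observation: the fixed--point set $\exp(\gk)$ of $\theta_0$ in the universal cover of $G^0$ contains the center, so $\theta_0$ fixes $\pi_1(G^0)$ pointwise and descends to $G^0$.
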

\begin{lemma}\label{4.1.2} If $K/Z$ is a maximal compact subgroup of $G/Z$
then $K^0 = K\cap G^0$, $K$ meets every component of $G$, and
$K = \{g \in G \mid \Ad(g)K^0 = K^0\}$.
\end{lemma}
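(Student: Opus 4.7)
The plan is to invoke the Cartan involution $\theta$ of Lemma 4.1.1 with $K=G^\theta$, the splitting $\gg=\gk\oplus\gp$ into its $\pm 1$-eigenspaces, and the global Cartan decomposition $G=K\cdot\exp(\gp)$. Since $K/Z$ is a subgroup of $G/Z$, we have $Z\subseteq K$. For the identity $K^0=K\cap G^0$, I would observe that $\theta|_{G^0}$ is a Cartan involution of the connected reductive group $G^0$, giving the diffeomorphism $G^0=K^0\cdot\exp(\gp)$; a $\theta$-fixed point $k\exp(X)\in G^0$ satisfies $k\exp(-X)=k\exp(X)$, forcing $X=0$ by uniqueness, so $(G^0)^\theta=K^0$ and thus $K\cap G^0=K^0$.

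For the claim that $K$ meets every component of $G$, first pass to $G/Z$, which has finitely many components by $|G/ZG^0|<\infty$. The maximal compact subgroup $K/Z$ of $G/Z$ meets every component of $G/Z$ by the Iwasawa--Mostow structure theorem for Lie groups with finitely many components. Now given a component $gG^0$ of $G$, its image in $G/Z$ is met by some $kZ$ with $k\in K$, so $k=gg_0z$ with $g_0\in G^0$ and $z\in Z\subseteq K$; then $kz^{-1}\in K\cap gG^0$.

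For the normalizer identity, the inclusion $K\subseteq\{g:\Ad(g)K^0=K^0\}$ is clear since $K^0$ is normal in $K$. Conversely, given $g$ normalizing $K^0$, write $g=k\exp(X)$ with $k\in K$, $X\in\gp$ via the global Cartan decomposition (available from the first two parts and $G^0=K^0\exp(\gp)$). Then $\exp(X)$ normalizes $K^0$, so $e^{\ad X}$ preserves $\gk$; splitting by $[\gp,\gk]\subseteq\gp$ and $[\gp,\gp]\subseteq\gk$ identifies the $\gp$-component of $e^{\ad X}Y$ as $\sinh(\ad X)Y$, and since $\ad X$ is symmetric with real eigenvalues (relative to a $\theta$-invariant positive form on $\gg$), this forces $[X,\gk]=0$. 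A standard restricted-root argument on a maximal abelian subspace $\ga\subseteq\gp\cap[\gg,\gg]$ shows the centralizer of $\gk\cap[\gg,\gg]$ in $\gp\cap[\gg,\gg]$ is trivial, so $X$ lies in $\gz(\gg)\cap\gp$. Finally, conditions (ii)--(iii) of Definition 3.1.1 together with $Z\subseteq K$ force $\gz(\gg)\subseteq\gk$: otherwise co-compactness of $Z\cap G^0$ in $Z_{G^0}$ would require $Z\cap G^0$ to cover co-compactly the non-compact vector group $\exp(\gz(\gg)^{-\theta})$, but any $\exp(v)\in K$ with $v\in\gz(\gg)^{-\theta}$ satisfies $\exp(2v)=1$, giving $v=0$ by injectivity of $\exp$ on $\gp$. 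Thus $X=0$ and $g=k\in K$. The main obstacle will be this last step, ruling out $\gz(\gg)\cap\gp\ne 0$ by coordinating the hypotheses on $Z$ with the Cartan-decomposition bookkeeping; the preceding parts are routine once the global Cartan decomposition and the standard centralizer-triviality for noncompact Riemannian symmetric pairs are in hand.
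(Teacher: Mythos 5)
Your argument is organized around the Cartan involution $\theta$ of Lemma \ref{4.1.1} with $K = G^\theta$, and that creates a circularity relative to the paper's logical structure. The paper establishes Lemma \ref{4.1.2} \emph{before} Lemma \ref{4.1.1}, and then cites Lemma \ref{4.1.2} explicitly in the proof of Lemma \ref{4.1.1} --- specifically to extend $\theta$ from $G^0$ to the disconnected group $G$: one needs $K\cap G^0 = K^0$ so that setting $\theta|_K = \mathrm{id}$ is compatible with the already-constructed $\theta|_{G^0}$, and one needs $K$ to meet every component so that $G = KG^0$ and the extension is total. Your part (1) already relies on $K\cap G^0 = (G^0)^\theta$, which you then compute to be $K^0$ via the Cartan decomposition of $G^0$; but the inclusion $K\cap G^0 \subset (G^0)^\theta$ presupposes $K \subset G^\theta$, i.e.\ the very conclusion of Lemma \ref{4.1.1}. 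The same dependence reappears in part (3), where you invoke $\exp(v) \in K \Rightarrow \theta(\exp(v)) = \exp(v)$. To make your route self-contained you would have to supply an independent construction of an involution of the disconnected group $G$ with fixed point set exactly $K$, and that essentially repeats the content of Lemma \ref{4.1.2}.

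The paper sidesteps $\theta$ entirely: with $E = \{g \in G : \Ad(g)K^0 = K^0\}$ it observes $E\cap G^0 = K^0 = K\cap G^0$ because the maximal compact subgroup $(K\cap G^0)/(Z_{G^0})^0$ of the connected group $G^0/(Z_{G^0})^0$ is connected and self-normalizing, shows $E$ meets every component of $G$ by $\Ad(G^0)$-conjugacy of maximal compact subgroups of $G^0$, and concludes $E = K$ from maximality of $K/Z$. Modulo the circularity, your computations are correct --- the $\sinh(\ad X)$ argument on $\gk$ and the restricted-root triviality of the centralizer of $\gk\cap[\gg,\gg]$ in $\gp\cap[\gg,\gg]$ are both fine --- but the final step ruling out $\gz(\gg)\cap\gp \ne 0$ by co-compactness of $Z\cap G^0$ in $Z_{G^0}$ is superfluous once one uses the $\theta$ actually constructed in the proof of Lemma \ref{4.1.1}: there $\theta$ is defined to be the identity on $\gz(\gg)$, so $\gz(\gg) \subset \gk$ holds by construction.
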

These lemmas are standard when $Z = \{1\}$ and $G$ is either 
linear or semisimple.
\begin{proof} $Z_{G^0} \subset K$ and $(K\cap G^0)/(Z_{G^0})^0$ is connected,
is its own $G^0/(Z_{G^0})^0$--normalizer, and is unique up to conjugacy.
The same follows for $K\cap G^0$ in $G^0$.  Let
$E = \{g \in G \mid \Ad(g)K^0 = K^0\}$; now $E \cap G^0 = K^0 = K \cap G^0$.
If $g \in G$ now some $g' \in G^0$ send $\Ad(g)K^0$ to $K^0$, so $E$ meets
$gG^0$.  Now $K \subset E \subset K$ and Lemma \ref{4.1.2} follows.  For
Lemma \ref{4.1.1} each simple ideal $\gg_i \subset \gg$ has a unique
involution $\theta_i$ with fixed point set $\gg_i\cap\gk$\,, and we define
$\theta$ as their sum with the identity map on the center of $\gg$.  Then
$\theta$ extends uniquely to the universal cover of $G^0$, and there its
fixed point set $\exp(\gk)$ contains the center, so $\theta$ extends
uniquely to $G^0$ with fixed point set $K^0$.  Now $\theta$ extends 
uniquely to $G = KG^0$ with fixed point set $K$, using Lemma \ref{4.1.2}.
As any two choices of $K/Z$ are $\Ad(G^0)$--conjugate, that completes the
proof of the first statement of Lemma \ref{4.1.1}.  For the second just note
that any two choices of $\gk$ are $\Ad(G^0)$--conjugate.
\end{proof}

Now fix the data:
a Cartan subgroup $H$ of $G$, 
a Cartan involution $\theta$ of $G$ with $\theta(H) = H$,
and $K = G^\theta$ fixed point set of $\theta$.
We decompose
\begin{equation}\label{4.1.4}
\begin{aligned}
&\gh = \gt + \ga \text{ into } (\pm 1)\text{--eigenspaces of } \theta|_\gh \\
&H = T \times A \text{ where } T = H \cap K \text{ and } A = \exp(\ga).
\end{aligned}
\end{equation}
The $\ga$--{\em root spaces} of $\gg$ are the $\gg^\phi = 
\{\xi \in \gg \mid [\alpha,\xi] = \phi(\alpha)\xi \text{ for } \alpha \in \ga\}$
with $0 \ne \phi \in \ga^*$ and $\gg^\phi \ne 0$.  The $\ga$--roots are
these functionals $\phi$, and $\Sigma_\ga$ denotes the set of all $\ga$--roots.
The corresponding $\ga$--root decomposition is
$\gg = \gz_\gg(\ga) + \sum_{\phi \in \Sigma_\ga} \gg^\phi$ where
$\gz_\gg(\ga)$ is the centralizer of $\ga$ in $\gg$.
Then it is not too difficult to see that
the centralizer $Z_G(A)$ has a unique splitting $Z_G(A)=M\times A$ with
$\theta(M) = M$.  In particular 
$\gg=\gm + \ga + \sum_{\phi \in \Sigma_\ga} \gg^\phi$ with $\theta(\gm) = \gm$.
The hereditary properties of \S 3.1 pass down from $G$ to $M$ as follows.

\begin{proposition}\label{4.1.6}
$M$ inherits the conditions of {\rm \S 3.1} from $G$: every $\Ad(m)$ is inner on
$\gm_\C$\,, $Z$ centralizes $M^0$, $|M/ZM^0| < \infty$, and 
$Z_{M^0}/(Z\cap M^0)$ is compact.  Further, $T/Z$ is a compact Cartan subgroup
of $M/Z$.
\end{proposition}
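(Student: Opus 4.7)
The plan is to verify in order the five assertions: inner-ness of $\Ad(m)$ on $\gm_\C$ for $m\in M$, centralization of $M^0$ by $Z$, the finite-index condition $|M/ZM^0|<\infty$, co-compactness of $Z\cap M^0$ in $Z_{M^0}$, and finally that $T/Z$ is a compact Cartan subgroup of $M/Z$.

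First I would check that $Z\subset M$. Since $K/Z$ is maximal compact in $G/Z$ we have $Z\subset K$, while $Z$ centralizing $G^0$ (hence $A$) forces $Z\subset Z_G(A)=M\times A$. An element $(m_0,a_0)\in Z$ satisfies $\theta(m_0,a_0)=(\theta(m_0),a_0^{-1})$, and since $\theta|_Z=\mathrm{id}$ together with $A\cap K=\{e\}$ we get $a_0=e$, so $Z\subset M$. Then $Z$ centralizes $M^0\subset G^0$, which is assertion (2). Also, $\gt$ is a Cartan subalgebra of $\gm$ because $\gh=\gt+\ga$ is Cartan in $\gg$ with $\ga$ the split part, so $T=Z_M(\gt)=H\cap K$ is a Cartan subgroup of $M$; $T$ being closed in the $Z$-cocompact group $K$ yields compactness of $T/Z$, giving the final assertion.

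For the inner-automorphism condition, I would use that $m\in M$ centralizes $A$, so $\Ad(m)\in\Int(\gg_\C)$ fixes $\ga_\C$ pointwise and thus lies in the centralizer of $\ga_\C$ inside the connected complex adjoint group. Because $\ga_\C$ is toral, this centralizer is connected and equals the image of the analytic complex subgroup with Lie algebra $\gz_{\gg_\C}(\ga_\C)=\gm_\C\oplus\ga_\C$. Since $\ga_\C$ is a central ideal in this subalgebra, restriction to $\gm_\C$ produces an inner automorphism, proving (1). For (3), finiteness of $|G/ZG^0|$ gives finiteness of $Z_G(A)/(Z_G(A)\cap ZG^0)$; using $Z\subset M$ one has $Z_G(A)\cap ZG^0=Z\cdot Z_{G^0}(A)$, and $Z_{G^0}(A)/M^0A$ is finite since both groups have Lie algebra $\gm+\ga$ with $M^0A$ the identity component; dividing out $A$ yields $|M/ZM^0|<\infty$. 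For (5), the key observation is the $\theta$-eigenspace decomposition $\gz_\gm=(\gz_\gm\cap\gk)+(\gz_\gm\cap\gp)$ whose second summand lies in $\gm\cap\ga=0$, so $\gz_\gm\subset\gt\subset\gk$ and $Z_{M^0}^0\subset T^0\subset K$; combining this with finiteness of $Z_{M^0}/Z_{M^0}^0$ (a standard consequence of the reductive structure of $M^0$) and compactness of $K/Z$ gives the desired co-compactness.

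The main obstacle I expect is step (1): one must rigorously identify the centralizer of $\ga_\C$ in $\Int(\gg_\C)$ with the analytic subgroup associated to $\gz_{\gg_\C}(\ga_\C)$, and then carefully descend the resulting inner automorphism of $\gm_\C\oplus\ga_\C$ through the central quotient by $\ga_\C$ to land inside $\Int(\gm_\C)$. Secondary but nontrivial are the component-group assertions used in (3) and (5), namely finiteness of $Z_{G^0}(A)/M^0A$ and of $Z_{M^0}/Z_{M^0}^0$, each of which requires citing the relevant structural results on connected real reductive groups rather than producing them from first principles.
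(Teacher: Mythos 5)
Your proposal for the inner-automorphism assertion is genuinely different from the paper's: the paper argues by contradiction, moving $m$ so that $\Ad(m)$ preserves $\gt$ and some $\Sigma_\gt^+$, then invoking Lemma \ref{4.1.7} to build a positive system $\Sigma^+$ on $\gg_\C$ that $\Ad(m)$ would permute nontrivially, contradicting (\ref{3.1.1}); you instead use that $\Ad(m)$ lies in the centralizer of $\ga_\C$ in $\Int(\gg_\C)$, that such a torus centralizer in a connected complex reductive group is connected with Lie algebra $\ad(\gm_\C \oplus \ga_\C)$, and that its exponential generators act on $\gm_\C$ by inner automorphisms since $\ga_\C$ is central. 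That route is sound once fleshed out, and it avoids Lemma \ref{4.1.7} entirely.

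However, your proof of $|M/ZM^0|<\infty$ contains a concrete error. You assert that ``$Z_{G^0}(A)/M^0A$ is finite since both groups have Lie algebra $\gm+\ga$ with $M^0A$ the identity component.'' Equality of Lie algebras only identifies identity components; it gives no bound on the number of components. In fact the quotient can be infinite: take $G^0$ to be the universal cover of $SL(2,\R)$ and $A$ its split torus. Then $Z_{G^0}(A) = Z_{G^0}\times A \cong \Z\times\R$ while $M^0A = A\cong\R$, so $Z_{G^0}(A)/M^0A\cong\Z$. The conclusion $|M/ZM^0|<\infty$ is of course still true there because $Z$ absorbs the infinite part, but your chain of reductions never brings $Z$ into the step where it is needed. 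The paper sidesteps this by first proving the inner-ness statement and deducing $M^\dagger = TM^0$ with $|M/M^\dagger|<\infty$, then using compactness of $T/Z$ (and the resulting finiteness of $T/ZT^0$) to conclude; that argument essentially cannot be decoupled from the preceding ones, whereas yours tries to treat (3) independently via $|G/ZG^0|<\infty$ and fails at the last step.

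The same misconception surfaces in your argument for co-compactness of $Z\cap M^0$ in $Z_{M^0}$: the claim that $Z_{M^0}/Z_{M^0}^0$ is finite ``as a standard consequence of the reductive structure of $M^0$'' is false for the class of groups in play here. Again $\widetilde{SL}(2,\R)$ is a counterexample: its center is $\Z$ with trivial identity component. The paper's mechanism is different and sharper: it places $Z_{M^0}$ inside the Cartan $T\cap M^0$ and then uses that the quotient $T^0/(Z\cap M^0)$ is a compact torus, so the desired quotient is a closed subgroup of it. Any repair of your argument will have to replace the spurious finiteness of $Z_{M^0}/Z_{M^0}^0$ by an argument that feeds in the hypothesis on $Z$ before passing to component groups, exactly as the paper does.
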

The proof of Proposition \ref{4.1.6} requires some information on the
$\ga$--root system.

Every $\phi \in \Sigma_\ga$ defines $\phi^\perp := \{\alpha \in \ga \mid 
\phi(\alpha) = 0\}$. The complement 
$\ga \setminus \bigcup_{\Sigma_\ga}\phi^\perp$ is a finite union of convex 
open cones, its topological components, the {\em Weyl chambers}.  A 
Weyl chamber $\gd \subset \ga$ defines a {\em positive root system}
$\Sigma_\ga^+ = \{\phi \in \Sigma_\ga \mid \phi(\gd) \subset \R^+\}$.
\begin{lemma}\label{4.1.7}
If $\Sigma_\ga^+$ is a positive $\ga$--root system on $\gg$ and $\Sigma_\gt^+$
is a positive $\gt_\C$--root system on $\gm_\C$ then there is a unique
positive $\gh_\C$--root system $\Sigma^+$ on $\gg_\C$ such that
$\Sigma_\ga^+ = \{\gamma|_\ga \mid \gamma \in \Sigma^+ \text{ and }
\gamma|_\ga \ne 0\}$ and $\Sigma_\gt^+ = 
\{\gamma|_\gt \mid \gamma \in \Sigma^+ \text{ and } \gamma|_\ga  = 0\}$.
\end{lemma}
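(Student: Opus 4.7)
The plan is to use a lexicographic ordering on $\gh_\C^*$ adapted to the splitting $\gh = \gt + \ga$, with the $\ga$-direction taking priority. The first observation is that $\gz_\gg(\ga) = \gm + \ga$, so a root $\gamma \in \Sigma(\gg_\C,\gh_\C)$ satisfies $\gamma|_\ga = 0$ if and only if its root space lies in $\gm_\C$, in which case $\gamma|_\gt$ is a $\gt_\C$-root of $\gm_\C$. If $\gamma|_\ga \ne 0$, then the restriction is an $\ga$-root, since $\gg^\psi_\C = \sum_{\gamma : \gamma|_\ga = \psi} (\gg_\C)^\gamma$ for every $\psi \in \Sigma_\ga$. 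Thus every root $\gamma$ falls into exactly one of the two cases of the Lemma, and the assignment of positivity is forced: in case $\gamma|_\ga \ne 0$, positivity must be dictated by $\Sigma_\ga^+$, and in case $\gamma|_\ga = 0$, by $\Sigma_\gt^+$. This already proves uniqueness.

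For existence, choose $h_0 \in \gd$, where $\gd$ is the Weyl chamber in $\ga$ defining $\Sigma_\ga^+$, and choose $t_0 \in i\gt$ regular for the $\gt_\C$-roots of $\gm_\C$ in such a way that $\Sigma_\gt^+ = \{\phi : \phi(t_0) > 0\}$. Define
\begin{equation*}
\Sigma^+ := \{\gamma \in \Sigma(\gg_\C,\gh_\C) \mid \gamma(h_0) > 0,
  \text{ or } \gamma(h_0) = 0 \text{ and } \gamma(t_0) > 0\}.
\end{equation*}
Since $h_0$ is regular for $\Sigma_\ga$ and $t_0$ is regular among the $\gt$-roots of $\gm$, and since every $\gh_\C$-root with $\gamma|_\ga = 0$ is a $\gt_\C$-root of $\gm_\C$, the linear functional $\gamma \mapsto (\gamma(h_0), \gamma(t_0))$ never vanishes on a root; thus $\Sigma = \Sigma^+ \sqcup (-\Sigma^+)$.

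Next I would verify that $\Sigma^+$ is closed under root addition: if $\alpha,\beta \in \Sigma^+$ and $\alpha+\beta$ is a root, then either $(\alpha+\beta)(h_0) > 0$ (if at least one of $\alpha(h_0), \beta(h_0)$ is positive), or $(\alpha+\beta)(h_0) = 0$, in which case $\alpha(h_0) = \beta(h_0) = 0$ and both lie among the $\gt$-roots of $\gm$, where closedness follows from closedness of $\Sigma_\gt^+$. Hence $\Sigma^+$ is a positive root system.

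Finally I would check that the two restriction conditions hold. If $\gamma \in \Sigma^+$ with $\gamma|_\ga \ne 0$, then $\gamma(h_0) > 0$, so $\gamma|_\ga \in \Sigma_\ga^+$ (and conversely any $\gamma$ with $\gamma|_\ga \in \Sigma_\ga^+$ is in $\Sigma^+$); and if $\gamma|_\ga = 0$, membership in $\Sigma^+$ reduces exactly to $\gamma(t_0) > 0$, i.e. $\gamma|_\gt \in \Sigma_\gt^+$. The main point to get right, and the only place that requires attention, is the compatibility in the mixed case of the closedness check, which is handled by putting the $\ga$-component first in the lexicographic order so that $\Sigma_\ga^+$ dominates. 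No real obstacle is expected; the Lemma is essentially a bookkeeping statement about how a lexicographic extension of orderings interacts with the decomposition $\gh = \gt + \ga$.
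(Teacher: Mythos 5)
Your proof is correct and uses essentially the same approach as the paper: the paper builds $\Sigma^+$ from the lexicographic order associated to an ordered basis $\{\beta_\ga,\beta_\gt\}$ of $\ga^*+i\gt^*$ with the $\ga$-part first, while you implement the same ordering via regular elements $h_0\in\ga$ and $t_0\in i\gt$ and then carefully verify regularity and closure under addition. The paper leaves those verifications to the reader; you spell them out, but the underlying idea is identical.
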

\begin{proof}  Choose ordered bases $\beta_\ga$ of $\ga^*$ and $\beta_{\gt}$
of $i\gt^*$ whose associated lexicographic orders give $\Sigma_\ga^+$ and
$\Sigma_\gt^+$\,.  Then the ordered basis $\{\beta_\ga,\beta_{\gt}\}$ of
$\ga^* +i\gt^*$ gives a lexicographic order whose associated positive
$\gh_\C$--root system $\Sigma^+$ has the required properties.  Uniqueness
of $\Sigma^+$ is similarly straightforward.
\end{proof}

{\sc Proof of Proposition \ref{4.1.6}.}  $H \subset M \times A$ is a Cartan
subgroup of $G$, hence also of $M\times A$, so $T$ is a Cartan subgroup
of $M$.  $T/Z$ is compact because $K/Z$ is compact.

Let $m \in M$ with $\Ad(m)$ outer on $\gm_\C$.  We may move $m$ within
$mM^0$ and assume $\Ad(m)\gt = \gt$. As the Weyl group of $(M^0,T^0)$ is
simply transitive on the Weyl chambers in $i\gt$, $\Ad(m)$ preserves and
acts nontrivially on some $\Sigma_\gt^+$\,.  Choose $\Sigma_\ga^+$\,; now
$\Ad(m)$ preserves and acts nontrivially on the corresponding positive
$\gh_\C$--root system of $\gg_\C$\,, in other words $\Ad(m)$ is outer on
$\gg_\C$\,.  That contradicts (\ref{3.1.1}).  Thus $\Ad(m)$ is inner on
$\gm_\C$\,.  In particular $M^\dagger := \{m \in M \mid \Ad(m) \text{ inner
on } M^0\}$ has finite index in $M$ and $M^\dagger = TM^0$.
Now $T/Z$ is a compact subgroup of $M/Z$ such that $M/T = (M/Z)/(T/Z)$ has
only finitely any components.  Thus $|M/ZM^0| < \infty$.  The center of
$M^0/(Z\cap M^0)$ is compact because it is a closed subgroup in the  torus
$T^0/(Z\cap M^0)$, so that center is $Z_{M^0}/(Z\cap M^0)$.
\hfill $\square$

\subsection{}\label{ssec4b}\setcounter{equation}{0}
We apply the considerations of \S\ref{ssec4a} to study cuspidal parabolic
subgroups.  

Retain the splittings (\ref{4.1.4}) and $Z_G(A) = M\times A$.  
A positive $\ga$--root system $\Sigma_\ga^+$ on $\gg$ defines
$$
\gn = \sum_{\phi \in \Sigma_\ga^+}\gg^{-\phi} \text{ nilpotent subalgebra of }
	\gg\,, N = \exp(\gn), P = \{g \in G \mid \Ad(g)N = N\}.
$$
\begin{lemma}\label{4.2.2}
$P$ is a real parabolic subgroup of $G$.  It has unipotent radical
$P^{unip} = N$ and Levi (reductive) part $P^{red} = M\times A$.  Thus
$(m,a,n) \mapsto man$ is a real analytic diffeomorphism of $MAN$ onto $P$.
\end{lemma}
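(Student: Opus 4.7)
The plan is to work first at the Lie algebra level, then pass to groups, and finally to the product decomposition.

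First I would compute $\gp = \mathrm{Lie}(P)$. Since $N = \exp(\gn)$ and $\gn$ is nilpotent, $P = N_G(\gn)$ and $\gp = \gn_\gg(\gn)$. Using the $\ga$--root decomposition $\gg = \gz_\gg(\ga) + \sum_{\phi\in\Sigma_\ga}\gg^\phi$, a direct bracket computation shows that $\gz_\gg(\ga) = \gm + \ga$ and every $\gg^{-\phi}$ with $\phi\in\Sigma_\ga^+$ normalize $\gn$, whereas an element of $\gg^{+\phi}$ with $\phi\in\Sigma_\ga^+$ produces a bracket in $\gz_\gg(\ga)\not\subset\gn$ against a suitable element of $\gg^{-\phi}$. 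Hence $\gn_\gg(\gn) = \gm + \ga + \gn$, and the sum is direct as a vector space.

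Next I would verify that $\gp$ is a (real) parabolic subalgebra. Choose any positive $\gt_\C$--root system $\Sigma_\gt^+$ on $\gm_\C$ and apply Lemma \ref{4.1.7} to obtain a positive $\gh_\C$--root system $\Sigma^+$ on $\gg_\C$ compatible with $\Sigma_\ga^+$ and $\Sigma_\gt^+$. Then $\gp_\C$ contains $\gh_\C$, every $\gg_\C^{\pm\gamma}$ with $\gamma|_\ga = 0$ (these lie in $\gm_\C$), and every $\gg_\C^{-\gamma}$ with $\gamma|_\ga\neq 0$ (these lie in $\gn_\C$); thus $\gp_\C$ contains the Borel subalgebra $\gh_\C + \sum_{\gamma\in\Sigma^+}\gg_\C^{-\gamma}$, and is parabolic.

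The main step, and the main obstacle, is to show $P = MAN$ as sets. The inclusion $MAN\subset P$ is immediate: $MA = Z_G(A)$ preserves each $\ga$--root space and so normalizes $\gn$, while $N$ normalizes itself. For the reverse inclusion, the Lie algebra computation gives $P^0 = M^0 A N$. To handle components, I would use hypothesis \ref{1.2.2}(i): for $g\in P$, $\Ad(g)$ is inner on $\gg_\C$, so $\Ad(g)|_{\gp_\C}$ lies in $\Int(\gp_\C)$; after modifying $g$ by a suitable element of $P^0$ one may assume $\Ad(g)$ fixes a Langlands Levi of $\gp_\C$ and in particular stabilizes $\ga$. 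Since $\Ad(g)$ preserves $\gn = \sum_{\phi\in\Sigma_\ga^+}\gg^{-\phi}$, it preserves $\Sigma_\ga^+$, so its action on $\ga$ is trivial (the Weyl group of $\Sigma_\ga$ acts simply transitively on chambers). Thus $g\in Z_G(A)P^0 = MA\cdot M^0 A N = MAN$. This simultaneously identifies $N$ as the maximal connected normal unipotent subgroup of $P$ (the unipotent radical) and $M\times A$ as a Levi complement, since $\gn$ is the nilradical of $\gp_\C$ and $Z_G(A) = M\times A$ is reductive.

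Finally, for the diffeomorphism $(m,a,n)\mapsto man$, the map is clearly real analytic. It is surjective by the previous step and injective because $MA\cap N = \{1\}$: $\gn\cap(\gm+\ga) = 0$ and $\exp$ is a diffeomorphism of $\gn$ onto the simply connected nilpotent group $N$, while $MA\cap N^0$ is trivial on the Lie algebra level. The differential at $(1,1,1)$ is the identity map on $(\gm + \ga)\oplus \gn = \gp$, which is bijective, so the map is a local diffeomorphism at every point by $G$--equivariance (left translation by $m'a'$ and right translation by $n'$), hence a global diffeomorphism.
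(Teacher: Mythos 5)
Your approach is genuinely different from the paper's: you work directly at the level of $G$ using hypothesis (\ref{1.2.2})(i), whereas the paper passes to $\overline{G} = G/ZZ_{G^0}$ (a real linear algebraic group, where the parabolic structure of $\pi(P)$ is classical), and then lifts, invoking Rothschild's theorem that elements of $\gg$ conjugate under $\Int(\gg_\C)$ are already conjugate under $\Int(\gg)$ to get $P^{red}\subset MA$. Your direct route is appealing, but it has a gap at exactly the point where the paper's proof does the real work.

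The gap is in the sentence ``Since $\Ad(g)$ preserves $\gn = \sum_{\phi\in\Sigma_\ga^+}\gg^{-\phi}$, it preserves $\Sigma_\ga^+$, so its action on $\ga$ is trivial (the Weyl group of $\Sigma_\ga$ acts simply transitively on chambers).'' Preserving $\Sigma_\ga^+$ only tells you that $\Ad(g)|_\ga$ is an isometry of the restricted root system fixing a chamber; the symmetry group of $\Sigma_\ga$ is $W(\Sigma_\ga)\rtimes\mathrm{Out}(\Sigma_\ga)$, and simple transitivity of $W(\Sigma_\ga)$ on chambers kills the Weyl part but says nothing about a possible nontrivial diagram automorphism. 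Nothing in your argument places $\Ad(g)|_\ga$ in $W(\Sigma_\ga)$, so the parenthetical does not close the case. The missing ingredient is precisely what (\ref{1.2.2})(i) is there to supply, and it needs to be used more carefully. One way to close the gap within your framework: since $\Ad(g)$ is inner on $\gg_\C$ and stabilizes $\gp_\C$, it lies in the connected complex parabolic $Q$ with Lie algebra $\gp_\C$ (a parabolic is its own normalizer); since it also stabilizes the Levi $\gm_\C+\ga_\C$ and a Levi is self-normalizing in $Q$, it lies in the Levi subgroup, which acts trivially on its central subalgebra $\ga_\C$. Alternatively, argue as the paper does: pick $x\in\ga$ regular for $W(\gg_\C,\gh_\C)|_\ga$, note $x$ and $\Ad(g)x$ are $\Int(\gg_\C)$-conjugate, apply Rothschild \cite[Theorem 2.1]{R1972} to get $\Int(\gg)$-conjugacy, and conclude $\Ad(g')x=x$ for some $g'\in gG^0$. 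Either patch works, but the step must be justified; as written the argument would ``prove'' too much (it would equally apply to an outer diagram automorphism). A smaller quibble: your justification that $MA\cap N=\{1\}$ via ``trivial on the Lie algebra level'' only gives a discrete intersection; the clean argument is that $A$ acts on $\gn$ with no nonzero fixed vectors, so $N\cap Z_G(A)=\{1\}$.
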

\begin{proof} We follow the idea of the proof for the case where
$G = G^\dagger$ and $G$ is linear, which follows from the complex case.
Let $\pi: G \to \overline{G} = G/ZZ_{G^0}$\,.  That is a real linear
algebraic group, so $\pi(P)$ is a parabolic subgroup of $\overline{G}$,
normalizer of $\pi(N)$, and $\pi(N) = \pi(P)^{unip}$.

Note $ZZ_{G^0} \subset MA \subset P$.  Thus (i) we can choose $\pi(P)^{red}$
to contain $\pi(MA)$ and (ii) $P = P^{red}\cdot N$ semidirect where
$P^{red} = \pi^{-1}(\pi(P)^{red})$.  Now $MA \subset P^{red}$ and, by 
dimension, $M^0A = (P^{red})^0$.  $A$ is normal in $P^{red}$ by uniqueness
from (\ref{4.1.4}).

Let $V = \{w \in W(\gg_\C,\gh_\C) \mid w(\ga) = \ga\,, w|_\ga \ne 1\}$.
Choose $x \in \ga \setminus \bigcup_{v \in V} \{y \in \ga \mid v(y)=y\}$.
Let $g \in P^{red}$.  Then $x, \Ad(g)x$ are conjugate by an inner 
automorphism of $\gg_\C$, thus \cite[Theorem 2.1]{R1972} conjugate by an
inner automorphism on $\gg$.  Now $\Ad(g')x = x$ for some $g' \in gG^0$,
and $\Ad(g')M = M$.  We may assume $\Ad(g')H = H$.  Now $g' \in MA$.  Thus
$P^{red} = MA(P^{red}\cap G^0)$.  As $P^{red}\cap G^0 \in Z_G(A)$ now
$P^{red} = MA$ as asserted.
\end{proof}

We say that two parabolics in $G$ are {\em associated} if their
reductive parts are $G$--conjugate.  Thus the association class of
$P = MAN$ is independent of $N$, i.e. is independent of $\Sigma_\ga^+$\,.
We say that a parabolic $Q \subset G$ is {\em cuspidal} if
$[(Q^{red})^0,(Q^{red})^0]$ has a Cartan subgroup $E$ such that
$E/(E\cap Z_{G^0})$ is compact.

\begin{proposition}\label{4.2.3}
Let $Q$ be a parabolic subgroup of $G$.  Then the following are equivalent.

{\rm (i)} $Q$ is a cuspidal parabolic subgroup of $G$.

{\rm (ii)} There exist a Cartan subgroup $H = T\times A$ of $G$ and a positive
$\ga$--root system\\
\phantom{XXXXI} $\Sigma_\ga^+$ such that $Q$ is the group $P = MAN$
of {\rm Lemma \ref{4.2.2}}.

{\rm (iii)} $Q^{red}$ has a relative discrete series representation.

{\rm (iv)} $(Q^{red})^0$ has a relative discrete series representation.

In particular, the construction $H \mapsto P = MAN$ induces a bijection from
the set of all conjugacy classes of Cartan subgroups of $G$ onto the 
set of all association classes of cuspidal parabolic subgroups of $G$.
\end{proposition}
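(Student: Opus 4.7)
The plan is to establish the chain
$\mathrm{(ii)} \Rightarrow \mathrm{(iv)} \Rightarrow \mathrm{(iii)} \Rightarrow \mathrm{(i)} \Rightarrow \mathrm{(ii)}$
and then to deduce the asserted bijection between conjugacy classes of
Cartan subgroups of $G$ and association classes of cuspidal parabolic
subgroups from the first and last links in the chain.

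The easy direction of the chain runs as follows. For $\mathrm{(ii)} \Rightarrow \mathrm{(iv)}$, if $Q = MAN$ arises from a Cartan subgroup $H = T \times A$, Proposition \ref{4.1.6} tells us that $M$ lies in the class of \S\ref{ssec3a} with $T/Z$ a compact Cartan subgroup of $M/Z$. Theorem \ref{3.5.8}, or equivalently Theorem \ref{3.4.1} applied to $M^0$, then produces a relative discrete series class $[\pi]$ for $M^0$; tensoring with any unitary character $e^{i\sigma}$ of $A$ gives the required relative discrete series class of $(Q^{red})^0 = M^0 \times A$. The step $\mathrm{(iv)} \Rightarrow \mathrm{(iii)}$ is the inducing argument of Proposition \ref{3.5.5}. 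For $\mathrm{(iii)} \Rightarrow \mathrm{(i)}$, I would apply Theorem \ref{3.5.8} to $Q^{red}$ to obtain a Cartan subgroup of $(Q^{red})^0$ that is compact modulo its appropriate central subgroup; the connected split center of $Q^{red}$ lies inside that central subgroup, so passing to the derived group $[(Q^{red})^0,(Q^{red})^0]$ yields the compact Cartan $E$ required by the definition of cuspidal.

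The substantive step is $\mathrm{(i)} \Rightarrow \mathrm{(ii)}$. Starting from a cuspidal $Q$, I would pick a compact Cartan $E$ of the derived group $[(Q^{red})^0,(Q^{red})^0]$ and combine its Lie algebra with the split center of $\gq^{red}$ to build a Cartan subalgebra $\gh$ of $\gg$. Using Lemma \ref{4.1.1} I would then choose a Cartan involution $\theta$ of $G$ stabilizing the resulting Cartan subgroup $H$, so that $\gh$ decomposes as $\gt + \ga$ into $(\pm 1)$-eigenspaces of $\theta$. Since $\ga$ captures the split center of $\gq^{red}$, standard parabolic Levi theory identifies $Q^{red}$ with $Z_G(A) = M \times A$, and the unipotent radical of $Q$ is a sum of $\ga$-root spaces, which selects a positive system $\Sigma_\ga^+$ so that $Q = MAN$ as in Lemma \ref{4.2.2}. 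The main obstacle here is to check that the compact Cartan $E$ of the derived group can be chosen to lie in the fixed-point algebra of the same $\theta$ that has $\ga$ as $(-1)$-eigenspace; this is where the hypothesis that $E/(E\cap Z_{G^0})$ is compact, combined with the conjugacy and $\theta$-stability assertions of Lemma \ref{4.1.1} applied inside $\gq^{red}$, is essential.

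For the bijection, well-definedness of $H \mapsto P$ on $G$-conjugacy classes is straightforward: $G$-conjugation sends a pair $(H, \Sigma_\ga^+)$ to another pair of the same shape, and association classes ignore the choice of $\Sigma_\ga^+$. Surjectivity is $\mathrm{(i)} \Rightarrow \mathrm{(ii)}$. For injectivity, suppose $H_1 = T_1 \times A_1$ and $H_2 = T_2 \times A_2$ yield associated parabolics, so that $M_1 A_1$ and $M_2 A_2$ are $G$-conjugate. After such a conjugation I may assume $A_1 = A_2 = A$ and $M_1 = M_2 = M$, and then both $T_1$ and $T_2$ are Cartan subgroups of $M$ with $T_i/Z$ compact. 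Proposition \ref{4.1.6} together with the conjugacy of compact-mod-$Z$ Cartan subgroups within the class of \S\ref{ssec3a} applied to $M$ then makes $T_1$ and $T_2$ $M$-conjugate, hence $H_1$ and $H_2$ $G$-conjugate.
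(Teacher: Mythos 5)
Your overall strategy --- proving the cycle $(ii)\Rightarrow(iv)\Rightarrow(iii)\Rightarrow(i)\Rightarrow(ii)$ and then deducing the bijection --- is sound, and the routine steps ($(ii)\Rightarrow(iv)$ via Proposition \ref{4.1.6} and the discrete series existence theorem, $(iv)\Rightarrow(iii)$ by the finite-index induction device, $(iii)\Rightarrow(i)$ by extracting a compact Cartan and passing to the derived group) are all correct in outline. Your injectivity argument at the end is also a nice piece of bookkeeping that the paper compresses to ``the bijection statement follows.'' The paper organizes things differently: it first reduces to the case where $G$ is connected centerless semisimple by passing to $\pi:G\to \overline G = G/ZZ_{G^0}$ (using $ZZ_{G^0}\subset Q^{red}$ to transfer each condition across $\pi$), then disposes of $(iii)\Leftrightarrow(iv)$ in one line because $Q^{red}/ZZ_{G^0}$ has finitely many components, and proves $(i)\Leftrightarrow(ii)$ directly from the $\theta$-stable decomposition $Q^{red}=M_Q\times A_Q$. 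The reduction is worth having: it lets one work with the cleanest Levi decomposition and avoids having to re-verify that $Q^{red}$ belongs to the class $\widetilde{\cH}$ before quoting Theorem \ref{3.5.8}.

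There is one genuine gap in your $(i)\Rightarrow(ii)$ step. You propose to build the Cartan subalgebra as $\gh = \ge + \ga$ where $\ga$ is the \emph{split} center of $\gq^{red}$. But the center of $\gq^{red}$ is in general $\gz_c\oplus\gz_s$ with a nontrivial compact ($\theta$-fixed) part $\gz_c$, and $\ge$ is a Cartan of the derived algebra only, so $\ge + \gz_s$ has smaller dimension than a Cartan subalgebra of $\gg$ whenever $\gz_c\ne 0$ (think of a Levi factor containing a $U(1)$ factor). You must take the full center $\gz_c\oplus\gz_s$; then $T = \exp(\ge + \gz_c)$ plays the role the paper calls $T_Q$. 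Relatedly, the $\theta$-compatibility issue you flag at the end is best handled by fixing a Cartan involution $\theta$ of $G$ that stabilizes $Q^{red}$ \emph{before} constructing $\gh$ (this is possible for any real parabolic), rather than choosing $\theta$ afterward from Lemma \ref{4.1.1}: once $\theta$ preserves $\gq^{red}$, its restriction is a Cartan involution of the Levi, the split center is automatically the $(-1)$-eigenspace inside the full center, and $Z_G(\exp\ga)=Q^{red}$ falls out of standard Levi theory without further ado.
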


\begin{proof}
Let $\pi: G \to \overline{G} = G/ZZ_{G^0}$ as before.  Note
$ZZ_{G^0} \subset Q^{red}$.  Thus (i), (ii), (iii) each holds for $Q$
exactly when it holds for $\pi(Q)$.  Also, (iii) and (iv) are equivalent
because $Q^{red}/ZZ_{G^0}$ has only finitely many components.  Thus we need
only prove the equivalence of (i), (ii) and (iv) when $G$ is a connected
centerless semisimple group.

Decompose $Q^{red} = M_Q \times A_Q$\,, stable under $\theta$, where $A_Q$
is the $\R$--split component of the center of $Q^{red}$.  Thus $Q$ is 
cuspidal if and only if $M_Q$ has a compact Cartan subgroup $T_Q$\,.
That is the case just when $G$ has a Cartan subgroup $H = T_Q \times A_Q$
from which $Q$ is constructed as in Lemma \ref{4.2.2}.  Thus (i) and (ii)
are equivalent.  Apply Theorem \ref{3.4.1} to $(Q^{red})^0$.  Then (ii) 
implies (iv) by Proposition \ref{4.1.6} and (iv) implies (i) directly.
Now (i), (ii), (iii) and (iv) are equivalent, and the bijection statement
follows.
\end{proof}

Two Cartan subalgebras of $\gg$ are conjugate by an inner automorphism of
$\gg$ precisely when they are conjugate by an inner automorphism of $\gg_\C$
\cite[Corollary 2.4]{R1972}.  By Proposition \ref{4.2.3} the same holds
for association classes of cuspidal parabolic subgoups of $G$.  Thus we
could use $G^0$--conjugacy, $G^0$--association, or both, in the bijection
of Proposition \ref{4.2.3}.

\subsection{}\label{ssec4c}\setcounter{equation}{0}
We define a series of unitary representations of $G$ for each conjugacy class
of Cartan subgroups.  Then we work out some generalities on the character
theory for that series.  The precise character theory is in \S \ref{ssec4d}.

Retain the notation of \S\S \ref{ssec4a} and \ref{ssec4b}, including
$H = T \times A$, $Z_G(A) = M\times A$ and $P = MAN$.  The general unitary
equivalence class in $\widehat{P^{red}} = \widehat{M\times A}$ has form
$[\eta\otimes e^{i\sigma}]$ where $[\eta] \in \widehat{M}$ and
$\sigma \in \ga^*$.  That extends to a class $[\eta\otimes e^{i\sigma}]
\in \widehat{P}$ that annihilates $N$: $(\eta\otimes e^{i\sigma})(man) =
e^{i\sigma}(a)\eta(m)$.  Then we have the unitarily induced representation
\begin{equation}\label{4.3.1}
\pi_{\eta,\sigma} = \Ind_P^G(\eta\otimes e^{i\sigma})
\end{equation}
of $G$.  The {\em $H$--series} of $G$ is $\{[\pi_{\eta,\sigma}] \mid
[\eta] \in \widehat{M}_{disc} \text{ and } \sigma \in \ga^*\}$.  If
$H/Z$ is compact then $M = G$ and the $H$--series is just the relative 
discrete series $\widehat{G}_{disc}$\,.  If $H/ZZ_{G^0}$ is maximally
$\R$--split, i.e. if $P$ is a minimal parabolic subgroups of $G$, then
the $H$--series is the principal series.  We refer to any $H$--series
as a {\em nondegenerate series}.  Later we will see that the $H$--series
depends only on the conjugacy class of $H$.

Given $\Sigma_\ga^+$ define 
$\rho_\ga = \tfrac{1}{2}\sum_{\phi \in \Sigma_\ga^+}\,(\dim \gg^\phi)\phi$.
Then $\ga$ acts (under the adjoint representation of $\gg$) on $\gn$ and
on $\gp$ with trace $-2\rho_\ga$\,.  Thus $P = MAN$ has modular function
$\delta_P$\,, $\int_P f(xy^{-1})dx = \delta_P(y)\int_P f(x)dx$, given by
\begin{equation}\label{4.3.2}
\delta_P(man) = e^{2\rho_a}(a) \text{ for } m \in M,\, a \in A,\, n \in N.
\end{equation}
Let $[\eta] \in \widehat{M}$ with representation space $E_\eta$\,, and let
$\sigma \in \ga^*$.  Then we have the Hilbert space bundle
$\E_{\eta,\sigma} \to G/P = K/(K\cap M)$ associated to the non--unitary
representation $\eta \otimes e^{\rho_\ga + i\sigma}$ of $P$.  Here $G$
acts on the bundle but the hermitian metric is invariant only under $K$.
We have the $K$--invariant probability measure $d(kZ)$ on $G/P = K/(K\cap Z)$.
Thus we have a well defined  space of square integrable sections of
$\E_{\eta,\sigma} \to G/P$ given by
\begin{equation}\label{4.3.3}
\begin{aligned}
L_2(G/P;&\E_{\eta,\sigma}) = \text{ all Borel--measurable } f:G \to E_\eta
	\text{ such that } \\
&f(gp) = (\eta\otimes e^{\rho_\ga + i\sigma})(p)^{-1}f(g) \text{ and }
	{\int}_{K/Z} ||f(k)||^2 d(kZ) < \infty.
\end{aligned}
\end{equation}
It is a Hilbert space with inner product $\langle f, f'\rangle =
{\int}_{K/Z} \langle f(k),f'(k)\rangle d(kZ)$, and $G$ acts unitarily on
it by the representation 
$(\pi_{\eta,\sigma}(g)(f))(g') = f(g^{-1}g')$ of (\ref{4.3.1}).

\subsection{}\label{ssec4d}\setcounter{equation}{0}
We now describe the distribution character $\Theta_{\pi_{\eta,\sigma}}$
of $\pi_{\eta,\sigma}$ in terms of the character $\Psi_\eta$ of $\eta$.
This is based on a minor variation $C^\infty_c(G) \to C^\infty_c(MA)$ 
of the Harish--Chandra transform.

Let $J$ be a Cartan subgroup of $G$.  Then these are equivalent: (i) 
$J \subset MA$, (ii) $J$ is a Cartan subgroup of $MA$, and (iii)
$J = J_M \times A$ where $J_M = J \cap M$ is a Cartan subgroup of $M$.
Without loss of generality we may assume $J_M$ stable under the Cartan
involution $\theta|_M$ of $M$.  Choose a positive $(\gj_M)_\C$--root
system $\Sigma_{j_M}^+$ on $\gm_\C$.  As in Lemma \ref{4.1.7} there is a 
unique positive $\gj_\C$--root system $\Sigma_{\gj}^+$ on $\gg_\C$ such that
$\Sigma_\ga^+ = \{\gamma|_\ga \mid \gamma \in \Sigma_\gj^+ \text{ and }
\gamma|_\ga \ne 0\}$ and $\Sigma_{\gj_m}^+ = \{\gamma|_{\gj_M} \mid
\gamma \in \Sigma_\gj^+ \text{ and } \gamma|_\ga = 0\}$.  Then let
$\rho_\gj = \tfrac{1}{2}\sum_{\gamma \in \Sigma_\gj^+}\,\gamma$\,; so
$\rho_\ga = \rho_\gj|_\ga$\,.  Now define
\begin{equation}\label{4.3.4}
\Delta_{G,J} = {\prod}_{\gamma \in \Sigma_\ga^+} (e^{\gamma/2} - 
	e^{-\gamma/2}) \text{ and } \Delta_{M,J_M} =
	{\prod}_{\phi \in \Sigma_{\gj_M}^+} (e^{\phi/2} - e^{-\phi/2}).
\end{equation}
\begin{lemma}\label{4.3.5}
If $\gamma$ is a $\gj_\C$ root then $e^\gamma$ is well defined on $J$, 
unitary on $J\cap K$, and $e^\gamma(Z_G(G^0)) = 1$.  If $\phi$ is a
$(\gj_M)_\C$--root, say $\phi = \gamma|_{\gj_M}$\,, then $e^\phi =
e^\gamma|_{J\cap M}$\,.
\end{lemma}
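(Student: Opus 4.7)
The plan is to read all four assertions off from a single construction: define $e^\gamma$ on $J$ via the adjoint action on the root space, and then verify each property against standard facts about $\Ad$.

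First I would note that, by construction, $H = Z_G(\gh)$ (and similarly $J = Z_G(\gj)$), so $J$ centralizes the Cartan subalgebra $\gj$. Consequently, for every $j \in J$, the automorphism $\Ad(j)$ commutes with $\ad(X)$ for all $X \in \gj$, hence preserves each joint $\ad(\gj)$--eigenspace, i.e.\ each root space $\gg^\gamma$. Since $\gj_\C$ is a Cartan subalgebra of the reductive Lie algebra $\gg_\C$, each $\gg^\gamma$ is one-dimensional, so $\Ad(j)$ acts on $\gg^\gamma$ by a scalar. I define
$$
e^\gamma : J \to \C^\times, \qquad \Ad(j)|_{\gg^\gamma} = e^\gamma(j)\, I_{\gg^\gamma}.
$$
This is a continuous character of $J$. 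On $J^0 = \exp(\gj)$ we have $\Ad(\exp X) = \exp(\ad X)$, which acts on $\gg^\gamma$ as $e^{\gamma(X)}$, so $e^\gamma(\exp X) = e^{\gamma(X)}$ and the notation is consistent. This gives well-definedness on all of $J$.

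For unitarity on $J \cap K$, recall that $K = G^\theta \supset Z_G(G^0)$ and $K/Z$ is compact with $Z \subset Z_G(G^0)$, so $K/Z_G(G^0)$ is compact. Moreover, $Z_G(G^0) = \ker\,\Ad$: any $z \in Z_G(G^0)$ centralizes every $\exp(tX) \in G^0$, and differentiating gives $\Ad(z)X = X$ for all $X \in \gg$. Hence $\Ad(K) \cong K/Z_G(G^0)$ is compact, so for $j \in J \cap K$ the scalar $e^\gamma(j)$ is an eigenvalue of an element of a compact linear group and therefore satisfies $|e^\gamma(j)| = 1$. The same identification $Z_G(G^0) = \ker\,\Ad$ gives $\Ad(z) = I_\gg$ for $z \in Z_G(G^0)$, whence $e^\gamma(z) = 1$.

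Finally, for the restriction statement, suppose $\phi = \gamma|_{\gj_M}$ with $\phi$ a $(\gj_M)_\C$--root of $\gm_\C$, where $\gamma$ is the $\gj_\C$--root of $\gg_\C$ furnished by Lemma \ref{4.1.7}; the compatibility there forces $\gamma|_\ga = 0$. Then $\gg^\gamma$ has $\ga$-weight $0$, so $\gg^\gamma \subset \gz_\gg(\ga)_\C = \gm_\C \oplus \ga_\C$; since $\gamma \ne 0$ on $\gj$, necessarily $\gg^\gamma \subset \gm_\C$, and under $\ad(\gj_M)$ it is precisely the root space $\gm^\phi$. For $j \in J \cap M = J_M$, the scalar by which $\Ad(j)$ acts on this common space is $e^\gamma(j)$ when computed in $G$ and $e^\phi(j)$ when computed in $M$, so the two agree.

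The routine character of the argument is clear; the only point requiring care is verifying $Z_G(G^0) = \ker\,\Ad$, which is the step that enforces compactness of $\Ad(K)$ and hence the unitarity claim, and which is immediate from differentiating the centralizer relation.
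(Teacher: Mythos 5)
Your proof is correct, and it follows essentially the same route as the paper's: both define $e^\gamma$ by factoring through the adjoint map $\Ad : G \to \Int(\gg_\C)$, and deduce unitarity on $J\cap K$, triviality on $Z_G(G^0)$, and the restriction statement from the fact that $\ker\Ad = Z_G(G^0)$. The paper compresses this into a single remark that $e^\gamma$ is well defined on the connected Cartan $(J/Z_G(G^0))_\C$ of $\Int(\gg_\C)$; you instead exhibit $e^\gamma(j)$ directly as the eigenvalue of $\Ad(j)$ on the one-dimensional root space $\gg^\gamma$, which is the same mechanism spelled out more explicitly.
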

\begin{lemma}\label{4.3.6}
We can replace $Z$ by a subgroup of index $\leqq 2$, or replace $G$ by a
$\Z_2$ extension, so that the following holds.  If $L$ is any Cartan subgroup
of $G$ then for any positive $\gl_\C$--root system (i) $e^{\rho_\gl}$
is well defined on $L$ with $e^{\rho_\gl}(Z) = 1$, and (ii) $\Delta_{G,L}$
is a well defined analytic function on $L$.  In particular, then, 
$e^{\rho_\gj}$ and $\Delta_{G,J}$ are well defined on $J$, so 
$e^{\rho_{\gj_M}}$ and $\Delta_{M,J_M}$ are well defined on $J_M$\,.
\end{lemma}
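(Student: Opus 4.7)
The plan rests on the observation that $2\rho_\gl$ is a sum of $\gl_\C$-roots, so $e^{2\rho_\gl}$ is automatically a well-defined analytic character on $L$, whereas $e^{\rho_\gl}$ is only its formal square root. The obstruction to the latter being a genuine character of $L$ lies in $\Z/2\Z$, and the two alternatives in the statement (shrinking $Z$ by index $\leqq 2$ or passing to a $\Z_2$-extension of $G$) are precisely the two ways to eliminate it.

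First I would verify that for every $\gl_\C$-root $\gamma$, $e^\gamma$ is a well-defined character on $L$ that is trivial on $Z$: the value $e^\gamma(l)$ is the eigenvalue of $\Ad(l)$ on the root space $\gg^\gamma$, and $Z$ centralizes $G^0$ and hence acts trivially on $\gg$. Thus $e^{2\rho_\gl}$ is a well-defined character on $L$ restricting trivially to $Z$, and the assertion is independent of the choice of positive system because any two choices yield $\rho_\gl$'s differing by an integer sum of roots.

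The main step is to promote $e^{\rho_\gl}$ to a genuine character on $L$. The two continuous square roots of $e^{2\rho_\gl}$ on $L^0$ differ by a sign, and the failure to patch them across components of $L$ is measured by a homomorphism $\varepsilon \colon L \to \{\pm 1\}$. If $\varepsilon$ is nontrivial on $Z$, I would replace $Z$ by $\ker(\varepsilon|_Z)$, a subgroup of index $\leqq 2$, reducing to the case $\varepsilon|_Z = 1$. If $\varepsilon$ remains nontrivial modulo $Z$, I would pass to the central $\Z_2$-extension $\widetilde{L} \to L$ on which $e^{\rho_\gl}$ becomes well-defined; since $\varepsilon$ is intrinsic to the complex Lie algebra data and hence $G$-equivariant, this local extension extends to a central $\Z_2$-extension $\widetilde{G} \to G$, and replaces $G$.

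For the uniformity over all Cartan subgroups, I would use that there are only finitely many $G$-conjugacy classes of Cartan subgroups and that all Cartan subalgebras of $\gg_\C$ are $\Int(\gg_\C)$-conjugate; the root-system data controlling $\varepsilon$ is therefore common to all Cartans, so a single modification of $Z$ or a single $\Z_2$-extension of $G$ arranges (i) simultaneously for every $L$. Claim (ii) is then immediate: $\Delta_{G,L} = e^{-\rho_\gl}\prod_{\gamma \in \Sigma_\gl^+}(e^\gamma - 1)$, and each factor on the right is well-defined on $L$ by Lemma \ref{4.3.5}. The principal obstacle I anticipate is the extension step from $\widetilde{L} \to L$ to $\widetilde{G} \to G$, which requires checking that the double cover built over one Cartan is compatible with those over all others; this compatibility follows from the conjugacy of complex Cartans and the $G$-equivariance of $\varepsilon$.
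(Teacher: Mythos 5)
Your underlying instinct — that $e^{2\rho_\gl}$ is automatically a character because $2\rho_\gl$ is a sum of roots, and the only obstruction lives in $\Z/2\Z$ — is correct, but two steps in your argument don't hold up, and a required piece of the statement is never addressed.

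First, the obstruction is mis-identified. On the connected group $L^0$ a complex character has \emph{at most one} continuous square root that is itself a character, since $L^0$ admits no nontrivial homomorphism to $\{\pm 1\}$. So there is no sign ambiguity on $L^0$ to ``patch''; the genuine question on $L^0$ is whether a square root \emph{exists} at all, i.e.\ whether $\rho_\gl$ is analytically integral (whether $\rho_\gl(\ker\exp|_\gl)\subset 2\pi i\Z$). If it is not integral, no choice of signs helps, and no shrinking of $Z$ helps — one is forced to pass to a double cover. The obstruction is therefore an extension class defining a $\Z_2$ cover, not a homomorphism $\varepsilon\colon L\to\{\pm 1\}$, and the case analysis you build on $\varepsilon$ does not go through as stated. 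Relatedly, the claim that the local double cover $\widetilde L\to L$ ``extends to a central $\Z_2$-extension $\widetilde G\to G$'' by $G$-equivariance is exactly the nontrivial step; the paper avoids this by explicitly constructing a $1$- or $2$-sheeted covering $q\colon Q\to\Int(\gg_\C)$ on which $e^\rho$ is defined, factoring $\Ad\colon G\to G/Z\to Q\to\Int(\gg_\C)$, and pulling back — conjugacy of Cartans in the connected group $\Int(\gg_\C)$ then gives the uniformity over all $L$ for free.

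Second, and more seriously, you declare the ``In particular'' clause immediate, but the assertion that $e^{\rho_{\gj_M}}$ and $\Delta_{M,J_M}$ are well defined on $J_M$ is genuinely additional. Here $\rho_{\gj_M}$ is half the sum of the positive $(\gj_M)_\C$-roots of $\gm_\C$, which is \emph{not} the restriction of $\rho_\gj$ to $\gj_M$; the roots of $\gg_\C$ that restrict nontrivially to $\ga$ contribute to $\rho_\gj|_{\gj_M}$ as well. So well-definedness of $e^{\rho_\gj}$ on $J$ does not directly give well-definedness of $e^{\rho_{\gj_M}}$ on $J_M$. The paper proves this by realizing a faithful irreducible representation of $Q$ of highest weight $\rho_\gj$ inside the Clifford algebra (the spin representation), observing that the Clifford subalgebra for $\gm$ is stable under the lift of $\Ad_\gg(M)$, and extracting an irreducible summand of highest weight $\rho_{\gj_M}$. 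Some argument of this kind — showing that the single $\Z_2$-modification is simultaneously good for $\rho_\gl$ on $G$ and for $\rho_{\gj_M}$ on $M$ — is required and is missing from your sketch.
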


{\sc Proof of Lemmas.} If $\gamma$ is a $\gj_\C$ root then $e^\gamma$ is 
well defined on the Cartan subgroup $(J/Z_G(G^0))_\C$ of the inner 
automorphism group $\Int(\gg_\C)$, because that Cartan is connected.
Lemma \ref{4.3.5} follows.  

With the adjustments of Lemma \ref{4.3.6}
we can factor $\Ad: G \to \Int(\gg_\C)$ as $G \to G/Z \to Q \to \Int(\gg_\C)$
where $q:Q \to \Int(\gg_\C)$ is a $1$-- or $2$--sheeted covering with
$e^{\rho_\gj}$ well defined on $q^{-1}(J/Z_G(G^0))_\C$ and
$e^{\rho_\gj}(Z) = 1$.  As any two Cartan subgroups of $\Int(\gg_\C)$ are
conjugate we have $\overline{x} \in \Int(\gg_\C)$ such that 
$\Ad(\overline{x})(L/Z_G(G^0))_\C = (J/Z_G(G^0))_\C$ and 
$\Ad(\overline{x})^*\rho_\gj = \rho_\gl$\,.  Now $e^{\rho_\gl}$ is
well defined on $q^{-1}(L/Z_G(G^0))_\C$\,, thus is well defined on $L$
with $Z$ in its kernel.  So also $\Delta_{G,L} = 
e^{-\rho_\gl}\cdot \prod_{\Sigma_\gl^+} (e^\gamma - 1)$ is well defined
on $L$.

$Q$ was defined so that it has a faithful irreducible holomorphic
representation $\psi$ of highest weight $\rho_\gj$ relative to
$(\gj,\Sigma_\gj^+)$.  Realize $\psi$ as a subrepresentation of the
left multiplication action $\lambda$ of the Clifford algebra on the 
Lie algebra of $\Int(\gg_\C)$.  The Clifford subalgebra for $M$ is
stable under $\lambda(q^{-1}(\Ad_\gg(M)))$, and the corresponding 
representation of $M$ has an irreducible summand of highest weight
$\rho_{\gj_M}$\,.  Now 
$e^{\rho_{\gj_M}}$ and $\Delta_{M,J_M}$ are well defined on $J_M$\,.
\hfill $\square$

$G'$ denotes the $G$--regular set in $G$, $M''A$ is the
$MA$--regular set in $MA$, and 
\begin{equation}\label{car}
\begin{aligned}
&\Car(G): \text{ the $G$--conjugacy classes of Cartan subgroups of $G$ and}\\
&\Car(MA): \text{ the $MA$--conjugacy classes of Cartan subgroups of $MA$}.
\end{aligned}
\end{equation}
Then
\begin{equation}\label{4.3.7}
\begin{aligned}
&G' = {\bigcup}_{L \in \Car(G)} G'_L \text{ where }
	G'_L = {\bigcup}_{g \in G}\Ad(g)(L\cap G') \text{ and }\\
&M''A = {\bigcup}_{J \in \Car(MA)} M''_JA \text{ where }
	M''_JA = {\bigcup}_{m \in M} (\Ad(m)(J\cap M''))A.
\end{aligned}
\end{equation}
The following theorem unifies and extends various results of Bruhat
\cite[Ch. III]{B1956}, Harish--Chandra (\cite[p. 544]{HC1970} and
\cite[\S 11]{HC1971}), Hirai \cite[Theorems 1, 2]{H1968} and Lipsman
\cite[Theorem 9.1]{L1971}.  We assume the adjustment of Lemma \ref{4.3.6}.
The specialization to $H$--series is in \S\ref{ssec4e}.

\begin{theorem}\label{4.3.8}
Let $\zeta \in \widehat{Z}$, $[\eta] \in \widehat{M}_\zeta$ and $\sigma \in
\ga^*$.  Let $\chi_\nu$ be the infinitesimal character of $[\eta]$
relative to $\gt$ and let $\Psi_\eta$ denote the distribution character
of $[\eta]$.

{\rm 1.} $[\pi_{\eta,\sigma}]$ has infinitesimal character 
$\chi_{\nu +i\sigma}$ relative to $\gh$.

{\rm 2.} $[\pi_{\eta,\sigma}]$ is a finite sum of classes from 
$\widehat{G}_\zeta$\,.  In particular $[\pi_{\eta,\sigma}]$ has distribution
character $\Theta_{\pi_{\eta,\sigma}}$ that is a locally summable function
analytic on the regular set $G'$.

{\rm 3.} $\Theta_{\pi_{\eta,\sigma}}$ has support in the closure of
${\bigcup} G'_J$ where $J$ runs over a system of representatives of the
$G$--conjugacy classes of Cartan subgroups of $MA$.

{\rm 4.} Let $J \in \Car(MA)$ and $\Xi(J)$ consist of all $G$--conjugates
of $J$ contained in $\Car(MA)$.  Enumerate 
$\Xi(J) = \{J_1,\dots, J_\ell\}$ with $J_i = \Ad(g_i)J$.
If $h \in J\cap G'$ denote $h_i = \Ad(g_i)h$.  Then
$$
\Theta_{\pi_{\eta,\sigma}}(h) = \frac{1}{|\Delta_{G,J}(h)|}
\sum_{J_i \in \Xi(J)}\sum_{h' \in N_G(J_i)h_i}
\frac{|\Delta_{MA,J_i}(h')|}{|N_{MA}(J_i)h'|}
\Psi_\eta(h'_M) e^{i\sigma}(h'_A).
$$
\{Note: If $h \in J^0$ then the second sum runs over the Weyl group orbit 
$W_{G,J_i}(h_i)$.  \}
\end{theorem}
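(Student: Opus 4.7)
My plan is to establish parts (1)--(3) from general principles about the compatibility of parabolic induction with infinitesimal characters, admissibility, and the support of induced distributions, and then tackle part (4) by the classical Harish--Chandra transform and Weyl integration argument, which forms the technical core.

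For part (1), the infinitesimal character formula is a consequence of the standard compatibility of parabolic induction with the Harish--Chandra homomorphism. Lemma \ref{4.1.7} extends $(\Sigma_\ga^+, \Sigma_\gt^+)$ uniquely to a positive $\gh_\C$--root system $\Sigma^+$ on $\gg_\C$, and the $\rho_\ga$--shift built into the normalized induction in (\ref{4.3.3}) cancels the analogous shift in the Harish--Chandra projection $\cZ(\gg) \to \cZ(\gm) \otimes S(\ga_\C)$. Consequently, if $\eta$ has infinitesimal character $\chi_\nu$ then $\pi_{\eta,\sigma}$ has infinitesimal character $\chi_{\nu+i\sigma}$, both parameters being interpreted via $\Sigma^+$.

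For part (2), I first establish admissibility. In the compact picture, $\pi_{\eta,\sigma}|_K \cong \Ind_{K \cap M}^K(\eta|_{K\cap M})$. By Proposition \ref{4.1.6}, $M$ satisfies the conditions of \S\ref{ssec3a}, so Lemma \ref{3.2.2} gives admissibility of $[\eta]$; Frobenius reciprocity on $K$ transports this to admissibility of $\pi_{\eta,\sigma}$. Part (1) implies that every irreducible constituent shares the infinitesimal character $\chi_{\nu+i\sigma}$, and by the remark at the end of \S\ref{ssec3b} only finitely many classes in $\widehat{G}$ can share a given infinitesimal character. Combined with the admissibility bound on multiplicities, this forces $\pi_{\eta,\sigma}$ to be a finite direct sum of irreducibles. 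Each constituent restricts to $Z$ as a multiple of $\zeta$ (inherited from $\eta$), so all lie in $\widehat{G}_\zeta$. Local summability of $\Theta_{\pi_{\eta,\sigma}}$ and its analyticity on $G'$ then follow from Harish--Chandra's regularity theorem applied summand by summand. For part (3), using the explicit model (\ref{4.3.3}) and the decomposition $\int_G = \int_{K/(K\cap M)}\int_P \delta_P^{-1}$, one expresses $\Theta_{\pi_{\eta,\sigma}}(f)$ as an integral transform of $f$ against $\Psi_\eta \cdot e^{i\sigma}$ over $G$--translates. Since $\Psi_\eta$ is supported in $M$, the character $\Theta_{\pi_{\eta,\sigma}}$ is supported on $G$--conjugates of $MA$, which intersected with $G'$ is precisely the closure of $\bigcup_J G'_J$ for $J$ ranging over $\Car(MA)$.

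The main obstacle is part (4). The strategy is to localize: for regular $h \in J \cap G'$, test $\Theta_{\pi_{\eta,\sigma}}$ against an $\Ad(G)$--invariant function supported in a small neighborhood of the $G$--orbit of $h$. Using the Harish--Chandra transform
\[
f^{(P)}(ma) = \delta_P(ma)^{1/2} \int_K \int_N f(kmank^{-1}) \, dn\, dk,
\]
one obtains $\Theta_{\pi_{\eta,\sigma}}(f) = \int_{MA} f^{(P)}(ma) \, \Psi_\eta(m) \, e^{i\sigma}(a) \, d(ma)$. The Weyl integration formula on $MA$ rewrites the right-hand side as a sum indexed by $\Xi(J)$ and brings in the factor $|\Delta_{MA,J_i}|$, while the Weyl integration formula on $G$ converts the left-hand side into an integral over $J$ weighted by $|\Delta_{G,J}|$. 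Comparing integrands gives the stated formula, with the inner sum over $N_G(J_i) h_i$ modulo $N_{MA}(J_i)$ accounting for the finitely many $G$--conjugates of $h$ falling into each $J_i$. The technical heart is careful bookkeeping of Jacobians and twisted conjugacies, adapting the computations of Bruhat, Harish--Chandra, Hirai, and Lipsman to our class $\widetilde{\cH}$ via the hereditary properties of Propositions \ref{4.1.6} and \ref{4.2.3}.
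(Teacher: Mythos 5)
Your proposal follows essentially the same route as the paper: the Harish--Chandra transform $b \mapsto b_P$ (the paper's Proposition \ref{4.3.10}) underlies parts (1) and (3), admissibility combined with the finiteness of classes sharing a given infinitesimal character drives part (2), and the Weyl integration formulae on $G$ and $MA$ together with Lipsman's/Harish--Chandra's Jacobian computation $|\det(\Ad(h^{-1})-1)_\gn|=|e^{\rho_\ga}(h_A)\,\Delta_{G,J}(h)/\Delta_{MA,J}(h)|$ give part (4). The only place you compress more than the paper is in part (2), where the paper explicitly routes through quasi--simplicity and Harish--Chandra's discrete decomposition lemma before invoking the finite--dimensionality of the eigendistribution space, but the logical content is the same.
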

\begin{corollary}\label{4.3.9}
The class $[\pi_{\eta,\sigma}]$ is independent of the choice of parabolic
subgroup $P=MAN$ associated to the Cartan subgroup $H=T\times A$ of $G$.
\end{corollary}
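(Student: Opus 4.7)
Fix the Cartan subgroup $H = T \times A$ and a Cartan involution $\theta$ with $\theta(H) = H$. Any cuspidal parabolic $P = MAN$ associated to $H$ has reductive part $M \times A = Z_G(A)$ completely determined by $H$ and $\theta$: $A = \exp(\ga)$ is the image under $\exp$ of the $(-1)$-eigenspace of $\theta|_\gh$, and $M$ is the unique $\theta$-stable complement to $A$ in $Z_G(A)$. So two associated parabolics $P$ and $P'$ differ only in their unipotent radicals $N$ and $N'$, which are parametrized by the choice of positive $\ga$-root system $\Sigma_\ga^+$ (equivalently, a Weyl chamber in $\ga$). We must therefore show that $\pi_{\eta,\sigma}$ built from $P$ is unitarily equivalent to the one built from $P'$.

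The argument is character-theoretic, using Theorem \ref{4.3.8}. Part (2) of that theorem says $\pi_{\eta,\sigma}$ is a finite sum of classes in $\widehat{G}_\zeta$ and hence has a well-defined distribution character $\Theta_{\pi_{\eta,\sigma}}$ that is locally summable and analytic on the regular set $G'$. Since distribution characters of inequivalent irreducible representations are linearly independent on $C_c^\infty(G)$, any finite sum of irreducibles is determined up to unitary equivalence by its character. It therefore suffices to show that the character formula of Theorem \ref{4.3.8}(4) is independent of $N$.

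Inspect the ingredients of that formula: the set $\Car(MA)$ and the family $\Xi(J)$, the normalizers $N_G(J_i)$ and $N_{MA}(J_i)$, the functions $\Psi_\eta$ and $e^{i\sigma}$, and the absolute values $|\Delta_{G,J}|$ and $|\Delta_{MA,J_i}|$. All the combinatorial and representation-theoretic data depend only on $(M, A, \eta, \sigma)$, and are manifestly insensitive to $N$. The only potential dependence enters through the Weyl denominator $\Delta_{G,J}$, whose definition (via Lemma \ref{4.1.7}) invokes the positive $\gj_\C$-root system $\Sigma_\gj^+$ compatible with $\Sigma_\ga^+$ and $\Sigma_{\gj_M}^+$. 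But any two positive $\gj_\C$-root systems differ by a Weyl group element $w \in W(\gg_\C, \gj_\C)$, which alters $\Delta_{G,J}$ only by $\det(w) = \pm 1$; the absolute value $|\Delta_{G,J}|$ is therefore invariant. The factor $|\Delta_{MA,J_i}|$ involves only the $(\gj_M)_\C$-root system on $\gm_\C$ and has no dependence on $\Sigma_\ga^+$ at all.

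Thus the right-hand side of the formula in Theorem \ref{4.3.8}(4) is unchanged by replacing $N$ by any $N'$ associated to $H$, so $\Theta_{\pi_{\eta,\sigma}}$, and hence the class $[\pi_{\eta,\sigma}]$, is independent of the choice of $P$. There is no serious obstacle: given Theorem \ref{4.3.8}, the corollary is essentially a bookkeeping check confirming that none of the terms in the character formula are sensitive to $N$.
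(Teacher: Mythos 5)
Your proof is correct and matches the paper's implicit reasoning: the paper simply places Corollary \ref{4.3.9} immediately after Theorem \ref{4.3.8}, leaving it to the reader to observe that the character formula in part (4) is built entirely from data ($MA$, $\eta$, $\sigma$, the Cartan subgroups $J$ and their normalizers, and absolute values of Weyl denominators) that do not depend on the choice of $N$, and that a finite sum of irreducibles is determined up to equivalence by its distribution character. You have spelled out exactly this bookkeeping, including the one nontrivial point — that replacing $\Sigma_\ga^+$ changes the compatible $\Sigma_\gj^+$, hence $\Delta_{G,J}$, only by a sign $\det(w)$, which is killed by the absolute value — so the argument is sound and essentially identical to the paper's.
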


The proof of Theorem \ref{4.3.8} is based on the following 
minor variation of the Harish--Chandra transform
$C^\infty_c(G) \to C^\infty_c(MA)$.

\begin{proposition}\label{4.3.10}
Let $b \in C^\infty_c(G)$ and define 
\begin{equation}\label{4.3.11a}
b_P(ma) = 
e^{-\rho_\ga}(a)\int_{K/Z}\left ( \int_N b(kmank^{-1})dn\right )d(kZ).
\end{equation}
Then $b_P(C^\infty_c(MA)$, $\pi_{\eta,\sigma}(b)$ is of trace class, and
\begin{equation}\label{4.3.11b}
{\rm \text{trace\,\,}} \pi_{\eta,\sigma}(b) = 
	\int_{MA} b_P(ma)\Psi_\eta(m)e^{i\sigma}(a)\, dm\, da.
\end{equation}
\end{proposition}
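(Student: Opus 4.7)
The plan is to realize $\pi_{\eta,\sigma}$ in its compact picture and compute the trace from the diagonal of a smooth integral kernel. First I would use $G = K\cdot P$ with $K\cap P = K\cap M$ to identify $L_2(G/P;\E_{\eta,\sigma})$ with $L_2(K/(K\cap M); E_\eta)$; the base $K/(K\cap M) = G/P$ is compact since $K\cap M\supseteq Z$ and $K/Z$ is compact. Under this identification, $\pi_{\eta,\sigma}(b)$ becomes an integral operator
\begin{equation*}
(\pi_{\eta,\sigma}(b)f)(k_1) = \int_{K/(K\cap M)} \mathcal{K}(k_1,k_2)f(k_2)\, d\overline{k_2},
\end{equation*}
whose kernel $\mathcal K(k_1,k_2)\in\operatorname{End}(E_\eta)$ is obtained from $b$ by integration along the fibers of $(k_1,k_2,p)\mapsto k_1 p k_2^{-1}$, weighted by the covariance factor of (\ref{4.3.3}). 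Since $b\in C_c^\infty(G)$ and these fibers are smooth, $\mathcal K$ is $C^\infty$; on a compact base, a smooth kernel is trace class (in fact smoothing), which proves that $\pi_{\eta,\sigma}(b)$ is of trace class.

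To compute the trace, I would evaluate $\int_{K/(K\cap M)} \operatorname{tr}_{E_\eta}\mathcal K(k,k)\, d\overline{k}$. Substituting $g = k\,man\,k^{-1}$ in the integral defining $\mathcal K(k,k)$ and writing Haar measure on $G$ along the decomposition $G = K\cdot MAN$, the modular factor $\delta_P(a) = e^{2\rho_\ga}(a)$ combines with the covariance weight $e^{-\rho_\ga - i\sigma}(a)\eta(m)^{-1}$ from (\ref{4.3.3}) to leave the net weight $e^{-\rho_\ga + i\sigma}(a)\eta(m)^{-1}$. Taking the fiber trace on $E_\eta$ and applying Fubini (justified by the compact support of $b$) yields
\begin{equation*}
\operatorname{tr}\pi_{\eta,\sigma}(b) = \int_{MA}\!\Bigl(e^{-\rho_\ga}(a)\!\int_{K/Z}\!\!\int_N\! b(k\,man\,k^{-1})\, dn\, d(kZ)\Bigr)\Psi_\eta(m)\,e^{i\sigma}(a)\, dm\, da,
\end{equation*}
which is exactly (\ref{4.3.11b}) with $b_P$ given by (\ref{4.3.11a}). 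The remaining claim that $b_P \in C_c^\infty(MA)$ then follows by differentiation under the integral sign (for smoothness) and from compactness of $K/Z$ together with the fact that the $MA$-projection along $N$ of $\bigcup_{kZ\in K/Z} k^{-1}\operatorname{supp}(b) k$ is compact (for the support).

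The principal obstacle is the careful bookkeeping of the normalizations: the factor $e^{-\rho_\ga}(a)$ appearing in (\ref{4.3.11a}) is the exact residue of the interplay between the modular function $\delta_P = e^{2\rho_\ga}$ on $P$, the $\delta_P^{1/2}$-twist built into unitary induction (which enters the covariance condition (\ref{4.3.3}) as $e^{-\rho_\ga}$), and the Jacobian of conjugation $p\mapsto kpk^{-1}$ on $P$. Once these factors are correctly tallied the remainder of the argument is mechanical; the resulting formula is a minor variant of Harish--Chandra's transform and serves as the computational engine for the character formula in Theorem \ref{4.3.8}.
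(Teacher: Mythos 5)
Your proposal follows essentially the same route as the paper: realize $\pi_{\eta,\sigma}$ in the compact picture on $L_2(K/(K\cap M); E_\eta)$, express $\pi_{\eta,\sigma}(b)$ as an integral operator with kernel obtained by fiber-integrating $b$ with the covariance weight (the paper's $\Phi_b(k',k)$), and evaluate the trace on the diagonal via Fubini and the $G = K\cdot MAN$ decomposition; the bookkeeping of $\delta_P = e^{2\rho_\ga}$ against the $\delta_P^{1/2}$-twist in the covariance condition (\ref{4.3.3}) is likewise the crux in both. The one point where you are slightly more explicit than the paper is the trace-class claim, which you justify cleanly by observing that a smooth kernel on a compact base gives a smoothing (hence trace class) operator, whereas the paper leaves this step implicit in the calculation.
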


\begin{proof} Let $K_1$ denote the image of a Borel section of $K \to K/Z$.
If $f \in L_2(G/P;\E_{eta,\sigma})$ is continuous it is determined by 
$f|_{K_1}$\,.  Compute
$$
\begin{aligned}
(\pi_{\eta,\sigma}&(b)f)(k') = \int_G b(g)f(g^{-1}k')dg = 
			\int_G b(k'g)g(g^{-1}dg \\
&= \int_{K/K\cap M} d(kM)\int_{MAN}  b(k'mank^{-1})f(k(man)^{-1})
			e^{-2\rho_\ga}(a) dm\, da\, dn \\
&= \int_{K/K\cap M}\left ( \int_{MAN} b(k'mank^{-1}) e^{-2\rho_\ga}(a)
			\eta(m) dm\, da\, dn \right ) f(k)d(kM).
\end{aligned}
$$
Define $\Phi_b(k',k) = \int_{MAN} b(k'mank^{-1}) e^{-\rho_\ga +i\sigma}(a)
\eta(m) dm\, da\, dn : E_\eta \to E_\eta$\,.  Then $\Phi_b(k',km_1)f(km_1)
= \Phi_b(k',k)f(k)$ for $m_1 \in M\cap K$.  Thus
$
(\pi_{\eta,\sigma}(b)f)(k') = \int_{K/K\cap M} \Phi_b(k',k) f(k) d(kM).
$
As $Z \subset K\cap M$ and $K/Z$ is compact we write this as
$(\pi_{\eta,\sigma}(b)f)(k') = \int_{K/Z} \Phi_b(k',k) f(k) d(kZ)$; so
$
\text{trace\,} \pi_{\eta,\sigma}(b) 
	= \int_{K/Z} \text{trace\,} \Phi_b(k,k) d(kZ).
$
Set $\varphi_b(k,m) = \int_{NA}e^{-\rho_\ga +i\sigma}b(kmank^{-1}) dn\, da$.
Then $\varphi_b \in C^\infty_c((K/Z)\times M)$.  Noting that we always have
absolute convergence we calculate
$$
\begin{aligned}
\int_{K/Z}\text{trace\,}& \Phi_b(k,k) d(kZ)
	= \int_{K/Z}\left ( \text{trace\,} \int_M \Phi_b(k,m)\eta(m)dm
		\right ) d(kz) \\
&= \int_{K/Z}\left ( \int_M \Phi_b(k,m)\Psi_\eta(m)dm\right ) d(kZ)\\
&= \int_{K/Z}\left ( \int_M \left ( \int_{NA} e^{-\rho_\ga +i\sigma}(a)
	b(kmank^{1})dn\,da\right )\Psi_\eta(m)dm\right ) d(kZ)\\
&= \int_M\left ( \int_A b_P(ma)e^{i\sigma}(a)\Psi_\eta(m)da\right ) dm.
\end{aligned}
$$
That completes the proof of Proposition \ref{4.3.10}
\end{proof}

Let $\cZ(\gg)$ and $\cZ(\gm+\ga)$ denote the respective centers of the 
enveloping algebras $\cU(\gg)$ and $\cU(\gm+\ga)$.  Recall the canonical
homomorphisms $\gamma_G$ and $\gamma_{MA}$ to Weyl group invariant polynomials.
Then \cite[\S 12]{HC1965a} $\mu_{MA} = \gamma_{MA}^{-1}\cdot\Gamma_G
\cZ(\gg) \to \cZ(\gm+\ga)$ has the property that $\cZ(\gm+\ga)$ has finite
rank over its subalgebra $\mu_{MA}(\cZ(\gg))$.

Proposition \ref{4.3.10} says trace\,$\pi_{\eta,\sigma}(b)$ =
trace\,$(\eta\otimes e^{i\sigma})(b_P)$ for $b \in C^\infty_c(G)$.  Now
Harish--Chandra's \cite[Lemma 52]{HC1965a} says 
trace\,$\pi_{\eta,\sigma}(zb)$ = trace\,$(\eta\otimes e^{i\sigma})
(\mu_{MA}(z)b_P)$ for $z \in \cZ(\gg)$.  From that, the infinitesimal
character $\chi_{\pi_{\eta,\sigma}}(z)$ = $\chi_{\eta\otimes e^{i\sigma}}(
\mu_{MA}(z))$ = $\chi_{\nu + i\sigma}(\mu_{MA}(z))$ ($\chi$ for 
$((\gm + \ga)_\C,\gh_\C)$) = $\gamma_{MA}(\mu_{MA}(z))(\nu + i\sigma)$
= $\gamma_G(z)(\nu + i\sigma)$ = $\chi_{\nu + i\sigma}(z)$ 
(for $(\gg_\C,\gh_\C)$).  That proves the first assertion of 
Theorem \ref{4.3.8}.

To see that $[\pi_{\eta,\sigma}]$ is a direct integral over $\widehat{G}_\zeta$
we set $G^1 = ZG^0$ and $M^1 = M\cap G^1$.  Then $[\eta] \in \widehat{M}_\zeta$
gives $[\eta^1] \in \widehat{M^1}_\zeta$ such that $[\eta]$ is a 
subrepresentation of $[\Ind_{M^1}^1 (\eta^1)]$.  Thus $\pi_{\eta,\sigma}$
is a subrepresentation of $\Ind_P^G((\Ind_{M^1}^M (\eta^1))\otimes e^{i\sigma})$
= $\Ind_{G^1} ^G( \Ind_{M^1AN}^{G^1} (\eta^1 \otimes e^{i\sigma})).$
Here $[\eta^1] \in \widetilde{M^1}_\zeta$\,, so 
$\Ind_{M^1AN}^{G^1} (\eta^1 \otimes e^{i\sigma})|_Z$ is a multiple of $\zeta$.
Then $[\pi_{\eta,\sigma}]$ is a direct integral over $\widehat{G}_\zeta$.

We now prove that there is an integer $n > 0$ such that
\begin{equation}\label{4.3.12}
\pi_{\eta,\sigma}|_K = {\sum}_{\widehat{K_\zeta}} m_\kappa\, \kappa
	\text{ where } 0 \leqq m_\kappa \leqq n\cdot\dim\kappa < \infty.
\end{equation}
It suffices to prove this for the finite index subgroup $ZG^0$, so we may assume
$Z$ central in $G$.  Then (\ref{4.3.3}) and the discussion just above
give us a $K$--equivariant injective isometry $r_K: L_2(G/P;\E_{\eta,\sigma}) 
\to L_2(K/Z;\zeta)$ by $r_K(f) \ f|_K$\,.  As $L_2(K/Z;\zeta) =
\sum_{\widehat{K}_\zeta} V_\kappa \otimes V_\kappa^*$ the multiplicity of
$V_\kappa$ here is $\dim((V_\kappa^*\otimes E_\eta)^{M\cap K})$.  But 
$\eta|_{M\cap K} = \sum_{\widehat{(M\cap K)}_\zeta} m_i\mu_i$ where
$0 \leqq m_i \leqq _M\dim \mu_i < \infty$.  If $\kappa \in \widehat{K}_\zeta$
then $\dim \kappa < \infty$ so $\kappa_{M\cap K}$ is a finite sum
$\sum_{\widehat{M\cap K}_\zeta} m_{\kappa,i}\mu_i$\,.  Now
$$
\dim((V_\kappa^*\otimes E_\eta)^{M\cap K}) 
	= \sum m_{\kappa,i}\mu_i
	\leqq n_M \sum m_{\kappa,i}\dim \mu_i
	= n_M\dim \kappa < \infty,
$$
proving (\ref{4.3.12}).  Note from the proof that $n \leqq n_M|G/ZG^0|$.

We show that $[\pi_{\eta,\sigma}]$ is a finite sum from $\widehat{G}_\zeta$\,,
following Harish--Chandra.  The discussion of (\ref{3.2.2}) shows that we
need only consider the case where $G$ is connected.  Then $Z$ is central so
$[\pi_{\eta,\sigma}]$ has central character $\zeta$ and infinitesimal character
$\chi_{\nu + i \sigma}$\,.  By (\ref{4.3.12}) $\pi_{\eta,\sigma}$ has no
nontrivial subrepresentation of infinite multiplicity.  Thus it is 
quasi--simple in the sense of Harish--Chandra \cite[p. 145]{HC1956c}.
Consequently it has distribution character $\Theta_{\pi_{\eta,\sigma}}$ that
is a locally summable function analytic on the regular set $G'$ 
\cite[Theorem 6]{HC1956c}, and $\pi_{\eta,\sigma} = \sum \pi_j$ discrete 
direct sum of irreducibles \cite[Lemma 2]{HC1954c}.  Each $[\pi_j] \in 
\widehat{G}_\zeta$ and each $\chi_{\pi_j} = \chi_{\nu + i\sigma}$\,.  Further,
the differential equations $z\Theta = \chi_{\nu + i\sigma}(z)\Theta$ 
$(z \in \cZ(\gg))$ constrain
the $\Theta_{\pi_j}$ to a finite dimensional space of functions on $G$.
Inequivalent classes in $\widehat{G}_\zeta$ have linearly independent 
distribution characters, so $\pi_{\eta,\sigma} = \sum \pi_j$ involves only
finitely many classes from $\widehat{G}_\zeta$\,.  Since the multiplicities 
$m(\pi_j,\pi_{\eta,\sigma}) < \infty$, $\pi_{\eta,\sigma}$ is a finite sum
from $\widehat{G}_\zeta$.  That is the second assertion of Theorem \ref{4.3.8}.

We now calculate $\Theta_{\pi,\sigma}$ by extending Lipsman's argument
\cite[Theorem 9.1]{L1971} to our more general situation.  Recall (\ref{4.3.7})
and the definition (\ref{4.3.11a}) of $b_P$\,.
\begin{lemma}\label{4.3.13}
Let $L \in \Car(G)$ not conjugate to a Cartan subgroup of $MA$ and
$b \in C^\infty_c(G'_L)$.  Then $b_P = 0$.  On the other hand, 
if $J \in \Car(MA)$ and
$b \in C^\infty_c(G'_J)$ then $b_P \in C^\infty_c(MA\cap G'_J)
\subset C^\infty_c((MA)''_J)$.
\end{lemma}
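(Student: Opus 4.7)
The plan centers on the following structural observation, which I would prove first: \emph{if $man \in G'$ with $m \in M$, $a \in A$, $n \in N$, then $man$ is $N$-conjugate to $ma$.} Solving $n_1(man)n_1^{-1} = ma$ in $N$ (using that $m$ and $a$ normalize $N$) reduces to finding $n_1 \in N$ with $n = n_1 \cdot \Ad(ma)^{-1}(n_1)^{-1}$. The map $\phi: N \to N$, $n_1 \mapsto n_1 \cdot \Ad(ma)^{-1}(n_1)^{-1}$, has differential $I - \Ad(ma)^{-1}$ on $\gn$ at the identity; $G$-regularity of $man$ (equivalently of the conjugate $ma$) forces the Cartan subalgebra $Z_\gg(ma)$ to lie in $\gm + \ga$, so $\Ad(ma)$ has no eigenvalue $1$ on $\gn$ and $\phi$ is a diffeomorphism, yielding the required $n_1$. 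Consequently $Z_G(man)$ is $N$-conjugate to $Z_{MA}(ma)$, which is a Cartan subgroup of $MA$.

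Smoothness of $b_P$ is immediate from compactness of $K/Z$ (a maximal compact of $G/Z$) and standard differentiation under the integral. Compact support of $b_P$ is the standard fact that the Harish--Chandra transform sends $C^\infty_c(G)$ into $C^\infty_c(MA)$: the compactness of the support of $b$ in $G$ together with the compactness of $K/(K\cap M)$ (acting by conjugation modulo $M$) constrains the support of $b_P$ to a compact subset of $MA$, since a nonzero value at $ma$ forces $man$ into a translate of the support by a conjugation that varies over a compact parameter space.

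For the first assertion ($L$ not $G$-conjugate to any Cartan of $MA$): if $b(kmank^{-1}) \ne 0$ at some $(k, n, m, a)$, then $kmank^{-1} \in G'_L$, so $man$ is $G$-regular with Cartan $G$-conjugate to $L$. But the central observation identifies this Cartan as $G$-conjugate to a Cartan of $MA$, contradicting the hypothesis on $L$. Hence the integrand vanishes identically and $b_P \equiv 0$.

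For the second assertion ($J \in \Car(MA)$ and $b \in C^\infty_c(G'_J)$): the same reasoning forces $b_P(ma) \ne 0$ to imply $ma \in MA \cap G'_J$, so $b_P \in C^\infty_c(MA \cap G'_J)$. The remaining inclusion $MA \cap G'_J \subset M''_J A$ requires the structural fact that two Cartan subgroups of $MA$ which are $G$-conjugate are already $MA$-conjugate. I would establish this by observing that every Cartan of $MA$ has the form $T' \times A$ with $T' \subset K$, so $A$ is precisely its $\R$-split component; any $G$-conjugacy between two such Cartans therefore sends $A$ to $A$, placing the conjugating element in $N_G(A)$, and a reduction via the restricted Weyl group $W(G,A) = N_K(A)/Z_K(A)$ brings it into $MA$. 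This last step---ensuring compatibility of the $W(G,A)$-action with $M$-conjugacy classes of Cartans of $M$---is the technical heart and main obstacle; it is a standard structural feature of cuspidal Levi decompositions, implicit in \cite{H1968} and \cite{L1971}.
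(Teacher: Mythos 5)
Your approach is genuinely different from the paper's.  The paper invokes Harish--Chandra's integration formula on $N$ (\cite[Lemma~11]{HC1966a}), which is valid whenever $\Ad(ma)-1$ is nonsingular on $\gn$, and then lets density of such $ma$ together with smoothness of $b_P$ finish the argument; you instead try to establish the pointwise fact that every $G$--regular $man$ is $N$--conjugate to $ma$.  That statement is true, but your justification of it is circular: you deduce that $\Ad(ma)-1$ is nonsingular on $\gn$ from ``$G$--regularity of $man$ (equivalently of the conjugate $ma$),'' yet $ma$ being a conjugate of $man$ is exactly what you are trying to prove, and one cannot assume it in order to verify the hypothesis that $\phi$ is a diffeomorphism.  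A non--circular route would first note that $man\in G'$ is semisimple (a Cartan subalgebra contains no nonzero nilpotents, so the Jordan unipotent part of a regular element is trivial); then $Z_N(man)$, being a connected unipotent subgroup of the Cartan $Z_G(man)$, is trivial, so the $N$--orbit of $man$ in $maN$ has full dimension and, as a Zariski--closed unipotent orbit in the affine variety $maN\cong N$, must equal $maN$, hence contains $ma$.  (A small slip as well: solving $n_1(man)n_1^{-1}=ma$ gives $n=\Ad(ma)^{-1}(n_1^{-1})\,n_1$, not $n_1\cdot\Ad(ma)^{-1}(n_1)^{-1}$; the order matters in nonabelian $N$, though the differential at $1$ and hence the nonsingularity criterion are unchanged.)

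There is also a genuine error in your sketch of the inclusion $MA\cap G'_J\subset(MA)''_J$.  You assert that every Cartan of $MA$ has the form $T'\times A$ with $T'\subset K$, so that $A$ is its full $\R$--split component.  That holds only for the fundamental Cartan $J=H=T\times A$.  A general $J=J_M\times A\in\Car(MA)$ has $J_M$ a possibly noncompact Cartan of $M$, and then the $\R$--split part of $J$ strictly contains $A$; a $G$--conjugacy carrying $J$ to another such Cartan need not carry $A$ to $A$, so the reduction through $N_G(A)$ and $W(G,A)$ that you propose does not apply.  The structural fact actually needed---that two Cartan subalgebras of $\gm+\ga$ which are $\Int(\gg)$--conjugate are already $MA$--conjugate---rests on $\theta$--stable representatives and $K$--conjugacy rather than on identifying $\ga$ as the split part; the paper, like the references you cite, invokes it without spelling it out.
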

\begin{proof}
If $ma \in MA$ and $(\Ad(ma)-1)^{-1}$ is nonsingular on $N$ then
\cite[Lemma 11]{HC1966a} gives us 
$
\int_N b(kmank^{-1})dn=|\det(\Ad(ma)^{-1} - 1)_\gn|\int_N b(knman^{-1}k^{-1})dn.
$
If $b \in C^\infty_c(G'_L)$ where $G'_L$ doesn't meet $MA$ then 
$b(knman^{-1}k^{-1})$ is identically zero, so $b_P = 0$.  If
$b \in C^\infty_c(G'_J)$ where $J \in \Car(MA)$ there is a compact set
$S \subset G'_J$ such that, if $\int_N b(kmank^{-1})dn \ne 0$ for some
$k \in K$ then $ma \in S$.  Thus $b_P$ is supported in
$S \cap MA \subset G'_J\cap MA \subset (MA)_J''$\,.
\end{proof}

Let $L \in Car(G) \setminus \Car(MA)$.  Let
$b \in C^\infty_c(G'_L)$. Combine Proposition \ref{4.3.10} and
Lemma \ref{4.3.13} to see 
$
\Theta_{\pi_{\eta,\sigma}}(b) =
\int_{MA} b_P(ma)\Psi_\eta(m) e^{i\sigma}(a) dmda = 0.
$  
Thus $\Theta_{\pi_{\eta,\sigma}}|_{G'_L} = 0$.  That is the third
assertion of Theorem \ref{4.3.8}.

Fix $J = J_M \times A \in \Car(MA)$.  To compute 
$\Theta_{\pi_{\eta,\sigma}}|_{G'_J}$ we need a variation on the
Weyl Integration Formula.  The center $Z_J$ of $J$ is open in $J$
so it inherits Haar measure $dh$.  Normalize measure on
$G/Z_J$ by $\int_G f(g)dg = \int_{G/Z_J}\bigl ( \int_{Z_J} f(gh)dh
\bigr ) d(gZ_J)$ and on $MA/Z_J$ by
$\int_{MA} F(x)dx = \int_{MA/Z_J}\bigl ( \int_{Z_J} F(xh)dh
\bigr ) d(xZ_J)$.
Extending Harish--Chandra's extension 
\cite{HC1966} of Weyl's argument, 
\begin{lemma}\label{4.3.14} If $b \in C_c(G'_J)$ and
$B \in C_c((MA)_J'')$ then 

\noindent
$\int_G b(g)dg = \int_{J\cap G'} |N_G(J)(h)|^{-1}\bigl (
\int_{G/Z_J} b(ghg^{-1})d(gZ_J)\bigr ) |\Delta_{G,J}(h)|^2 dh$ and

\noindent
$\int_{MA} B(x)dx = \int_{J\cap (MA)''} |N_{MA}(J)(h)|^{-1}\bigl (
\int_{MA/Z_J} b(xhx^{-1})d(xZ_J)\bigr ) |\Delta_{MA,J}(h)|^2 dh$.
\end{lemma}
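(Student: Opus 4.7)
The plan is to prove both formulas by a single application of the Weyl integration formula, adapted to a possibly disconnected and non-abelian Cartan subgroup. Since $MA$ lies in the same class of real reductive Lie groups as $G$ (by Proposition \ref{4.1.6}) and $J$ is a Cartan subgroup of $MA$, the second assertion is simply the first one applied with $(MA,J)$ in place of $(G,J)$; so I would only prove the first. The basic object is the conjugation map
\[
\phi\colon (G/Z_J)\times (J\cap G') \longrightarrow G'_J,\qquad \phi(gZ_J,h) = ghg^{-1}.
\]
This is well defined because $Z_J$ centralizes every element of $J$, and surjective by the definition of $G'_J$ in (\ref{4.3.7}).

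The first step is the Jacobian. At $(eZ_J,h)$, after identifying $T_{eZ_J}(G/Z_J)\cong \gg/\gj$, $T_h J\cong \gj$ and $T_h G\cong \gg$ by left translation, the differential of $\phi$ is $(X+\gj,Y)\mapsto (\Ad(h^{-1})-1)X + Y$. Since $\Ad(h)$ acts as the identity on $\gj$ and preserves the root-space decomposition of $\gg_\C$ relative to $\gj_\C$, the operator $\Ad(h^{-1})-1$ induces an automorphism of $\gg/\gj$ with determinant $\prod_\gamma(e^{-\gamma}(h)-1)$. Pairing positive and negative roots and comparing with (\ref{4.3.4}) produces the familiar Weyl denominator squared $|\Delta_{G,J}(h)|^2$.

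The second step is to analyze the fiber of $\phi$. Fix $y = g_0 h_0 g_0^{-1} \in G'_J$ with $h_0 \in J\cap G'$. Since $h_0$ is $G$-regular, $Z_G(h_0)^0 = J^0$; hence any element that conjugates $h_0$ into $J$ normalizes $J^0$, so normalizes $\gj$, so normalizes $J = Z_G(\gj)$. Consequently the second coordinates occurring in $\phi^{-1}(y)$ form exactly the $N_G(J)$-orbit of $h_0$, and for each such $h$ the first coordinate ranges over $|Z_G(h)/Z_J|$ cosets of $Z_J$ (using $Z_J \subset Z_G(h)$ because $h \in J$). The total fiber size is $|N_G(J)/Z_J|$, independent of $y$. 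A key observation is that the exceptional set $\{h\in J\cap G' : Z_G(h)\supsetneq Z_J\}$ is contained in the union $\bigcup_{z\in N_G(J),\, z\notin Z_J}\{h\in J : zh=hz\}$, a finite union (modulo $Z_J$) of proper closed analytic subvarieties of $J$, hence of measure zero. Therefore $|N_G(J)(h)| = |N_G(J)/Z_J|$ for almost every $h\in J\cap G'$.

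Combining these via change of variables yields
\[
\int_G b(g)\,dg = \int_{G'_J} b(y)\,dy = \frac{1}{|N_G(J)/Z_J|}\int_{J\cap G'}\left(\int_{G/Z_J} b(ghg^{-1})\,d(gZ_J)\right)|\Delta_{G,J}(h)|^2\,dh,
\]
using that $b$ is supported in $G'_J$, and the constant coefficient can be rewritten as the pointwise weight $|N_G(J)(h)|^{-1}$ inside the outer integral by the measure-zero observation above. The main technical obstacle is the disconnected, non-abelian nature of $J$: one must verify $Z_J\subset Z_G(h)\subset N_G(J)$ for regular $h\in J$ and carefully dispose of the measure-zero locus where $Z_G(h)$ strictly contains $Z_J$, so that the correct pointwise weight $|N_G(J)(h)|^{-1}$ appears in place of the symmetric constant $|N_G(J)/Z_J|^{-1}$.
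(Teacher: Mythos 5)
Your proposal is correct and takes essentially the same route as the paper: a change of variables along the conjugation map $(gZ_J,h)\mapsto ghg^{-1}$, with the Jacobian computed from the adjoint action on $\gg/\gj$ giving $|\Delta_{G,J}(h)|^2$. In fact you are a bit more careful than the paper on one point: the paper simply asserts that the map is ``$|N_G(J)(h)|$ to one,'' whereas the fiber over $ghg^{-1}$ actually has the constant cardinality $|N_G(J)/Z_J|$ (by your orbit--stabilizer count), and this coincides with $|N_G(J)(h)|=|N_G(J)/Z_G(h)|$ only on the complement of the measure-zero set where $Z_G(h)\supsetneq Z_J$. Your observation that this exceptional set is a proper analytic subvariety of $J$ is exactly what is needed to pass from the constant degree to the pointwise weight $|N_G(J)(h)|^{-1}$ appearing in the statement, and it fills a detail the paper's proof leaves implicit.
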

\begin{proof}
$(G/Z_J) \times (J\cap G')\to G'_J$, by $(gZ_J,h) \mapsto ghg^{-1}$,
is regular, surjective, and $|N_G(J)(h)|$ to one with Jacobian
determinant $|\det(\Ad(h)^{-1}-1)_{\gg/\gj}|
= |\prod_{\gamma \in \Sigma_\gj}(e^\gamma - 1)(h)|$ at
$(gZ_J,h)$.  But $\prod_{\gamma \in \Sigma_\gj}(e^\gamma - 1)(h)$ is
the product over $\Sigma_\gj^+$ times the product over $-\Sigma_\gj^+$,
which is $(-1)^n\Delta_{G,J}(h)^2$ where $n = |\Sigma_\gj^+|$, so that
Jacobian is $|\Delta_{G,J}(h)|^2$.  That proves the first equation;
the second is similar.
\end{proof}

Given $b \in C_c^\infty(G'_J)$, 
$\Theta_{\pi_{\eta,\sigma}}(b) = 
\int_{MA} b_P(ma)\Psi_\eta(m)e^{i\sigma}(a)dmda$
by Proposition \ref{4.3.10}.  Lemma \ref{4.3.13} ensures convergence.
Now, by Lemma \ref{4.3.14},
\begin{equation}\label{4.3.16}
\begin{aligned}
\Theta_{\pi_{\eta,\sigma}}(b) = &\int_{J\cap G'} |N_{MA}(J)(h)|^{-1} \\
&\Bigl ( \int_{MA/Z_J} b_P(xhx^{-1}d(xZ_J)\Bigr ) \Psi_\eta(h_M)
e^{i\sigma}(h_A)|\Delta_{MA,J}(h)|^2dh
\end{aligned}
\end{equation}
where $h = h_Ah_M$ along $J = J_M\times A$.  As $A$ is central in $MA$,
$\int_{MA/Z_J} b_P(xhx^{-1})d(xZ_J)$
= $e^{-\rho_\ga (h_A)}\int_{MA/Z_J}d(xZ_J)\cdot\int_{K/Z}d(kZ)\cdot
\int_N b(k.xhx^{-1}\cdot n\cdot k^{-1})dn.$  Unimodularity of $N$ and
\cite[Lemma 11]{HC1966} say 
$$
\int_N b(k\cdot maha^{-1}m^{-1}\cdot n\cdot k^{-1})dn =       
|\det(\Ad(h^{-1})-1)_\gn|\int_Nb(knma\cdot h\cdot (knma)^{-1}dn.
$$

We modify Harish--Chandra's evaluation \cite[Lemma 12]{HC1966}
of $|\det(\Ad(h^{-1})-1)_\gn|$ for the case $J = H$.  Choose $c \in
\Int(\gg_\C)$ with $c(\gg_\C) = \gh_\C$\,, $c(x) = x$ for 
$x \in \ga$, and $c^*\Sigma^+ = \Sigma^+_\gj$\,, so $c$ also
preserves $\gm_\C$ and $\gn_\C$\,.  Then
$\det(\Ad(h^{-1})-1)_\gn$ = $\det(\Ad(c(h^{-1}))-1)_\gn$
= $\prod_{\gamma \in \Sigma^+\setminus \Sigma^+_\gt} (e^\gamma(c(h^{-1}))-1$
= $\prod_{\beta \in \Sigma^+_\gj \setminus \Sigma^+_{\gj_M}} 
	(e^\beta(h))-1)$
=$e^{\rho_\gj - \rho_{\gj_M}}(h)\frac{\Delta_{G,J}(h)}{\Delta_{MA,J}(h)}$
= $e^{\rho_\ga}(h_A) \frac{\Delta_{G,J}(h)}{\Delta_{MA,J}(h)}$, so
$\int_{MA/Z_J} b_P(xhx^{-1})d(xZ_J)$ is equal to
$$
\begin{aligned}
\int_{MA/Z_J}d(xZ_J)\cdot & \int_{K/Z}d(kZ)\cdot 
	|\Delta_{G,J}(h)/\Delta_{MA,J}(h)| \int_N b(\Ad(knma)h)dn\\
&= |\tfrac{\Delta_{G,J}(h)}{\Delta_{MA,J}(h)}|\int_{K/K\cap M} \Bigl (
	\int_{MNA/Z_J} b(\Ad(kmna)h)dm\,da\,dn\Bigr ) d(kM) \\
&= |\Delta_{G,J}(h)/\Delta_{MA,J}(h)|\int_{G/Z_J} b(ghg^{-1}) d(gZ_J).
\end{aligned}
$$
Substituting this into (\ref{4.3.16}),
\begin{equation}\label{4.3.18}
\begin{aligned}
\Theta_{\pi_{\eta,\sigma}}&(b) = \int_{J\cap G'} |N_{MA}(J)(h)|^{-1} \\
&\Bigl ( \int_{G/Z_J} b(ghg^{-1})d(gZ_J)\Bigr ) \Psi_\eta(h_M)
e^{i\sigma}(h_A)|\Delta_{G,J}(h)\Delta_{MA,J}(h)|^2dh
\end{aligned}
\end{equation}
We extend $\Phi_{\eta,\sigma,J}(h) := \frac{1}{|\Delta_{G,J}(h)|}
\sum_{h' \in N_G(J)h} \frac{|\Delta_{MA,J}(h')|}{|N_{MA}(J)h'|}
\Psi_\eta(h'_M)e^{i\sigma}(h'_A)$ 
to a class function on $G'_J$ and substitute that into (\ref{4.3.18}).
Thus $\Theta_{\pi_{\eta,\sigma}}(b)$ is
$$
\int_{J\cap G'} |N_G(J)(h)|^{-1}|^{-1} Bigl ( \int_{G/Z_J}b(ghg^{-1})
\Phi_{\eta,\sigma,J}(ghg^{-1})d(gZ_J)\Bigr ) |\Delta_{G,J}(h)|^2 dh.
$$
From Lemma \ref{4.3.14} we see that $\Theta_{\pi_{\eta,\sigma}}|_{G'_J}$
is given by $\Phi_{\eta,\sigma,J}$\,.  That proves the character formula
and completes the proof of Theorem \ref{4.3.8}.

\subsection{}\label{ssec4e}\setcounter{equation}{0}
We specialize the results of \S \ref{ssec4d} to the $H$--series of $G$,
where $[\eta] \in \widehat{M}_{disc}$\,.
The Cartan subgroup $H = T\times A$ and the associated cuspidal parabolic
subgroup $P=MAN$ are fixed.  The two principal simplifications here are
(1) $\Xi(H) = \{H\}$ and (2) the character formulae for $H$--series
representations are explicit \cite{HW1986a}.

The choice of $H$ and $P$ specifies $\Sigma_\ga^+$ with
$\gn = \sum_{\alpha \in \Sigma_\ga^+} \gg^{-\alpha}$.  Choose $\Sigma_\gt^+$
and specify $\Sigma^+$ as in Lemma \ref{4.1.7}.  We have $\rho$, $\rho_\ga$,
$\rho_\gt$, $\Delta_{G,H}$ and $\Delta_{M,T}$ as in (\ref{4.3.4}).  Make the
adjustment of Lemma \ref{4.3.6} if needed, so that $e^\rho \in \widehat{H}$ 
and $e^{\rho_\gt} \in \widehat{T}$ are well defined, and $e^\rho(Z) = 
e^{\rho_\gt}(Z) = 1$.  Then $\Delta_{G,H}$ is well defined on $H$ and
$\Delta_{M,T}$ is well defined on $M$.  
Proposition \ref{4.1.6} says
that $M$ has relative discrete series as described in \S\S \ref{ssec3d} 
and \ref{ssec3e}.  It comes out as follows.  Let $\varpi_\gt(\nu) =
\prod_{\phi \in \Sigma^+_\gt} \langle \phi,\nu\rangle$ for
$\nu \in \gt^*_\C$ and $L''_\gt = \{\nu \in i\gt^* \mid e^\nu \in
\widehat{T^0} \text{ and } \varpi_\gt(\nu) \ne 0\}$.  Every 
$\nu \in L''_\gt$ specifies a class $[\eta_\nu] \in (\widehat{M^0})_{disc}$
whose distribution character satisfies
$$
\Psi_{\eta_\nu}|_{T^0\cap M''} = (-1)^{q_M(\nu)}
\tfrac{1}{\Delta_{M,T}} {\sum}_{w \in W(M^0,T^0)} \det(w)e^{w\nu}
$$
with $q_M$ defined on $L_\gt''$ as in (\ref{3.4.6}).  Every class in
$(\widehat{M^0})_{disc}$ is one of these $[\eta_\nu]$.  Classes
$[\eta_\nu] = [\eta_{\nu'}]$ if and only if $\eta' \in W(M^0,T^0)(\nu)$.
Finally, $[\eta_\nu]$ has central character $e^{\nu - \rho_\gt}|_{Z_M^0}$
and infinitesimal character $\chi_\nu$ relative to $\gt$.

If $\nu \in L''_\gt$ and $[\chi] \in \widehat{Z_M(M^0)}_\xi$ where
$\xi = e^{\nu - \rho_\gt}|_{Z_M^0}$ then we have
\begin{equation}\label{4.4.3a}
\eta_{\chi,\nu}
= \Ind_{M^\dagger}^M (\chi \otimes \eta_\nu) \in \widehat{M}_{disc}.  
\end{equation}
Here recall $M^\dagger := Z_M(M^0)M^0$.  Also,
$\eta_{\chi,\nu} \in \widehat{M}_{disc}$\,, and it is the only
class there with distribution character given on 
$Z_M(M^0)\cdot(T^0\cap M'')$ by
\begin{equation}\label{4.4.3b}
\begin{aligned}
&\Psi_{\eta_{\chi,\nu}}(zt) = \\
&= \sum_{1\leqq j\leqq r} (-1)^{q_M(w_j\nu)}\trace \chi(x_j^{-1}zx_j)\cdot
\frac{1}{\Delta_{M,T}} \sum_{w \in W_{M^0,T^0}} \det(ww_j)e^{ww_j\nu}(t)
\end{aligned}
\end{equation}
where there the $w_j = \Ad(x_jT)_\gt$ are representatives of $W_{M,T}$
modulo $W_{M^0,T^0}$\,.  Every class in $\widehat{M}_{disc}$ is one of the
$[\eta_{\chi,\nu}]$.  Classes $[\eta_{\chi,\nu}] = [\eta_{\chi',\nu'}]$ 
exactly when $([\chi'],\nu') \in W_{M,T}([\chi],\nu)$.  Finally,
$[\eta_{\chi,\nu}]$ has infinitesimal character $\chi_\nu$ relative
to $\gt$.  Now we combine this description with Theorem \ref{4.3.8}.
Recall that the normalizers $N_{MA}(H) = N_M(T) \times A$ and $N_G(H)$ have
all orbits finite on $H \cap G'$.

\begin{theorem}\label{4.4.4}  Let $\nu \in L''_\gt$, $\sigma \in \ga^*$ and 
$[\chi] \in \widehat{Z_M(M^0)}_\xi$ where 
$\xi = e^{\nu - \rho_\gt}|_{Z_M^0}$\,.  Define $\eta_{\chi,\nu}$
and $\Psi_{\eta_{\chi,\nu}}$ by {\rm (\ref{4.4.3a})} and
{\rm (\ref{4.4.3b})}.  Then $[\pi_{\chi,\nu,\sigma}] :=
[\Ind_P^G(\eta_{\chi,\nu}\otimes e^{i\sigma})]$ is the unique $H$--series
representation class on $G$ whose distribution character satisfies
$$
\Theta_{\pi_{\chi,\nu,\sigma}}(ta)  
= \frac{|\Delta_{M,T}(t)|}{|\Delta_{G,H}(ta)|}
\sum_{w(ta)\in N_G(H)(ta)} |N_{M}(T)(wt)|^{-1} 
\Psi_{\eta_{\chi,\nu}}(wt) e^{i\sigma}(wa)
$$
for $t\in T$, $a \in A$ and $ta \in G'$.
Every $H$--series class on $G$ is one of the $[\pi_{\chi,\nu,\sigma}]$,
and classes $[\pi_{\chi,\nu,\sigma}] = [\pi_{\chi',\nu',\sigma'}]$
if and only if $(\chi',\nu',\sigma') \in W_{G,H}(\chi,\nu,\sigma)$.
$[\pi_{\chi',\nu',\sigma'}]$ is a finite sum from $\widehat{G}_\zeta$ where
$[\eta_{\chi,\nu}] \in \widehat{M}_\zeta$\,.  The dual 
$[\pi_{\chi,\nu,\sigma}^*] = [\pi_{\bar\chi,-\nu,-\sigma}]$.
The infinitesimal character is $\chi_{\nu + i\sigma}$ relative to $\gh$,
so $[\pi_{\chi,\nu,\sigma}]$ sends the Casimir element of
$\cU(\gg)$ to $||\nu||^2 + ||\sigma||^2 - ||\rho||^2$.
\end{theorem}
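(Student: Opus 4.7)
The plan is to derive Theorem \ref{4.4.4} as the specialization of Theorem \ref{4.3.8} to the case $[\eta] = [\eta_{\chi,\nu}] \in \widehat{M}_{disc}$, substituting the explicit discrete series character formula (\ref{4.4.3b}) for $\Psi_{\eta_{\chi,\nu}}$ into the output of Theorem \ref{4.3.8}(4).

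First I would assemble the ``soft'' statements. Since $[\eta_{\chi,\nu}]$ has infinitesimal character $\chi_\nu$ relative to $\gt$, Theorem \ref{4.3.8}(1) gives $[\pi_{\chi,\nu,\sigma}]$ the infinitesimal character $\chi_{\nu+i\sigma}$ relative to $\gh$, and the Casimir eigenvalue $||\nu||^2+||\sigma||^2-||\rho||^2$ then follows from (\ref{3.2.5}) applied to the standard Harish--Chandra image of the Casimir element. Theorem \ref{4.3.8}(2) supplies the finite--sum property over $\widehat{G}_\zeta$. The duality $[\pi_{\chi,\nu,\sigma}^*] = [\pi_{\bar\chi,-\nu,-\sigma}]$ reduces, via commutativity of contragredient with unitary induction, to $[\eta_{\chi,\nu}^*] = [\eta_{\bar\chi,-\nu}]$ (inherited from Theorem \ref{3.5.9} applied to $M$ via Proposition \ref{4.1.6}) together with $(e^{i\sigma})^* = e^{-i\sigma}$.

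The heart of the proof is the character formula. Apply Theorem \ref{4.3.8}(4) at $J = H = T\times A$ and note three simplifications. First, $\Xi(H) = \{H\}$ in $\Car(MA)$: the split part of the centre of $MA$ is $A$, so a Cartan subgroup of $MA$ whose compact part has full dimension $\dim T$ is forced to have split part $A$, hence is of the form $T'\times A$ with $T'$ a compact Cartan of $M$; since compact Cartans of $M$ are $M^0$--conjugate, every $G$--conjugate of $H$ inside $MA$ is already $MA$--conjugate to $H$. Second, $|\Delta_{MA,H}(ta)| = |\Delta_{M,T}(t)|$ because the nonzero $(\gm+\ga)_\C$--roots on $\gh_\C$ are exactly the $\gt_\C$--roots on $\gm_\C$, extended trivially across $\ga$. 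Third, $N_{MA}(H) = N_M(T)\times A$, so $|N_{MA}(H)(wta)| = |N_M(T)(wt)|$. Since $N_G(H)$ preserves the canonical compact/split splitting $H = T\times A$ and hence permutes $\Sigma_\gt$ up to signs, $|\Delta_{M,T}(wt)| = |\Delta_{M,T}(t)|$ for every $w \in W_{G,H}$, and this factor can be pulled outside the sum. Collecting these reductions yields the displayed formula.

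Uniqueness and the equivalence criterion $(\chi',\nu',\sigma') \in W_{G,H}(\chi,\nu,\sigma)$ follow because distribution characters separate unitary equivalence classes, combined with the $W_{M,T}$--parametrization of $\widehat{M}_{disc}$ recorded in (\ref{4.4.3b}) and the $N_G(H)$--symmetrization built into the above character formula; linear independence of the exponentials $e^\nu$ on $T^0$ and of characters on $Z_M(M^0)$ does the rest. The fact that every $H$--series class has the form $\pi_{\eta_{\chi,\nu},\sigma}$ is immediate from the definition of the $H$--series together with the exhaustion of $\widehat{M}_{disc}$ by the $[\eta_{\chi,\nu}]$. The main obstacle will be the combinatorial verification that the two nested sums in Theorem \ref{4.3.8}(4), after substitution of the $W_{M,T}$--antisymmetric sum in (\ref{4.4.3b}), collapse into the single $N_G(H)$--orbit sum displayed in Theorem \ref{4.4.4}; tracking the interaction between $W_{M,T}$ sitting inside $W_{G,H}$ is where the careful book--keeping lives.
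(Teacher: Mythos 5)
Your proposal is essentially the paper's proof: specialize Theorem \ref{4.3.8} to $[\eta]=[\eta_{\chi,\nu}]\in\widehat{M}_{disc}$, observe $\Xi(H)=\{H\}$ (because fundamental Cartan subgroups of $MA$ are $\Ad(M^0)$--conjugate), identify $|\Delta_{MA,H}|$ with $|\Delta_{M,T}|$ and $N_{MA}(H)$ with $N_M(T)\times A$, and obtain uniqueness from linear independence of the characters $e^{i\sigma''}$ on $A$ together with linear independence of the $\Psi_{\eta_{\chi,\nu}}$ on $T\cap M''$. One small point worth flagging: the ``main obstacle'' you anticipate --- combinatorial collapse of nested sums after substituting (\ref{4.4.3b}) --- does not actually arise, because the displayed formula in Theorem \ref{4.4.4} keeps $\Psi_{\eta_{\chi,\nu}}$ symbolic rather than expanding it; once $\Xi(H)=\{H\}$ removes the outer sum in Theorem \ref{4.3.8}(4), the single $N_G(H)$--orbit sum is already in the desired form and no interaction between $W_{M,T}$ and $W_{G,H}$ needs to be tracked at that stage.
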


\begin{proof} First note that $\Xi(H) = \{H\}$ because any two fundamental
Cartan subgroups of $MA$ are $\Ad(M^0)$--conjugate.  That eliminates the
sum over $\Xi(H)$ expected from Theorem \ref{4.3.8}.  Now we need only 
show that $\Theta_{\pi_{\chi,\nu,\sigma}}|_{H\cap G'}$
determines $(\chi,\nu,\sigma)$ modulo $W_{G,H}$.  Let
$\Theta_{\pi_{\chi,\nu,\sigma}}|_{H\cap G'} =
\Theta_{\pi_{\chi',\nu',\sigma'}}|_{H\cap G'}$.
By linear independence of characters $e^{i\sigma''}$ on $A$ we may
replace $\sigma'$ by any element of $N_G(H)\sigma'$ and assume
$\sigma' = \sigma$.  Thus, on $H\cap G'$,
$\sum |N_M(T)(wt)|^{-1} e^{i\sigma}(wa)\bigl ( \Psi_{\eta_{\chi,\eta}}(wt)
- \Psi_{\eta_{\chi',\eta'}}(wt)\bigr ) = 0$.
Here $|N_M(T)(wt)|$ is locally constant on $T\cap M''$ and the functions 
$\Psi_{\eta_{\chi'',\eta''}}$ are linearly independent on $T\cap M''$.
Thus $\Psi_{\eta_{\chi,\eta}} = \Psi_{\eta_{\chi',\eta'}}$\,, so
$(\chi,\eta) = (\chi',\eta')$, and thus 
$(\chi',\eta') \in W_{M,T}(\chi,\eta)$.
\end{proof}

\begin{corollary}\label{4.4.5}
The $H$--series classes $[\pi_{\chi,\nu,\sigma}]$ are independent of
the choice of cuspidal parabolic subgroups $P$ associated to $H$
\end{corollary}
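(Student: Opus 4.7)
The plan is to deduce the corollary from the explicit character formula of Theorem \ref{4.4.4}, by verifying that every ingredient on its right-hand side is intrinsic to the quadruple $(H,\chi,\nu,\sigma)$ and does not involve the choice of positive $\ga$--root system $\Sigma_\ga^+$ that produces $N$. The underlying point is that a class in $\widehat{G}_\zeta$ is determined by its distribution character, so it will suffice to check that $\Theta_{\pi_{\chi,\nu,\sigma}}$ is the same for any two cuspidal parabolics $P = MAN$ and $P' = MAN'$ associated to $H$.

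First I would observe that the building block $[\eta_{\chi,\nu}] \in \widehat{M}_{disc}$ in (\ref{4.4.3a}) is defined purely from $M$--data (the compact-mod-$Z$ Cartan subgroup $T$, the pair $(\chi,\nu)$, and an auxiliary $\Sigma_\gt^+$), with equivalence class a $W_{M,T}$--invariant function of $(\chi,\nu)$; it has no knowledge of $\Sigma_\ga^+$. Next, by Theorem \ref{4.3.8}(2) the class $[\pi_{\chi,\nu,\sigma}]$ is a finite sum from $\widehat{G}_\zeta$, so it has a bona fide distribution character $\Theta_{\pi_{\chi,\nu,\sigma}}$ which is locally $L_1$ on $G$ and analytic on $G'$, and hence determines the class.

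I would then inspect the formula from Theorem \ref{4.4.4} on $H \cap G'$:
$$
\Theta_{\pi_{\chi,\nu,\sigma}}(ta) = \frac{|\Delta_{M,T}(t)|}{|\Delta_{G,H}(ta)|}
\sum_{w(ta) \in N_G(H)(ta)} |N_M(T)(wt)|^{-1}
\Psi_{\eta_{\chi,\nu}}(wt)\, e^{i\sigma}(wa).
$$
The absolute values $|\Delta_{G,H}|$ and $|\Delta_{M,T}|$ are unchanged when any root is replaced by its negative (each flip only changes the sign of one factor $e^{\gamma/2} - e^{-\gamma/2}$), so they are intrinsic to the complex root systems and make no reference to any positivity choice. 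The normalizers $N_G(H)$ and $N_M(T)$, the character $\Psi_{\eta_{\chi,\nu}}$, and the character $e^{i\sigma}$ on $A$ are all intrinsic to $(H, \chi, \nu, \sigma)$. To cover points of $G'$ off $H$, I would use Theorem \ref{4.3.8}(3), which locates the support of $\Theta_{\pi_{\chi,\nu,\sigma}}$ in the closure of $\bigcup G'_J$ for $J \in \Car(MA)$ (a set depending only on $MA$, hence independent of $N$), and Theorem \ref{4.3.8}(4), whose character formula on each such $G'_J$ again involves only $|\Delta_{G,J}|$, $|\Delta_{MA,J}|$, $\Psi_{\eta_{\chi,\nu}}$, $e^{i\sigma}$ and normalizers. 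Since none of these depend on $\Sigma_\ga^+$, neither does the character; hence neither does the class.

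There is no real obstacle; the corollary is essentially bookkeeping on the character formula. The one point requiring attention is the sign ambiguity in $\Delta_{G,H}$ and $\Delta_{M,T}$ that accompanies a change of positive root system: this is precisely the reason the formula of Theorem \ref{4.4.4} writes these Weyl denominators in absolute value, so that the dependence on any positivity convention drops out. One could equally well invoke Corollary \ref{4.3.9} directly, together with the fact that $[\eta_{\chi,\nu}]$ is $N$--free, but the character-formula route is self-contained and exhibits the mechanism.
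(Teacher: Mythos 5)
Your proof is correct and matches the reasoning the paper leaves implicit: Corollary \ref{4.4.5} is stated without proof precisely because it follows at once from the character formula of Theorem \ref{4.4.4} (equivalently from Corollary \ref{4.3.9}), and you correctly check that every ingredient there — $|\Delta_{G,H}|$, $|\Delta_{M,T}|$, the normalizers, $\Psi_{\eta_{\chi,\nu}}$, and $e^{i\sigma}$ — is intrinsic to $(H,\chi,\nu,\sigma)$ and blind to the choice of $N$ (equivalently of $\Sigma_\ga^+$), while a class in $\widehat{G}_\zeta$ is determined by its distribution character. Your observation that the absolute values in the Weyl denominators are exactly what eliminate the sign ambiguity attached to a positivity choice is the right thing to point out.
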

 
The support of $\Theta_{\pi_{\chi,\nu,\sigma}}$ meets the interior of
$G'_H$\,, and by Theorem \ref{4.3.8}(3) it determines the conjugacy
class of $H$.  A stronger result, due to Lipsman \cite[Theorem 11.1]{L1971}
for connected semisimple groups with finite center, is
\begin{theorem}\label{4.4.6}
Let $H$ and $'H$ be non--conjugate Cartan subgroups of $G$.  Let 
$[\pi] \in \widehat{G}$ be $H$--series and let $['\pi] \in \widehat{G}$ 
be $'H$--series.  Then the infinitesimal characters 
$\chi_\pi \ne \chi_{'\pi}$\,, and $[\pi]$ and $['\pi]$ are disjoint
(no composition factors in common).
\end{theorem}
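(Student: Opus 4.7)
The plan is to prove assertion (a) (distinct infinitesimal characters) first and deduce assertion (b) from it. Indeed, by Theorem~\ref{4.3.8}(1), every irreducible composition factor of $[\pi_{\chi,\nu,\sigma}]$ shares the infinitesimal character $\chi_{\nu+i\sigma}$ of the whole induced representation, since $\cZ(\gg)$ acts by the same scalar on every closed $G$-invariant subspace. Hence a common irreducible composition factor of $[\pi]$ and $['\pi]$ would force $\chi_\pi = \chi_{'\pi}$, so (b) is a formal consequence of (a).

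For (a), by Lemma~\ref{4.1.1} I first replace $'H$ within its $G^0$-conjugacy class so that a single Cartan involution $\theta$ of $G$ stabilizes both $H$ and $'H$; this gives $\theta$-decompositions $\gh = \gt \oplus \ga$ and $'\gh = {}'\gt \oplus {}'\ga$. By Theorem~\ref{4.4.4}, $\chi_\pi = \chi_\mu$ and $\chi_{'\pi} = \chi_{\mu'}$ with $\mu = \nu + i\sigma \in i\gh^*$ and $\mu' = {}'\nu + i\,{}'\sigma \in i\,'\gh^*$ (purely imaginary functionals on the respective real Cartans). Assume for contradiction $\chi_\mu = \chi_{\mu'}$. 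Then via the Harish--Chandra isomorphism there is an inner automorphism $\Ad(g)$ of $\gg_\C$ identifying $\gh_\C$ with $'\gh_\C$ under which $\mu$ and the pullback of $\mu'$ fall in a common $W(\gg_\C,\gh_\C)$-orbit in $\gh_\C^*$.

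The core of the argument, as in Lipsman~\cite{L1971} for the connected semisimple case with finite center, is to show that such a pure-imaginary $W$-orbit detects the real form of $\gh_\C$ up to $G^0$-conjugacy. One decomposes $\mu$ and $\mu'$ into $\theta^*$-eigenspaces on the complex Cartan and tracks the Killing-form pairings: $B(\nu,\nu) > 0$ whenever $\nu \neq 0$ (since $B$ is negative-definite on the compact part $\gt$), while $B(i\sigma, i\sigma) \leq 0$ (since $B$ is positive-definite on the split part $\ga$). The regularity of $\nu$ in $L''_\gt$ together with this signature pins down the conjugacy class of the Cartan, forcing $\gh$ and $'\gh$ to be $G^0$-conjugate and contradicting the hypothesis that $H$ and $'H$ are non-conjugate.

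The principal technical obstacle is that an inner automorphism $\Ad(g)$ identifying $\gh_\C$ with $'\gh_\C$ need not commute with $\theta$, so its pullback does not a priori respect the $\theta^*$-decomposition; one must combine it with an element of $W(\gg_\C,\gh_\C)$ so as to realign the decompositions. The existence of such a combined map that preserves both the Killing-form signature on $\theta^*$-eigenspaces and the imaginary-type structure is precisely what forces $\gh$ and $'\gh$ into the same $K$-conjugacy class, equivalently the same $G^0$-conjugacy class. The reduction from class $\widetilde{\cH}$ to the connected semisimple case is routine given condition~(\ref{1.2.2})(i), which makes all of $\Ad(G)$ inner on $\gg_\C$, together with the observation following Proposition~\ref{4.2.3} that $G^0$-conjugacy and $G$-conjugacy of Cartans coincide in our setting; Lipsman's proof then carries over with only notational change.
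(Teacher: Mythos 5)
Your proposal follows the same overall route as the paper: reduce disjointness to the statement about infinitesimal characters, then assume $\chi_\pi = \chi_{'\pi}$, produce an inner automorphism $\beta$ of $\gg_\C$ carrying one Cartan to the other and matching the parameters, and deduce that the real Cartans must be $G^0$-conjugate, a contradiction. Your ``Killing-form signature'' observation and the paper's ``real span of roots'' are in fact the same device: the real span $\gh_\R^* = i\gt^* \oplus \ga^*$ (resp.\ $i\gh_\R^*$) is exactly where $B$ is positive (resp.\ negative) definite, and $\nu$ lives in $\gh_\R^*$ while $i\sigma$ lives in $i\gh_\R^*$.

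However, the description of the ``principal technical obstacle'' and its resolution is confused and leaves the crucial step unargued. You say one must combine $\beta$ with a Weyl element ``so as to realign the decompositions,'' but this realignment is not something a general Weyl adjustment can be expected to achieve, and more importantly it is not needed: since $\beta$ is inner it permutes the root system, hence $\beta^*$ automatically carries the real span of $'\gh$-roots to the real span of $\gh$-roots, so $\beta^*({}'\nu)=\nu$ and $\beta^*({}'\sigma)=\sigma$ with no adjustment at all. The Weyl adjustment in the paper's proof serves a different purpose --- arranging $\beta^*({}'\Sigma^+)=\Sigma^+$ --- after which one concludes $\beta(\gh) = {}'\gh$ over $\C$ and then invokes Rothschild's $\C$-to-$\R$ conjugacy theorem \cite[Corollary~2.4]{R1972} to pass to $\Int(\gg)$-conjugacy of $\gh$ and $'\gh$. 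Your closing sentence that the ``combined map \dots forces $\gh$ and $'\gh$ into the same $K$-conjugacy class'' is the key assertion of the whole proof, and as written it is stated rather than established; citing Lipsman is legitimate for the class-$\cH$ case, but the point of the exercise is to see that the paper's mechanism (real span preserved automatically $\Rightarrow$ $\beta^*$ splits the parameter $\Rightarrow$ adjust by $W$ to match positive systems $\Rightarrow$ $\beta(\gh)={}'\gh$ $\Rightarrow$ Rothschild) carries over verbatim to $\widetilde{\cH}$.
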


\begin{proof}
Take both Cartans are $\theta$--stable, $H = T \times A$
and $'H = {'T} \times {'A}$.  Express $[\pi] = [\pi_{\chi,\nu,\sigma}]$
using $H$ and $[{'\pi}] = [\pi_{{'\chi},{'\nu},{'\sigma}}]$ using $'H$.
Then $\chi_\pi = \chi_{\nu + i\sigma}$ using $\gh$ and
$\chi_{'\pi} = \chi_{{'\nu} +i {'\sigma}}$ using $'\gh$.

If $\chi_\pi = \chi_{'\pi}$ there exists $\beta \in \Int(\gg_\C)$ such
that $\beta(\gh_\C) = {'\gh}_\C$ and $\beta^*({'\nu}+{'\sigma})
= (\nu + i\sigma)$.  $\beta^*$ sends real span of roots to real span
of roots, so $\beta^*({'\nu}) = \nu$ and $\beta^*({'\sigma}) = \sigma$.
Further, we may suppose $\beta^*('\Sigma^+) = \Sigma^+$.  It follows
that $\beta(\gh) = {'\gh}$.  Consequently \cite[Corollary 2.4]{R1972}
there is an inner automorphism of $\gg$ that sends $\gh$ to $'\gh$,
contradicting nonconjugacy of $H$ and $'H$.  Thus
$\chi_\pi \ne \chi_{'\pi}$\,.  Now $[\pi]$ and
$[{'\pi}]$ are disjoint because common factors would have the same 
infinitesimal character.
\end{proof}

\subsection{}\label{ssec4f}\setcounter{equation}{0}
We discuss irreducibility for $H$--series representations.  As before
fox $H=T\times A$ and $P = MAN$.  Let 
$[\eta] \in \widehat{M}$ have infinitesimal character $\chi_nu$ relative 
to $\gt_\C$\,.  We say that $[\eta]$ has {\em real infinitesimal
character} if $\langle\phi,\nu\rangle$ is real for every 
$\phi \in \Sigma_\gt^+$\,.  The classes in $\widehat{M}_{disc}$ have
real infinitesimal character.

An element $\sigma \in \ga^*$ is $(\gg,\ga)$--{\em regular} if
$\langle\psi,\sigma\rangle \ne 0$ for all $\psi \in \Sigma_\ga$\,.
Choose a minimal parabolic subgroup $P_0 = M_0A_0N_0$ of $G$ with 
$A \subset A_0 = \theta A_0$\,.  The $\ga$--roots are just the
nonzero restrictions of the $\gh_\C$--roots, and so they are the
nonzero restrictions of the $\ga_0$--roots.  If $w \in W(\gg,\ga_0)$ and if
$\sigma \in \ga^*$ is $(\gg,\ga)$--regular, then $\ga$ is central in
the centralizer $\gg^\sigma$ and $w \in W(\gg^\sigma,\ga_0)$, so $w$
is generated by reflections in roots that annihilate $\ga$, forcing
$w|_\ga$ to be trivial.  In summary,
\begin{lemma}\label{4.5.1} If $\sigma \in \ga^*$ then the following
conditions are equivalent:
{\rm (i)} $\sigma$ is $(\gg,\ga)$--regular,
{\rm (ii)} If $\phi \in \Sigma^+$ and $\phi|_\ga \ne 0$ then
	$\langle\phi,\sigma\rangle \ne 0$,
{\rm (iii)} If $\psi_0$ is an $\ga_0$--root of $\gg$ and $\psi_0|_\ga 
\ne 0$ then
        $\langle\psi_0,\sigma\rangle \ne 0$,
{\rm (iv)} If $w \in W(\gg,\ga_0)$ and $w|_\ga \ne 1$ then
	$w(\sigma)\ne\sigma$.
\end{lemma}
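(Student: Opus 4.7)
The plan is to verify (i)$\Leftrightarrow$(ii)$\Leftrightarrow$(iii) as essentially a matter of bookkeeping, then address (iii)$\Leftrightarrow$(iv), which carries the substantive content.

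First I would dispatch (i)$\Leftrightarrow$(ii). By definition $\Sigma_\ga=\{\phi|_\ga:\phi\in\Sigma,\,\phi|_\ga\ne 0\}$ is $\pm$-symmetric, so ``$\langle\psi,\sigma\rangle\ne 0$ for all $\psi\in\Sigma_\ga$'' is the same as ``$\langle\phi|_\ga,\sigma\rangle\ne 0$ for all $\phi\in\Sigma^+$ with $\phi|_\ga\ne 0$''. Since $\gt\perp\ga$ under the Killing form, extending $\sigma\in\ga^*$ by zero on $\gt$ gives $\langle\phi,\sigma\rangle=\langle\phi|_\ga,\sigma\rangle$, so the two inner-product conditions coincide. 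The equivalence (ii)$\Leftrightarrow$(iii) is identical in spirit: every $\gh_\C$-root with nonzero $\ga$-restriction restricts further to a nonzero $\ga_0$-root (and conversely every $\ga_0$-root with nonzero $\ga$-restriction arises this way, as noted just before the lemma), and since $\ga$ is Killing-orthogonal to its complement in $\ga_0$, the pairings with $\sigma$ are preserved.

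For (iii)$\Rightarrow$(iv), let $w\in W(\gg,\ga_0)$ satisfy $w(\sigma)=\sigma$. The Chevalley--Steinberg theorem on stabilizers in finite reflection groups says that $w$ lies in the subgroup generated by those reflections $s_{\psi_0}$ for which $\langle\psi_0,\sigma\rangle=0$. By (iii) each such $\psi_0$ vanishes on $\ga$; and the reflection $s_{\psi_0}$ on $\ga_0$ has the form $x\mapsto x-\psi_0(x)h_{\psi_0}$ for a coroot vector $h_{\psi_0}\in\ga_0$, hence acts as the identity on $\ga$ whenever $\psi_0|_\ga=0$. Therefore $w|_\ga=1$, which is (iv).

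Conversely, (iv)$\Rightarrow$(iii) I would prove contrapositively: if (iii) fails, pick an $\ga_0$-root $\psi_0$ with $\psi_0|_\ga\ne 0$ and $\langle\psi_0,\sigma\rangle=0$, and set $w=s_{\psi_0}$. Then $w(\sigma)=\sigma$ by orthogonality, while the formula $s_{\psi_0}(x)=x-\psi_0(x)h_{\psi_0}$ shows $s_{\psi_0}|_\ga\ne 1$, since $\psi_0|_\ga\ne 0$ and $h_{\psi_0}\ne 0$. The one step that needs care is the Chevalley--Steinberg invocation together with the (purely notational) extension of $\sigma$ from $\ga^*$ to $\ga_0^*$ by zero on the Killing-orthogonal complement of $\ga$; the rest is routine root-restriction bookkeeping and the elementary reflection formula.
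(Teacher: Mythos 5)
Your proposal is correct and follows essentially the same route as the paper: the (i)$\Leftrightarrow$(ii)$\Leftrightarrow$(iii) bookkeeping via restriction of roots, then (iii)$\Leftrightarrow$(iv) via the stabilizer theorem for reflection groups, which the paper invokes implicitly by noting that a $w$ fixing $\sigma$ lies in $W(\gg^\sigma,\ga_0)$ and is therefore generated by reflections in $\ga_0$--roots that annihilate $\ga$. You simply make the Chevalley--Steinberg citation explicit and spell out the easy contrapositive for (iv)$\Rightarrow$(iii), both of which the paper leaves to the reader.
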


The following theorem was proved by Harish--Chandra (unpublished):
\begin{theorem} \label{4.5.2}
Let $[\eta]\in\widehat{M}$ have real infinitesimal
character and let $\sigma \in \ga^*$ be $(\gg,\ga)$--regular.  Then
$[\pi_{\eta,\sigma}] = [\Ind_P^G(\eta\otimes e^{i\sigma})]$ is
irreducible.
\end{theorem}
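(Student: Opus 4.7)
The plan is to reduce irreducibility to a Bruhat--type dimension bound on $G$--intertwining operators, and then exploit the regularity of $\sigma$. By Theorem 4.3.8, $\pi_{\eta,\sigma}$ is a finite direct sum of irreducible unitary representations, all sharing the infinitesimal character $\chi_{\nu + i\sigma}$; so irreducibility is equivalent to showing that $\dim \Hom_G(\pi_{\eta,\sigma},\pi_{\eta,\sigma}) = 1$.

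First I would set up the Weyl group $W = W(G,A) = N_K(A)/Z_K(A)$, which acts on $\ga^*$ and, via its conjugation action on the Levi $Z_G(A) = M\times A$, on $\widehat{M}$. For each $w \in W$ one constructs the standard intertwining integral
\[
A(w,\eta,\sigma) \colon \pi_{\eta,\sigma} \longrightarrow \pi_{w\eta,w\sigma},
\]
defined initially on a dense subspace for $\sigma$ in an appropriate open cone in $\ga^*$, and extended meromorphically in $\sigma$. Here both hypotheses enter: the reality of the infinitesimal character of $[\eta]$ together with the $(\gg,\ga)$--regularity of $\sigma$ place $\nu + i\sigma$ in the holomorphy region of all the $A(w,\eta,\sigma)$. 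The classical Bruhat--type estimate, obtained from the $P$--$P$ double coset decomposition of $G$ and the fact that a $G$--intertwining distribution is supported on unions of such cells and determined by its leading behavior on the open cells, then gives
\[
\dim \Hom_G(\pi_{\eta,\sigma},\pi_{\eta',\sigma'}) \leqq \#\{w \in W : w\cdot [\eta] = [\eta'],\ w\sigma = \sigma'\}.
\]

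Taking $(\eta',\sigma') = (\eta,\sigma)$ yields $\dim \Hom_G(\pi_{\eta,\sigma},\pi_{\eta,\sigma}) \leqq |W_{(\eta,\sigma)}|$, where $W_{(\eta,\sigma)}$ is the joint stabilizer. Since $\sigma$ is $(\gg,\ga)$--regular, Lemma 4.5.1(iv) gives $W_\sigma = \{1\}$, so $W_{(\eta,\sigma)} = \{1\}$, and hence $\dim \Hom_G(\pi_{\eta,\sigma},\pi_{\eta,\sigma}) \leqq 1$. The identity operator gives the reverse inequality, so the endomorphism algebra is $\C$. Combined with the finite direct sum decomposition from Theorem 4.3.8, this forces $\pi_{\eta,\sigma}$ to be irreducible.

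The main obstacle is establishing the Bruhat--type dimension bound in the generality of our class $\widetilde{\cH}$, since the classical references treat Harish--Chandra's class $\cH$ or connected semisimple linear groups. The cleanest route is to reduce to $G[\zeta]$ via Theorem 3.3.3: $\pi_{\eta,\sigma}|_{ZG^0}$ decomposes into finitely many pieces indexed by $G/ZG^0$--translates, each corresponding to a parabolically induced representation on the connected reductive group $G[\zeta]$ with compact center, where Bruhat's and Harish--Chandra's classical arguments apply directly; the bound then transports back to $G$, the finite extension $G/ZG^0$ contributing only a harmless bookkeeping factor that does not disturb uniqueness at a regular parameter. An alternative is to compute $\dim \Hom_G$ directly by analyzing asymptotic expansions of matrix coefficients $\phi_{u,v}$ of $\pi_{\eta,\sigma}$ along $A$, where the real infinitesimal character of $[\eta]$ makes the leading exponents explicit and the $(\gg,\ga)$--regularity of $\sigma$ separates the $W$--translates of $(\eta,\sigma)$ so that no nontrivial self--intertwiner can arise from coalesced asymptotics.
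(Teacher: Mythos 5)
The paper gives no proof of Theorem \ref{4.5.2} at all: it is attributed to Harish--Chandra as an unpublished result and simply quoted, so your sketch is not being compared against anything in the text. That said, there is a substantive gap in the proposal. The Bruhat--type inequality
$\dim \Hom_G(\pi_{\eta,\sigma},\pi_{\eta',\sigma'}) \leqq \#\{w : w[\eta]=[\eta'],\, w\sigma=\sigma'\}$
is not a classical lemma you can cite; for a cuspidal parabolic $P=MAN$ with non-abelian $M$ and $\eta$ merely having real infinitesimal character (rather than being a discrete series class of $M$), this bound \emph{is} the theorem. The hard part is showing that a $G$--intertwining distribution supported on a non-open $P\backslash G/P$ double coset must vanish. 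Bruhat's thesis handles the minimal parabolic, where $M$ is compact, and the non-open cells are excluded by analysis that breaks down when $M$ has interesting representation theory; for general $P$ the published arguments (Knapp--Stein, Speh--Vogan \cite{SV1980}) require either Maass--Selberg relations, or the $R$-group machinery, or the Langlands quotient theorem, all of which are deeper than what you invoke. You describe the hypotheses as placing $\nu+i\sigma$ ``in the holomorphy region'' of the $A(w,\eta,\sigma)$, but that is not the mechanism: holomorphy of the standard intertwining integrals at a point does not imply the absence of other intertwiners, and in fact the integrals being well-defined is the easy part. The real infinitesimal character hypothesis enters through the exponents in the $A$--asymptotics of matrix coefficients of $\eta$ (this is the point of your ``alternative'' paragraph, which is closer to how such proofs actually run), and the regularity of $\sigma$ then separates the $W$--translates of the leading exponents so no coalescence can produce an extra self-intertwiner.

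The reduction to $G[\zeta]$ is also not as clean as suggested. Theorem \ref{3.3.3} relates $\widehat{ZG^0}_\zeta$ to $\widehat{G[\zeta]}_{1_S}$, not $\widehat{G}$ to anything; restricting $\pi_{\eta,\sigma}$ to $ZG^0$ and climbing back up requires showing that the $G/ZG^0$--conjugates of the resulting summand are pairwise inequivalent, which is a separate argument using regularity (compare the last paragraph of the proof of Proposition \ref{3.5.5}). Moreover $G[\zeta]$ is connected reductive with compact center but generally non-linear, so ``Bruhat's and Harish--Chandra's classical arguments apply directly'' overstates what is in the references for that class. In short, the outline is a reasonable guess at the shape of Harish--Chandra's unpublished argument, but the load-bearing step --- the vanishing of intertwiners from non-open cells --- is asserted rather than proved, and that is precisely where the two hypotheses do their work.
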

\begin{corollary} \label{4.5.3}
If $\sigma \in \ga^*$ is $(\gg,\ga)$--regular then every $H$--series
class $[\pi_{\chi,\nu.\sigma}]$ is irreducible.
\end{corollary}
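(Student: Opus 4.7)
The plan is to derive Corollary \ref{4.5.3} as an immediate specialization of Theorem \ref{4.5.2}. By the definition in \S\ref{ssec4e}, the $H$--series class in question is
$$
[\pi_{\chi,\nu,\sigma}] \;=\; [\Ind_P^G(\eta_{\chi,\nu} \otimes e^{i\sigma})]
\quad\text{with}\quad [\eta_{\chi,\nu}] \in \widehat{M}_{disc}\,.
$$
So the only thing to check is that $[\eta_{\chi,\nu}]$ satisfies the hypothesis of Theorem \ref{4.5.2}, namely that it has real infinitesimal character; the $(\gg,\ga)$--regularity of $\sigma$ is already in our hands.

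To verify the real-infinitesimal-character condition, I would invoke the description of $\widehat{M}_{disc}$ recalled in \S\ref{ssec4e}: the class $[\eta_{\chi,\nu}]$ has infinitesimal character $\chi_\nu$ relative to $\gt_\C$ for some $\nu \in L_\gt''$, and by definition $L_\gt'' \subset i\gt^*$. Because $T/Z$ is a compact Cartan subgroup of $M/Z$, the algebra $\gt$ is (modulo the center $Z$) compact, so every $\gt_\C$--root $\phi \in \Sigma_\gt^+$ takes purely imaginary values on $\gt$, i.e.\ lies in $i\gt^*$ as well. Consequently $\langle \phi, \nu \rangle$, computed via the Killing-form pairing on $i\gt^*$, is real for every $\phi \in \Sigma_\gt^+$. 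By the definition stated just before Lemma \ref{4.5.1}, this says exactly that $[\eta_{\chi,\nu}]$ has real infinitesimal character; this is the assertion made in the first paragraph of \S\ref{ssec4f}.

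With that verification in place, Theorem \ref{4.5.2} applies with $\eta = \eta_{\chi,\nu}$ and $\sigma$ as given, and concludes that $[\pi_{\chi,\nu,\sigma}] = [\Ind_P^G(\eta_{\chi,\nu} \otimes e^{i\sigma})]$ is irreducible, which is Corollary \ref{4.5.3}. The main (indeed only) obstacle is not in the corollary itself but in its input, Theorem \ref{4.5.2}: Corollary \ref{4.5.3} is simply the observation that the real-infinitesimal-character hypothesis of Harish--Chandra's theorem is automatic on the discrete series, so that the theorem translates cleanly into a regularity criterion for irreducibility of the full $H$--series.
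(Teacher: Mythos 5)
Your proposal is correct and follows exactly the paper's intended route: the corollary is an immediate specialization of Theorem \ref{4.5.2}, where the only thing to check — that classes in $\widehat{M}_{disc}$ have real infinitesimal character — is precisely the observation the paper records just before Lemma \ref{4.5.1}. Your explanation of why this holds (both $\nu$ and the $\gt_\C$--roots lie in $i\gt^*$ since $T/Z$ is compact, so the Killing--form pairing $\langle\phi,\nu\rangle$ is real) correctly fills in the short gap the paper leaves to the reader.
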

After that, irreducibility were settled by Knapp and Zuckerman
(\cite{KZ1982a}, \cite{KZ1982b}) for connected reductive real
linear algebraic groups (the case where $G$ is connected and is isomorphic
to a closed subgroup of some general linear group $GL(n;\R)$).  In view of 
Langlands theorem \cite{L1973}, that completed the classification of 
irreducible 
admissible representations for reductive real linear algebraic groups.
For those groups, and more generally
for groups of class $\cH$, Vogan's treatment of the
Kazhdan--Lusztig conjecture and construction and analysis
of the KLV polynomials (\cite{V1983a}, \cite{V1983b}) includes a complete
analysis of the composition series of any $H$--series representation
$[\pi_{\chi,\nu.\sigma}]$.  Finally, the {\em Atlas} software,
{\tt http://www.liegroups.org/software/}, allows explicit computation
of those composition factors; see \cite{ALTV2015}.

\section{The Plancherel Formula for General Real Reductive Lie Groups}
\label{sec5}
\setcounter{equation}{0}
As before, $G$ is a real reductive Lie group that satisfies (\ref{1.2.2}).
The Harish--Chandra class $\cH$ consists of all such groups for which 
$G/G^0$ is finite and the derived group $[G^0,G^0]$ has finite center.
We fix a Cartan involution $\theta$ of $G$ and a system
$\Car(G) = \{H_1, \dots , H_\ell\}$ of $\theta$--stable representatives of
the conjugacy classes of Cartan subgroups of $G$.  
Harish--Chandra's announced \cite[\S 11]{HC1971} a Plancherel formula
for groups of class $\cH$: there are unique continuous
functions $m_{j,\eta}$ on $\ga_j^*$\,, meromorphic on $(\ga_j^*)_\C$\,,
invariant under the Weyl group $W(G,H_j)$, such that
$$
f(x) = \sum_{1\leqq j\leqq\ell} \,\,\sum_{[\eta] \in (\widehat{M_j})_{disc}}
	\deg(\eta)\int_{\ga_j^*} \Theta_{\pi_{\eta,\sigma}}
	r_x(f) m_{j,\eta}(\sigma)d\sigma,
$$
absolutely convergent for $x \in G'$ and $f \in C_c^\infty(G).$ 
This was extended to our class in \cite{W1973} without consideration
of meromorphicity.  Later Harish--Chandra published a complete
treatment for $G$ of class $\cH$ and $f$ in the Harish--Chandra
Schwartz space $\cS(G)$ (\cite{HC1975},\cite{HC1976a},\cite{HC1976b}).  
Still later
Herb and I extended those results to general real reductive groups,
including explicit formulae for the various constants and functions 
that enter into the Plancherel measure (\cite{HW1986a}, \cite{HW1986b}).

Here, for lack of space or necessity, I'll only indicate the results from 
\cite{W1973}, because that is all that is needed in \S\S 7 and 8 below.

\subsection{}\label{ssec5a}\setcounter{equation}{0}
As above, we have $G$, $\theta$, $K = G^\theta$, 
$\Car(G) = \{H_1,\dots, H_\ell\}$, $H_j = T_j\times A_j$,
$\Sigma^+_{\ga_j}$ and $P_j = M_jA_jN_j$ with $M_j\times A_j = Z_G(A_j)$.
As in \S 4, $L_j = \{\nu \in i\gt^*_j \mid e^\nu \in \widehat{T_j^0}\}$
and $L_j''$ is its $M_j$--regular set.  Fix the $\Sigma^+_{\gt_j}$
and set $\varpi_{\gt_j}(\nu) = \prod_{\phi\in\Sigma^+_{\gt_j}}
\langle \phi,\nu\rangle$, so $L_j'' = \{\nu\in L_j \mid \varpi_{\gt_j}(\nu)
\ne 0\}$.  

If $\zeta\in\widehat{Z}$ then $L_{j,\zeta} =
\{\nu \in L_j \mid e^{\nu - \rho_{\gt_j}}|_{Z\cap M_j^0} =
\zeta|_{Z\cap M_j^0}\}$ and $L_j'' = L_{j,\zeta}\cap L_j''$\,.
Write $\xi_\nu$ for $e^{\nu - \rho_{\gt_j}}$.  Since $ZZ_{M_j^0}$
has finite index in $Z_{M_j}(M_j^0)$ we define finite subsets 
$S(\nu,\zeta) \subset \widehat{Z_{M_j}(M_j^0)}$ by
$S(\nu,\zeta) = \widehat{Z_{M_j}(M_j^0)}_{\zeta\otimes\xi_\nu}$ if 
$\xi|_{Z\cap M_j^0} = \xi_\nu|_{Z\cap M_j^0}$\,,  
$S(\nu,\zeta) = \emptyset$ otherwise.  When $\nu \in L_j''$ and 
$\sigma \in \ga_j^*$ the $H_j$--series classes $[\pi_{\chi,\nu,\sigma}]$
that transform by $\zeta$ are just the ones with $[\chi] \in S(\nu,\zeta)$. 
Thus we have finite sums
$$
\pi_{j,\zeta,\nu+i\sigma} = {\sum}_{S(\nu,\zeta)} (\dim \chi)
	\pi_{\chi,\nu,\sigma} \text{ and }
\Theta_{j,\zeta,\nu+i\sigma} = {\sum}_{S(\nu,\zeta)} (\dim \chi)
	\Theta_{\pi_{\chi,\nu,\sigma}}\,\,.
$$
If $\zeta|_{Z\cap M_j^0} \ne \xi_\nu|_{Z\cap M_j^0}$\,, in other words if
$\nu \not\in L_{j,\zeta}$\,, then $\Theta_{j,\zeta,\nu+i\sigma} = 0$.
Here is the 
extension \cite{W1973} of the Harish--Chandra Plancherel
Formula (\cite{HC1970}, \cite{HC1971}) to the $\widehat{G}_\zeta$\,.

\begin{theorem}\label{5.1.6}
Let $G$ be a general real reductive Lie group {\rm (\ref{1.2.2})} and 
$\zeta \in \widehat{Z}$.
Then there are unique Borel--measurable functions $m_{j,\zeta,\nu}$ on
$\ga_j^*$\,, $1\leqq j\leqq\ell$, defining the
{\rm Plancherel measure on} $\widehat{G}_\zeta$\,, as follows.

{\rm 1.} The $m_{j,\zeta,\nu}$ are $W_{G,H_j}$--equivariant:
$w^*m_{j,\zeta,\nu}(\sigma) = 
m_{j,w^*\zeta,w^*\nu}(w^*\sigma)$.

{\rm 2.} If $\nu \notin L_{j,\zeta}$ then $m_{j,\zeta,\nu} = 0$. 

{\rm 3.} Let $f \in L_2(G/Z,\zeta)$ be $C^\infty$ with support compact 
modulo $Z$.  If $x \in G$ define $(r_xf)(g) = f(gx)$.  Then
\begin{equation}\label{5.1.7}
\begin{aligned}
&\sum_{1\leqq j\leqq\ell} \sum_{\nu \in L''_{j,\zeta}} |\varpi_{\gt_j}(\nu)|
	\int_{\ga_j^*} |\Theta_{j,\zeta,\nu+i\sigma}(r_xf)
	m_{j,\zeta,\nu}(\sigma)|d\sigma < \infty \text{ and } \\
&f(x) = \sum_{1\leqq j\leqq\ell}\,\, \sum_{\nu \in L''_{j,\zeta}} 
	|\varpi_{\gt_j}(\nu)| \int_{\ga_j^*} 
	\Theta_{j,\zeta,\nu+i\sigma}(r_xf)
	m_{j,\zeta,\nu}(\sigma) d\sigma.
\end{aligned}
\end{equation}
\end{theorem}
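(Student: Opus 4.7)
The plan is to reduce, via the central extension machinery of \S \ref{ssec3c}, to the case in which Harish--Chandra's Plancherel theorem is known, and then transport the resulting measure back along the bijection of Theorem \ref{3.3.3}. Concretely, fix $\zeta \in \widehat{Z}$ and form $G[\zeta]$ together with the projection $p: ZG^0 \to G[\zeta]$. Lemma \ref{3.3.2} gives a $G$--equivariant isometry $L_2(G[\zeta]/S, 1_S) \cong L_2(ZG^0/Z, \zeta)$, and Theorem \ref{3.3.3} matches the $H_j$--series of $ZG^0$ (restricted from $G$) with an $H_j[\zeta]$--series on the connected reductive group $G[\zeta]$ with compact center. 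Since $G[\zeta]$ lies in Harish--Chandra's class $\cH$, the Plancherel expansion of \cite{HC1970, HC1971} supplies Borel functions $\widetilde{m}_{j,\nu}$ on $\ga_j^*$ producing the analogue of (\ref{5.1.7}) on $L_2(G[\zeta]/S,1_S)$.

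Next I would push the measure back to $L_2(ZG^0/Z,\zeta)$: the character identities $\Theta_{\varepsilon[\psi]} = \Theta_{[\psi]} \cdot p$ from Theorem \ref{3.3.3}, together with the explicit form of the distribution character in Theorem \ref{4.4.4} (summed over $\chi \in S(\nu,\zeta)$ to obtain $\Theta_{j,\zeta,\nu+i\sigma}$), let me read off the correspondence between the Harish--Chandra densities on $G[\zeta]$ and the desired $m_{j,\zeta,\nu}$. The factor $|\varpi_{\gt_j}(\nu)|$ enters exactly because it is the formal degree appearing in the discrete--series contribution from $M_j[\zeta]$, via the Corollary \ref{3.4.9} normalization. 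To lift from $L_2(ZG^0/Z,\zeta)$ up to $L_2(G/Z,\zeta)$, I would apply the Mackey induction machinery of \S \ref{ssec2c}: the finite group $G/ZG^0$ acts by conjugation, summing $\zeta$--Plancherel decompositions across the cosets; by Proposition \ref{3.5.5} the corresponding $H_j$--series characters on $G$ are precisely the finite sums $\Theta_{j,\zeta,\nu+i\sigma}$, so the finite--index averaging preserves the integrated formula.

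Finally I would verify the three bulleted properties. Property (1), $W(G,H_j)$--equivariance of $m_{j,\zeta,\nu}$, follows from the equivariance built into Theorem \ref{4.4.4}, together with the fact that both sides of (\ref{5.1.7}) are intrinsic and any $W(G,H_j)$ redundancy in the integrand must be cancelled in the density. Property (2), vanishing off $L_{j,\zeta}$, is immediate because $\Theta_{j,\zeta,\nu+i\sigma}=0$ when $\xi_\nu|_{Z\cap M_j^0}\neq\zeta|_{Z\cap M_j^0}$, so the coefficient is forced to zero by uniqueness. Uniqueness itself comes from applying (\ref{5.1.7}) to Dirac--like approximations at regular points of each stratum $G'_{H_j}$ and invoking the disjointness Theorem \ref{4.4.6}: the $H_j$--series components have mutually disjoint infinitesimal character data, so the individual densities are pinned down.

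The principal obstacle is the bookkeeping between the character formulae on $G$, on $ZG^0$ and on $G[\zeta]$: one must check that the restriction map $p^*$, the $G/ZG^0$--averaging, the Weyl denominators $\Delta_{G,H_j}$ and $\Delta_{M_j,T_j}$ of Theorem \ref{4.4.4}, and the Harish--Chandra $c$--function style densities on $G[\zeta]$ all fit together so that a single scalar density $m_{j,\zeta,\nu}$ on $\ga_j^*$ emerges, with the correct $|\varpi_{\gt_j}(\nu)|$ prefactor. Meromorphic dependence, which would require the techniques of \cite{HW1986a, HW1986b}, is not claimed here, so the argument can stop at Borel measurability, which is automatic from the reduction.
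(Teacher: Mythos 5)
The paper does not reproduce a proof of Theorem \ref{5.1.6}: the text explicitly states ``we simply state the result as proved in \cite[\S 5]{W1973}'' and notes that the sharper argument of \cite{HW1986a, HW1986b} (which also yields meromorphicity of the densities) would require machinery not introduced here. So there is no paper-internal proof against which to measure your sketch line by line; the right comparison is with the strategy of the 1973 Memoir, which the present paper itself indicates ran through central extensions in the discrete-series case (\S\ref{ssec3c}--\ref{ssec3d}).

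With that understood, your proposed route --- pass to $G[\zeta]$ via the Mackey circle extension, invoke Harish--Chandra's Plancherel theorem for the class-$\cH$ group $G[\zeta]$, transport the measure back with the character identity $\Theta_{\varepsilon[\psi]}=\Theta_{[\psi]}\cdot p$, and finally lift over the finite quotient $G/ZG^0$ --- is the natural one, and it is consistent with the provenance the paper ascribes to the theorem. A few steps in your sketch need more care than you have given them. First, Harish--Chandra's formula is for $L_2(G[\zeta])$, not directly for $L_2(G[\zeta]/S,1_S)$; you must project onto the $1_S$-isotypic component over the compact central circle $S$, which works because $\widehat S \cong \Z$ is discrete but needs to be said, and you must then identify the surviving discrete-series parameters for $M_j[\zeta]$ with those contributing to the sets $S(\nu,\zeta)$. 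Second, your appeal to Proposition \ref{3.5.5} for the coefficient bookkeeping at the $ZG^0\to G$ stage is slightly off target: that proposition is specifically about relative discrete series, while what you need is the analogous Mackey-type statement for general $H_j$-series characters, which is Theorem \ref{4.4.4} combined with the definitions of $\pi_{j,\zeta,\nu+i\sigma}$ and $\Theta_{j,\zeta,\nu+i\sigma}$ in \S\ref{ssec5a}; moreover $Z$ is normal but not necessarily central in $G$, so $G/ZG^0$ can move $\zeta$, and the ``averaging'' must be organized as a Mackey little-group analysis rather than a literal sum of copies. Third, the extraction of the factor $|\varpi_{\gt_j}(\nu)|$ as a formal degree is correct in spirit (Corollary \ref{3.4.9}), but the actual verification that the lifted density cleanly factors as $|\varpi_{\gt_j}(\nu)|\,m_{j,\zeta,\nu}(\sigma)$ after both reduction steps is a nontrivial piece of bookkeeping that you flag but do not carry out. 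None of these is a fatal flaw --- they are precisely the technicalities that occupy \cite[\S 5]{W1973} --- but as written the proposal is a roadmap rather than a proof, and your uniqueness argument via Dirac approximations also needs to be made precise as a Fourier-inversion statement using the support properties from Theorem \ref{4.3.8}(3) and the disjointness Theorem \ref{4.4.6}.
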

The following corollary is used for realization of $H$--series
representations on spaces of partially harmonic spinors \cite{W1974a}.
\begin{corollary}\label{5.7.1}
Let $\omega \in \cU(\gg)$ be the Casimir element.  If $c \in \R$
and $\zeta \in \widehat{Z}$ then $\{[\pi] \in \widehat{G}_\zeta
\setminus \widehat{G}_{\zeta-disc} \mid \chi_\pi(\omega) = c\}$ has 
Plancherel measure $0$ on $\widehat{G}_\zeta$\,.
\end{corollary}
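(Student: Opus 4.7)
The plan is to use Theorem \ref{5.1.6} to reduce the statement to an assertion about the infinitesimal character restricted to the continuous parameter $\sigma \in \ga_j^*$ along each $H_j$--series.

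First I would separate the Cartan subgroups in $\Car(G)$ into the compact ones (i.e., those $H_j$ with $A_j = \{1\}$, equivalently $H_j/Z$ compact) and the noncompact ones ($\dim \ga_j \geqq 1$). By Theorem \ref{3.5.8} and the discussion following Theorem \ref{4.4.4}, the $H_j$--series for the compact Cartans gives exactly the relative discrete series $\widehat{G}_{\zeta - disc}$, and by Theorem \ref{4.4.6} this series is disjoint from the other $H_j$--series. Thus $\widehat{G}_\zeta \setminus \widehat{G}_{\zeta - disc}$ is, up to a set of Plancherel measure zero, the disjoint union of the $H_j$--series classes $[\pi_{\chi,\nu,\sigma}]$ with $\dim \ga_j \geqq 1$, $\nu \in L''_{j,\zeta}$, $\sigma \in \ga_j^*$, $[\chi] \in S(\nu,\zeta)$.

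Next I would invoke Theorem \ref{4.4.4} to compute that the infinitesimal character of $[\pi_{\chi,\nu,\sigma}]$ sends $\omega$ to $\|\nu\|^2 + \|\sigma\|^2 - \|\rho\|^2$. Thus, for fixed $j$ with $\dim \ga_j \geqq 1$, for fixed $\nu \in L''_{j,\zeta}$, and for fixed $[\chi] \in S(\nu,\zeta)$, the subset of $\ga_j^*$ corresponding to $\chi_\pi(\omega) = c$ is
\[
E_{j,\nu,c} = \{\sigma \in \ga_j^* \mid \|\sigma\|^2 = c + \|\rho\|^2 - \|\nu\|^2\},
\]
which is either empty or a Euclidean sphere of codimension $1$ in $\ga_j^*$. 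In either case, $E_{j,\nu,c}$ has Lebesgue measure zero in $\ga_j^*$.

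By Theorem \ref{5.1.6}, the restriction of Plancherel measure on $\widehat{G}_\zeta$ to the $H_j$--series (for $\dim \ga_j \geqq 1$) is a countable sum over $\nu \in L''_{j,\zeta}$ and the finite set $S(\nu,\zeta)$ of measures of the form $|\varpi_{\gt_j}(\nu)|\,m_{j,\zeta,\nu}(\sigma)\,d\sigma$, each of which is absolutely continuous with respect to Lebesgue measure $d\sigma$ on $\ga_j^*$. Hence each such measure gives zero mass to $E_{j,\nu,c}$, and summing the countably many contributions over $\nu$, $[\chi]$, and the finitely many indices $j$ with $\dim \ga_j \geqq 1$ yields Plancherel measure zero for the whole set $\{[\pi] \in \widehat{G}_\zeta \setminus \widehat{G}_{\zeta - disc} \mid \chi_\pi(\omega) = c\}$.

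No serious obstacle is expected: the only issue to be careful about is the bookkeeping identifying the $H_j$--series subsets of $\widehat{G}_\zeta$ disjointly up to Plancherel null sets, which is already handled by Theorems \ref{4.4.6} and \ref{5.1.6}; the measure-theoretic core is just that a sphere has Lebesgue measure zero in a Euclidean space of positive dimension.
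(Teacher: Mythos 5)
Your proof is correct and follows what the paper intends: the paper states Corollary \ref{5.7.1} without an explicit argument, leaving it as an evident consequence of the Plancherel formula, and your argument is precisely the natural filling-in of that gap. You correctly combine the Casimir value formula from Theorem \ref{4.4.4} (so that for fixed $\nu$, the condition $\chi_\pi(\omega)=c$ cuts out a sphere in $\ga_j^*$, which has Lebesgue measure zero whenever $\dim\ga_j\geqq 1$), the absolute continuity of each $H_j$--series Plancherel density $|\varpi_{\gt_j}(\nu)|\,m_{j,\zeta,\nu}(\sigma)\,d\sigma$ with respect to $d\sigma$ from Theorem \ref{5.1.6}, and countability of the sum over $\nu\in L''_{j,\zeta}$ together with finiteness of the sets $S(\nu,\zeta)$ and of $\Car(G)$. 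The one point you pass over quickly is that the classes $[\pi_{\chi,\nu,\sigma}]$ need not be irreducible, so the measure is actually concentrated on their irreducible constituents; but since those constituents all share the infinitesimal character $\chi_{\nu+i\sigma}$, the Casimir value is the same and the absolute-continuity argument is unaffected, so this does not create a gap.
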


Corollary \ref{5.7.3} below is needed when we consider spaces of square 
integrable partially harmonic $(0,q)$--forms in \S\S 7 and 8.
It follows from Theorems \ref{4.5.2} and \ref{5.1.6}; or one can
also derive it from 

\begin{lemma}\label{5.7.2}
Let $[\pi]$ be an irreducible constituent of an $H$--series class
$[\pi_{\chi,\nu,\sigma}]$ where $\nu + i\sigma \in \gh^*$ is
$\gg$--regular.  If $G$ has relative discrete series representations,
and if $H/Z$ is noncompact, then $\Theta_\pi|_{K\cap G'} = 0$.
\end{lemma}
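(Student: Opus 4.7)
The plan is to prove the vanishing of $\Theta_\pi$ on $K\cap G'$ in two stages: first, that the full $H$--series character $\Theta_{\pi_{\chi,\nu,\sigma}}$ vanishes on $K\cap G'$, and second, that this vanishing propagates to each irreducible constituent, using the regularity of $\nu + i\sigma$.

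For the first stage I would use Theorem \ref{4.3.8}(3) together with the hypothesis that $G$ has a relative discrete series.  The latter gives (Theorem \ref{3.5.8}) a compact Cartan subgroup $H_{cpt}$ of $G$ modulo $Z$.  Since $H/Z$ is noncompact, the split factor $A$ of $H = T\times A$ is nontrivial, so every Cartan subgroup of $MA$ has the form $J_M\times A$ and contains $A$; hence none is $G$--conjugate to $H_{cpt}$, whose split part modulo $Z$ is trivial.  Theorem \ref{4.3.8}(3) places the support of $\Theta_{\pi_{\chi,\nu,\sigma}}$ in the closure of $\bigcup_J G'_J$ as $J$ ranges over Cartan subgroups of $MA$, giving $\Theta_{\pi_{\chi,\nu,\sigma}}|_{G'_{H_{cpt}}} = 0$.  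The containment $K\cap G' \subset G'_{H_{cpt}}$ that completes this stage should follow from $\rank K = \rank G$ (equivalent to existence of discrete series): for $k\in K\cap G'$, the closure of $\langle k\rangle$ modulo $Z$ is a compact abelian subgroup of $K/Z$ contained in some maximal torus, and that torus lifts to a compact Cartan of $G$, forced to coincide with $Z_G(k)^0$ by $G$--regularity of $k$.

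For the second stage I would decompose $\pi_{\chi,\nu,\sigma} = \bigoplus_i \pi_i$ as the finite orthogonal sum of irreducible constituents (Theorem \ref{4.3.8}(2)); stage one gives $\sum_i \Theta_{\pi_i}|_{K\cap G'} = 0$, and the remaining task is to show each summand vanishes separately.  Here is where the regularity of $\nu + i\sigma$ is essential.  Each $\Theta_{\pi_i}$ is an invariant eigendistribution with the common regular infinitesimal character $\chi_{\nu+i\sigma}$, so by Harish--Chandra's structure theorem its restriction to $H_{cpt}\cap G'$ takes the form $\Delta_{G,H_{cpt}}\,\Theta_{\pi_i}|_{H_{cpt}\cap G'} = \sum_w c^i_w\, e^{w(\nu+i\sigma)}$.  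Comparing with the character formulas of Theorems \ref{3.4.7} and \ref{3.5.9}, Harish--Chandra's theory identifies the tempered representations with some $c^i_w \neq 0$ as precisely the relative discrete series.

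So the crux reduces to ruling out discrete series among the $\pi_i$, which I would settle via Langlands classification \cite{L1973}: the Langlands datum of any irreducible constituent of $\Ind_P^G(\eta_{\chi,\nu}\otimes e^{i\sigma})$ involves the proper cuspidal parabolic $P$, whereas the Langlands datum of a relative discrete series of $G$ is $(G,\pi_\lambda,0)$ with trivial parabolic.  Consequently every $c^i_w = 0$, which together with $G$--invariance of $\Theta_{\pi_i}$ and $K\cap G' \subset G'_{H_{cpt}}$ gives $\Theta_{\pi_i}|_{K\cap G'} = 0$ for each $i$, hence for $[\pi]$.  The hard part is exactly this separation of constituents: stage one is a formal support argument, but passing from $\sum_i \Theta_{\pi_i} = 0$ on $K\cap G'$ to the vanishing of each $\Theta_{\pi_i}$ there requires an input beyond Theorem \ref{4.3.8}, either Langlands classification as above, or (closer to the 1973 Memoir) a Plancherel-theoretic substitute exploiting Corollary \ref{5.7.1} to separate continuous-spectrum from discrete-series infinitesimal characters.
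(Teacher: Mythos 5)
The paper does not actually prove Lemma~\ref{5.7.2}; it is stated without proof, and the text immediately following notes that the downstream Corollary~\ref{5.7.3} (which is what \S\S~\ref{sec7}--\ref{sec8} actually use) can instead be obtained from Theorems~\ref{4.5.2} and~\ref{5.1.6}. So there is no in-paper proof to compare against; your argument has to stand on its own, with the original Memoir as the implicit reference.

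Stage~1 of your argument is correct and essentially formal. Theorem~\ref{4.3.8}(3) places the support of $\Theta_{\pi_{\chi,\nu,\sigma}}$, within the regular set, inside $\bigcup_{J\subset MA} G'_J$; since $A\ne 1$, every Cartan subgroup $J$ of $MA$ has nontrivial split part and hence is not conjugate to a compact Cartan $H_{cpt}$, and because the sets $G'_L$ are pairwise disjoint and open in $G'$ the character vanishes identically on $G'_{H_{cpt}}$. The inclusion $K\cap G'\subset G'_{H_{cpt}}$ is the standard consequence of $\rank K=\rank G$ (equivalent to the discrete-series hypothesis); your sketch is a bit casual about disconnectedness of $K/Z$, but the conclusion is right.

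Stage~2 is where the content of the lemma actually sits, and you locate this correctly. Two remarks, though. First, to rule out relative discrete series among the constituents $\pi_i$, you invoke the Langlands classification~\cite{L1973}. That is both external in the wrong way --- \cite{L1973} covers reductive real linear algebraic groups, not the class $\widetilde{\cH}$ of the paper --- and unnecessary: Theorem~\ref{4.4.6}, whose proof is exactly where the $\gg$-regularity of $\nu+i\sigma$ enters, already shows that $\chi_{\nu+i\sigma}$ (computed from $\gh$) cannot equal any $\chi_\lambda$ computed from the nonconjugate Cartan $\gh_{cpt}$, so no $\pi_i$ can be a relative discrete series. Second, and more seriously, the sentence ``Harish--Chandra's theory identifies the tempered representations with some $c^i_w\ne 0$ as precisely the relative discrete series'' is precisely the crux of the lemma and is asserted rather than proved. (Also, the exponentials should be $e^{w\Lambda}$ where $\Lambda$ is a $W(\gg_\C)$-transport of $\nu+i\sigma$ to the compact Cartan, not $e^{w(\nu+i\sigma)}$ as written.) This is Harish--Chandra's completeness/selection principle for elliptic tempered characters: a deep theorem, in the literature proved for class $\cH$ and not automatically available for $\widetilde{\cH}$, whose extension to $\widetilde{\cH}$ is essentially what the 1974 Memoir supplies. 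As written, the proposal has the right skeleton, but the load-bearing step is invoked rather than established, so this is where the genuine gap lies.
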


\begin{corollary}\label{5.7.3}
If $G$ has relative discrete series representations and if $\zeta \in
\widehat{Z}$ then $\{[\pi] \in \widehat{G}_\zeta \setminus 
\widehat{G}_{\zeta-disc} \mid \Theta_\pi|_{K\cap G'} \ne 0\}$
has Plancherel measure $0$ on $\widehat{G}_\zeta$\,.
\end{corollary}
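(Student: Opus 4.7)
The plan is to combine the Plancherel formula (Theorem \ref{5.1.6}) with Theorem \ref{4.5.2} and Lemma \ref{5.7.2}, showing that the set of bad classes is concentrated on a parameter set of Lebesgue measure zero within each $H_j$-series. First I would use Theorem \ref{5.1.6} to observe that the Plancherel measure on $\widehat{G}_\zeta$ decomposes as a sum over $j = 1,\dots,\ell$ of measures supported on the $H_j$-series portions of $\widehat{G}_\zeta$, each of the form $|\varpi_{\gt_j}(\nu)|\,m_{j,\zeta,\nu}(\sigma)\,d\sigma$ summed over $\nu \in L''_{j,\zeta}$ and over $\chi \in S(\nu,\zeta)$. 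Since $G$ has relative discrete series, one index $j_0$ has $H_{j_0}/Z$ compact and contributes exactly $\widehat{G}_{\zeta-disc}$; the task is therefore to show, for each remaining index $j$ with $H_j/Z$ noncompact, that the set of $\sigma \in \ga_j^*$ giving a class with $\Theta_\pi|_{K\cap G'} \ne 0$ has Lebesgue measure zero.

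Fix such a $j$, and fix $\nu \in L''_{j,\zeta}$ and $\chi \in S(\nu,\zeta)$. By the remarks following (\ref{4.4.3b}), $[\eta_{\chi,\nu}] \in \widehat{M_j}_{disc}$ has real infinitesimal character $\chi_\nu$ relative to $\gt_j$. Let $R_j(\nu) \subset \ga_j^*$ be the set of $\sigma$ such that both (a) $\sigma$ is $(\gg,\ga_j)$-regular in the sense of Lemma \ref{4.5.1}, and (b) $\nu + i\sigma \in \gh_j^*$ is $\gg$-regular, i.e.\ $\langle\phi,\nu+i\sigma\rangle \ne 0$ for every $\phi \in \Sigma^+$. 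The complement of $R_j(\nu)$ is a finite union of proper real-affine subspaces of $\ga_j^*$ (cut out by the vanishing of real or imaginary parts of the linear forms $\sigma \mapsto \langle\phi,\nu+i\sigma\rangle$), hence has Lebesgue measure zero.

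For $\sigma \in R_j(\nu)$, Theorem \ref{4.5.2} forces $\pi_{\chi,\nu,\sigma} = \Ind_{P_j}^G(\eta_{\chi,\nu}\otimes e^{i\sigma})$ to be irreducible, so $[\pi_{\chi,\nu,\sigma}] \in \widehat{G}_\zeta \setminus \widehat{G}_{\zeta-disc}$. Lemma \ref{5.7.2} then applies (using that $G$ has relative discrete series and $H_j/Z$ is noncompact, and that $\nu + i\sigma$ is $\gg$-regular) and yields $\Theta_{\pi_{\chi,\nu,\sigma}}|_{K\cap G'} = 0$. Thus, within the $(j,\nu,\chi)$-slice of the Plancherel decomposition, the locus where $\Theta_\pi|_{K\cap G'} \ne 0$ is contained in the complement of $R_j(\nu)$, which is $d\sigma$-null and therefore $m_{j,\zeta,\nu}(\sigma)\,d\sigma$-null. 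Summing over the finitely many $j$ and the countably many $(\nu,\chi)$ gives the corollary.

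The main obstacle I anticipate is the bookkeeping at reducible parameters: when $\pi_{\chi,\nu,\sigma}$ fails to be irreducible, the Plancherel decomposition sees its irreducible constituents rather than $\pi_{\chi,\nu,\sigma}$ itself, so one must verify that this ambiguity occurs only on the $\sigma$-null set already excluded. This is handled uniformly by the observation that the irreducibility criterion of Theorem \ref{4.5.2} holds throughout $R_j(\nu)$, so the fiber of the parameter-to-class map over any $[\pi] \in \widehat{G}_\zeta \setminus \widehat{G}_{\zeta-disc}$ appearing generically is described by Theorem \ref{4.4.4}(the $W_{G,H_j}$-orbit statement), and this matching is compatible with the Plancherel measure by Theorem \ref{5.1.6}.
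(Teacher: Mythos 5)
The proposal is correct and follows exactly the route the paper indicates: the text gives no detailed proof of this corollary, saying only that it follows from Theorems \ref{4.5.2} and \ref{5.1.6} (or alternatively from Lemma \ref{5.7.2}), and you have correctly assembled the argument from precisely those ingredients, taking care of the measure-zero locus of non-regular parameters where either irreducibility (Theorem \ref{4.5.2}) or the vanishing of the character on $K\cap G'$ (Lemma \ref{5.7.2}) could fail.
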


\begin{corollary}\label{5.7.4}
Fix $\zeta \in \widehat{Z}$.  Let $\widehat{G}_{H_j,_\zeta}$ denote
the set of all $H_j$--series classes $[\pi_{\chi,\nu,\sigma}]$ for
$\zeta$ such that $\sigma$ is $(\gg,\ga_j)$--regular.  Then each
$\widehat{G}_{H_j,_\zeta} \subset \widehat{G}_\zeta$ and the
Plancherel measure on $\widehat{G}_\zeta$ is concentrated on
$\bigcup_{1\leqq j\leqq\ell} \widehat{G}_{H_j,_\zeta}$\,.
\end{corollary}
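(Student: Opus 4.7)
The plan is to obtain the corollary by combining the irreducibility criterion of Theorem~\ref{4.5.2} with the explicit form of the Plancherel formula in Theorem~\ref{5.1.6}. Inclusion of $\widehat{G}_{H_j,\zeta}$ into $\widehat{G}_\zeta$ is essentially automatic: for any $H_j$-series class $[\pi_{\chi,\nu,\sigma}]$ in $\widehat{G}_{H_j,\zeta}$, the relative discrete series class $[\eta_{\chi,\nu}]$ of $M_j$ has real infinitesimal character (by the description recalled in \S\ref{ssec4e}), and $\sigma$ is $(\gg,\ga_j)$-regular by hypothesis; hence Theorem~\ref{4.5.2} (and Corollary~\ref{4.5.3}) guarantees that $[\pi_{\chi,\nu,\sigma}]$ is already irreducible. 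That it lies in $\widehat{G}_\zeta$ follows from Theorem~\ref{4.3.8}(2), which says that $[\pi_{\chi,\nu,\sigma}]$ is a finite sum of classes from $\widehat{G}_\zeta$ whenever $[\eta_{\chi,\nu}] \in \widehat{M_j}_\zeta$, together with irreducibility.

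To handle the concentration statement, I will argue that for Plancherel-almost every parameter the $H_j$-series class appearing in the Plancherel formula already lies in $\widehat{G}_{H_j,\zeta}$. First I would note that the set
\[
\ga_j^{*,\mathrm{sing}} = \{\sigma \in \ga_j^* \mid \sigma \text{ is not }(\gg,\ga_j)\text{-regular}\}
\]
is a finite union of hyperplanes in $\ga_j^*$ by Lemma~\ref{4.5.1} (the conditions defining regularity are linear equations in $\sigma$, one for each nonzero $\ga_j$-restriction of a root), so $\ga_j^{*,\mathrm{sing}}$ has Lebesgue measure zero.

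Next I would invoke Theorem~\ref{5.1.6}: the Plancherel measure on the $H_j$-series part of $\widehat{G}_\zeta$ is given, for each $\nu \in L''_{j,\zeta}$, by the density $|\varpi_{\gt_j}(\nu)|\, m_{j,\zeta,\nu}(\sigma)\, d\sigma$ on $\ga_j^*$. Since $\ga_j^{*,\mathrm{sing}}$ has Lebesgue measure zero, it also has Plancherel measure zero. For $\sigma \in \ga_j^* \setminus \ga_j^{*,\mathrm{sing}}$, the irreducibility argument of the first paragraph shows that every $H_j$-series class $[\pi_{\chi,\nu,\sigma}]$ supported by this density lies in $\widehat{G}_{H_j,\zeta}$. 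Summing over $1 \leq j \leq \ell$ and over $\nu \in L''_{j,\zeta}$ gives the claim that the total Plancherel measure on $\widehat{G}_\zeta$ is concentrated on $\bigcup_{j} \widehat{G}_{H_j,\zeta}$.

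The main subtlety, rather than a real obstacle, is bookkeeping: the finite sum decomposition $\pi_{j,\zeta,\nu+i\sigma} = \sum_{[\chi] \in S(\nu,\zeta)} (\dim \chi)\, \pi_{\chi,\nu,\sigma}$ must be tracked so that one knows each individual term $[\pi_{\chi,\nu,\sigma}]$ (not just the aggregate $\pi_{j,\zeta,\nu+i\sigma}$) is irreducible and central-character-compatible with $\zeta$, which is exactly what the finite index of $ZZ_{M_j^0}$ in $Z_{M_j}(M_j^0)$ (Lemma~\ref{3.5.1}) together with Corollary~\ref{4.5.3} secures. Once this is in place, no further analysis of the densities $m_{j,\zeta,\nu}$ is required.
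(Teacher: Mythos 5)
Your proposal is correct and takes essentially the same route the paper intends: the text explicitly says that Corollary~\ref{5.7.3} "follows from Theorems~\ref{4.5.2} and~\ref{5.1.6}," and Corollary~\ref{5.7.4} is the same kind of deduction — irreducibility from Theorem~\ref{4.5.2}/Corollary~\ref{4.5.3} for regular $\sigma$, combined with the fact that the Plancherel densities $m_{j,\zeta,\nu}(\sigma)\,d\sigma$ in Theorem~\ref{5.1.6} are absolutely continuous with respect to Lebesgue measure on $\ga_j^*$, so the (finitely many) singular hyperplanes contribute nothing.
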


The ``absolute'' version of Theorem \ref{5.1.6} derives from 
$f(x) := \int_{\widehat{Z}} f_\zeta(x)\zeta(z)d\zeta$ where
$f_\zeta(x) = \int_Z f(xz)\zeta(z)dz$.  Given $f \in C^\infty_c(G)$ we
apply Theorem \ref{5.1.6} to each $f_\zeta$ and sum over $\widehat{Z}$.
The same holds for the corollaries.

\section{Real Groups and Complex Flags}
\label{sec6}
\setcounter{equation}{0}

While $G$ is a general real reductive Lie group (\ref{1.2.2}) the
adjoint representation takes $G$ to a real reductive semisimple Lie
group $\overline{G} := G/Z_G(G^0)$.  That group has complexification
$\overline{G}_\C = \Int(\gg_\C)$, the group of inner automorphisms
of $\gg_\C$\,.  Notice that $\overline{G}_\C$ is connected. 
Now $G$ acts on all complex flag manifolds
$X = \overline{G}_\C/Q$.  Here we recall the part of \cite{W1969}
needed for geometric realization of standard induced representations,
extending them from $\overline{G}^0$ to $G$ as needed.  We discuss
holomorphic arc components of $G$--orbits; consider measurable,
integrable and flag type orbits; and give a complete analysis of the
orbits on which our representations are realized in \S\S 7 and 8.

Notation: $Q$ is used for a (complex) parabolic subgroup of
$\overline{G}_\C$ and $P$ is reserved for cuspidal parabolics in $G$.
Roots are ordered so that $X = \overline{G}_\C/Q$ has holomorphic
tangent space spanned by positive root spaces.

\subsection{}\label{ssec6a}\setcounter{equation}{0}
It is standard that the following are equivalent for a closed complex
subgroup $Q \subset \overline{G}_\C$\,: (i) $X = \overline{G}_\C$ is
compact, (ii) $X$ is a compact simply connected K\"ahler manifold,
(iii) $X$ is a $\overline{G}_\C$--homogeneous projective algebraic
variety, (iv) $X$ is a closed $\overline{G}_\C$--orbit in a (finite
dimensional) projective representation, and (v) $Q$ contains a Borel
subgroup of $\overline{G}_\C$\,.  Under these conditions we say that
(1) $Q$ is a {\em parabolic subgroup} of $\overline{G}_\C$\,, (2)
$\gq$ is a {\em parabolic subalgebra} of $\overline{\gg}_\C$, and
(3) $X = \overline{G}_\C/Q$ is a {\em complex flag manifold} of 
$\overline{\gg}_\C$\,.  Given (i) through (v) $Q$ is the analytic
subgroup of $\overline{G}_\C$ for $\overline{\gq}_\C$\,, in fact is
the $\overline{G}_\C$--normalizer of $\overline{\gq}_\C$\,.

Recall the structure.  Choose a Cartan subalgebra $\overline{\gh}_\C$
of $\overline{\gg}_\C$ and a system $\Pi$ of simple 
$\overline{\gh}_\C$--roots on $\overline{\gg}_\C$\,.  Any subset
$\Phi \subset \Pi$ specifies
\begin{itemize}
\item $\Phi^r$: all roots that are linear combinations of elements 
of $\Phi$;
\item $\Phi^u$: all negative roots not contained in $\Phi^r$;
\item $\gq_\Phi^r = 
	\overline{\gh}_\C + \sum_{\Phi^r} \overline{\gg}^\phi$\,,\,\,
      $\gq_\Phi^u = \sum_{\Phi^u} \overline{\gg}^\phi$ and
      $\gq_\Phi = \gq_\Phi^r + \gq_\Phi^u$\,.
\end{itemize}
Then $\overline{G}_\C$ has analytic subgroups $Q_\Phi^r$ for $\gq_\Phi^r$\,,
$Q_\Phi^u$ for $\gq_\Phi^u$ and $Q_\Phi = Q_\Phi^u\rtimes Q_\Phi^r$ for 
$\gq_\Phi$\,.  $Q_\Phi^u$ and $\gq_\Phi^u$ are the nilradicals, and
$Q_\Phi^r$ and $\gq_\Phi^r$ are the Levi (reductive) components.  
$\Phi$ is a simple $\overline{\gh}_\C$--root system for $\gq_\Phi^r$\,. 
$Q_\Phi$
is a parabolic subgroup of $\overline{G}_\C$ and every parabolic subgroup
of $\overline{G}_\C$ is conjugate to exactly one of the $Q_\Phi$\,.
Any parabolic
$Q_\Phi$ is its own normalizer in $\overline{G}_\C$, so the complex
flag manifold $X = \overline{G}_\C/Q$ is in one-one correspondence
$x \leftrightarrow Q_x$ with the set of $\overline{G}_\C$--conjugates of
$Q$, by $Q_x = \{\overline{g} \in \overline{G}_\C \mid \overline{g}(x)=x\}$.
We will make constant use of this identification.

\subsection{}\label{ssec6b}\setcounter{equation}{0}
Let $\overline{G}$ be an open subgroup of a real form $\overline{G}_\R$
of $\overline{G}_\C$\,, so $\overline{G}^0$ is the real analytic subgroup
of $\overline{G}_\C$ for $\overline{\gg} = \overline{\gg}_\R$\,.  Denote
\begin{equation}\label{6.2.1}
\tau: \text{ complex conjugation of } \overline{G}_\C  \text{ over } 
	\overline{G}_\R \text{ and of } \overline{\gg}_\C  \text{ over }
	\overline{\gg}_\R\,.
\end{equation}
The isotropy subgroup of $\overline{G}$ at $x \in X$ is 
$\overline{G}\cap Q_x$\,.  The latter has Lie algebra 
$\overline{\gg}\cap \gq_x$\, which is a real form of $\gq_x \cap \tau\gq_x$.
The intersection of any two Borel subgroups contains a Cartan, and using
care it follows that we have
\begin{equation}\label{6.2.3}
\begin{aligned}
&\text{a Cartan subalgebra } \overline{\gh} \subset \overline{\gg}\cap \gq_x
	\text{ of } \overline{\gg}, \text{ a system } \Pi
	\text{ of simple } \overline{\gh}_\C\text{--roots }\\
&\text{of } \overline{\gg}_\C\,, \text{ and a subset } \Phi \subset \Pi
	\text{ such that } \gq_x = \gq_\Phi\,.
\end{aligned}
\end{equation}
Then we have the key decomposition to understanding $G$--orbits on $X$:
\begin{equation}\label{6.2.4}
\begin{aligned}
\gq_x &\cap \tau\gq_x = (\gq_x \cap \tau\gq_x)^r +
	(\gq_x \cap \tau\gq_x)^u \text{ where }
	(\gq_x \cap \tau\gq_x)^r = \overline{\gh}_\C + 
	\sum_{\Phi^r \cap \tau\Phi^r} \overline{\gg}^\phi \\
&\text{ and } (\gq_x \cap \tau\gq_x)^u = 
	\sum_{\Phi^r \cap \tau\Phi^u} \overline{\gg}^\phi +
	\sum_{\Phi^u \cap \tau\Phi^r} \overline{\gg}^\phi +
	\sum_{\Phi^u \cap \tau\Phi^u} \overline{\gg}^\phi \,.
\end{aligned}
\end{equation}
This shows that $\overline{G}(x)$ has real codimension
$|\Phi^u \cap \tau\Phi^u|$ in $X$, in particular that $\overline{G}(x)$
is open in $X$ if and only if $\Phi^u \cap \tau\Phi^u$ is empty, and
also that there are only finitely many $\overline{G}$--orbits on $X$.
This last shows that $\overline{G}$ has both open and closed 
orbits.

Recall $\overline{G} = G/Z_G(G^0) = \Int(\gg)_\C$\,, so $G$ acts on 
$X = \overline{G}/Q$ through $G \to \overline{G}$, specifically by
$Q_{g(x)} = \Ad(g)Q_x$\,.  Thus $G(x) = \overline{G}(x)$.  Now the results
on orbits and isotropy of $\overline{G}$ and $\overline{\gg}$ on $X$,
apply as well to orbits and isotropy of $G$ and $\gg$.

\subsection{}\label{ssec6c}\setcounter{equation}{0}
Let $H$ be a Cartan subgroup of $G$ and $\theta$ a Cartan involution with
$\theta(H) = H$.  Let $K$ be the fixed point set $G^\theta$.  As in
(\ref{4.1.4}) $\gh = \gt + \ga$ and $H = T\times A$ under the action of
$\theta$.  Thus \cite[Theorem 4.5]{W1969} the following conditions are 
equivalent: (i) $T$ is a
Cartan subgroup of $K$, (ii) $\gt$ contains a regular element of $\gg$,
and (iii) some simple system $\Pi$ of $\gh_\C$--roots satisfies
$\tau\Pi = -\Pi$.  Then those conditions hold, one says that $\gh$ is a
{\em fundamental Cartan subalgebra} of $\gg$ and $H$ is a
{\em fundamental Cartan subgroup} of $G$.  Equivalently, $\gh$ is a
{\em maximally compact Cartan subalgebra} of $\gg$ and $H$ is a
{\em maximally compact Cartan subgroup} of $G$.  From (\ref{6.2.4}),
\begin{lemma}\label{6.4.2} $G(x)$ is open in $X$ if and only if there
exist a maximally compact Cartan subalgebra $\gh \subset \gg$,
a simple $\gh_\C$--root system $\Pi$ with $\tau\Pi = -\pi$, and a
subset $\Phi \subset \Pi$, such that $\gq_x = \gq_\Phi$\,.
\end{lemma}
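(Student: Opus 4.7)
The direction $(\Leftarrow)$ is immediate from (6.2.4). If $\tau\Pi=-\Pi$ then $\tau$ interchanges positive and negative roots, so $\Phi^u\subset\Sigma^-$ is sent into $\Sigma^+$; in particular $\Phi^u\cap\tau\Phi^u=\emptyset$, and (6.2.4) gives that $G(x)$ has real codimension $0$ in $X$, i.e., is open.

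For $(\Rightarrow)$, assume $G(x)=\overline{G}(x)$ is open. The preparatory construction (6.2.3) furnishes \emph{some} Cartan $\overline{\gh}'\subset\overline{\gg}\cap\gq_x$ of $\overline{\gg}$, a simple $\overline{\gh}'_\C$-root system $\Pi'$ of $\overline{\gg}_\C$, and $\Phi'\subset\Pi'$ with $\gq_x=\gq_{\Phi'}$; openness combined with (6.2.4) forces $(\Phi')^u\cap\tau(\Phi')^u=\emptyset$. My plan is to modify these data in two steps. First, I replace $x$ by a suitable $x'\in\overline{G}(x)=G(x)$ and $\overline{\gh}'$ by a maximally compact Cartan $\overline{\gh}$ of $\overline{\gg}$ contained in $\overline{\gg}\cap\gq_{x'}$. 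Second, I produce a simple $\overline{\gh}_\C$-root system $\Pi$ of $\overline{\gg}_\C$ with $\tau\Pi=-\Pi$ such that $\gq_{x'}$ is standard relative to $\Pi$; then $\Phi:=\Pi\cap\{\text{roots of }\gq_{x'}\}$ gives $\gq_{x'}=\gq_\Phi$ as required.

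The first step is the main obstacle. It is the standard result (essentially Theorem 4.5 of \cite{W1969}, of which the present lemma is the parabolic-subgroup reformulation) that every open $\overline{G}$-orbit in a complex flag manifold contains a point whose isotropy subalgebra contains a fundamental Cartan of $\overline{\gg}$. Concretely, fix a Cartan involution $\theta$ of $\overline{G}$ with maximal compact subgroup $\overline{K}$; then $\overline{K}(x)$ is a compact real submanifold of the open orbit, and a careful choice of base point on it, combined with the fact that any two $\theta$-stable Cartans of $\overline{\gg}$ inside a common reductive subalgebra are conjugate by its identity component, yields a point $x'\in G(x)$ whose isotropy subalgebra contains a maximally compact Cartan $\overline{\gh}$ of $\overline{\gg}$.

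For the second step, the existence of some simple system $\Pi_0$ with $\tau\Pi_0=-\Pi_0$ follows from the maximal compactness of $\overline{\gh}$ by the equivalence (i)$\Leftrightarrow$(iii) recalled just before the lemma. To arrange additionally that $\gq_{x'}$ be standard with respect to $\Pi$, I use that $\overline{\gh}_\C\subset\gq_{x'}$, so the set $\Psi$ of roots of $\gq_{x'}$ is a parabolic subset of the root system $\Sigma$: the complement $\Sigma\setminus\Psi$ is closed under addition and disjoint from its negative. Openness of $G(x)$ translates, exactly as in the $(\Leftarrow)$ computation, into $(\Sigma\setminus\Psi)\cap\tau(\Sigma\setminus\Psi)=\emptyset$. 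A short Weyl-group argument, using that the $\tau$-adapted positive systems form a single orbit under the Weyl group of the compact part of $\overline{\gh}$, then produces a $\tau$-adapted positive system $\Sigma^+$ containing $\Sigma\setminus\Psi$, whose simple roots $\Pi$ supply the required data. Step~2 is essentially combinatorial once Step~1 is in hand.
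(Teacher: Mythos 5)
Your overall structure is right and matches the route the paper intends (which is really just a pointer back to \cite{W1969}): $(\Leftarrow)$ is immediate from (6.2.4), and $(\Rightarrow)$ reduces to (a) arranging for a fundamental Cartan to lie in the isotropy, and (b) a root-combinatorics step producing a $\tau$-adapted simple system standard for $\gq_x$. The $(\Leftarrow)$ direction is correct as written.

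Two remarks on $(\Rightarrow)$. First, Step~1 can be done more economically than by moving the base point: (6.2.3) already furnishes a Cartan $\overline\gh$ of $\overline\gg$ inside $\overline\gg\cap\gq_x$; choose $\overline\gh$ fundamental in the reductive part of that isotropy subalgebra. When $G(x)$ is open, $\Phi^u\cap\tau\Phi^u=\emptyset$ forces every real root ($\tau\alpha=\alpha$) to lie in $\Phi^r\cap\tau\Phi^r$, i.e.\ in the Levi of the isotropy, so fundamentality in the Levi already gives fundamentality in $\overline\gg$. This avoids any appeal to the compact-orbit geometry and stays closer to the paper's "From (6.2.4)." Second, the Weyl-group mechanism you propose for Step~2 is not quite the right tool: knowing that $\tau$-adapted positive systems form a single $W(\gk,\gt)$-orbit does not by itself produce one containing the fixed set $A:=\Sigma\setminus\Psi$, since that orbit is not all positive systems containing $A$. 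A cleaner finish: set $\tau^*\alpha=-\tau\alpha$, check that $C:=A\cup\tau^*A$ is closed, $\tau^*$-stable, and contains no opposite pair (all of this uses only $A\cap\tau A=\emptyset$ and that $A$ is the opposite nilradical root set), extend $C$ to a positive system $\Sigma_0^+$, average the associated regular functional $\xi_0$ to $\xi_1=\tfrac12(\xi_0+\tau^*\xi_0)$, and perturb $\xi_1$ by a small $\tau^*$-fixed regular element (such exist precisely because there are no real roots, i.e.\ because $\overline\gh$ is fundamental). The resulting positive system is $\tau$-adapted and contains $A$, hence $\gq_x=\gq_\Phi$ for $\Phi=\Pi\cap\Psi^r$. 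Your citation of Theorem~4.5 of \cite{W1969} is a little loose (as you flag, that theorem characterizes fundamental Cartans rather than stating the open-orbit fact directly), but the result you lean on is indeed in \cite{W1969} and is what the paper is implicitly invoking.
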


An open orbit $G(x)$ carries an invariant Radon measure if and only
if the isotropy subgroup at $x$ is reductive, i.e., if and only
if the choices in Lemma \ref{6.4.2} can be made so that
$\tau\Phi^r = \Phi^r$ and $\tau\Phi^u = -\Phi^u$.  Thus 
\cite[Theorem 6.3]{W1969} these
conditions are equivalent: (i) $G(x)$ is open in $X$ and has a
$G$--invariant positive Radon measure, (ii) $G(x)$ has a $G$--invariant
possibly--indefinite K\"ahler metric, (iii) $\gq_x\cap\tau\gq_x$ is
reductive, i.e., $\gq_x\cap\tau\gq_x = \gq_x^r \cap \tau\gq_x^r$ and
(iv) $\tau\Phi^r = \Phi^r$ and $\tau\Phi^u = -\Phi^u$.  Under those
conditions we say that the open orbit $G(x)$ is {\em measurable}.

A closely related set of equivalent conditions \cite[Theorem 6.7]{W1969} is
(a) some open $G$--orbit on $X$ is measurable, (b) every open 
$G$--orbit on $X$ is measurable and (c) if $\gq = \gq_\Phi$ then
$\tau\gq$ is conjugate to the opposite parabolic $\gq^r + \gq^{-u}$
where $\gq^{-u} = \sum_{\Phi^u}\overline{\gg}^{-\phi}$.
These conditions are automatic  \cite[Corollary 6.8]{W1969} if
$\rank K = \rank G$, i.e., if $G$ has relative discrete series 
representations.  In that regard we will need

\begin{lemma}\label{6.4.5}
Let $U$ be the isotropy subgroup of $G$ at $x \in X$.  Suppose that
$\gq$ does not contain any nonzero ideal of $\overline{\gg}_\C$\,.
Then the following are equivalent.

1. $U$ acts on the tangent space to $G(x)$ as a compact group.

2. $G(x)$ has a $G$--invariant positive definite hermitian metric.

3. $\overline{\gg}\cap \gq_x$ is contained in the fixed point set of a
Cartan involution of $\overline{\gg}$

\noindent
Under these conditions, $G(x)$ is open in $X$ and the maximal compact
subgroups $\overline{K} \subset \overline{G}$ satisfy
$\rank \overline{K} = \rank \overline{G}$.
\end{lemma}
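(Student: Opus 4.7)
The plan is to prove the three conditions equivalent in the cycle $(3)\Rightarrow(1)\Leftrightarrow(2)\Rightarrow(3)$, reading off $G(x)$ open and $\rank \overline{K}=\rank \overline{G}$ along the way. Throughout I write $\overline{\gu}:=\overline{\gg}\cap\gq_x$ so that $T_xG(x)\cong\overline{\gg}/\overline{\gu}$, and I keep in mind the ambient complex structure on $T_xX=\overline{\gg}_\C/\gq_x$.

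First I would handle $(1)\Leftrightarrow(2)$: a $G$-invariant positive definite hermitian form on $G(x)$ corresponds bijectively to a $U$-invariant such form on $T_xG(x)$, and such a form exists iff the image of $U$ in $GL(T_xG(x))$ has compact closure---in one direction average a hermitian form over that compact image, in the other note that a preserved positive definite form traps the image in a compact unitary group. The complex structure on $T_xG(x)$ needed to formulate ``hermitian'' is inherited through the inclusion $T_xG(x)\hookrightarrow T_xX$, which becomes an equality exactly when $G(x)$ is open, a property I will verify along the way.

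Next, for $(3)\Rightarrow(1)$ together with the two structural conclusions, I assume $\overline{\gu}\subset\overline{\gk}$ where $\overline{\gk}$ is the fixed algebra of a Cartan involution $\theta$ of $\overline{\gg}$. By (\ref{6.2.3}) I can pick a $\tau$-stable Cartan subalgebra $\overline{\gh}\subset\overline{\gu}$ together with a simple $\overline{\gh}_\C$-root system $\Pi$ and $\Phi\subset\Pi$ satisfying $\gq_x=\gq_\Phi$. Then $\overline{\gh}\subset\overline{\gk}$ is a compact Cartan, so $\rank\overline{K}=\rank\overline{G}$ and every $\overline{\gh}_\C$-root $\alpha$ satisfies $\tau\alpha=-\alpha$; consequently $\tau\Phi^u=-\Phi^u$, so $\Phi^u\cap\tau\Phi^u=\emptyset$, and (\ref{6.2.4}) gives that $G(x)$ is open in $X$. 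Since $\overline{G}$ is centerless semisimple, the analytic subgroup $\overline{K}^0$ for $\overline{\gk}$ is compact. The identity component of the isotropy $\overline{U}:=\overline{G}\cap Q_x$ has Lie algebra $\overline{\gu}\subset\overline{\gk}$ and therefore lies in $\overline{K}^0$; combined with finiteness of $\overline{G}/\overline{G}^0$ this makes $\overline{U}$ compact, and condition~(1) follows.

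For $(1)\Rightarrow(3)$, the main step, I would first note that the hypothesis forces $\overline{\gu}$ to contain no nonzero $\overline{\gg}$-ideal, since any such ideal would complexify to a nonzero $\overline{\gg}_\C$-ideal inside $\gq_x$. Compactness of the image of $U$ in $GL(\overline{\gg}/\overline{\gu})$ yields an invariant positive definite form on $\overline{\gg}/\overline{\gu}$, so each $\ad(X)$ with $X\in\overline{\gu}$ acts there by a skew-symmetric operator. A standard structure-theoretic argument then upgrades this to compact embedding of $\overline{\gu}$ in $\overline{\gg}$, the ideal-free hypothesis ruling out the degenerate case in which $\overline{\gu}$ could contain a noncompact central part acting trivially on the quotient. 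Since every compactly embedded subalgebra of the real semisimple $\overline{\gg}$ sits inside the fixed algebra of some Cartan involution, (3) follows, and reapplying the previous paragraph then recovers the openness of $G(x)$ and the rank identity. The main obstacle is precisely this upgrade: converting control of the quotient representation of $\overline{\gu}$ on $\overline{\gg}/\overline{\gu}$ into control of how $\overline{\gu}$ sits inside $\overline{\gg}$ itself, where the no-ideal hypothesis on $\gq_x$ is essential to avoid pathological central behavior.
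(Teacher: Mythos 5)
The paper states Lemma~\ref{6.4.5} without proof, so I can only assess your argument on its own merits.  Your $(3)\Rightarrow(1)$ together with the two structural conclusions is essentially fine, and $(1)\Leftrightarrow(2)$ is standard (though note that formulating ``hermitian'' on $T_xG(x)$ already presupposes $T_xG(x)=T_xX$, which you have not established under hypothesis~(1) alone at that point in the cycle).  The genuine gap is in the implication $(1)\Rightarrow(3)$, which you correctly identify as the ``main obstacle'' but then wave away.  You observe that each $\ad(X)$, $X\in\overline{\gu}$, is skew on $\overline{\gg}/\overline{\gu}$ and assert that a ``standard structure-theoretic argument'' upgrades this to $\overline{\gu}$ being compactly embedded in $\overline{\gg}$, with the ideal-free hypothesis ``ruling out the degenerate case in which $\overline{\gu}$ could contain a noncompact central part acting trivially on the quotient.''  That diagnosis is not accurate, and the upgrade is not automatic.

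The kernel $\gj=\{X\in\overline{\gu}:[X,\overline{\gg}]\subset\overline{\gu}\}$ of the adjoint action of $\overline{\gu}$ on $\overline{\gg}/\overline{\gu}$ is indeed an ideal of $\overline{\gu}$, but it is not in general an ideal of $\overline{\gg}$, so the hypothesis that $\gq_x$ contains no nonzero ideal of $\overline{\gg}_\C$ does not directly say $\gj=0$.  Moreover $\gj$ is typically much larger than any central part: by the structure in {\rm(\ref{6.2.4})}, if $\overline{\gh}=\overline{\gt}+\overline{\ga}$ is a $\theta$-stable Cartan subalgebra inside $\overline{\gu}$, then every root occurring in $\overline{\gg}_\C/\overline{\gu}_\C$ must vanish on $\overline{\ga}$ (its eigenvalue there is real, and skewness forces it imaginary, hence zero), and every nilpotent $X$ in the nilradical of $\overline{\gu}$ must act trivially on the quotient (a skew nilpotent is zero).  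So both the $\theta$-split part $\overline{\ga}$ and the full nilradical of $\overline{\gu}$ lie in $\gj$, and neither is central.  The split Borel of $\gsl(2,\R)$ in $\gsl(2,\C)/B$ already illustrates this: the nilradical of $\overline{\gu}$ acts by zero on the one-dimensional $\overline{\gg}/\overline{\gu}$, so compactness of the nilradical's action tells you nothing, and hypothesis~(1) fails there only because the split Cartan does not act skew.  A correct proof of $(1)\Rightarrow(3)$ therefore has to make the root-space analysis of {\rm(\ref{6.2.4})} do real work — showing that $\Phi^u\cap\tau\Phi^u=\emptyset$ and that $\overline{\ga}=0$ by producing, for any nonzero vector in the offending pieces, a direction in $\overline{\gg}/\overline{\gu}$ on which the induced one-parameter group is unbounded — or it has to prove that $\overline{U}$ injects into an orthogonal group with \emph{closed} image (e.g.\ via a $G$-invariant Riemannian metric on $G(x)$ and the algebraicity of the isotropy representation, using the no-ideal hypothesis to kill the kernel), and then extract a Cartan involution fixing $\overline{\gu}$.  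Neither line of argument is supplied in your proposal, so the proof as written does not establish $(1)\Rightarrow(3)$.
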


Suppose $\rank K = \rank G$.  Let $G(x)$ be an open orbit, so 
$\overline{\gg}$ has a Cartan subalgebra $\overline{\gh} \subset
\overline{\gk}\cap\gq_x$ where $\overline{H} = H/Z_G(G^0)$ and
$\overline{K} = K/Z_G(G^0)$.  Let $W_K$\,, $W_{G_\C}$ and $W_{Q_x^r}$
denote Weyl groups relative to $\overline{\gh}$.  Then
\cite[Theorem 4.9]{W1969} the open $G$--orbits on $X$ are enumerated
by the double coset space $W_K\backslash W_{G_\C}/W_{Q_x^r}$\,.

\subsection{}\label{ssec6d}\setcounter{equation}{0}
We look at the maximal complex analytic pieces of a $G$--orbit on $X$.

Let $V$ be a complex analytic space and $D \subset V$.  By {\em holomorphic
arc} in $D$ we mean a holomorphic map $f:\{z \in C \mid |z|<1\} \to V$
with image in $D$.  A {\em chain of holomorphic arcs} in $D$ is a finite
sequence $\{f_1,\dots,f_m\}$ of holomorphic arcs in $D$ such that the
image of $f_{k-1}$ meets the image of $f_k$\,.  A {\em holomorphic arc
component} of $D$ is an equivalence class of elements of $D$ under
$u \sim v$ if there is a chain $\{f_1,\dots,f_m\}$ of holomorphic
arcs in $D$ with $u$ in the image of $f_1$ and $v$ in the image of $f_m$\,.
If $g$ is a holomorphic diffeomorphism of $V$, $g(D) = D$, and
$S$ is a holomorphic arc component of $D$ then $g(S)$ is 
holomorphic arc component of $D$.

Let $L$ be a group of holomorphic diffeomorphisms of $V$ that preserve $D$.
If $S$ is a holomorphic arc component of $D$ denote its $L$--normalizer
$\{\alpha \in L | \mid \alpha(S) = S\}$ by $N_L(S)$.  If $\alpha \in L$
and $\alpha(S)$ meets $S$ then $\alpha(S) = S$.  So if $D$ is an $L$--orbit
then $S$ is an $N_L(S)$--orbit.  It can happen that $D$ is a
real submanifold of $V$ but is not a complex submanifold; see 
\cite[Example 8.12]{W1969}.  

Now we turn to holomorphic arc components of an orbit 
$G(x) \subset X = \overline{G}_\C/Q$.  It is a finite union of $G^0$--orbits,
which are its topological components.  So we have
$$
S_{[x]}: \text{ holomorphic arc component of } G(x) 
	\text{ through } x.
$$
in the topological component of $x$ in $G(x)$, 
and its $G$-- and $\overline{G}$--normalizers 
$$
N_{[x]} = \{g \in G \mid gS_{[x]} = S_{[x]}\} \text{ and }
	\overline{N}_{[x]} = N_{[x]}/Z_G(G^0).
$$
The main general fact concerning these groups and their Lie algebras
\cite[Theorems 8.5 and 8.15]{W1969} is that $\gn_{[x],\C}$ is a $\tau$--stable
parabolic subalgebra of $\overline{\gg}_\C$\,, so $N_{[x],\C}$ is a parabolic 
subgroup of $\overline{G}$, and
$\gn_{[x]} = \overline{\gg}\cap \gn_{[x],\C}$ is a parabolic subalgebra 
of $\overline{\gg}$, so $N_{[x]}$ is a subgroup of finite index in a parabolic
subgroup of $\overline{G}$.  This ensures $G=KN_{[x]}$ and $\overline{G} =
\overline{K}\overline{N}_{[x]}$. In other words, $K$ and $\overline{K}$ are
transitive on the space $G/N_{[x]} = \overline{G}/\overline{N}_{[x]}$ of all
holomorphic arc components of $G(x)$.

With $x \in X$ fixed and $\gq_x = \gq_\Phi$ and in (\ref{6.2.4}) we consider
the real linear form 
$\delta_x = \sum_{\Phi^u\cap \tau\Phi^u}\phi : \overline{\gh} \to \R$.   That
defines
$\gq_{[x]} = \overline{\gh}_\C + \sum_{\langle\phi ,\delta_x\rangle \geqq 0}
	\gg^\phi$\,,
a $\tau$--stable parabolic subalgebra of $\gg_\C$\,. Then 
$(\gq_x^u \cap \tau\gq_x^u) \subset \gq_{[x]} \subset \{\overline{\gn}_{[x],\C}
\cap (\gq_x + \tau\gq_x)\}$.  
Let $\Gamma = \{\phi \in \Delta_\gg \mid
\langle\phi,\delta_x\rangle < 0, -\phi \notin \Phi^u\cap \tau\Phi^u,
\phi+\tau\phi \text{ not a root.}\}$  and define 
$\gm_{[x]} = \gq_{[x]} + \sum_{\Gamma} \overline{\gg}^\phi$.  Then
\cite[Theorem 8.9]{W1969} $\gm_{[x]} \subset \{\overline{\gn}_{[x],\C}
\cap (\gq_x + \tau\gq_x)\}$ and the following are equivalent:
(i) The holomorphic arc components of $G(x)$ are complex
submanifolds of $X$, (ii) $\overline{\gn}_{[x],\C} \subset (\gq_x + \tau\gq_x)$,
(iii) $\overline{\gn}_{[x],\C} = \gm_{[x]}$\,, and (iv) $\gm_{[x]}$ is a
subalgebra of $\gg_\C$\,.  When these hold, we say that the orbit
$G(x)$ is {\em partially complex.}

We will need stronger conditions.  An orbit $G(x)$ is {\em of flag type}
if the Zariski closure $\overline{N}_{[x],\C}$ of $S_{[x]}$ is a 
complex flag manifold, {\em measurable} if the holomorphic arc components
carry positive Radon measures invariant under their normalizers,
{\em integrable} if $(\gq_x + \tau\gq_x)$ is a subalgebra of 
$\overline{\gg}_\C$\,.  Given $\gq_x = \gq_\Phi$ we denote
$$
\gv_x^- = \sum_{\Phi^u \cap -\tau\Phi^u}\overline{\gg}^\phi,\,\,
\gv_x^+ = \sum_{-\Phi^u \cap -\tau\Phi^u}\overline{\gg}^\phi,\,\,
\gv_x = \gv_x^- + \gv_x^+\,.
$$
Then \cite[Theorem 9.2]{W1969} $G(x)$ is measurable if and only
if $\overline{\gn}_{[x],\C} = (\gq_x \cap \tau\gq_x) + \gv_x$\,.  It follows
that, in $G(x)$ is measurable, then (i) the invariant measure on $S_{[x]}$
comes from an $N_{[x]}$--invariant possibly--indefinite K\"ahler metric, (ii)
$G(x)$ is partially complex and of flag type, and (ii) $G(x)$ is integrable
if and only if $\tau\gq_x^r = \gq_x^r$\,.  

On the other hand, if 
$\tau\gq_x^r = \gq_x^r$ then \cite[Theorem 9.19]{W1969} $G(x)$ is measurable
$\Leftrightarrow G(x)$ is integrable $\Leftrightarrow G(x)$ is partially
complex and of flag type, and under those conditions
$\overline{\gn}_{[x],\C} =(\gq_x \cap \tau\gq_x) + \gv_x = \gq_{[x]}
= \gq_x + \tau\gq_x$\,.  Open orbits are obviously integrable, partially
complex and of flag type.  Closed orbits are another matter.  There is just one
closed $G$--orbit on $X$, every maximal compact subgroup of $\overline{G}$
is transitive on it, and it is connected.  There is a problem with 
\cite[Theorem 9.12]{W1969}, where it was asserted that the closed orbit 
always is measurable, hence partially complex (consider 
\cite[Example 8.12]{W1969} applied to $SU(m,m)$).  But if $Q$ is a
Borel subgroup of $\overline{G}$, then the closed orbit is measurable, hence
partially complex and of flag type, and in that case is integrable.

\subsection{}\label{ssec6e}\setcounter{equation}{0}
We now describe a class of orbits that plays a key role in the geometric 
realization of the various nondegenerate series of representations of $G$.
Fix a Cartan subgroup $H = T\times A$ of $G$ and an associated cuspidal
parabolic $P=MAN$.  We need complex flag manifolds
$X = \overline{G}_\C/Q$ and measurable integrable orbits $Y = G(x) \subset X$
such that the $G$--normalizers of the holomorphic arc components of $Y$ satisfy
\begin{equation}\label{6.7.1c}
N_{[x]} = \{g \in G \mid gS_{[x]} = S_{[x]} \text{ has Lie algebra } \gp\}.
\end{equation}
As $Y$ is to be measurable $S_{[x]}$ will be an open $M$--orbit 
in the sub--flag $\overline{M}_\C(x)$.  So $AN$ will act trivially on $S_{[x]}$
and the isotropy subgroup of $G$ at $x$ will have form $UAN$ with
$T\subset U\subset M$.  Finally, we need the condition that
\begin{equation}\label{6.7.1d}
U/Z_G(G^0) = \{m \in M \mid m(x) = x\}/Z_G(G^0) \text{ is compact.}
\end{equation}  
We look at some consequences of (\ref{6.7.1c}) and (\ref{6.7.1d}).
Write $\gz_\gg$ for the center of $\gg$, so $\gg \cong \gz_\gg  \oplus
\overline{\gg}$.  Since $Y$ is measurable and integrable,
\S \ref{ssec6d} would lead to
\begin{equation}\label{6.7.2}
\begin{aligned}
&\gp_\C = \gz_{\gg,\C} + \gq_x + \tau\gq_x\,,\,\, \tau\gq_x^r = \gq_x^r\,,\,\,
	\gn_\C = \gp_\C^u = \gq_x^u\cap \tau\gq_x^u\,,\\
&(\gm+\ga)_\C = \gp_\C^r = (\gz_{\gg,\C} +\gq_x^r) 
		+ (\gq_x^{-u} \cap \tau\gq_x^u)
		+ (\gq_x^u \cap \tau\gq_x^{-u})\,, \\
& (\gu + \ga)_\C = \gz_{\gg,\C} + \gq_x^r \text{ and }
	\gm_\C = \gu_\C + (\gq_x^{-u} \cap \tau\gq_x^u)
                + (\gq_x^u \cap \tau\gq_x^{-u}).
\end{aligned}
\end{equation}
Since $S_{[x]}$ a measurable open $M^0$--orbit in the flag $\overline{M}_\C(x)$,
(\ref{6.7.2}) ensures that
\begin{equation}\label{6.7.3}
\gr:= \gm_\C + (\gz_{\gg,\C} + \gq_x) \text{ is parabolic in } \gm_\C 
	\text{ with } \gr^r = \gu_\C \text{ and } 
	\gr^u = \gq_x^u\cap \tau\gq_x^{-u}\,.
\end{equation}
The following Proposition shows that (\ref{6.7.2}) and (\ref{6.7.3}) give us
the parabolics that we need for our geometric realizations.

\begin{proposition}\label{6.7.4}
Let $G$ be a real reductive Lie group in the class $\widetilde{\cH}$
of {\rm (\ref{1.2.2})}, $H = T\times A$, and $P=MAN$ an associated cuspidal
parabolic subgroup of $G$.  Suppose that {\rm (i)} $\gu \subset \gm$ is the
$\gm$--centralizer of a subalgebra of $\gt$ such that $U^0/(U^0\cap Z_G(G^0))$
is compact, {\rm (ii)} $\gr \subset \gm_\C$ is a parabolic subalgebra with 
$\gr^r \subset \gu_\C$\,, {\rm (iii)} $\gq$ is the 
$\overline{\gg}_\C$--normalizer
of $\gr^u + \gn_\C$ and $Q$ is the corresponding analytic subgroup of
$\overline{G}_\C$\,, and {\rm (iv)} $X = \overline{G}_\C/Q$ and $x = 1Q\in X$.
Then $Q$ is a parabolic subgroup of $\overline{G}_\C$, 
$\gq^u = \gr^u + \gn_\C$\,, and $G(x)$ is a measurable integrable orbit, and 
$(X,x)$ satisfies {\rm (\ref{6.7.1c})} and {\rm (\ref{6.7.1d})}.  
Conversely every pair $(X,x)$ satisfying {\rm (\ref{6.7.1c})} and 
{\rm (\ref{6.7.1d})}, $G(x)$ measurable and integrable, $U^0/(U^0\cap Z_G(G^0))$
compact, is constructed as above.
\end{proposition}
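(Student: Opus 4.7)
The plan is to make everything explicit by describing $\gq$ as a specific parabolic, then reading off measurability, integrability, and the isotropy data from structural criteria recorded in \S\ref{ssec6d}. Throughout I work upstairs in $\gg_\C$ and project to $\overline{\gg}_\C$ to handle the center $\gz_{\gg,\C}$.

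First I would show that $\gq$ is the image in $\overline{\gg}_\C$ of $\tilde\gq := \gz_{\gg,\C}+\gr+\ga_\C+\gn_\C\subset\gg_\C$. The key computation is that $[\gm_\C,\gn_\C]\subset\gn_\C$ (since $\gm_\C$ is the zero $\ga$-weight space and each $\gg^{-\phi}$ is $\ad(\gm)$-stable), so in particular $\gr\subset\gm_\C$ normalizes $\gn_\C$; combined with the parabolic relation $[\gr,\gr^u]\subset\gr^u$, the commutations $[\ga_\C,\gr]=0$, $[\ga_\C,\gn_\C]\subset\gn_\C$, and $[\gn_\C,\gn_\C]\subset\gn_\C$, we get that $\tilde\gq$ is a subalgebra normalizing $\gr^u+\gn_\C$. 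Picking a Borel subalgebra $\gb_\gr\subset\gr$ of $\gm_\C$, Lemma \ref{4.1.7} shows that $\gb_\gr+\ga_\C+\gn_\C$ is a Borel of $\gg_\C$, so $\tilde\gq$ is parabolic and is its own normalizer. Hence $\gq$ is parabolic in $\overline{\gg}_\C$ with Levi decomposition $\gq^r=\gu_\C+\ga_\C$ (mod center) and nilradical $\gq^u=\gr^u+\gn_\C$, read off from the $\ga$-weight decomposition combined with the parabolic structure of $\gr$ in $\gm_\C$.

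Next I would verify measurability and integrability. The Levi $\gq^r=\gu_\C+\ga_\C$ is manifestly $\tau$-stable, so by the equivalence after (9.19) of \cite{W1969} recalled at the end of \S\ref{ssec6d}, it suffices to exhibit the Lie algebra of $\overline{N}_{[x],\C}$. Since $\gt$ is a compact-mod-$Z$ Cartan of $\gm$, $\tau$ acts as $-1$ on $i\gt^*$; then, because $\gu$ is compact-mod-center inside $\gm$, the parabolic $\gr$ and its conjugate $\tau\gr$ share the Levi $\gu_\C$ and have opposite nilradicals, so $\gr+\tau\gr=\gm_\C$. Using $\tau\gn_\C=\gn_\C$ (as $\gn$ is a real form), I compute $\gq+\tau\gq=(\gr+\tau\gr)+\ga_\C+\gn_\C=\gm_\C+\ga_\C+\gn_\C=\gp_\C$ modulo center. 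This both verifies integrability ($\gq+\tau\gq$ is a subalgebra) and, via the identification $\overline{\gn}_{[x],\C}=\gq_x+\tau\gq_x$ valid in the integrable case, shows $N_{[x]}$ has Lie algebra $\gp$, giving \eqref{6.7.1c}; measurability then follows from the integrable case with $\tau$-stable Levi. For \eqref{6.7.1d}, the $M$-isotropy at $x$ has Lie algebra $\gm\cap\gq=\gm\cap\gr$ (the $\ga_\C$ and $\gn_\C$ summands carry nonzero $\ga$-weight), and $\gr\cap\tau\gr=\gu_\C$ forces $\gm\cap\gr=\gu$; compactness of $U/Z_G(G^0)$ is given in hypothesis (i).

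For the converse, given $(X,x)$ satisfying the stated conclusions, set $U=\{m\in M\mid m(x)=x\}$ and define $\gr$ via \eqref{6.7.3}. The hypothesis \eqref{6.7.1c} together with integrability forces $\gp_\C=\gz_{\gg,\C}+\gq_x+\tau\gq_x$, and measurability forces $\tau\gq_x^r=\gq_x^r$; reading off $\ga$-weights in $\gp_\C$ yields the decomposition \eqref{6.7.2}, so $\gq_x^r=\gu_\C+\ga_\C$ (mod center) and $\gq_x^u=\gr^u+\gn_\C$ with $\gr^u=\gq_x^u\cap\tau\gq_x^{-u}\subset\gm_\C$. Thus $\gr$ is a parabolic of $\gm_\C$ with $\gr^r=\gu_\C$, and $\gq_x$ is the $\overline{\gg}_\C$-normalizer of $\gr^u+\gn_\C$ by the forward analysis applied in reverse.

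The main obstacle will be the step that $\gr$ and $\tau\gr$ are opposite parabolics of $\gm_\C$, equivalently $\gr\cap\tau\gr=\gu_\C$: this identity is what makes integrability, measurability, and the isotropy computation \eqref{6.7.1d} all go through simultaneously, and it depends essentially on $\gu$ being compact-mod-center inside the ambient $\gm$ (whose fundamental Cartan $T/Z$ is itself compact). Alongside this, the bookkeeping of passing between $\gg_\C$ and $\overline{\gg}_\C=\gg_\C/\gz_{\gg,\C}$—and keeping straight which identities hold only modulo center—needs to be done carefully at every step.
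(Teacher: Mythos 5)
Your argument is correct and reaches the conclusion by essentially the same route as the paper's proof---compute $\gq^u = \gr^u + \gn_\C$ and $\gq^r = \gu_\C + \ga_\C$ modulo center, establish $\gq + \tau\gq = \gp_\C$ modulo center together with $\tau\gq^r = \gq^r$, and read off integrability, measurability, (\ref{6.7.1c}) and (\ref{6.7.1d})---but your verifications of the two supporting facts are organized differently and are somewhat more self-contained. To identify $\gq$ you show $\tilde\gq = \gz_{\gg,\C}+\gr+\ga_\C+\gn_\C$ contains a Borel (via Lemma \ref{4.1.7}) and is self-normalizing; the paper instead reads off the $\overline{\gg}_\C$-normalizer of $\gr^u+\gn_\C$ contribution by contribution from $\overline{\gm}_\C$, $(\ga+\gn)_\C$, and $\gn_\C^-$. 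Your step needs one more sentence to close: ``$\tilde\gq$ is parabolic and self-normalizing'' does not by itself show that the normalizer of $\gr^u+\gn_\C$ is \emph{exactly} $\tilde\gq$. You should also note that $\gr^u+\gn_\C$ is the nilradical of $\tilde\gq$ (immediate from your $\ga$- and $\gt$-weight bookkeeping), and then use that a parabolic is the full normalizer of its own nilradical. For the identity $\gr\cap\tau\gr=\gu_\C$ you compute directly from $\tau=-1$ on $i\gt^*$ (valid since $T/Z$ is compact), whereas the paper routes through the sub-flag $S=\overline{M}_\C/\overline{R}$ and cites Lemmas \ref{6.4.2} and \ref{6.4.5} to see the open $M$-orbit in $S$ is measurable; your direct computation is precisely the mechanism those lemmas encode in this situation, so it is more transparent at the cost of reproving a special case. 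One small slip: $\ga_\C$ does not carry nonzero $\ga$-weight---it lies in the zero weight space. The correct reason that $\gm_\C\cap\tilde\gq = \gr$ is that $\gm_\C\cap\ga_\C=0$ while $\gn_\C$ sits in nonzero $\ga$-weights and $\gm_\C$ in weight zero.
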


\begin{proof}
Denote $\overline{M} = M/Z_G(G^0)$.  Let $\overline{R}$ be the analytic 
subgroup of $\overline{M}_\C$ for $\overline{\gr} := \gr/\gz_{\gg,\C}$
and $S = \overline{M}_\C/\overline{R}$.  Then $S$ is a complex flag
manifold of $\overline{M}_\C$ by (ii).  The isotropy subalgebra of $\gm$ at
$s = 1\overline{R} \in S$ is $\gm\cap \gr$.  It has reductive part $\gu$
by (ii).  As $\gt \subset \gu$ by (i), and as $M(s)$ is measurable and 
open in $S$ by Lemmas \ref{6.4.2} and \ref{6.4.5}, we have 
$\gm\cap\gr = \gu$ and $\gr\cap\tau\gr = \gu_\C$\,.

Define $\gq$ and $Q$ as in (iii).  The contribution to $\gq$ from 
$\overline{\gm}_\C$ is $\overline{\gr}$, all of $(\ga + \gn)_\C$ from
$(\ga + \gn)_\C$\,, and $0$ from $\gn_\C^-$\,.  So $\gq = 
(\overline{\gu}_\C + \ga_\C) + (\gr^u + \gn_\C)$, thus is parabolic in
$\overline{\gg}_\C$\,.  Now $X = \overline{G}_\C/Q$ is a complex flag
manifold, $\gz_{\gg,\C} + \gq^r = \gu_\C + \ga_\C$\,, and 
$\gq^u = \gr^u + \gn_\C$\,.  In particular, $\gq + \tau\gq = \gp_\C/\gz_\gg$
is a subalgebra of $\gg_\C$\,, so the orbit $G(x)\subset X$ is integrable,
and $\tau\gq^r = \gr^r$ so $G(x)$ is measurable with 
$\overline{\gn}_{[x],\C} = \gq + \tau\gq = \gp_\C/\gz_{\gg,\C}$\,.  We conclude
$\overline{\gn}_{[x]} = \gp/\gz_\gg$ and so $\gn_{[x]} = \gp$.  We have
shown that $G(x)$ is a measurable integrable orbit, and 
$(X,x)$ satisfies {\rm (\ref{6.7.1c})} and {\rm (\ref{6.7.1d})}.

For the converse compare {\rm (\ref{6.7.1c})} and {\rm (\ref{6.7.1d})}
with the construction.
\end{proof}

We enumerate the $(X,x)$ of Proposition \ref{6.7.4}.  Let $\Pi_\gt$ be a
simple $\gt_\C$--roots system on $\gm_\C$ and $\Phi_\gt$ a subset of 
$\Pi_\gt$\,.  Let $\Pi$ be the simple $\gh_\C$--root system on $\gg_\C$
that contains $\Pi_\gt$ and induces the positive $\ga$--root
system used for construction of $P=MAN$.  Define 
$\Phi = \Phi_\gt \cup (\Pi \setminus \Pi_\gt)$.  The parabolic subalgebras
$\gq \subset \overline{\gg}_\C$ of Proposition \ref{6.7.4} are just the
$\gq_\Phi$\,.

\begin{corollary}\label{6.7.7} Given $G(x) \in X$ as in {\rm Proposition
\ref{6.7.4}}, $M^\dagger$ is the stabilizer 
$\{m \in M \mid mS_{[x]} = S_{[x]}\}$ of $S_{[x]}$ in $M$.   Thus
$U \subset M^\dagger$ and $N_{[x]} = M^\dagger AN$.
\end{corollary}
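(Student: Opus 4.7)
The strategy is to identify $S_{[x]}$ with the connected open $M^0$-orbit through $x$ in the subflag $\overline{M}_\C(x)$, compute the $M$-stabilizer of this set, and match it to $M^\dagger$.

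First, by Proposition \ref{6.7.4}, $G(x)$ is measurable and integrable, and the $M^0$-orbit $M^0(x)$ is a connected complex submanifold of $X$ through $x$, open in the subflag $\overline{M}_\C(x)$. Since the holomorphic arc component $S_{[x]}$ is the maximal connected complex analytic subset of $G(x)$ through $x$, a dimension count forces $S_{[x]} = M^0(x)$. Proposition \ref{6.7.4} also gives $\gn_{[x]} = \gp$, and $N_G(\gp) = P = MAN$, so $N_{[x]} \subset P$; and because $AN$ fixes $x$ (the isotropy at $x$ being $UAN$) it preserves $S_{[x]}$, giving $AN \subset N_{[x]}$ and $N_{[x]} = (N_{[x]} \cap M)\cdot AN$. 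For $m \in M$, the condition $m S_{[x]} = S_{[x]}$ reduces to $m(x) \in M^0(x)$, i.e.\ $m \in M^0 U$; hence $N_{[x]} \cap M = M^0 U$.

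The corollary then reduces to the identity $M^0 U = M^\dagger = Z_M(M^0) M^0$. One inclusion is immediate: since $U = Z_M(\gc)$ for the subalgebra $\gc \subset \gt$ of Proposition \ref{6.7.4}(i), and $Z_M(M^0)$ centralizes $M^0 \supset \gc$, we have $Z_M(M^0) \subset U$, hence $M^\dagger \subset M^0 U$. For the reverse, the key step is $U = Z_M(M^0) U^0$. Centralizers of toral subalgebras in a connected reductive Lie group are connected, so $U \cap M^0 = Z_{M^0}(\gc) = U^0$; given $u \in U$, any decomposition $u = z m_0$ with $z \in Z_M(M^0)$ and $m_0 \in M^0$ forces $m_0 = z^{-1}u \in U \cap M^0 = U^0$, so $u \in Z_M(M^0) U^0 \subset M^\dagger$. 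This yields both $U \subset M^\dagger$ and $M^0 U = M^\dagger$, whence $N_{[x]} = M^\dagger AN$.

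The main obstacle is producing the decomposition $u = z m_0$: it requires upgrading ``$\Ad(u)$ inner on $\gm_\C$'' (the conclusion of the Dynkin-diagram argument in Proposition \ref{4.1.6}) to ``$\Ad(u) \in \Ad(M^0)$''. For $u \in U$ this strengthening is available because $\Ad(u)$ fixes $\gc$ pointwise and so restricts to an element of $\Int(\gu_\C)$, and since $U^0/Z$ is compact the image $\Ad(U^0)$ is a maximal compact subgroup of $\Int(\gu_\C)$; the compact real subgroup $\Ad(U) \supseteq \Ad(U^0)$ must therefore coincide with $\Ad(U^0)$, and elementary fiber-chasing (absorbing the $\Ad$-kernel into $Z_M(M^0)$) then yields the desired decomposition.
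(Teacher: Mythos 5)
The first half of your argument, through $N_{[x]}\cap M=M^0U$ and the reduction to $M^0U=M^\dagger$, is correct and matches the paper's reduction ($M^1=UM^0$). The problem is the step $U\subset M^\dagger$, and specifically the assertion that $\Ad(u)$ fixes $\gc$ pointwise (coming from your identification $U=Z_M(\gc)$). That identification is not a definition: $U$ is the isotropy subgroup at $x$, and while the inclusion $Z_M(\gz_\gu)\subset U$ is easy (centralizing $\gz_\gu$ forces $\Ad(m)$ to preserve the parabolic $\gr$, hence $\gq_x$), the reverse inclusion $U\subset Z_M(\gz_\gu)$ is exactly the content of Corollary~\ref{6.7.8}, which the paper derives \emph{from} \ref{6.7.7}. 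So as written you are assuming what you need to prove. The same issue recurs in your justification of $U\cap M^0=U^0$: the equality $U\cap M^0=Z_{M^0}(\gc)$ is not free, since $Z_{M^0}(\gc)=U^0\subset U\cap M^0$ gives only one inclusion; the honest reason $U\cap M^0$ is connected is that $S_{[x]}=M^0(x)\cong M^0/(U\cap M^0)$ is an open orbit in a flag manifold, hence simply connected.

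The paper avoids this circularity with a Weyl-chamber trick you should compare against. It replaces $u$ within $uU^0$ so that $\Ad(u)$ preserves $\gt$ and a Weyl chamber $\gd\subset i\gt^*$ of $\gu$. Then $\Ad(u)\in\Int(\gg_\C)$ normalizes $\gh_\C$, so $w:=\Ad(u)|_{\gh_\C}$ lies in $W(\gg_\C,\gh_\C)$; since $u\in M$ it fixes $\ga$ pointwise, putting $w$ in $W(\gm_\C,\gt_\C)$; since $u$ fixes $x$ it preserves $\gq_x$ and hence the parabolic $\gr\subset\gm_\C$, which forces $w\in W(\gu_\C,\gt_\C)$ (the Weyl-group stabilizer of a parabolic is the Levi's Weyl group); and $w$ preserving the chamber $\gd$ then gives $w=1$, i.e.\ $u\in Z_M(\gt)=T\subset M^\dagger$. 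Your ``maximal compact in $\Int(\gu_\C)$'' route could also be made rigorous, but it requires first establishing that $\Ad(u)$ lands in the connected Levi subgroup $Q_x^r\subset\Int(\gg_\C)$ (so that $\Ad(u)|_{\gu_\C}$ is inner and hence trivial on $\gz_\gu$); that step, not the maximal-compact bookkeeping, is the genuine content, and it is missing from your proposal.
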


\begin{proof}
Let $M^1 = \{m \in M \mid mS_{[x]} = S_{[x]}\}$.  Then $M^\dagger \subset M^1$
because $M^\dagger = Z_M(M^0)M^0$ and $Z_M(M^0)$ acts trivially on $S_{[x]}$.
The isotropy subgroup $U$ of $M$ at $x$ is in $M^1$ and $M^0$ is transitive
on $S_{[x]}$\,, so $M^1 = UM^0$.  Let $u \in U$.  All Cartan subalgebras and 
all Weyl chambers of $\gu$ are $\Ad(U^0)$--conjugate, so we choose a
Weyl chamber $\gd \subset i\gt^*$ for $\gu$ and replace $u$ within $uU^0$
so that $\Ad(u)$ preserves $\gt$ and $\gd$.  Thus $\Ad(u)$ is an inner
automorphism of $\gg_\C$ that is the identity on $\gt$, so 
$u \in T \subset M^\dagger$.  We have shown $M^1 \subset M^\dagger$.
As $M^\dagger \subset M^1$ now $M^1 = M^\dagger$.
\end{proof}

\begin{corollary}\label{6.7.8} Given $G(x) \in X$ as in {\rm Proposition
\ref{6.7.4}} and $u \in U$, $\Ad(u)$ is an inner automorphism on $U^0$.
\end{corollary}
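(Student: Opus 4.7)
The plan is to reduce to the case $u \in T$ and then invoke classical structure theory for reductive complex Lie algebras.

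First I would observe that the proof of Corollary~\ref{6.7.7} actually shows more than its statement: within each coset of $U^0$ in $U$ one can find a representative lying in $T$, so $U = TU^0$. Writing $u = tv$ with $t \in T$ and $v \in U^0$, the automorphism $\Ad(v)|_{\gu_\C}$ is visibly in $\Int(\gu_\C)$; since the inner automorphisms form a group, it suffices to prove $\Ad(t)|_{\gu_\C} \in \Int(\gu_\C)$ for $t \in T$.

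Next I would exploit that $T \subset H = Z_G(\gh)$, so $\Ad(t)$ acts trivially on $\gh$ and in particular fixes $\gt$ pointwise. The key structural point is that $\gt$ is a Cartan subalgebra of the reductive algebra $\gu$: since $U$ is the $\gm$-centralizer of a subalgebra of $\gt$ we have $\gt \subset \gu$, and $\gt$ remains maximal abelian in $\gu \subset \gm$ because it is already maximal abelian in $\gm$. Under the root-space decomposition $\gu_\C = \gt_\C \oplus \bigoplus_\alpha \gu_\C^\alpha$, each $\gu_\C^\alpha$ is one-dimensional, so $\Ad(t)$ acts on $\gu_\C^\alpha$ by a scalar $c_\alpha$, and the assignment $\alpha \mapsto c_\alpha$ is a character of the root lattice $Q_\gu$ of $\gu_\C$.

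Finally I would invoke the surjectivity of the natural map $T_\C^0 \to \Hom(Q_\gu, \C^*)$ (which holds because $Q_\gu$ is the character lattice of the Cartan of the adjoint group of $[\gu_\C,\gu_\C]$) to produce $t_0 \in T_\C^0 \subset U_\C^0$ with $\Ad(t_0)$ acting on each $\gu_\C^\alpha$ by the scalar $c_\alpha$. Then $\Ad(t)\Ad(t_0)^{-1}$ fixes $\gt_\C$ and every root space pointwise, hence is the identity automorphism of $\gu_\C$, so $\Ad(t)|_{\gu_\C} = \Ad(t_0)|_{\gu_\C} \in \Int(\gu_\C)$ as desired. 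The step most deserving of care is the reduction $U = TU^0$ together with its consequence $\Ad(t)|_\gt = \mathrm{id}$; once those are in hand, the Lie-algebraic part is essentially formal, needing only that the center of $\gu_\C$ be handled separately, which is immediate since both $\Ad(t)$ and any $\Ad(t_0)$ fix it pointwise.
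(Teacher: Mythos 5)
Your reduction via $U = TU^0$ (extracted from the proof of Corollary \ref{6.7.7}) is exactly the intended mechanism, and the observations that $T \subset H = Z_G(\gh)$ forces $\Ad(t)|_\gt = \mathrm{id}$ and that $\gt$ is a Cartan subalgebra of $\gu$ are both correct. The one slip is at the very end. You produce $t_0$ in a complexified torus and conclude only that $\Ad(t)|_{\gu_\C}\in\Int(\gu_\C)$, but the corollary asserts that $\Ad(u)$ is an inner automorphism \emph{of the group} $U^0$ — that is the sense in which the phrase is used in (\ref{3.1.3}) to define $G^\dagger$, and it is the form in which the corollary is invoked in the proof of Lemma \ref{7.1.2}. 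An automorphism of the real form $\gu$ that lies in $\Int(\gu_\C)$ need not lie in $\Ad(U^0)$; the two can differ by real outer automorphisms.

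The missing ingredient is that your scalars $c_\alpha$ automatically have modulus one. Since $U^0/(U^0\cap Z_G(G^0))$ is compact, $\gt$ is a compact Cartan subalgebra of $\gu$, so complex conjugation $\tau$ interchanges $\gu_\C^\alpha$ and $\gu_\C^{-\alpha}$; as $\Ad(t)$ commutes with $\tau$ and preserves the Killing form, $c_{-\alpha}=\overline{c_\alpha}$ and $c_\alpha c_{-\alpha}=1$, hence $|c_\alpha|=1$. Your character therefore lands in $\Hom(Q_\gu,S^1)$, which is precisely the character group of the compact torus $\Ad(T^0)\subset\Int(\gu)$, so you may take $t_0\in T^0\subset U^0$. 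Since two automorphisms of the connected group $U^0$ with the same differential at the identity agree, $\Ad(t)|_{U^0}$ is conjugation by $t_0\in U^0$, and then $\Ad(u)=\Ad(t)\Ad(v)$ is conjugation by $t_0v\in U^0$. With this one adjustment your argument is correct and is essentially the proof the paper leaves implicit.
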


\section{Open Orbits and Discrete Series}
\label{sec7}
\setcounter{equation}{0}

Let $G$ be a reductive Lie group of our class specified in \S \ref{ssec3a}.  
We consider complex flag manifolds $X = \overline{G}_C/Q$ and open orbits 
$Y = G(x) \subset X$ such that $U = \{g \in G : g(x) = x\}$ is compact modulo
$Z$.  In \S \ref{ssec7a}  we see that these pairs $(X,x)$ exist precisely 
when $G$ 
has relative discrete series representations, that $U = Z_G(G^0)U^0$ with
$U^0 = U \cap G^0$, and that $Y$ has $|G/G^\dagger|$ topological components.
If $[\mu] \in \widehat{U}$ we show that the associated $G$--homogeneous
hermitian vector bundle $\cV_\mu \to Y$ has a unique $G$--homogeneous
holomorphic vector bundle structure.  That allows us to construct the
Hilbert spaces $H^{0,q}_2(Y;\cV_\mu)$ of square integrable harmonic 
$(0,q)$--forms on $Y$ with  values in $\cV_\mu$\,, and unitary representations 
$\pi^q_\mu$ of $G$ on $H^{0,q}_2(\cV_\mu)$.  The remainder of \S \ref{sec7} 
shows that the 
$[\pi^q_\mu]$, $q \geqq 0$ and $[\mu] \in \widehat{U}$, are the relative 
discrete series classes in $\widehat{G}$.
\medskip

Section \ref{ssec7b} is the formulation and history of our main result, 
Theorem 7.2.3.
Let $[\mu] \in \widehat{U}$.  Then $[\mu] = [\chi \otimes \mu^0]$ where
$[\chi] \in \widehat{Z_G(G^0)}$ and $[\mu^0] \in \widehat{U^0}$.  Let
$\Theta^{disc}_{\pi^q_\mu}$ denote the character of the discrete part of
$\pi^q_\mu$\,.  We prove
$$
\sum_{q \geqq 0} (-1)^q \Theta^{disc}_{\pi^q_\mu} =
	(-1)^{n + q(\lambda + \rho)} \Theta_{\pi_{\chi, \lambda + \rho}}
$$
where $\lambda$ is the highest weight of $\mu^0$, $n$ is the number of
positive roots, and $\rho$ is half the sum of the positive roots.  We
note that $H^{0,q}_2(\cV_\mu) = 0$ for $q \ne q(\lambda + \rho)$, and
we show that $[\pi^{q(\lambda + \rho)}_\mu] = [\pi_{\chi, \lambda + \rho}]
\in \widehat{G}_{disc}$\,.  Theorem 7.2.3 is proved in \S\S 7.3 through 7.7.
\medskip

We reduce the proof of Theorem 7.2.3 to the case $G = G^\dagger$ in \S 7.3, 
to the case $G = G^0$ in \S 7.4, and then further to the case where $Q$ is
a Borel subgroup of $\overline{G}_C$ in \S 7.5.  In \S 7.6 we use results
of Harish--Chandra and a method of W. Schmid to prove the alternating sum
formula for the $\Theta^{disc}_{\pi^q_\mu}$\,.  The vanishing statement comes 
out of work of Schmid \cite{S1976} cited above.  It combines with the
alternating sum formula to identify $[\pi_{\chi, \lambda + \rho}] \in 
\widehat{G}_{disc}$ as the discrete part of $[\pi^{q(\lambda + \rho)}_\mu]$\,.
This trick is due to Narasimhan and Okamoto.  Finally we use Corollary 5.7.2
of our Plancherel Theorem to show that $[\pi^{q(\lambda + \rho)}_\mu]$ has
no nondiscrete part, so $[\pi^{q(\lambda + \rho)}_\mu] = 
[\pi_{\chi, \lambda + \rho}]$, completing our proof in~\S 7.7.
\medskip
\medskip

\subsection{}\label{ssec7a}\setcounter{equation}{0}
$G$ is a Lie group of the class $\widetilde{\cH}$ of general real
reductive Lie groups defined in \S\ref{ssec3a}.  As explained in 
\S\ref{ssec6b}, $\overline{G} = G/Z_G(G^0)$ has
complexification $\overline{G}_C = \Int(\gg_C)$, and $G$ acts on the
complex flag manifolds of $\overline{G}_C$\,.  To realize the relative discrete
series of $G$ we work with 
\begin{equation}\label{7.1.1}
\begin{aligned}
&X = \overline{G}_C/Q \text{ complex flag manifold of }
	\overline{G}_C \\
&Y = G(x) \subset X \text{ open $G$--orbit such that} \\
&\phantom{XXX} \text{the isotropy subgroup $U$ of $G$ at $x$ 
	is compact modulo } Z.
\end{aligned}
\end{equation}

We collect some immediate consequences of (\ref{7.1.1}).

\begin{lemma}\label{7.1.2}
Suppose $(X,x)$ is given as in {\rm (\ref{7.1.1})}.  Then $U/Z$ contains 
a compact Cartan subgroup $H/Z$ of $G/Z$, so
$G$ has relative discrete series representations.  Further, the open orbit
$Y = G(x) \subset X$ is measurable and integrable, and $(X,x)$ is the
case $P = G$ of {\rm (6.7.1)}.  Finally, $U = Z_G(U^0)U^0$, $U \cap G^0 = U^0$,
$UG^0 = G^\dagger$, and $G/G^\dagger$ enumerates the topological
components of $Y$.
\end{lemma}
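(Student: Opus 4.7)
The plan proceeds in three stages, followed by identification of the main obstacle. First, construction of a compact Cartan subgroup: openness of $Y = G(x)$ forces $\gu$ to have the same rank as $\gg$ (standard, \S \ref{ssec6c}), so $\gu$ contains a Cartan subalgebra $\gh$ of $\gg$. Compactness of $U/Z$ lets us conjugate $U$ into a maximal compact $K/Z$ of $G/Z$, so $\gh \subset \gk$ and the Cartan subgroup $H = Z_G(\gh)$ satisfies $H/Z$ compact; Theorem \ref{3.5.8} then gives $\widehat{G}_{disc} \neq \emptyset$.

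Second, measurability, integrability, and the match to case $P=G$ of (6.7.1). $G$ is unimodular (reductive) and $U$ is unimodular (compact modulo $Z$), so $Y = G/U$ carries a $G$--invariant positive Radon measure; hence $Y$ is a measurable open orbit, and the characterization $\tau\Phi^r = \Phi^r$, $\tau\Phi^u = -\Phi^u$ of \S \ref{ssec6c} yields $\gq_x + \tau\gq_x = \gg_\C$, establishing integrability. Setting $T = H$, $A = \{1\}$, $N = \{1\}$, $M = G$, $P = G$, condition (6.7.1c) holds because for an open orbit the holomorphic arc component $S_{[x]} = G^0(x)$, so $N_{[x]}^0 = G^0$; and (6.7.1d) follows from $Z \subset Z_G(G^0) \subset U$.

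Third, the structural description of $U$ and the component count. Because $Z_G(G^0)$ acts trivially on $X = \overline{G}_\C/Q$ through $G \to \overline{G}$, we have $Z_G(G^0) \subset U$. The decomposition $U = Z_G(U^0) U^0$ follows from Corollary \ref{6.7.8}: given $u \in U$, pick $u_0 \in U^0$ with $\Ad(u) = \Ad(u_0)$ on $U^0$; then $u u_0^{-1} \in Z_G(U^0)$. Connectedness $U \cap G^0 = U^0$ uses that the isotropy of the connected real form $\overline{G}^0$ at a point with compact isotropy in an open orbit is connected \cite{W1969}. For $UG^0 = G^\dagger$: since $G^\dagger = Z_G(G^0) G^0 \subset UG^0$, it suffices to show $U \subset G^\dagger$. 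Given $u \in U$, write $u = u_0 v$ with $u_0 \in U^0$ so that $\Ad(v)$ centralizes $\gu \supset \gh$; then $\Ad(v)$ lies both in the centralizer of $\gh_\C$ in $\overline{G}_\C$ and in $\overline{G}$, hence in the real Cartan $\overline{H}$, which the compact-isotropy hypothesis places inside $\overline{G}^0$, giving $u \in G^\dagger$. The topological components of $Y = G/U$ are then indexed by $G/(UG^0) = G/G^\dagger$.

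The main obstacle is the inclusion $U \subset G^\dagger$: hypothesis (1.2.2)(i) gives only $\Ad(u) \in \Int(\gg_\C) = \overline{G}_\C$, whereas we need $\Ad(u) \in \overline{G}^0$, the image of $G^0$. Bridging this gap combines Corollary \ref{6.7.8}, the maximal-rank property of $\gu$, and the structural observation that a fundamental Cartan subgroup of $\overline{G}$ lies in the identity component $\overline{G}^0$ whenever $G$ admits relative discrete series.
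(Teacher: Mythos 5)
Your proposal tracks the paper's proof closely in structure: existence of a compact Cartan $H\subset U$ and an appeal to Theorem~\ref{3.5.8}; measurability, integrability and the matching with the case $P=M=G$ of \S~\ref{ssec6e}; then the structural analysis of $U$ and the component count.  Your measurability argument differs slightly but validly: you deduce the invariant measure on $Y=G/U$ from unimodularity of $G$ and of $U$, whereas the paper cites Lemma~\ref{6.4.5} (compact isotropy on the tangent space forces reductive isotropy, hence measurable).  Both reach the same point.

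The step you flag --- $U\subset G^\dagger$, since hypothesis (\ref{1.2.2})(i) gives only $\Ad(u)\in\Int(\gg_\C)$ while one needs $\Ad(u)\in\Ad(G^0)$ --- is exactly the crux, and the paper's proof disposes of it in a single sentence.  After using Corollary~\ref{6.7.8} to adjust $u$ within $uU^0$ so that $\Ad(u)$ fixes a Cartan subalgebra $\gh$ of $\gu$ pointwise (hence a Cartan subalgebra of $\gg$, by equal rank), the paper simply asserts ``Now $\Ad(u)$ is an inner automorphism of $G^0$.''  The mechanism you and the paper both rely on is this: an element of $Z_{\overline{G}_\C}(\gh_\C)=\overline{H}_\C$ that preserves the real form $\gg$ lies in the compact torus $\exp(\ad\,\gh)\subset\overline{G}^0$, provided $\gh$ is a compact Cartan subalgebra.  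That is a short $\tau$--computation: write $\exp(\ad H)\in\overline{H}_\C$ with $H=H_1+iH_2$, $H_1,H_2\in\gh$; preserving $\gg$ forces $\exp(\ad(\tau H-H))=1$, i.e.\ $\alpha(H_2)\in\pi\Z$ for every root $\alpha$, and since $\alpha(\gh)\subset i\R$ for a compact Cartan this means $\alpha(H_2)=0$ for all $\alpha$, so $H_2$ is central and $\exp(\ad H)=\exp(\ad H_1)\in\exp(\ad\,\gh)\subset\overline{G}^0$.  So your ``structural observation'' about the fundamental Cartan is correct, but it is the compactness of the particular Cartan $\gh\subset\gu$ supplied by (\ref{7.1.1}) --- not the existence of relative discrete series as such --- that makes the computation close.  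You have correctly located the delicate point; the paper does not spell it out either, so this is a gap you identified rather than created.

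One small point: to upgrade $U\subset Z_G(U^0)U^0$ to equality, note $Z_G(U^0)\subset Z_G(\gh)=H\subset U$, the last inclusion because $\Ad(H)$ fixes $\gh$ pointwise, hence preserves each root space, hence preserves $\gq_x$, and $Q_x$ is its own normalizer.  The paper instead records the conclusion as $U=Z_G(G^0)U^0$, obtained from $U\subset G^\dagger$, $U\cap G^0=U^0$ and $Z_G(G^0)\subset U$; the two descriptions of $U$ agree.
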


{\sl Remark.}\,\,\,  As a consequence of the second assertion, all 
possibilities for (\ref{7.1.1}) are enumerated in the paragraph following
Proposition \ref{6.7.4}.

\begin{proof}  Isotropy subgroups of $G$ on $X$ all contain Cartan
subgroups of $G$ by (\ref{6.2.3}).  Now the first assertion follows from
(\ref{7.1.1}) and Theorem \ref{3.5.8}.

$U$ acts on the tangent space at $x$ as $U/Z_G(G^0)$, which is compact by 
(\ref{7.1.1}).  Thus the orbit $Y = G(x)$ is measurable by 
Lemma \ref{6.4.5}.
As open orbits are integrable with $\gq_x + \tau\gq_x = \overline{\gg}_C$\,.
Now we have (\ref{6.7.1c}) and (\ref{6.7.1d}) with $P = M = G = N_{[x]}$\,.

Let $u \in U$.  Corollary \ref{6.7.8} says that $\Ad(u)$ is trivial on some
Cartan subalgebra of $\gu$, thus on a Cartan subalgebra of $\gg$.  Now
$\Ad(u)$ is an inner automorphism of $G^0$, i.e. $u \in G^\dagger$.
We have just seen $U \subset G^\dagger = Z_G(G^0)G^0$.  On the other
hand, open orbits are simply connected, so $U \cap G^0 = U^0$,  Thus
$U = Z_G(G^0)U^0$ and $UG^0 = G^\dagger$.  Since $UG^0$ is the
$G$--normalizer of $G^0(x)$, now $G/G^\dagger$ parameterizes the
components of $G(x)$.
\end{proof}
\medskip

The facts about $U$ in Lemma \ref{7.1.2} tell us 
\begin{equation}\label{7.1.3}
\begin{aligned}
&\widehat{U} = \{[\chi \otimes \mu^0] : [\chi] \in
	\widehat{Z_G(G^0)} \text{ and } [\mu^0] \in \widehat{U^0}\}; 
	\text{ so } \\
&\text{if } [\mu] \in \widehat{U} \text{ then its representation
	space } E_\mu \text{ has } \dim E_\mu < \infty \\
&\text{and we have } \E_\mu \to Y, \text{ a } G\text{--homogeneous hermitian 
	vector bundle.}
\end{aligned}
\end{equation}

\begin{lemma}\label{7.1.4} There is a unique complex
structure on $\E_\mu$ such that $\E_\mu \to Y$ is a $G$--homogeneous
holomorphic vector bundle.
\end{lemma}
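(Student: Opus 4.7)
The plan is to construct the holomorphic structure by realizing $\E_\mu$ as the pullback, along the canonical projection $G \to \overline{G}$, of the restriction to $Y \subset X$ of a $\overline{G}_\C$-homogeneous holomorphic vector bundle on $X$. The key identification is that the complexified isotropy algebra of $\overline{G}$ at $x$ is the Levi factor of $\gq_x$.

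By Lemma \ref{7.1.2} the open orbit $Y$ is measurable, so the discussion of \S \ref{ssec6c} gives $\tau\gq_x^r = \gq_x^r$ and $\tau\gq_x^u = \gq_x^{-u}$; in particular the complexified isotropy algebra $\overline{\gu}_\C$ of $\overline{G}$ at $x$ coincides with the Levi factor $\gq_x^r$ of the parabolic $\gq_x$. Using (\ref{7.1.3}), I would write $[\mu] = [\chi \otimes \mu^0]$. The representation $\mu^0$ factors through the compact quotient $U^0/Z_{G^0}$, which is a compact real form of (the identity component of) the complex reductive Levi, and so extends uniquely to a holomorphic representation of that Levi; the central character $\chi$ is accommodated via the Mackey central extension construction of \S \ref{ssec3c}. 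Extending by the trivial representation on the unipotent radical $Q_x^u$ produces a holomorphic representation $\tilde\mu$ of $Q_x$. The resulting $\overline{G}_\C$-homogeneous holomorphic bundle $\overline{G}_\C \times_{Q_x} E_{\tilde\mu} \to X$, pulled back along $G \to \overline{G}$ and restricted to $Y$, is canonically isomorphic as a $C^\infty$ $G$-homogeneous hermitian bundle to $\E_\mu$, and this isomorphism transports its holomorphic structure to $\E_\mu$.

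For uniqueness, any $G$-homogeneous holomorphic structure on $\E_\mu$ induces a complex Lie algebra extension $\rho: \gq_x \to \mathfrak{gl}(E_\mu)$ of the derivative $d\mu$ from $\gu_\C = \gq_x^r$ to all of $\gq_x$, with $\rho|_{\gq_x^r}$ the complex-linear extension of $d\mu|_\gu$ (forced by the $U$-action on the fiber $E_\mu$). The restriction $\rho|_{\gq_x^u}$ must then be $\gq_x^r$-equivariant. Since $E_\mu$ is a finite-dimensional completely reducible $\gq_x^r$-module while $\gq_x^u$ is nilpotent, standard weight considerations (applied to extremal $\gq_x^r$-weight vectors in $E_\mu$) force $\rho|_{\gq_x^u} = 0$. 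Hence the Lie algebra extension, and with it the group-level holomorphic extension and the associated $G$-homogeneous holomorphic structure on $\E_\mu$, are uniquely determined.

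The main obstacle I anticipate is verifying that the Mackey central extension of \S \ref{ssec3c} interacts correctly with the complex-analytic extension of $\mu^0$: the possibly non-compact center $Z$ and possibly disconnected group $Z_G(G^0)$ prevent any naive direct complexification of $U$, so one must work on a covering as in \S \ref{ssec3c} and then check that the holomorphic structure constructed upstairs descends consistently to $\E_\mu$.
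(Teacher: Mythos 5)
Your overall architecture is the same as the paper's — reduce the problem to Lie algebra extensions of $\mu$ past $\gu_\C$ to the isotropy subalgebra, and show the extension must kill the nilpotent ideal — and in particular your uniqueness argument is implicitly invoking the correspondence theorem of Tirao--Wolf \cite[Theorem 3.6]{TW1970} (which the paper cites explicitly). Given that you are already leaning on that correspondence, the pullback construction for existence is more machinery than needed: once the Lie algebra extension is pinned down, the Tirao--Wolf bijection hands you existence for free, which is exactly how the paper proceeds.

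The genuine gap is in the uniqueness step. You write that $E_\mu$ is ``completely reducible'' as a $\gq_x^r$-module and that weight considerations then force $\rho|_{\gq_x^u} = 0$. Complete reducibility is not enough: if $E_\mu = V_1 \oplus V_2$ with $V_1, V_2$ irreducible $\gq_x^r$-modules, a nonzero $\gq_x^r$-equivariant map $\gq_x^u \otimes V_1 \to V_2$ could easily exist (whenever $V_2$ occurs in $\gq_x^u \otimes V_1$), giving a nontrivial extension. What kills such maps is the \emph{irreducibility} of $\mu$ as a $U$-module, which is part of the hypothesis $[\mu] \in \widehat{U}$: since $\gq_x^u$ acts by weight-lowering (hence nilpotent) operators and is an ideal, Engel's theorem gives a nonzero joint kernel, and that kernel is $U$-stable because $\Ad(U)$ preserves $\gq_x^u$; irreducibility of $\mu$ then forces the kernel to be all of $E_\mu$. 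This is exactly the role of irreducibility in the paper's proof (``that implies $\lambda(\gl^-) = 0$ because $\mu$ is irreducible''), and it is not recoverable from complete reducibility alone.

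Two smaller imprecisions worth noting. First, you extend $\rho$ to $\gq_x \subset \overline{\gg}_\C$, but for a $G$-homogeneous bundle the relevant isotropy subalgebra is $\gl = \alpha^{-1}(\gq_x) \subset \gg_\C$ (where $\alpha: \gg_\C \to \overline{\gg}_\C$), which contains $\gz_{\gg,\C}$; the extension there is forced by $\mu$ so nothing goes wrong, but $\gu_\C \ne \gq_x^r$ in general. Second, the obstacle you flag at the end — reconciling the Mackey extension with the complex-analytic extension of $\mu^0$ — is genuinely a nuisance in your route and is entirely avoided by the paper's method, which never complexifies $U$ and instead works directly at the Lie algebra level via $\gl = \gu_\C + \gl^-$.
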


\begin{proof}  The action of $G^0$ on $X$ maps $\gg_\C$ to a Lie
algebra of holomorphic vector fields.  Define
$$
\gl = \{\xi \in \gg_\C \mid \xi_x = 0\}, 
$$
isotropy subalgebra at $x$.  The homomorphism $G^0 \to \overline{G}$ induces
a homomorphism $\alpha$ of $\gg_C$ onto $\overline{\gg}_C$\,, and
$\gl = \alpha^{-1}(\gq_x)$.  Note that $\gu_C = \alpha^{-1}(\gq_x^r)$
reductive subalgebra of $\gl$.  Choose a linear algebraic group with Lie
algebra $\gg_\C$ and observe that $\alpha$ is a homomorphism of algebraic
Lie algebras.  Thus $\gu_\C$ is a maximal reductive subalgebra of $\gl$, 
and there is a nilpotent ideal $\gl^-$ such that $\gl =  \gu_\C + \gl^-$
semidirect sum.  Observe that $\Ad(u)\gl^- = \gl^-$ for all $u \in U$.

By extension of $\mu$ from U to $\gl$, we mean a (complex linear)
representation $\lambda$ of $\gl$ on $V_\mu$ such that
$$
\lambda|_\gu = \mu\,, \text{ i.e., } \lambda(\xi)
	= \mu(\xi) \text{ for all } \xi \in \gu,
$$
and 
$$
\mu(u)\lambda(\xi)\mu(u)^{-1} = \lambda(\Ad(u)\xi)
	\text{ for all } u \in U \text{ and } \xi \in \gl. 
$$
Let $\lambda$ be such an extension.  Then $\lambda(\gl^-)$ consists of
nilpotent linear transformations because $\lambda$ is an algebraic 
representation
of $\gl$, and that implies $\lambda(\gl^-)~=~0$ because $\mu$ is 
irreducible.  Thus there is just one extension of $\mu$ from $U$ to $\gl$\,;\,
it is given by $\lambda(\xi_1 + i \xi_2 + \eta) = \mu(\xi_1) + i\mu(\xi_2)$
where $\xi_1\,, \xi_2 \in \gu$ and~$\eta~\in~\gl^-$.

Our lemma now follows from the fact \cite[Theorem 3.6]{TW1970} that the
$G$--homogeneous holomorphic vector bundle structures on $\E_\mu \to Y$
are in bijective correspondence with the extensions of $\mu$ from $U$
to $\gl$. 
\end{proof}

Using (\ref{7.1.1}) we fix a $G$--invariant hermitian metric on the complex
manifold $Y$.  The unitary structure of $E_\mu$ specifies a
$G$--invariant hermitian metric on the fibers of $\E_\mu \to Y$.  Denote
\begin{equation}\label{7.1.5}
\begin{aligned}
&A^{p,q}(Y;\E_\mu) = \{ C^\infty (p,q)\text{--forms on }
	Y \text{ with values in } \E_\mu\} \text{ so we have }\\
&\text{Hodge--Kodaira maps 
	$A^{p,q}(Y;\E_\mu) \overset{\#}{\to} A^{n-p,n-q}(Y;\E_\mu^*)
	\overset{\widetilde{\#}}{\to} A^{p,q}(Y;\E_\mu)$}
\end{aligned}
\end{equation}
Here $n = \dim_\C Y$ and $\E_\mu^* = \E_{\mu^*}$ is the dual bundle.  If
$\alpha$, $\beta \in A^{p,q}(Y;\E_\mu)$ then $\alpha \wedge \#\beta \in
A^{n,n}(Y;\E_\mu \otimes \E_\mu^*)$.  The pairing $E_\mu \otimes E_\mu^* \to \C$
sends $\alpha \wedge \#\beta$ to an ordinary $(n,n)$--form on $Y$ that we
denote $\alpha \bar\wedge \#\beta$.  This gives us a pre Hilbert space
\begin{equation}\label{7.1.6}
A_2^{p,q}(\cV_\mu) = \left \{ \alpha \in A^{p,q}(Y;\E_\mu)
	\left | \int_Y \alpha \bar\wedge \#\alpha < \infty \right . \right \}\,,
\langle \alpha , \beta \rangle =
	\int_Y \alpha \bar\wedge \#\beta.
\end{equation}
The space of {\em square integrable} 
$(p,q)$--{\em forms} on $Y$ with values in $\E_\mu$ is
\begin{equation}\label{7.1.6c}
L_2^{p,q}(Y;\E_\mu): \text{ Hilbert space completion of } A_2^{p,q}(Y;\E_\mu).
\end{equation}

The operator $\overline{\partial}: A^{p,q}(Y;\E_\mu) \to A^{p,q+1}(Y;\E_\mu)$
is densely defined on $L_2^{p,q}(Y;\E_\mu)$ with formal adjoint
$\overline{\partial}^* = - \widetilde{\#}\,\overline{\partial}\,\#$. 
That gives us a second order elliptic operator 
\begin{equation}\label{7.1.7}
\square = (\overline{\partial} + \overline{\partial}^*)^2
	= \overline{\partial}\,\overline{\partial}^* + \overline{\partial}^*
	\,\overline{\partial}: \text{ Kodaira--Hodge--Laplacian.}
\end{equation}

The hermitian metric on $Y$ is complete by homogeneity, so the work of
Andreotti and Vesentini \cite{AV1965} applies.  First, it says that 
$\square$, with domain consisting of the compactly supported forms in 
$A^{p,q}(Y;\E_\mu)$,
is essentially self adjoint on $L_2^{p,q}(Y;\E_\mu)$.  We also write
$\square$ for the unique self adjoint extension, which coincides both with
the adjoint and the closure.  Its kernel
\begin{equation}\label{7.1.8}
H_2^{p,q}(Y;\E_\mu) = \{\omega \in L_2^{p,q}(Y;\E_\mu) \mid
	\square\, \omega = 0\}
\end{equation}
consists of the {\em square integrable harmonic}
$(p,q)$--{\em forms} on $Y$ with values in $\E_\mu$\,.
$H_2^{p,q}(Y;\E_\mu)$ is a closed subspace of $L_2^{p,q}(Y;\E_\mu)$.  It is
contained in $A_2^{p,q}(Y;\E_\mu)$  by ellipticity of $\square$.  Write
$c\ell$ for closure.  The Andreotti--Vesentini work shows that
$\overline{\partial}^*$ has closed range and gives us an orthogonal direct sum
\begin{equation}\label{7.1.9}
L_2^{p,q}(Y;\E_\mu) = 
	c\ell\, \overline{\partial} L_2^{p,q-1}(\cV_\mu)
	\oplus \overline{\partial}^* L_2^{p,q+1}(\cV_\mu)
	\oplus H_2^{p,q}(\cV_\mu)
\end{equation}
Here $\overline{\partial}$ has kernel 
$c\ell\,\overline{\partial} L_2^{p,q-1}(Y;\E_\mu)
\oplus H_2^{p,q}(Y;\E_\mu)$  and the kernel of 
$\overline{\partial}^*$ is the closed subspace 
$\overline{\partial}^* L_2^{p,q+1}(Y;\E_\mu)
        \oplus H_2^{p,q}(Y;\E_\mu)$. 
Thus $H_2^{p,q}(Y;\E_\mu)$ is a square integrable Dolbeault cohomology group.

The metrics and complex structures on $Y$ and $\E_\mu$ are invariant under 
the action of $G$.  Thus $G$ acts on $L_2^{p,q}(Y;\E_\mu)$ by a unitary
representation $\widetilde{\pi}_\mu^{p,q}$ that has a subrepresentation
\begin{equation}\label{7.1.10}
\pi_\mu^{p,q}: \text{ unitary representation of $G$ on $H_2^{p,q}(\E_\mu)$.}
\end{equation}
For convenience we also denote $\pi_\mu^q = \pi_\mu^{0,q}$.
The program here in \S \ref{sec7} is to represent the classes in 
$\widehat{G}_{disc}$ by the various $\pi_\mu^q$\,.
\medskip

\subsection{}\label{ssec7b}\setcounter{equation}{0}
Fix a compact Cartan subgroup $H/Z$ of $G/Z$ with $H \subset U$ as in 
Lemma \ref{7.1.2}.  Choose a system $\Pi$ of simple
$\gh_\C$--roots of $\gg_\C$ such that $\gq_x = \gq_\Phi$ where $\Phi \subset 
\Pi$.  Let $\Sigma^+$ denote the corresponding positive root system.  As usual
we pass to a $\Z_2$ extension if necessary so that $e^\rho$ and $\Delta$ are
well defined on $H^0$ and define
\begin{equation}\label{7.2.1} 
\begin{aligned}
&\rho = \frac{1}{2}\sum_{\phi \in \Sigma^+} \phi\,,\,\,\,\,
	\Delta = \prod_{\phi \in \Sigma^+} 
	(e^{\phi/2} - e^{-\phi/2})\,,\,\,\,\, \text{ and }\,\,
	\widetilde{\omega}(\cdot) = \prod_{\phi \in \Sigma^+}
	\langle \cdot, \phi\rangle, \\
&L = \{\lambda \in i\gh^* : e^\lambda \text{ is
	well defined on }
	H^0\} \text{ and } L' = \{\lambda \in L : \widetilde{\omega}(\lambda)
	\ne 0\}. 
\end{aligned}
\end{equation}
Let $\theta$ be the (unique) Cartan involution under which $H$ is
stable and $K = G^\theta$, so $H \subset U \subset K$.
Now $\Sigma^+ = \Sigma_k^+ \cup \Sigma_m^+$ (disjoint) where $\Sigma_k^+$
consists of the compact positive roots ($\overline{\gg}^\phi \subset \gk_\C$)
and $\Sigma_m^+$ consists of the noncompact positive roots
($\overline{\gg}^\phi \not\subset \gk_\C$).  If $\lambda \in L'$ we have
\begin{equation}\label{7.2.2}
q(\lambda) = |\{\phi \in \Sigma^+_k : \langle \lambda,
	\phi \rangle < 0\}| + |\{\phi \in \Sigma^+_m : \langle \lambda,
        \phi \rangle > 0\}|.
\end{equation}

Recall the statement of Theorem 3.5.9.  The main result of \S \ref{sec7} is

\begin{theorem}\label{7.2.3}  Let $[\mu] \in \widehat{U}$,
say $[\mu] = [\chi \otimes \mu^0]$ as in {\rm (7.1.3)}.  Let $\lambda$
be the highest weight of $\mu^0$ for the positive $\gh_\C$--root system
$\Sigma^+ \cap \Phi^r$ of $\gu_\C$.  Then $\lambda \in L$ and
$[\mu] \in \widehat{U}_\zeta$ where $\zeta \in \widehat{Z}$ coincides with
$e^\lambda$ on $Z \cap G^0$ and $[\chi] \in \widehat{Z_G(G^0)}_\zeta$\,.
Assume $\lambda + \rho \in L'$.  Then
$H^{0,q}_2(Y;\E_\mu) = 0$ whenever $q \ne q(\lambda + \rho)$, and
the natural action of $G$ on
$H^{0,q(\lambda + \rho)}_2(Y;\E_\mu)$ is the $\zeta$--discrete series
class $[\pi_{\chi,\lambda + \rho}]$\,.
\end{theorem}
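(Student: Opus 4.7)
The plan is to follow the roadmap already sketched in the introductory paragraphs of \S\ref{sec7}, proceeding by a sequence of reductions followed by an alternating-sum identity, a vanishing theorem, and a Plancherel-theoretic ``no-continuous-part'' argument. First I would reduce to the connected case. The orbit $Y$ decomposes into $|G/G^\dagger|$ topological components, permuted transitively by $G/G^\dagger$, and over each component the pulled-back bundle carries an action of $G^\dagger$. Standard induction from an open-closed subgroup lets me write $\pi^q_\mu \cong \mathrm{Ind}_{G^\dagger}^G(\pi^{q,\dagger}_\mu)$, which matches on the representation-theoretic side with Proposition \ref{3.5.5} (the description of $[\pi_{\chi,\lambda+\rho}] = [\mathrm{Ind}_{G^\dagger}^G(\chi\otimes\pi_{\lambda+\rho})]$). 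Using Proposition \ref{3.5.2}, I reduce from $G^\dagger = Z_G(G^0)G^0$ to $G^0$ by isolating the $\chi$-factor, since $Z_G(G^0)$ acts trivially on $Y$ and tensors into both the bundle and the induced representation in a parallel way.

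Next I would reduce to the case where $Q$ is a Borel subgroup of $\overline{G}_\C$. For a general $\Phi \subset \Pi$ one has a holomorphic fibration $\overline{G}_\C/Q_\emptyset \to \overline{G}_\C/Q_\Phi$ with compact complex fiber $Q_\Phi/Q_\emptyset$, which restricts over $Y$ to a $G$-equivariant fibration whose fiber is the flag manifold of $U^0$. A Leray/Bott-type argument applied to this fiber expresses $H^{0,q}_2(Y;\E_\mu)$ in terms of square-integrable cohomology on $\overline{G}_\C/Q_\emptyset$ with coefficients in the line bundle determined by the highest weight $\lambda$ of $\mu^0$ (shifted by the fiber $\rho$); the Bott--Borel--Weil theorem on the compact fiber handles the vertical direction and concentrates the computation in a single vertical degree, matching the counting in $q(\lambda+\rho)$.

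Once reduced to the Borel case, the heart of the argument is the alternating-sum identity
\[
\sum_{q\geqq 0}(-1)^q\,\Theta^{\mathrm{disc}}_{\pi^q_\mu} \;=\; (-1)^{n+q(\lambda+\rho)}\,\Theta_{\pi_{\chi,\lambda+\rho}}.
\]
My plan is to compute $\sum(-1)^q\,\mathrm{trace}\,\pi^q_\mu(f)$ for $f \in C^\infty_c(G)$ by taking $L^2$-traces of the heat operators $e^{-t\Box}$ acting on each $L^{0,q}_2(Y;\E_\mu)$ and letting $t \to \infty$: McKean--Singer cancellation forces the answer to be independent of $t$, while for small $t$ one inserts the local heat kernel and uses the $G$-homogeneity to integrate over $Y$. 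Combining with Harish--Chandra's character formula on $H\cap G'$ and the Weyl-integration identity, one gets the claimed formula with the Weyl-denominator pattern of Theorem \ref{3.4.7}. This is a direct extension of Schmid's argument in the $L^2$-index formulation; the sign and the compact-vs-noncompact split in the definition (\ref{7.2.2}) of $q(\lambda+\rho)$ come out automatically from the signature of the curvature on $T^{0,1}Y$ decomposed into root spaces.

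The vanishing statement $H^{0,q}_2(Y;\E_\mu)=0$ for $q\neq q(\lambda+\rho)$ is exactly the content of Schmid's 1976 improvement \cite{S1976} over \cite{S1971} (no nonsingularity needed); I will invoke it directly. Combining vanishing with the alternating-sum identity, only $q = q(\lambda+\rho)$ contributes, so the discrete part of $\pi^{q(\lambda+\rho)}_\mu$ has distribution character exactly $\Theta_{\pi_{\chi,\lambda+\rho}}$, identifying $[\pi_{\chi,\lambda+\rho}]$ as an irreducible summand (the Narasimhan--Okamoto trick). The final step is to rule out a nondiscrete part of $\pi^{q(\lambda+\rho)}_\mu$. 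Here I invoke Corollary \ref{5.7.3}: since $\mathrm{supp}\,\Theta_{\pi^{q(\lambda+\rho)}_\mu - \pi_{\chi,\lambda+\rho}}$ is (from the alternating-sum calculation) supported on $K\cap G'$ (coming from the compactly supported heat kernel on $Y = G/U$ with $U/Z$ compact), any nondiscrete constituents would violate the Plancherel-measure vanishing there, forcing them to be absent. Thus $[\pi^{q(\lambda+\rho)}_\mu] = [\pi_{\chi,\lambda+\rho}]$.

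The main obstacle will be the alternating-sum identity in the Borel case: executing the heat-kernel/$L^2$-Lefschetz calculation on the noncompact manifold $Y$ and matching it to the explicit Harish--Chandra character on $H^0\cap G'$ requires careful tracking of normalizations (the $(-1)^n$ and the $(-1)^{q(\lambda+\rho)}$ signs), and in the general-class-$\widetilde{\cH}$ setting one must be sure the reductions to $G^0$ and to Borel $Q$ preserve the bookkeeping of $[\chi]$ and of $\zeta$, so that the final identification lands in $\widehat{G}_{\zeta-\mathrm{disc}}$ rather than in a neighboring component.
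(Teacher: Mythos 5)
Your reductions in the first two paragraphs match the paper's \S\S\ref{ssec7c}--\ref{ssec7e} step by step: induce from $G^\dagger$ to $G$, split off the $Z_G(G^0)$-factor $\chi$, and collapse the Leray spectral sequence of $\overline{G}_\C/Q_\emptyset \to \overline{G}_\C/Q$ to reduce to the Borel case with $U=H$. Your final two steps (invoking Schmid \cite{S1976} for the vanishing, then using Corollary \ref{5.7.3} and the linear independence of discrete characters to kill the nondiscrete part and identify $\pi_\mu^{q(\lambda+\rho)}$ with $\pi_{\chi,\lambda+\rho}$) also track \S\ref{ssec7g}. The genuine divergence is in the alternating-sum formula, which you propose to derive by an $L^2$-Lefschetz/heat-kernel argument: McKean--Singer $t$-independence of the supertrace of $\pi(f)e^{-t\square}$, a $t\to 0$ local parametrix computation, and Weyl integration. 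The paper does something different and more algebraic: it realizes $L_2^{0,q}(Y;\L_\lambda)$ as a direct integral $\int_{\widehat{G}_\zeta}\cH_\pi\otimes\{\cH_\pi^*\otimes\Lambda^q\gn^*\otimes L_\lambda\}^H\,d\pi$, introduces Schmid's self-adjoint Kostant-type operator $\delta+\delta^*$ on $\cH_\pi^0\otimes\Lambda\gn^*$ to define the finite-dimensional $\gh$-modules $\cH^q(\pi)$, and uses the character identity $f_\pi|_{H\cap G'} = (-1)^n\Delta e^\rho T_\pi|_{H\cap G'}$ (Lemma \ref{7.6.4}) to read off the coefficient of $e^{-\lambda}$. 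That gives the alternating sum of discrete characters by pure bookkeeping without ever taking an operator trace on the noncompact manifold $Y$.

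The trade-off is this: the heat-kernel route is conceptually attractive (it is the Atiyah--Schmid $L^2$-index philosophy) and, if carried out, would produce the alternating-sum identity from a local curvature computation, making the sign $(-1)^{|\Sigma^+|+q(\lambda+\rho)}$ geometric. But on the noncompact homogeneous space $Y=G/U$ there are real analytic obstacles that the paper's argument avoids by design: the operator $\pi(f)e^{-t\square}$ need not be trace class on $L_2^{0,q}(Y;\E_\mu)$ in the ordinary sense (one has to replace $\mathrm{Tr}$ by a $G$-trace or Plancherel trace, which then must be matched against the distribution character), and the $t\to\infty$ limit will pick up not just the discrete spectrum at $0$ but any continuous spectrum accumulating at $0$, which is precisely the piece you still need to discard via the Plancherel theorem. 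Schmid's direct-integral argument separates the discrete and continuous parts \emph{before} any trace is taken, so the character formula drops out as an identity of distributions restricted to $H\cap G'$, and the $t$-asymptotics and trace-class issues never arise. So your outline is a genuine alternative but the McKean--Singer step, as written, hides exactly the hard analytic content that the paper's Lemma \ref{7.6.4} and the decomposition (\ref{7.6.5}) handle cleanly; to make it rigorous you would need to work systematically in the $G$-trace/Plancherel-trace framework and re-prove a version of Lemma \ref{7.6.4} as the $t\to 0$ limit, which is a substantial project rather than a routine computation.
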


Theorem \ref{7.2.3} gives a number of explicit geometric realizations
of the relative discrete series representations of $G$.  The case
where $G$ is a connected semisimple Lie group with finite center and
$U = H$ is due to W. Schmid (\cite{S1971}, \cite{S1976}); to some extent 
we follow his ideas.
The case where $G$ is a connected semisimple Lie group with finite center
and $Y = G(x)$ is a hermitian symmetric space was proved by
M. S. Narasimhan and K. Okamoto \cite{NO1970}.  Some results for groups with
possibly infinite center were proved by Harish--Chandra \cite{HC1956b} and
J. A. Tirao \cite{T1974}.  Also, W. Schmid (unpublished) and R. Parthasarathy 
(\cite{P1971}, \cite{P1972}) obtained realizations on spaces of square 
integrable harmonic spinors.  Finally, R. Hotta \cite{H1971} realized 
discrete series
representations of connected semisimple groups of finite center on
certain eigenspaces of the Casimir operator.

We carry out the proof of Theorem \ref{7.2.3} in \S\S \ref{ssec7c} 
through \ref{ssec7g}.  

\subsection{}\label{ssec7c}\setcounter{equation}{0}
We reduce Theorem \ref{7.2.3} to the case $G = G^\dagger$\,.

Choose a system $\{g_1, \cdots , g_r\}$ of coset representatives
of $G$ modulo $G^\dagger$\,.  According to Lemma \ref{7.2.1}, the topological
components of $Y = G(x)$ are the $Y_i = G^\dagger(g_ix)$.  Let $^i\pi_\mu^q$
denote the representation of $G^\dagger$ on
$$
H_2^{0,q}(Y;\E_\mu|_{Y_i}) = \{\omega \in H_2^{0,q}(Y;\E_\mu)\,
	:\, \omega \text{ is supported in } Y_i\}.
$$
Evidently $H_2^{0,q}(Y;\E_\mu) = H_2^{0,q}(Y;\E_\mu|_{Y_1}) \oplus
	\cdots \oplus H_2^{0,q}(Y;\E_\mu|_{Y_r})$ as orthogonal direct sum.
Thus $\pi_\mu^{0,q} = {^1\pi}_\mu^{0,q} \oplus \cdots \oplus 
{^r\pi}_\mu^{0,q}$\,.  Also, $\pi_\mu^q(g_i)$ sends $H_2^{0,q}(Y;\E_\mu|_{Y_j})$
to $H_2^{0,q}(Y;\E_\mu|_{Y_k})$ where $g_iY_k = Y_j$\,, i.e., where
$g_i^{-1}g_j \in g_kG^\dagger$.  In summary,

\begin{lemma}\label{7.3.1}
$\pi_\mu^q = \Ind_{G^\dagger}^G
({^i\pi}_\mu^{0,q})$ for $1 \leqq i \leqq r$.
\end{lemma}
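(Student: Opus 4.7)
The plan is to realize $H_2^{0,q}(Y;\E_\mu)$ as a standard concrete model for the induced representation $\Ind_{G^\dagger}^G({^i\pi_\mu^q})$, exploiting that $G^\dagger$ is normal in $G$ of finite index $r$ and that the $G/G^\dagger$-action permutes the topological components $Y_j$ simply transitively. All the ingredients are essentially laid out in the paragraph preceding the lemma, so this is a bookkeeping exercise with the Mackey model.

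First I would fix $i$, set $V := H_2^{0,q}(Y;\E_\mu|_{Y_i})$, and for each $j$ introduce the operator $T_j := \pi_\mu^q(g_j g_i^{-1})$; this is a unitary isomorphism from $V$ onto $H_2^{0,q}(Y;\E_\mu|_{Y_j})$, because $\pi_\mu^q$ is a unitary representation of $G$ and $(g_j g_i^{-1})\cdot Y_i = Y_j$. (The latter follows from normality of $G^\dagger$: $g_j g_i^{-1}\cdot G^\dagger(g_i x) = G^\dagger \cdot g_j g_i^{-1} g_i x = G^\dagger(g_j x) = Y_j$.) Combining the $T_j$ with the orthogonal direct sum $H_2^{0,q}(Y;\E_\mu) = \bigoplus_j H_2^{0,q}(Y;\E_\mu|_{Y_j})$ recorded just above, one obtains a unitary isomorphism
\[
\Phi : V^{\oplus r} \longrightarrow H_2^{0,q}(Y;\E_\mu), \qquad (v_1,\dots,v_r) \longmapsto \sum_{j=1}^r T_j v_j.
\]

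Next I would realize $\Ind_{G^\dagger}^G({^i\pi_\mu^q})$ on $V^{\oplus r}$ via the usual Mackey identification: an element is a measurable $f : G \to V$ with $f(gh) = {^i\pi_\mu^q}(h)^{-1} f(g)$ for $h \in G^\dagger$, and the evaluations $f \mapsto (f(g_j g_i^{-1}))_{j=1}^r$ give an isometry onto $V^{\oplus r}$ (here I use that $\{g_j g_i^{-1}\}_j$ is again a full set of coset representatives for $G/G^\dagger$, by right multiplication of cosets by $g_i^{-1}$). To check that $\Phi$ intertwines this $G$-action with $\pi_\mu^q$, note that for any $g \in G$ there is a permutation $\sigma$ of $\{1,\dots,r\}$ and elements $h_k \in G^\dagger$ with $g \cdot g_k g_i^{-1} = g_{\sigma(k)} g_i^{-1} h_k$, giving
\[
\pi_\mu^q(g) T_k \;=\; T_{\sigma(k)}\,{^i\pi_\mu^q}(h_k)
\]
on $V$; a direct comparison with the explicit formula for $(\tilde{\pi}(g)f)(g_j g_i^{-1}) = f(g^{-1}g_j g_i^{-1})$ in the Mackey model shows that the two $G$-actions on $V^{\oplus r}$ coincide.

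The only obstacle is the conventional bookkeeping of coset representatives and induction conventions; no substantive difficulty is expected, since nothing beyond unitarity of $\pi_\mu^q$, the orthogonal component decomposition recorded just above the lemma, and normality of $G^\dagger$ in $G$ is needed. The resulting equivalence is independent of the choice of $i$, which is consistent: the representations ${^i\pi_\mu^q}$ and ${^{i'}\pi_\mu^q}$ are conjugate via the unitary $\pi_\mu^q(g_{i'}g_i^{-1})$ and therefore induce to unitarily equivalent representations of $G$.
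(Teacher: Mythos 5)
Your proposal is correct and is essentially the same argument the paper uses (the paper presents it as the unadorned observations in the paragraph preceding the lemma -- the orthogonal direct sum over the components $Y_j$ and the fact that $\pi_\mu^q(g)$ permutes the summands consistently with the coset decomposition -- and then closes with "In summary"). You have simply made the implicit system of imprimitivity explicit by writing out the Mackey model and checking the intertwining, which is a faithful elaboration rather than a different route.
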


We know from Theorem \ref{3.5.9} that $\widehat{G}_{disc}$ consists of the
classes $[\pi] = [\Ind_{G^\dagger}^G(\pi^\dagger)]$ where
$[\pi^\dagger] \in \widehat{G^\dagger}_{disc}$\,.  Further, $\Theta_\pi$
is supported in $G^\dagger$, where it coincides with 
$\Theta_{\pi_{G^\dagger}}$\,.  Now Lemma \ref{7.3.1} tells us that, if Theorem
\ref{7.2.3} holds for $G^\dagger$ with each of the $\E_{\mu|_{Y_i}}$\,.
Then Theorem \ref{7.2.3} is valid for $G$ with $\E_\mu$\,.  In summary,
\begin{lemma}\label{7.3.2}
In the proof of {\rm Theorem \ref{7.2.3}}
we may assume $G = G^\dagger$\,.
\end{lemma}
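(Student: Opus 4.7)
The plan is to combine Lemma \ref{7.3.1} with the inductive description of $\widehat{G}_{disc}$ provided by Theorem \ref{3.5.9} (via Proposition \ref{3.5.5}). Assume Theorem \ref{7.2.3} is already established for the group $G^\dagger$ acting on each component $Y_i = G^\dagger(g_i x)$ with the restricted bundle $\E_\mu|_{Y_i}$. Since $Z_G(G^0) \subset U$ and $H \subset G^\dagger$, the Cartan data $(\gh,\Sigma^+,\Phi,\lambda,\rho)$ used in Theorem \ref{7.2.3} makes equally good sense for $G^\dagger$, and the integer $q(\lambda+\rho)$ depends only on $\lambda+\rho$ relative to $\Sigma^+$, hence is the same whether computed for $G$ or $G^\dagger$.

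Under the assumed $G^\dagger$--version of the theorem, each $^i\pi_\mu^q$ vanishes for $q \ne q(\lambda+\rho)$, and $^i\pi_\mu^{q(\lambda+\rho)}$ belongs to the relative discrete series class $[\pi_{\chi,\lambda+\rho}^\dagger] = [\chi\otimes\pi_{\lambda+\rho}] \in \widehat{G^\dagger}_{\zeta-disc}$ in the notation of Proposition \ref{3.5.2}. Since
$$
H_2^{0,q}(Y;\E_\mu) = {\bigoplus}_{i=1}^r H_2^{0,q}(Y;\E_\mu|_{Y_i})
$$
as an orthogonal $G^\dagger$--invariant decomposition, the vanishing of each summand for $q \ne q(\lambda+\rho)$ gives $H_2^{0,q}(Y;\E_\mu) = 0$ in the same range. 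This establishes the vanishing clause of Theorem \ref{7.2.3} for $G$.

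For the non--vanishing degree, Lemma \ref{7.3.1} yields
$$
\pi_\mu^{q(\lambda+\rho)} = \Ind_{G^\dagger}^G\bigl({}^1\pi_\mu^{q(\lambda+\rho)}\bigr)
= \Ind_{G^\dagger}^G\bigl(\chi\otimes\pi_{\lambda+\rho}\bigr).
$$
By Proposition \ref{3.5.5}(3), the right side is precisely the $\zeta$--discrete series class $[\pi_{\chi,\lambda+\rho}] \in \widehat{G}_{\zeta-disc}$ defined in Theorem \ref{3.5.9}, because $[\pi_{\lambda+\rho}] \in (\widehat{G^0})_{disc}$ lifts via induction to an irreducible class on $G$ with the correct infinitesimal character $\chi_{\lambda+\rho}$ and the distribution character (\ref{3.5.10}). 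Thus $[\pi_\mu^{q(\lambda+\rho)}] = [\pi_{\chi,\lambda+\rho}]$, which is the remaining assertion of Theorem \ref{7.2.3} for $G$.

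The only point that requires verification is the compatibility of the character $\zeta \in \widehat{Z}$ and the $Z_G(G^0)$--component $\chi$ of $\mu$ between the $G$-- and $G^\dagger$--pictures; but this is immediate since $Z \subset Z_G(G^0) \subset G^\dagger$ and the central and infinitesimal characters of $\pi_\mu^{q(\lambda+\rho)}$ are computed from $\mu = \chi\otimes\mu^0$ without reference to the ambient group. The main technical input is therefore already packaged in Proposition \ref{3.5.5}, making this reduction essentially a formal consequence of Lemma \ref{7.3.1} together with the discrete series inductive structure from \S \ref{ssec3e}.
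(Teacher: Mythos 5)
Your proof is correct and follows the same route as the paper: use the component decomposition and Lemma \ref{7.3.1} to realize $\pi_\mu^q$ as $\Ind_{G^\dagger}^G({^1\pi}_\mu^q)$, assume Theorem \ref{7.2.3} for $G^\dagger$, and invoke Theorem \ref{3.5.9} / Proposition \ref{3.5.5} to identify the induced class as $[\pi_{\chi,\lambda+\rho}]$. You have merely spelled out the steps the paper's very terse proof leaves implicit.
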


\subsection{}\label{ssec7d}\setcounter{equation}{0}
We reduce Theorem \ref{7.2.3} to the case where $G$
is connected.  Using Lemma \ref{7.3.2} we assume $G = G^\dagger$.  Thus
$G = Z_G(G^0)G^0$ and $Y = G(x)$ is connected.  Recall
$[\mu] = [\chi \otimes \mu^0]$ with $[\chi] \in \widehat{Z_G(G^0)}$
and $[\mu^0] \in \widehat{U^0}$, so $E_\mu = E_\chi \otimes E_{\mu^0}$\,.
Now $[\mu^0]$ specifies a $G^0$--homogeneous holomorphic vector
bundle $\E_{\mu^0} \to Y$.  Let $\pi^q_{\mu^0}$ denote the representation
of $G^0$ on $H^{0,q}_2(Y;\E_{\mu^0})$.

\begin{lemma}\label{7.4.1}
$\pi^q_{\mu} = \chi \otimes \pi^q_{\mu^0}$
for all $q \geqq 0$.
\end{lemma}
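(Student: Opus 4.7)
The key observation is that, under the assumption $G = G^\dagger = Z_G(G^0)G^0$, the subgroup $Z_G(G^0)$ is central in $G$, is contained in $U$, fixes $x$, and hence acts trivially on $Y = G(x)$, while on each fiber of $\E_\mu$ (using $E_\mu = E_\chi \otimes E_{\mu^0}$) it acts as $\chi(z) \otimes I$. My plan is to produce a $G$-equivariant isometric identification $L_2^{0,q}(Y;\E_\mu) \cong E_\chi \otimes L_2^{0,q}(Y;\E_{\mu^0})$ that intertwines $\overline{\partial}$, and then pass to kernels of $\square$.

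For the decomposition I realize sections of $\E_\mu$ as functions $f: G \to E_\chi \otimes E_{\mu^0}$ satisfying $f(gu) = \mu(u)^{-1} f(g)$ for $u \in U$. Centrality of $Z_G(G^0)$ together with the factorization $\mu(zu^0) = \chi(z) \otimes \mu^0(u^0)$ shows that $f$ is determined by its restriction $f|_{G^0}$ through $f(zg^0) = (\chi(z)^{-1} \otimes I)f(g^0)$, and that $f|_{G^0}$ satisfies the $U^0$-equivariance $f(g^0 u^0) = (I \otimes \mu^0(u^0)^{-1})f(g^0)$. Choosing a basis $\{e_i\}$ of $E_\chi$ and writing $f|_{G^0} = \sum_i e_i \otimes f_i$ exhibits each $f_i$ as a section of $\E_{\mu^0}$. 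The same bookkeeping carries over to $(0,q)$-forms, and the fiber metric on $E_\mu$ being the tensor product of those on $E_\chi$ and $E_{\mu^0}$ makes the resulting bijection isometric.

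For the $\overline{\partial}$ compatibility I apply Lemma \ref{7.1.4} $G^0$-equivariantly: viewed as a $G^0$-homogeneous bundle $\E_\mu$ is associated to the $U^0$-representation $u^0 \mapsto I \otimes \mu^0(u^0)$, so it coincides with the tensor product of the constant holomorphic bundle $Y \times E_\chi$ and $\E_{\mu^0}$. Correspondingly, the unique extension of $I \otimes \mu^0$ from $\gu_\C$ to the isotropy subalgebra $\gl$ that vanishes on the nilradical $\gl^-$ factors as $I$ tensored with the corresponding extension of $\mu^0$, yielding $\overline{\partial}_{\E_\mu} = I \otimes \overline{\partial}_{\E_{\mu^0}}$ and hence $\square_{\E_\mu} = I \otimes \square_{\E_{\mu^0}}$ and $H_2^{0,q}(Y;\E_\mu) = E_\chi \otimes H_2^{0,q}(Y;\E_{\mu^0})$. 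Under this identification $G^0$ acts on the right factor through $\pi^q_{\mu^0}$ by translating $f|_{G^0}$, while $Z_G(G^0)$ acts on the left factor as $\chi$ via the formula above; the two actions commute and agree on $Z_{G^0}$ (both equal the common scalar $\chi|_{Z_{G^0}} = \mu^0|_{Z_{G^0}}$), so they combine to $\chi \otimes \pi^q_{\mu^0}$ on $G$. The main delicate point is the tensorial splitting of $\overline{\partial}$, which rests on Lemma \ref{7.1.4} and the fact that the $\gl$-extension of a tensor-product representation with one factor trivial is itself the tensor of extensions; everything else is bookkeeping.
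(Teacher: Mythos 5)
Your proof is correct and arrives at the same conclusion, but via a more constructive route than the paper. The paper's proof is a short, purely representation--theoretic argument: it notes that $Z_G(G^0)$ acts trivially on $Y$ (hence on the form bundle) and by $\chi$ on the $E_\mu$ factor, so $\pi_\mu^q|_{Z_G(G^0)}$ is a multiple of $\chi$; separately, since $\mu|_{U^0} = (\dim\chi)\mu^0$, the restricted bundle over $G^0$ is $(\dim\chi)$ copies of $\E_{\mu^0}$ and thus $\pi_\mu^q|_{G^0} = (\dim\chi)\pi_{\mu^0}^q$; these two facts, plus $G = Z_G(G^0)G^0$, force $\pi_\mu^q = \chi\otimes\pi_{\mu^0}^q$. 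You instead build the $G$--equivariant isometry $H_2^{0,q}(Y;\E_\mu) \cong E_\chi\otimes H_2^{0,q}(Y;\E_{\mu^0})$ by hand: you exhibit the bundle decomposition $\E_\mu \cong E_\chi\otimes\E_{\mu^0}$ as $G^0$--homogeneous holomorphic bundles via Lemma~\ref{7.1.4}, verify the Dolbeault operator factors as $I\otimes\overline{\partial}_{\mu^0}$, and track the two commuting actions and their agreement on $Z_{G^0}$. This is more work but pays back in transparency: one actually sees the intertwining operator, which is useful if one later wants to compute with it (e.g., for pairing with $K$--types or for explicit coefficient formulas), whereas the paper's argument establishes the unitary equivalence without producing it. The one detail you glossed over lightly --- well--definedness of $f(zg^0) = (\chi(z)^{-1}\otimes I)f(g^0)$ against the ambiguity $z\mapsto z\zeta^{-1}$, $g^0\mapsto \zeta g^0$ for $\zeta\in Z_{G^0}$ --- does check out, precisely because $\chi$ and $\mu^0$ share central character on $Z_{G^0}$, but it is worth stating.
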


\begin{proof} $Z_G(G^0)$ acts trivially on $X$, so it acts trivially
on the bundle of ordinary $(0,q)$--forms over the orbit $Y \subset X$.
Thus $Z_G(G^0)$ acts on $L_2^{0,q}(Y;\E_\mu)$ as a type I primary 
representation $\omega_\chi$\,.  In particular $\pi_\mu^q|_{Z_G(G^0)}$ is 
a multiple of $\chi$.  But $\mu|_{U^0} = (\dim \chi)\mu^0$ so
$\pi_\mu^q|_{G^0} = (\dim \chi)\pi^q_{\mu^0}$\,.  We conclude $\pi_\mu^q
= \chi \otimes \pi^q_{\mu^0}$\,. 
\end{proof}

We know from Proposition \ref{3.5.2} that $\widehat{G^\dagger}_{disc}$ consists 
of the $[\chi \otimes \pi^0]$ where $[\chi] \in \widehat{Z_G(G^0)}$ and
$[\pi^0] \in \widehat{G^0}_{disc}$ agree on $Z_{G^0}$\,.  The distribution
character $\Theta_{\chi \otimes \pi^0} = (\trace \chi)\Theta_{\pi^0}$\,.
If Theorem \ref{7.2.3} holds for $G^0$ with $\E_{\mu^0}$ now 
Lemma \ref{7.4.1} ensures
the result for $G$ with $\E_\mu$\,.  In summary

\begin{lemma}\label{7.4.2}
In the proof of {\rm Theorem 7.2.3} we may 
assume $G$ is connected.
\end{lemma}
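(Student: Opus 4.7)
The plan is to use Lemma 7.4.1 together with Proposition 3.5.2 to transfer the full statement of Theorem 7.2.3 from $G^0$ to $G = G^\dagger$, so that assuming connectivity is no loss of generality once the $G = G^\dagger$ reduction of Lemma 7.3.2 is in place.

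First I would check the vanishing half of Theorem 7.2.3. Lemma 7.4.1 gives $\pi^q_\mu = \chi \otimes \pi^q_{\mu^0}$, so $H^{0,q}_2(Y;\E_\mu) \cong E_\chi \otimes H^{0,q}_2(Y;\E_{\mu^0})$ as representation spaces of $Z_G(G^0) \times G^0$; in particular $H^{0,q}_2(Y;\E_\mu) = 0$ if and only if $H^{0,q}_2(Y;\E_{\mu^0}) = 0$. Since $\mu^0$ and $\mu$ share the same highest weight $\lambda$ (and thus the same $\lambda + \rho$ and $q(\lambda + \rho)$), the vanishing condition $q \ne q(\lambda + \rho)$ is the same in both settings, so vanishing for $G^0$ with $\E_{\mu^0}$ yields vanishing for $G$ with $\E_\mu$.

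Next I would identify the nonvanishing cohomology. Assuming Theorem 7.2.3 holds for $G^0$, the representation of $G^0$ on $H^{0,q(\lambda+\rho)}_2(Y;\E_{\mu^0})$ is the class $[\pi_{\lambda+\rho}] \in \widehat{G^0}_{disc}$ of Theorem 3.4.7 (with central character $e^{\lambda} |_{Z_{G^0}}$). Compatibility of $[\chi]$ with this character is exactly the condition $[\chi] \in \widehat{Z_G(G^0)}_\zeta$ built into the statement. Applying Lemma 7.4.1 gives $[\pi^{q(\lambda+\rho)}_\mu] = [\chi \otimes \pi_{\lambda+\rho}]$, and Proposition 3.5.2 identifies this exactly with the class $[\pi_{\chi,\lambda+\rho}] \in \widehat{G}_{disc}$ appearing in Theorem 3.5.9 (recall $G = G^\dagger$ here, so the class $[\pi_{\chi,\lambda+\rho}]$ defined by induction from $G^\dagger$ to $G$ reduces to the tensor product). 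The distribution character statement $\Theta_{\chi \otimes \pi^0} = (\trace \chi)\,\Theta_{\pi^0}$ on $Z_G(G^0) \cdot G^0$ from Proposition 3.5.2 also propagates the alternating-sum character formula of Theorem 7.2.3 from $G^0$ to $G$.

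The step that requires the most care is the bookkeeping of central characters: one must verify that the compatibility $[\chi] \in \widehat{Z_G(G^0)}_\zeta$ with $\zeta|_{Z \cap G^0} = e^\lambda|_{Z \cap G^0}$ is precisely what is needed to glue $\chi$ and $\mu^0$ into a well-defined $[\mu] = [\chi \otimes \mu^0] \in \widehat{U}$ and, correspondingly, to glue $\chi$ and $[\pi_{\lambda+\rho}]$ into a well-defined class in $\widehat{G^\dagger}_{disc}$; but this is exactly the content of (7.1.3) combined with Proposition 3.5.2. Once these compatibilities are matched, the reduction is immediate, which justifies recording Lemma 7.4.2 as a conclusion of the preceding paragraph rather than as a statement requiring a separate argument.
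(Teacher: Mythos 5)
Your proof is correct and follows essentially the same route as the paper: both use Lemma \ref{7.4.1} to factor $\pi^q_\mu = \chi \otimes \pi^q_{\mu^0}$ and then Proposition \ref{3.5.2} to identify $\chi \otimes \pi_{\lambda+\rho}$ with the class $[\pi_{\chi,\lambda+\rho}] \in (\widehat{G^\dagger})_{disc}$, passing the distribution character relation $\Theta_{\chi\otimes\pi^0} = (\trace\chi)\Theta_{\pi^0}$ along with it. The only difference is that you spell out the vanishing half and the central-character bookkeeping explicitly, whereas the paper compresses this to two sentences.
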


\subsection{}\label{ssec7e}\setcounter{equation}{0}
We reduce Theorem \ref{7.2.3} to the case where $G = G^0$
and $U = H^0$.

Choose a Borel subgroup $B \subset Q$ of $\overline{G}_\C$\,.  Denote 
$X' = \overline{G}_\C/B$ and consider the $G$-equivariant projection
$r: X' \to X$ defined by $r(\bar g B) = \bar g Q$.  Now choose a base
point $x' \in r^{-1}(x)$ defined by $\gb_{x'} = \gq_\emptyset$
relative to $(\overline{\gh}_\C, \Pi)$.
Since $\tau\phi = -\phi$ for every $\gh$-root, the isotropy subalgebra
of $\gg$ at $x'$ is just $\gh$.  Now 
$Y' = G(x')$ is open in $X'$ and
$H = \{g \in G : g(x') = x'\}$, and
$r:Y' \to Y$ is $G$-equivariant and holomorphic.

Following Lemma \ref{7.4.2}, we assume $G$ connected, so $U$ and $H$ are
connected.  Now $\lambda$ is the highest weight of $\mu$ and $e^\lambda 
\in \widehat{H}$ specifies
\begin{equation}\label{7.5.2}
\begin{aligned}
&\L_\lambda \to Y':\,\, G\text{--homogeneous holomorphic line bundle.} \\
&\pi_\lambda^q:\,\, \text{ representation of } G \text{ on } H_2^{0,q}
	(Y;\L_\lambda). 
\end{aligned}
\end{equation}
\begin{lemma}\label{7.5.3}
$[\pi_\lambda^q] = [\pi_\mu^q]$\,.
\end{lemma}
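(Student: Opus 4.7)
The plan is to exhibit $r\colon Y' \to Y$ as a $G$-equivariant holomorphic fiber bundle with compact complex fibers isomorphic to the flag manifold $F = U/H$ of the compact group $U/Z$, and to reduce the identification of the two $L_2$ Dolbeault cohomologies to a fiberwise Bott--Borel--Weil calculation.

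First I would pin down the fiber structure. The isotropy of $G$ at $x'$ is $H$, so each fiber $r^{-1}(gx) = gU\cdot x'$ is a $U$-orbit; it is open in the compact complex fiber $r^{-1}(gx) \subset X'$ by a dimension count, and hence equal to $gU/gHg^{-1}$. Under the embedding $Y' \hookrightarrow X'$ the fiber inherits the structure of a complex flag manifold of the compact group $U/Z$ with maximal torus $H/Z$, and $\L_\lambda|_F$ is the $U$-homogeneous holomorphic line bundle on $U/H$ associated to the character $e^\lambda \in \widehat{H}$.

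Next I apply Bott--Borel--Weil to each fiber. Having reduced to $G$ connected via Lemma~\ref{7.4.2} we have $U = Z_G \cdot U^0$ and $\mu$ has highest weight $\lambda$ dominant for the positive root system $\Sigma^+ \cap \Phi^r$ of $\gu_\C$. Proposition~\ref{1.1.12} applied to the compact fiber yields $H^0(F;\L_\lambda|_F) = E_\mu$ as a $U$-module and $H^q(F;\L_\lambda|_F) = 0$ for $q > 0$. Since $r$ is $G$-equivariant, this globalizes to a $G$-homogeneous holomorphic isomorphism $R^0 r_*\L_\lambda \cong \E_\mu$ together with vanishing of the higher direct images.

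The main step is then to transfer this sheaf-level identification to $L_2$ Dolbeault cohomology. I would construct an explicit $G$-equivariant isometry $\Psi \colon L_2^{0,q}(Y';\L_\lambda) \to L_2^{0,q}(Y;\E_\mu)$ using the horizontal/vertical bigrading of $(0,q)$-forms induced by the holomorphic submersion $r$: the vertical component of a form in bidegree $(0,j)$ along the fiber takes values in $\L_\lambda|_F$, and the projection onto the purely horizontal part converts such forms into $(0,q)$-forms on $Y$ with values in the bundle $R^0r_*\L_\lambda = \E_\mu$. A choice of $G$-invariant hermitian metric on $Y'$ compatible with the submersion makes this map an $L_2$-isometry by Fubini. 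The Kodaira--Hodge Laplacian then splits as a sum of horizontal and vertical pieces, and on harmonic forms the vertical part must annihilate the form, which by the fiberwise BBW vanishing forces the vertical degree to be zero. Thus $\Psi$ restricts to a $G$-equivariant unitary isomorphism $H_2^{0,q}(Y';\L_\lambda) \cong H_2^{0,q}(Y;\E_\mu)$, yielding $[\pi_\lambda^q] = [\pi_\mu^q]$.

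The principal obstacle is the analytic comparison in the last step: the Andreotti--Vesentini theory on the non-compact manifold $Y'$ must be reconciled with the fibration structure, and one must check rigorously that the splitting of $\square$ is well behaved on harmonic forms (cross-terms from curvature of the fibration could in principle obstruct this, but they are absorbed because the fibers are complex submanifolds and the metric is $G$-invariant). Once the splitting is justified, the identification is essentially formal from the fiber BBW.
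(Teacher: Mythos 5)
Your proposal hits the same geometric idea as the paper—reduce along the $G$--equivariant holomorphic fibration $r\colon Y'\to Y$ with compact fiber $U/H$, apply the extended Bott--Borel--Weil theorem (Proposition~\ref{1.1.12}) fiberwise to get $R^0r_*\L_\lambda\cong\E_\mu$ with higher direct images vanishing—but the last step diverges and contains a real gap. The paper packages the passage to $L_2$ Dolbeault cohomology as a (square-integrable) Leray spectral sequence that collapses at $E_2$ precisely because the higher fiber cohomology vanishes; no splitting of the Laplacian is invoked. You instead try to produce an explicit $G$--equivariant isometry by projecting to horizontal degree and argue that $\square$ splits into horizontal and vertical pieces, concluding that $\square$--harmonic forms have vertical degree zero.

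The claim that the splitting of $\square$ is "well behaved" and that the cross-terms "are absorbed because the fibers are complex submanifolds and the metric is $G$-invariant" is not correct as stated. Even with a $G$--invariant metric and complex fibers, the horizontal distribution is in general not holomorphic (nor integrable), so the Cauchy--Riemann operator of $Y'$ decomposes as $\overline{\partial}=\overline{\partial}_v+\overline{\partial}_h+R$ where $R$ shifts vertical degree down by one and horizontal degree up by two; consequently $\square=(\overline{\partial}+\overline{\partial}^*)^2$ is \emph{not} the sum of a vertical Laplacian and a horizontal Laplacian, and harmonic forms need not be harmonic in each bidegree separately. The correct mechanism, which the paper's Leray spectral sequence argument captures, is that the filtration by vertical degree has $E_1$ page concentrated in vertical degree zero because of the fiberwise Bott--Borel--Weil vanishing; degeneration then follows for degree reasons, irrespective of any cleanness of the Laplacian. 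Your proposal would be repaired by either abandoning the operator-splitting claim in favor of the filtration-and-spectral-sequence framework (carried to the $L_2$ category via Andreotti--Vesentini, as in the paper's (\ref{7.1.8})--(\ref{7.1.9})), or by a direct argument that any $L_2$-harmonic form is fiberwise $\overline{\partial}_v$-closed and its fiberwise harmonic projection has vanishing components in positive vertical degree — but this needs to be proved, not asserted.
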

\begin{proof}
This is a Leray spectral sequence argument.
Let $\cO(Y';\L_\lambda)$ denote the sheaf of germs of holomorphic sections of
$\L_\lambda \to Y'$.  Each integer $s \geqq 0$ gives a sheaf 
$\cR^s(Y;\L_\lambda) \to Y$, associated to the presheaf that assigns the
sheaf cohomology group $H^s(Y' \cap r^{-1}D; \cO(Y';\L_\lambda))$ to
an open set $D \subset Y$.  Since $r: Y' \to Y$ is a holomorphic fiber
bundle, $\cR^s(\L_\lambda)$ is the sheaf of germs of holomorphic
sections of the holomorphic vector bundle over $Y$ whose fiber at
$y \in Y$ is $H^s(Y' \cap r^{-1}(y); \cO(\L_\lambda))$.

Recall our 
Borel-Weil Theorem from Proposition \ref{1.1.12} with $q_0 = 0$, and
apply it to $Y' \cap r^{-1}(y) = U(x') \cong U/H$.  That says
$H^0(Y' \cap r^{-1}(y); \cO(\L_\lambda)) = E_\mu$ as $U$-module and
$H^s(Y' \cap r^{-1}(y); \cO(\L_\lambda)) = 0$ for $s > 0$.  Now
$\cR^0(\L_\lambda) = \cO(\E_\mu)$ and $H^s(Y;\L_\lambda) = 0$ for $s > 0$.

Our analysis of the direct image sheaves $\cR^s(\L_\lambda)$ shows that
the Leray spectral sequence collapses for $r: Y' \to Y$, so each
$H^q(Y';\cO(\L_\lambda)) = H^q(Y;\cO(\E_\mu))$ as $G$-modules.  More
to the point, we carry the spectral sequence over from sheaf cohomology
to Dolbeault cohomology and use the Andreotti--Vesentini theory 
((\ref{7.1.8}) and (\ref{7.1.9}))
to restrict considerations to square integrable forms.  Then the
resultant square integrable Leray spectral sequence collapses and we
conclude that each $H_2^{0,q}(Y';\L_\lambda) = H_2^{0,q}(Y;\E_\mu)$
as $G$-modules.  
\end{proof}

As immediate consequence of Lemmas \ref{7.4.2} and \ref{7.5.3} we have

\begin{lemma}\label{7.5.4}
In the proof of {\rm Theorem \ref{7.2.3}} we may
assume that $G$ is connected, that $Q$ is a Borel subgroup of $\overline{G}_\C$
and that $U = H$.
\end{lemma}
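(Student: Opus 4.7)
The plan is to chain Lemmas \ref{7.4.2} and \ref{7.5.3}. First I would apply Lemma \ref{7.4.2} to reduce to the case where $G$ is connected. Once $G = G^0$, the central subgroup $Z_G(G^0) = Z_{G^0}$ sits inside every Cartan subgroup of $G$, hence inside $H \subset U$; open $G$-orbits on $X$ being simply connected, Lemma \ref{7.1.2} forces $U = U^0$ and $H = H^0$, so $U$ and $H$ are both connected.

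Next I would pass from the general parabolic $Q$ to a Borel subgroup through the construction set up just before Lemma \ref{7.5.3}. Pick a Borel $B \subset Q$ in $\overline{G}_\C$, form $X' = \overline{G}_\C/B$ with its $G$-equivariant holomorphic projection $r: X' \to X$, and choose $x' \in r^{-1}(x)$ with $\gb_{x'} = \gq_\emptyset$ relative to the data $(\overline{\gh}_\C, \Pi)$ that already produced $\gq_x = \gq_\Phi$. Compactness of $H/Z$ gives $\tau\phi = -\phi$ for every $\gh$-root, so the isotropy subalgebra at $x'$ is exactly $\gh$; consequently $Y' = G(x')$ is open in $X'$ with $H = \{g \in G : g(x') = x'\}$. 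With $\lambda$ the highest weight of $\mu^0$ and $\L_\lambda \to Y'$ the associated $G$-homogeneous line bundle, Lemma \ref{7.5.3} then provides a $G$-equivariant identification $H_2^{0,q}(Y'; \L_\lambda) = H_2^{0,q}(Y; \E_\mu)$ for every $q$, i.e. $[\pi_\lambda^q] = [\pi_\mu^q]$.

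Combining the two reductions shows that it suffices to prove Theorem \ref{7.2.3} in the restricted setting (connected $G$, Borel $Q$, isotropy $U = H$), since the invariants $\lambda$, $\rho$ and $q(\lambda + \rho)$ appearing in Theorem \ref{7.2.3} depend only on the common positive root system $\Sigma^+$ and its compact/noncompact splitting, and hence are unchanged by either reduction. The only genuinely nontrivial ingredient, and the step I would scrutinize most carefully, is the collapse of the square-integrable Leray spectral sequence invoked inside Lemma \ref{7.5.3}; this rests on the Bott--Borel--Weil vanishing $H^s(U/H; \cO(\L_\lambda)) = 0$ for $s > 0$ on the compact fiber, together with the Andreotti--Vesentini decomposition (\ref{7.1.9}) that lets one transport the collapse from sheaf cohomology to $L_2$ Dolbeault cohomology. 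With those ingredients in place the combined reduction is automatic, and Lemma \ref{7.5.4} follows.
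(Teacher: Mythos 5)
Your proposal is correct and follows the paper's route exactly: the paper derives Lemma 7.5.4 "as an immediate consequence of Lemmas 7.4.2 and 7.5.3," which is precisely your chain (first reduce to connected $G$, then pass to a Borel $B \subset Q$ and use the Leray spectral sequence identification $[\pi_\lambda^q] = [\pi_\mu^q]$). The extra observations you supply — that connectedness of $G$ together with simple connectivity of the open orbit forces $U = U^0$ and $H = H^0$, and that $\lambda$, $\rho$, $q(\lambda+\rho)$ are unchanged under the reductions — are the details the paper leaves implicit, so they strengthen rather than alter the argument.
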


\subsection{}\label{ssec7f}\setcounter{equation}{0}
Next, we prove the formula 
$\sum_{q \geqq 0} \Theta^{disc}_{\pi_\mu^q} =
 (-1)^{|\Sigma^+| + q(\lambda + \rho)}\Theta_{\pi_{\chi, \lambda + \rho}}$\,\,.
By Lemma \ref{7.5.4} we may assume that $G$ is connected, that
$Q$ is a Borel subgroup of $\overline{G}_\C$ and that $U = H$.
$K/Z$ is the maximal compact subgroup of $G/Z$ that contains the compact
Cartan subgroup $H/Z$.  If $[\pi] \in \widehat{G}_\zeta$ Lemma \ref{3.2.2},
and an argument \cite[\S 5]{HC1954b} of Harish--Chandra say that
\begin{equation}\label{7.6.1}
\begin{aligned}
&\pi|_K = 
	\sum_{\widehat{K}_\zeta} m_\kappa\cdot\kappa \text{ where }
	0 \leqq m_\kappa \leqq n_G (\dim\kappa) \\
&(\pi|_K)(f) = \int_K f(k)\pi(k)dk, f \in C^\infty_c(K),
	\text{ is of trace class, and} \\ 
&T_\pi : C^\infty_c(K) \to \C \text{ defined by }
	f \mapsto \trace(\pi|_K)(f) 
	\text{ is a distribution on } K.  
\end{aligned}
\end{equation}
Harish--Chandra's argument \cite[\S 12]{HC1956c} now shows that 
$T_\pi|_{K \cap G'}$
is a real analytic function on $K \cap G'$ and that 
$T_\pi|_{K \cap G'} = \Theta_\pi|_{K \cap G'}$\,\,.

Recall the Cartan involution $\theta$ of $G$ with fixed point set $K$.  Fix a
nondegenerate invariant bilinear form $\langle\phantom{x},\phantom{x}\rangle$ on
$\gg_\C$ that restricts to the Killing form on the derived algebra and is
negative definite on $\gk = (\gk \cap [\gg,\gg])\oplus \gc$\,.  That gives 
us a positive definite $\Ad(K)$-invariant hermitian inner product
$(u,v) = -\langle u, \theta\tau v\rangle$ 
on $\gg_\C$ where $\tau$ is complex conjugation of $\gg_\C$ over $\gg$\,.

Consider the nilpotent algebra $\gn = \sum_{\phi \in \Sigma^+} 
	\overline{\gg}^{-\phi} = \gq_x^u \subset \gg_\C$\,. 
Denote 
\begin{equation}\label{7.6.3}
\Lambda(\Ad^*) = \sum_{j \geqq 0} \Lambda^j(\Ad^*): 
	\text{ representation of } \gq_x \text{ on }
	\Lambda\gn^* = \sum_{j \geqq 0} \Lambda^j\gn^*\,. 
\end{equation}
The inner product $(\phantom{x},\phantom{x})$ gives $\gn$, thus $\gn^*$, 
thus also $\Lambda\gn^*$,
a Hilbert space structure; and $\Ad^*(\gh)$ acts by skew-hermitian
transformations.

Fix $[\pi] \in \widehat{G}_\zeta$ and write $\cH_\pi$ for its
representation space.  Let $\cH_\pi^0$  denote the space
of $K$-finite vectors in $\cH_\pi$\,. It is dense and consists of
analytic vectors, by (\ref{7.6.1}).  Now $\gh$ acts on 
	$\cH_\pi^0\otimes \Lambda\gn^*$ by $\pi \otimes \Ad^*$
by skew--hermitian transformations.  Let 
$\{y_1,\cdots , y_n\}$ be a basis of $\gn$, let $\{\omega^j\}$ be the dual
basis of $\gn^*$, and $e(\omega^j): \Lambda\gn^* \to \Lambda\gn^*$
the exterior product.  Then $\delta := 
	\sum \{(\pi(\omega_j) \otimes e(\omega^j)) + \frac{1}{2}1 \otimes
		(e(\omega_j)\Ad^*(y_j))\}$
is the coboundary operator $\cH_\pi^0 \to \cH_\pi^0$ of the Lie algebra
cohomology for the action of $\gh$.  It has formal adjoint 
$\delta^* = \sum \{(-\pi(\tau y_j)\otimes i(\omega^j)) + (\frac{1}{2}1 \otimes
		\Ad^*(y_j)^* i(\omega^j))\}$ 
where $i(\omega^j)$ denotes interior product.  Now $\delta + \delta^*$ is
a densely defined symmetric operator on $\cH_\pi \otimes \Lambda \gn^*$.

Choose a basis $\{z_i\}$ of $\gk_\C$ that is orthonormal relative to
$(\phantom{x},\phantom{x})$.  Then $\Omega_K = \sum z_iz_i \in \cU(\gk)$
is independent of choice of the basis $\{z_i\}$.  In particular
$\Omega_K$ is a linear combination, positive coefficients, of the
Casimir operators of the simple ideals of $\gk$ plus the Laplacian
on the center of $\gk$.  Thus (\ref{7.6.1})  $\pi(\Omega_K)$ is symmetric
non-negative on $\cH_\pi^0$ and has a unique self adjoint extension
$\pi(\Omega_K)$ to $\cH_\pi$\,.  Further $\cH_\pi$ is the discrete
direct sum of the (all non--negative) eigenspaces of $\pi(\Omega_K)$.  As
$$
\{ [\kappa] \in \widehat{K}_\zeta :
	\kappa(\text{ Casimir element of } \cU(\gk)) \leqq c\}
$$
is finite for every real $c$, (\ref{7.6.1}) also says that the sum of the 
eigenspaces of $\pi(\Omega_K)$ for eigenvalues $\leqq c$ has finite
dimension.  Thus 
$(1 + \pi(\Omega_K))^{-1}$ is a
	self adjoint compact operator on $\cH_\pi$\,.
With this preparation, Wilfried Schmid's arguments \cite[\S 3]{S1971}.
are valid in our situation.  We state the result.

\begin{lemma}\label{7.6.4}
The closure of $\delta + \delta^*$ from the
domain $\cH_\pi^0 \otimes \Lambda\gn^*$ is the unique self adjoint extension
of $\delta + \delta^*$ on $\cH_\pi \otimes \Lambda\gn^*$.  Each
$$
\cH^q(\pi): \text{ kernel of }
	\delta + \delta^* \text{ on } \cH_\pi \otimes \Lambda^q \gn^*
$$
is a finite dimensional $H$-module.  Define 
$f_\pi = \sum (-1)^q
	(\text{character of } H \text{ on } \cH^q(\pi))$.
Let $\Delta$ and $\rho$ be as in {\rm (\ref{7.2.1})} and $n = \dim_\C\gn = 
|\Sigma^+|$.  Then 
$$
f_\pi|_{H \cap G'} =
	(-1)^n \Delta e^\rho\cdot T_\pi|_{H\cap G'}.
$$
\end{lemma}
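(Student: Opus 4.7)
The plan is to verify that Wilfried Schmid's arguments from \cite[\S 3]{S1971} carry over verbatim, since every prerequisite that Schmid uses for connected semisimple $G$ of finite center has been established above for our broader class $\widetilde{\cH}$ in the discussion preceding the lemma. The only inputs required are the admissibility bound (\ref{7.6.1}), Kostant's identity for the Koszul coboundary of $\gn$-cohomology, and the compactness of $(1+\pi(\Omega_K))^{-1}$ noted just above.

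First I would establish essential self-adjointness of $\delta + \delta^*$ on the domain $\cH_\pi^0 \otimes \Lambda\gn^*$. The key tool is Kostant's identity, which expresses $2\square = 2(\delta\delta^* + \delta^*\delta)$ as $\pi(\Omega_\gg)\otimes 1$ minus a diagonal Casimir-type action on $\Lambda\gn^*$. Since $[\pi]$ has infinitesimal character $\chi_\pi$, the first term is a scalar on $\cH_\pi^0$, so $\square$ differs from $\pi(\Omega_K)\otimes 1$ by a bounded operator depending only on the $\gh$-weight decomposition of $\Lambda\gn^*$. Essential self-adjointness of $\delta + \delta^*$ then reduces to essential self-adjointness of $\pi(\Omega_K)$ on $\cH_\pi^0$, which holds because $(1+\pi(\Omega_K))^{-1}$ is compact.

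For the finite-dimensionality of each $\cH^q(\pi)$ as an $H$-module, I would use the Kostant identity again: $\cH^q(\pi)$ is cut out by a single eigenvalue equation for $\pi(\Omega_\gg) \otimes 1$ corrected by the scalar action on $\Lambda^q\gn^*$. Restricted to any $K$-isotypic component this is a single finite-dimensional eigenspace of $\pi(\Omega_K)$, and by (\ref{7.6.1}) only finitely many $K$-types can satisfy the eigenvalue condition. Since $\delta$ and $\delta^*$ commute with the diagonal action $\pi \otimes \Lambda\Ad^*$ of $\gh$, the subspace $\cH^q(\pi)$ is automatically $H$-stable. Finally, the character identity proceeds by a supertrace computation: a Hodge-type decomposition $\cH_\pi \otimes \Lambda^q\gn^* = \cH^q(\pi) \oplus \Im\,\delta \oplus \Im\,\delta^*$ lets the image summands pair through $\delta$ in adjacent degrees and cancel in the alternating sum, so formally $f_\pi(h) = T_\pi(h)\cdot \sum_q (-1)^q \trace(\Lambda^q\Ad^*(h))$. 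The eigenvalues of $h$ on $\gn^*$ are the $e^\phi(h)$ for $\phi \in \Sigma^+$, so that last factor equals $\prod_{\phi \in \Sigma^+}(1-e^\phi(h)) = (-1)^n e^\rho(h)\Delta(h)$, yielding the stated identity on $H\cap G'$.

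The main obstacle is making this cancellation rigorous, because the traces on the ambient pieces $\cH_\pi \otimes \Lambda^q\gn^*$ do not converge absolutely. Schmid's trick is to fix $h = \exp(\xi)$ with $\xi \in \gh \cap G'$ regular, decompose $\cH_\pi$ into $\pi(\Omega_K)$-eigenspaces (each finite-dimensional, with every $K$-type contributing multiplicity bounded by (\ref{7.6.1})), compute the supertrace on each eigenspace as a literal finite-dimensional supertrace where cancellation is exact, and then pass to the limit using the distribution-theoretic convergence of $T_\pi$ recorded above. All ingredients---admissibility, the Harish--Chandra analyticity of $T_\pi$ on $K\cap G'$, and Kostant's purely algebraic identity---hold in our setting, so Schmid's argument transfers without modification.
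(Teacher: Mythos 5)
Your proposal matches the paper's proof exactly: the preparatory discussion in \S \ref{ssec7f} establishes precisely the prerequisites you list (admissibility (\ref{7.6.1}), the coboundary $\delta$ and its adjoint, and the spectral properties of $\pi(\Omega_K)$), and the paper then simply asserts that Schmid's arguments \cite[\S 3]{S1971} remain valid in this generality and states the result. Your interior sketch of Schmid's argument is broadly right, but the form you give for the Kostant-type identity is off---as literally written ($2\square$ equals the scalar $\pi(\Omega_\gg)\otimes 1$ minus a bounded diagonal operator on $\Lambda\gn^*$) it would force $\square$ to be bounded, whereas the correction term must be the Cartan Casimir acting diagonally on all of $\cH_\pi\otimes\Lambda\gn^*$, which is unbounded and is what makes the comparison with $\pi(\Omega_K)\otimes 1$ do real work; since you ultimately defer to Schmid for the details, this imprecision does not damage the plan.
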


Let $d\pi$ denote Plancherel measure on $\widehat{G}_\zeta$\,, so
$L_2(G/Z,\zeta) = \int_{\widehat{G}_\zeta} \cH_\pi \widehat{\otimes}
\cH_\pi^*\, d\pi$.  We have the unitary $G$-module structure 
$L_2^{0,q}(Y;\L_\lambda) =
\int_{\widehat{G}_\zeta} \cH_\pi \widehat{\otimes}\{\cH_\pi^* \otimes
\Lambda^q \gn^* \otimes L_\lambda\}^H d\pi$
where $L_\lambda$ is the representation space of $e^\lambda$, where
$H$ acts on $\cH_\pi^* \otimes \Lambda^q \gn^* \otimes L_\lambda$ by
$\pi^* \otimes \Ad^* \otimes e^\lambda$, and where $(\cdot )^H$
denotes the fixed points of $H$ there.  Now
$\overline{\partial}: A^{0,q}(Y;\L_\lambda) \to A^{0,q+1}(Y;\L_\lambda)$
and its formal adjoint $\overline{\partial}^*$ act by \hfill\newline
\phantom{XX}$\overline{\partial}(f\cdot\omega^J
	\cdot \ell) = \sum_{1\leq k\leq n} (y_k(f)\cdot e(\omega^k)\omega^J
	\cdot \ell) + \frac{1}{2} \sum_{1\leq k\leq n} (f\cdot e(\omega^k)
	\Ad^*(y_k)\omega^J\cdot\ell)$\hfill\newline
and \hfill\newline
\phantom{XX} $ \overline{\partial}^*(f\cdot\omega^I
	\cdot \ell) = -\sum_{1\leq k\leq n}(\tau(y_k)f\cdot i(\omega^k)\omega^I
	\cdot\ell) + \frac{1}{2} \sum_{1\leq k\leq n} (f\cdot \Ad^*(y_k)^*
	i(\omega^k)\omega^I\cdot\ell)$ \hfill\newline
where $I$ and $J$ are multi-indices and $\ell \in L_\lambda$\,.
These correspond to the formulae for $\delta$ and $\delta^*$.  The
argument of \cite[Lemmas 5 and 6]{S1971} shows that
$[\pi] \mapsto \{\cH^q(\pi^*) \otimes L_\lambda\}^H$ 
is a measurable assignment of Hilbert spaces on $\widehat{G}_\zeta$\,, 
and that 
\begin{equation}\label{7.6.5}
\begin{aligned}
&\cH_2^{0,q}(Y;\L_\lambda) =
	\int_{\widehat{G}_\zeta} \cH_\pi \otimes \{\cH^q(\pi^*)
	\otimes L_\lambda\}^H d\pi, \text{ unitary $G$--module; i.e.,}\\
&\pi_\lambda^q = \int_{\widehat{G}_\zeta}
	\dim (\cH^q(\pi^*)\otimes L_\lambda)^H\cdot\pi\,d\pi
	\text{, which has discrete part}\\
&^0\pi_\lambda^q = {\sum}_{\widehat{G}_{\zeta-disc}}
	\dim (\cH^q(\pi^*)\otimes L_\lambda)^H\cdot\pi.
\end{aligned}
\end{equation}
The equation for $^0\pi_\lambda^q$ is summation over the 
discrete set $W_G\backslash \{\nu \in L' : e^{\nu - \rho}|_Z = \zeta\}$.

Write $\Theta_{\pi^q_\lambda}^{disc}$ for the formal sum of the 
characters of the irreducible (in this case,  $\zeta$-discrete) 
subrepresentations of $\pi^q_\lambda$\,.  Define 
$$
\begin{aligned}
F_\lambda = &{\sum}_{q \geqq 0} (-1)^q 
        \dim (\cH^q(\pi^*)\otimes L_\lambda)^H\cdot \Theta_\pi\\
	&= {\sum}_{\pi \in \widehat{G}_{\zeta\text{-disc}}}
	(\sum_{q \geqq 0} (-1)^q\,
	\dim (\cH^q(\pi^*)\otimes L_\lambda)^H\cdot \Theta_\pi \\
	&= {\sum}_{\widehat{G}_{\zeta-disc}}
        {\rm (coefficient\,\, of \,\,}e^{-\lambda} {\rm \,\,in\,\, } 
        f_{\pi^*})\, \Theta_\pi
\end{aligned}
$$
because $\dim (\cH^q(\pi^*)\otimes L_\lambda)^H$ is the 
multiplicity of $e^{-\lambda}$ for $H$ on $\cH^q(\pi^*)$.
Let $\nu \in L'$ with $e^{\nu - \rho}|_Z = \zeta$, so
$[\pi_\nu] \in \widehat{G}_{\zeta-disc}$\,.  By (\ref{7.6.1}),
$$
\Delta\, T_{\pi_\nu^*|_{H \cap G'}}
	= \Delta\, T_{\pi_{-\nu}}|_{H \cap G'} = (-1)^{q(\nu)}
	\sum_{w \in W_G} \det(w)e^{-w(\nu)}.
$$
Thus Lemma 7.6.4 says $f_{\pi_\nu^*} = (-1)^{n + q(\nu)}
	\sum_{w \in W_G} \det(w)e^{-w(\nu)}$. 
In particular the coefficient of $e^{-\lambda}$ in $f_{\pi_\nu^*}$ is $0$ if
$\lambda + \rho \not\in W_G(\nu)$, is $(-1)^{n + q(\nu)}\det(w)$ if $w(\nu)
= \lambda + \rho$ for some $w \in W_G$\,.  Now $F_\lambda = (-1)^{n + q(\nu)}
	\Theta_{\pi_{\lambda + \rho}}$\,.
In view of Lemma \ref{7.5.4}, we have just proved the alternating sum formula
\begin{equation}\label{7.6.7}
{\sum}_{q \geqq 0} \Theta^{disc}_{\pi_\mu^q} =
 (-1)^{|\Sigma^+| + q(\lambda + \rho)}\Theta_{\pi_{\chi, \lambda + \rho}}\,.
\end{equation}
This is a key step in the proof of Theorem 7.2.3.

\subsection{}\label{ssec7g}\setcounter{equation}{0}
We complete the proof of Theorem \ref{7.2.3}.  The crux of the matter is the
vanishing statement, 
$$
\text{ if $\lambda + \rho \in L'$ then
		$H^{0,q}_2(Y;\E_\mu) = 0$ for $q \ne q(\lambda + \rho)$ }
$$
combined with the alternating sum formula (\ref{7.6.7}).

The vanishing statement was proved by Griffiths and Schmid 
\cite[Theorem 7.8]{GS1969} for the case where $G$ is a connected semisimple 
Lie group with finite center, $Q$ is a Borel subgroup of $\overline{G}_\C$\,, 
and $\lambda + \rho$ is ``sufficiently'' far from the walls of the Weyl 
chamber that contains it.  Then the requirement of ``sufficiently''
far from the wall was eliminated by Schmid \cite{S1976} using methods not 
available earlier.  Both proofs go through without change in our case. 

Now we have $H_2^{0,q}(Y;\L_\lambda) = 0$ for $q \ne q(\lambda + \rho)$.  Using
the alternating sum formula (\ref{7.6.7}) and linear independence of the 
$\Theta_\pi$ for $[\pi] \in \widehat{G}_{\zeta-disc}$, we see that 
\begin{equation}\label{7.7.1}
[\pi_{\lambda + \rho}] \text{ is the discrete part }
	[{^0\pi}^{q(\lambda + \rho)}_\lambda] 
	\text{ of } [\pi^{q(\lambda + \rho)}_\lambda].
\end{equation}
Corollary \ref{5.7.3} applied to $\overline{\zeta}$, with
(\ref{7.6.1}), tells us that
$$
\{[\pi] \in \widehat{G}_\zeta \setminus \widehat{G}_{\zeta-disc} \mid T_\pi
\ne 0\} \text{ has Plancherel measure zero in } \widehat{G}_\zeta\,.
$$
Lemma \ref{7.6.4} says 
$f_{\pi^*} = 0$ for almost all $[\pi] \in \widehat{G}_\zeta \setminus 
\widehat{G}_{\zeta-disc}$\,.  If $q \ne q(\lambda + \rho)$ now (\ref{7.6.5}) 
and $H^{0,q}_2(Y;\E_\mu) = 0$ force $(\cH^q(\pi^*)\otimes L_\lambda)^H = 0$, so
$e^{-\lambda}$ has multiplicity $0$ in the representation of $H$ on 
$\cH^q(\pi^*)$.  If $f_{\pi^*} = 0$ then also $e^{-\lambda}$ has 
multiplicity $0$ in the representation 
of $H$ on $\cH^{q(\lambda + \rho)}(\pi^*)$, so 
$(\cH^{q(\lambda + \rho)}(\pi^*)\otimes L_\lambda)^H = 0$.  In
summary, 
\begin{equation}\label{7.7.2}
(\cH^{q(\lambda + \rho)}(\pi^*)\otimes L_\lambda)^H = 0
\text{ for almost all } 
[\pi] \in \widehat{G}_\zeta \setminus \widehat{G}_{\zeta-disc}\,.
\end{equation}

The measure $(\dim(\cH^{q(\lambda + \rho)}(\pi^*)\otimes L_\lambda)^Hd\pi$ on
$\widehat{G}_\zeta$ is concentrated on $\widehat{G}_{\zeta-disc}$ by 
(\ref{7.7.2}).  Now (\ref{7.6.5}) says that 
$[\pi_\lambda^{q(\lambda + \rho)}] = 
[{^0\pi}_\lambda^{q(\lambda + \rho)}]$, so
$[\pi_\lambda^{q(\lambda + \rho)}] = [\pi_{\lambda + \rho}]$ by
(\ref{7.7.1}).  That completes the proof of Theorem \ref{7.2.3}.

\section{Measurable Orbits and Nondegenerate Series}\label{sec8}
\setcounter{equation}{0}

Let $G$ be a reductive Lie group from the class $\widetilde{\cH}$ of general
real reductive Lie groups defined in \S \ref{ssec3a}.  If $\zeta \in
\widehat{Z}$ then Theorem \ref{5.1.6} shows that Plancherel measure on
$\widehat{G}_\zeta$ is supported by the constituents of $H$--series
classes that transform by $\zeta$, as $H$ runs over the conjugacy classes
of Cartan subgroups of $G$.  Here we work out geometric realizations for 
all these $H$--series classes.  Our method is a reduction to the special
case of the relative discrete series ($H/Z$ compact) that we studied
in \S \ref{sec7}.

Fix a Cartan subgroup $H = T \times A$ in $G$ and an associated
cuspidal parabolic subgroup $P = MAN$ of $G$.  We work over measurable
orbits $Y = G(x) \subset X = \overline{G}_\C /Q$ 
such that (i) the $G$--normalizer $N_{[x]}$ of the holomorphic arc
$S_{[x]}$ is open in $P$ and (ii) $U = \{m \in M : m(x) = x\}$ is compact
modulo $Z$.  In \S \ref{ssec8a} we first check that $G$ has isotropy subgroup 
$UAN$ at $x$, $U = Z_M(M^0)U^0$, and that $N_{[x]} = M^\dagger AN$.  
If $[\mu] \in \widehat{U}$ and $\sigma \in \ga^*$ we show that the 
$G$--homogeneous complex vector bundle 
$$
p: \E_{\mu,\sigma} \to G/UAN = Y
	\text{ associated to } \mu \otimes e^{\rho_\ga + i\sigma}
$$
is holomorphic over each holomorphic arc component of $Y$, in an essentially
unique manner. 

Let $K$ be the fixed point set of a Cartan involution that
preserves $H$.  Since $\mu$ is unitary we get a $K$--invariant hermitian
metric on $\E_{\mu,\sigma}$\,.  Since $U/Z$ is compact we get a $K$--invariant
assignment of hermitian metrics on the holomorphic arc components of $Y$.
This results in Hilbert spaces $\cH_2^{p,q}(Y;\E_{\mu,\sigma})$
of ``square integrable partially harmonic $(p,q)$--forms'' on $Y$ with
values in $\E_{\mu,\sigma}$: measurable $\omega$ such that (i) 
$\omega|_{S_{[kx]}}$ is a harmonic $(p,q)$--form on $S_{[kx]}$ with values
in $\E_{\mu,\sigma}|_{S_{[kx]}}$ and $L_2$ norm 
$||\omega|_{S_{[kx]}}|| < \infty$ for almost all $k \in K$ and (ii)
$\int_{K/Z} ||\omega|_{S_{[kx]}}||^2 d(kZ) < \infty$.  We end \S \ref{ssec8a}
by showing that the natural action of $G$ on $\cH_2^{p,q}(Y;\E_{\mu,\sigma})$
is a unitary representation.

The representation of $G$  on $\cH_2^{0,q}(Y;\E_{\mu,\sigma})$ is denoted
$\pi_{\mu,\sigma}^q$\,.  Let $\eta_\mu^q$ denote the representation of
$M^\dagger$ on $\cH_2^{0,q}(S_{[x]};\E_{\mu,\sigma}|_{S_{[x]}})$; we studied 
these in Section \ref{sec7}.  Now we have a representation 
$$
\eta_{\mu,\sigma}^q(man) = e^{i\sigma}(a)
	\eta_\mu^q(m) \text{ of } N_{[x]} = M^\dagger AN.
$$
In \S \ref{ssec8b} we prove 
$[\pi_{\mu,\sigma}^q] = [\Ind_{N_{[x]}}^G(\eta_{\mu,\sigma}^q)]$.

Our main result is Theorem \ref{8.3.4}.  Split $[\mu] = [\chi\otimes\mu^0]$ 
where
$[\chi] \in \widehat{Z_M(M^0)}$ and $[\mu^0] \in \widehat{U^0}$, 
where $[\mu^0]$ has highest
weight $\nu$ such that $\nu + \rho_\gt$ is $\gm$--regular. Then the
$H$--series constituents of $\pi_{\mu,\sigma}^q$ are just its irreducible
subrepresentations.  Their sum ${^H\pi}^q_{\mu,\sigma}$ has distribution
character $\Theta^H_{\pi_{\mu,\sigma}^q}$\,\,.  Further  
$$
{\sum}_{q \geqq 0} (-1)^q
	\Theta^H_{\pi_{\mu,\sigma}^q} 
	= (-1)^{|\Sigma^+_\gt| + q_M(\nu + \rho_\gt)} 
		\Theta_{\pi_{\chi,\nu + \rho_\gt,\sigma}}\,.
$$
Also, if $q \ne q_M(\nu + \rho_\gt)$ then
$\cH_2^{0,q}(S_{[x]};\E_{\mu,\sigma}) = 0$.  This combines with the
alternating sum formula and some consequences of the Plancherel Theorem,
yielding 
$$
[\pi_{\mu,\sigma}^{q_M(\nu + \rho_\gt)}]
	= [\pi_{\chi, \nu + \rho_\gt,\sigma}], 
	\text{ $H$--series class.}
$$
The proof is a matter of applying the results from Section \ref{sec7} to 
every holomorphic arc component of $Y$ and combining those results by means 
of the induced representation theorem of \S \ref{ssec8b}.

\subsection{}\label{ssec8a}\setcounter{equation}{0}
$G$ is a general real reductive Lie group from our class $\widetilde{\cH}$ 
defined in \S \ref{ssec3a}.  As noted at the end of \S \ref{ssec6b}, 
$\overline{G} = G/Z_G(G^0)$ is a linear semisimple group with complexification 
$\overline{G}_\C = \Int(\gg_\C)$, and $G$ acts on the complex 
flag manifolds of $\overline{G}_\C$\,.

For the remainder of Section \ref{sec8} we fix a Cartan subgroup $H = T\times A$
of $G$ and an associated cuspidal parabolic subgroups $P=MAN$.
In order to realize the $H$--series of $G$ we work with a complex flag
manifold $X = \overline{G}_\C/Q$ and a measurable $G$--orbit 
$Y = G(x) \subset X$
such that the $G$--normalizers of the holomorphic arc components  of $Y$
in $X$ have the property 
\begin{equation}\label{8.1.2c}
N_{[x]} = \{g \in G : gS_{[x]} = S_{[x]}
	\text{ has Lie algebra } \gp.
\end{equation}
Since the orbit $Y = G(x)$ is measurable, it is partially complex and of
flag type.  Thus $S_{[x]}$ is an open $M^0$--orbit on the smaller flag manifold
$\overline{M}_\C(x)$ where $\overline{M} = M/Z_G(G^0)$ and $AN$ acts
trivially on $S_{[x]}$\,.  The isotropy group of $G$ at $x$ is $UAN$
where $T \subset U \subset M$.  We require that 
\begin{equation}\label{8.1.2d}
U/Z_G(G^0) = \{m \in M : m(x) = x\}/Z_G(G^0) \text{ is compact.}
\end{equation}

The $G$--orbits discussed studied in Theorem \ref{6.7.4} 
form the special case in which the orbit is integrable  We obtain a 
number of examples of that class from the construction in the paragraph
after the proof of Theorem \ref{6.7.4}.

\begin{lemma}\label{8.1.3}
Suppose that $G(x) \subset X$ is a measurable orbit, that
$N_{[x]} = \{g \in G : gS_{[x]} = S_{[x]}$ has Lie algebra $\gp$, and 
that the isotropy group of $G$ at $x$ is $UAN$ with $U/Z_G(G^0)$
compact. 
Then the open orbit $M(x) \subset \overline{M}_\C(x)$ is measurable and
integrable.  Further $U = Z_M(M^0)U^0$\,, $U \cap M^0 = U^0$\,,
$UM^0 = M^\dagger$\,, and $M/M^\dagger$ generates the topological
components of $M(x)$.  Finally, $N_{[x]} = M^\dagger AN$, and $G/M^\dagger G^0$
enumerates the topological components of $Y = G(x)$.
\end{lemma}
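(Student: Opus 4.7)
The plan is to mirror the reasoning of Corollary \ref{6.7.7} and Lemma \ref{7.1.2} one level down, now applied to $M$ acting on the sub-flag $\overline{M}_\C(x)$. Since $Y = G(x)$ is measurable and of flag type with $AN$ acting trivially on $\overline{M}_\C(x)$, the component $S_{[x]}$ is an open $M^0$-orbit in $\overline{M}_\C(x)$, so $M(x)$ is open there as a finite union of $M$-translates of $S_{[x]}$; openness makes it automatically integrable since $\gq_x \cap \overline{\gm}_\C + \tau(\gq_x \cap \overline{\gm}_\C) = \overline{\gm}_\C$. The compactness hypothesis that $U/Z_G(G^0)$ be compact, together with Lemma \ref{6.4.5} applied to $M$ and $\overline{M}_\C(x)$, yields an $M$-invariant positive-definite hermitian metric on $M(x)$, so $M(x)$ is also measurable.

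Next I would run the Corollary \ref{6.7.7} argument verbatim with $M$ in place of the larger group. Given $u \in U$, use the $\Ad(U^0)$-conjugacy of the Weyl chambers of $\gu$ in $i\gt^*$ to adjust $u$ within $uU^0$ so that $\Ad(u)$ preserves both $\gt$ and a fixed Weyl chamber. Hypothesis (\ref{1.2.2})(i) forces $\Ad(u)$ to be inner on $\gg_\C$; combined with preservation of $\gt$ and the chamber, $\Ad(u)$ must act trivially on $\gt$. Since $u \in M$ centralizes $\ga$, it follows that $\Ad(u)$ is the identity on the full Cartan $\gh = \gt \oplus \ga$, so $u \in U \cap H = T \subset M^\dagger$. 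This gives $U \subset M^\dagger = Z_M(M^0) M^0$, whence $U = Z_M(M^0)(U \cap M^0)$ by distributing the intersection. Simple-connectivity of the open $M^0$-orbit $S_{[x]}$ in the complex flag $\overline{M}_\C(x)$ forces the isotropy $U \cap M^0$ to be connected, hence $U \cap M^0 = U^0$; combining, $U = Z_M(M^0)U^0$ and $UM^0 = Z_M(M^0)M^0 = M^\dagger$.

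The remaining statements reduce to bookkeeping. The quotient $M/U \to M/UM^0 = M/M^\dagger$ has connected fibers $M^0/U^0$, so $M/M^\dagger$ indexes the topological components of $M(x)$. For $N_{[x]} = M^\dagger AN$, the inclusion $\supset$ holds because $U \subset M^\dagger$ stabilizes $S_{[x]} = M^0(x)$ while $AN \subset P^0 = N_{[x]}^0$ (as $N_{[x]}$ has Lie algebra $\gp$); for $\subset$, any $g \in N_{[x]}$ sends $x$ into $S_{[x]} = M^0(x)$, so $gx = m_0x$ for some $m_0 \in M^0$, making $m_0^{-1}g \in UAN$ and hence $g \in M^0 \cdot UAN = M^\dagger AN$. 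Finally, the components of $Y = G/UAN$ are indexed by $G/G^0 \cdot UAN$; since $AN \subset G^0$ and $UM^0 = M^\dagger$, this equals $G/G^0 U = G/G^0 M^\dagger = G/M^\dagger G^0$.

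The main obstacle I anticipate is verifying that the hypotheses of Lemma \ref{6.4.5} genuinely transfer to the action of $M$ on $\overline{M}_\C(x)$ (in particular that the isotropy parabolic there contains no nonzero ideal of $\overline{\gm}_\C$), and, relatedly, that the simple-connectivity argument underlying $U \cap M^0 = U^0$ extends beyond the linear or finite-center setting to our class $\widetilde{\mathcal{H}}$; both should hold by passing to the adjoint quotient $\overline{M}^0 = M^0/Z_{M^0}$, where $S_{[x]}$ becomes an open $\overline{M}^0$-orbit in a complex flag of $\overline{M}_\C$ and the standard argument via fibration over the maximal compact orbit applies.
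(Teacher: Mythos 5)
Your proposal is correct and follows essentially the same route as the paper: the paper's proof simply observes that the open orbit $M(x)\subset\overline{M}_\C(x)$ satisfies (\ref{7.1.1}) with $M$ in place of $G$, invokes Lemma~\ref{7.1.2} for the structural statements about $U$, $M^\dagger$, and the components of $M(x)$, and then computes $N_{[x]}=UN^0_{[x]}=UM^0AN=M^\dagger AN$ and $UG^0=M^\dagger G^0$, which is exactly the bookkeeping you do at the end. What you have done is unwind the appeal to Lemma~\ref{7.1.2} into its constituent steps (openness, Lemma~\ref{6.4.5} for measurability, the Weyl--chamber argument of Corollary~\ref{6.7.7} for $U\subset M^\dagger$, simple connectivity of the open $M^0$--orbit for $U\cap M^0=U^0$), and the one place where your phrasing is slightly off --- ``by distributing the intersection'' --- is really the observation that $Z_M(M^0)\subset U$ (since $Z_M(M^0)$ acts trivially on $\overline{M}_\C(x)$), which together with $U\subset Z_M(M^0)M^0$ and $U\cap M^0=U^0$ gives $U=Z_M(M^0)U^0$.
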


\begin{proof}
The open orbit $M(x) \subset \overline{M}_\C(x) =
\overline{M}_\C/(Q \cap \overline{M}_\C)$ satisfies (\ref{7.1.1}).  Applying
Lemma \ref{7.1.2} to it, we get the first two assertions.  For the third,
$N_{[x]} = UN_{[x]}^0 = UM^0 AN = M^\dagger AN$, and the $G$--normalizer
of $G^0(x)$ is $UG^0 = UM^0G^0 = M^\dagger G^0$. 
\end{proof}

\begin{remark}\label{8.1.3x}{\rm
$G^\dagger \subset M^\dagger G^0$ in general, but one
can have $G^\dagger \ne M^\dagger G^0$.  For example let $G = SL(2,R) 
\cup \left ( \begin{smallmatrix} 1 & 0 \\ 0 & -1 \end{smallmatrix}
\right ) SL(2,R)$ and $\gh = \{ \left ( \begin{smallmatrix} a & 0 \\ 
0 & -a \end{smallmatrix} \right ) : a \text{ real }\}$.  Then
$M^\dagger = M = \{\pm \left ( \begin{smallmatrix}1 & 0 \\ 0 & 1 
\end{smallmatrix}\right )\,,\, \left ( \pm \begin{smallmatrix}1 & 0 \\ 0 & -1 
\end{smallmatrix} \right ) \}$ so $M^\dagger G^0 = G \ne G^\dagger = G^0$.}
\end{remark}

Fix $[\mu] \in \widehat{U}$ and $\sigma \in \ga^*$,
	so $[\mu \otimes e^{i\sigma}] \in \widehat{U\times A}$.
As usual, $\Sigma_\ga^+$ is the positive $\ga$--root system on $\gg$
such that $\gn$ is the sum of the negative $\ga$--root spaces, and
$\rho_\ga = \frac{1}{2}\sum_{\phi \in Sigma_\ga^+} (\dim \gg^\phi)\phi$, 
so $\ga$ acts on $\gn$ with trace $-2\rho_\ga$\,.  Now $UAN$ acts on the
representation space $V_\mu$ of $\mu$ by
$$
\gamma_{\mu,\sigma}(uan) = e^{\rho_a + i\sigma}(a)\mu(u).
$$
That specifies the associated $G$--homogeneous complex vector bundle
\begin{equation}\label{8.1.4}
p:\E_{\mu,\sigma} \to G/UAN = G(x) = Y.
\end{equation}

\begin{lemma}\label{8.1.5}
There is a unique assignment of complex 
structures to the parts $p^{-1}S_{[gx]}$ of $\E_{\mu,\sigma}$ over
the holomorphic arc components of $Y$, such that each restriction
$\E_{\mu,\sigma}|_{S_{[gx]}} \to S_{[gx]}$ is an $N_{[gx]}$--homogeneous
holomorphic vector bundle.  The assignment is a $G$--equivariant 
real analytic tangent space distribution on $\E_{\mu,\sigma}$.
\end{lemma}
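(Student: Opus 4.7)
The plan is to reduce the lemma to Lemma \ref{7.1.4} applied to the action of $M^\dagger$ on the single holomorphic arc component $S_{[x]}$, and then propagate the resulting structure over all of $Y$ by $G$-equivariance.

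First, I would restrict attention to $S_{[x]}$. By Lemma \ref{8.1.3}, $S_{[x]}$ is an open $M^\dagger$-orbit in the complex flag manifold $\overline{M}_\C(x)$, it is measurable and integrable, the isotropy of $M^\dagger$ at $x$ is $U$, and $U/Z_G(G^0)$ is compact. Moreover $AN$ acts trivially on $S_{[x]}$, so the restriction of $\gamma_{\mu,\sigma}$ to the stabilizer group $U$ is simply $\mu$. Consequently $\E_{\mu,\sigma}|_{S_{[x]}} \to S_{[x]}$ coincides with the $M^\dagger$-homogeneous hermitian vector bundle associated to $[\mu] \in \widehat{U}$.

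Second, I would apply Lemma \ref{7.1.4} with $(G,Y,U,\E_\mu)$ there replaced by $(M^\dagger, S_{[x]}, U, \E_{\mu,\sigma}|_{S_{[x]}})$. The group $M^\dagger$ inherits membership in class $\widetilde{\cH}$ from $M$ (Proposition \ref{4.1.6}), and $T/Z \subseteq U/Z$ is a compact Cartan of $M^\dagger/Z$, so the hypotheses (\ref{7.1.1}) are fulfilled and Lemma \ref{7.1.4} yields a unique $M^\dagger$-homogeneous holomorphic vector bundle structure on $\E_{\mu,\sigma}|_{S_{[x]}}$. Since $N_{[x]} = M^\dagger A N$ with $AN$ acting trivially on $S_{[x]}$ and on the fiber via a scalar, this structure is automatically $N_{[x]}$-homogeneous.

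Third, I would transport the structure across the remaining arc components by $G$-equivariance: every holomorphic arc component of $Y$ has the form $S_{[gx]} = gS_{[x]}$ for some $g \in G$, and left-translation $L_g$ intertwines $\E_{\mu,\sigma}|_{S_{[x]}}$ with $\E_{\mu,\sigma}|_{S_{[gx]}}$ as $G$-homogeneous bundles. Pushing the complex structure on $p^{-1}S_{[x]}$ forward via $L_g$ gives the desired structure on $p^{-1}S_{[gx]}$. This is well-defined: any two choices of $g$ differ by multiplication from the right by an element of $N_{[x]}$, which preserves the structure on $p^{-1}S_{[x]}$ by step two. For uniqueness: any complex structure on $p^{-1}S_{[gx]}$ making $\E_{\mu,\sigma}|_{S_{[gx]}}\to S_{[gx]}$ an $N_{[gx]}$-homogeneous holomorphic bundle pulls back via $L_{g^{-1}}$ to an $M^\dagger$-homogeneous holomorphic structure on $\E_{\mu,\sigma}|_{S_{[x]}}$, which must be the one of step two by the uniqueness in Lemma \ref{7.1.4}.

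Finally, for the real-analytic assertion: the holomorphic arc components are the fibers of the $G$-equivariant real-analytic fibration $Y \to G/N_{[x]}$, and the resulting pointwise complex structures form a $G$-invariant sub-bundle of the complexified tangent bundle of $\E_{\mu,\sigma}$, hence a $G$-equivariant real-analytic tangent space distribution. The main obstacle is the technical verification in step two that Lemma \ref{7.1.4} applies cleanly to the (possibly disconnected) group $M^\dagger$ with its bundle $\E_{\mu,\sigma}|_{S_{[x]}}$, and that the non-trivial fiber action of $AN$ via $e^{\rho_\ga + i\sigma}$ does not obstruct the uniqueness — this is handled because, at points of $S_{[x]}$, the $AN$-component of $\gamma_{\mu,\sigma}$ only contributes on the stabilizer $U$ where it is trivial, leaving the $\mu$-action as the unique relevant isotropy data.
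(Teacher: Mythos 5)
Your proposal is correct and follows essentially the same route as the paper: apply Lemma \ref{7.1.4} to the bundle over a holomorphic arc component (isotropy $U$, acting group $M^\dagger$ or its conjugate), note that the $AN$ part acts trivially on the base and by a scalar on fibers so the unique structure is automatically $N_{[x]}$-homogeneous, and then invoke uniqueness to obtain $G$-equivariance (hence real analyticity) of the resulting assignment. The paper phrases this by applying Lemma \ref{7.1.4} directly on each $p^{-1}S_{[gx]}$ with the conjugated group $\Ad(g)M^\dagger$ and deriving $G$-invariance ``by uniqueness,'' which is the same argument as your transport-plus-well-definedness step.
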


\begin{proof} Lemma 7.1.4 says that $p^{-1}S_{[gx]}$ has a unique complex 
structure for which $\E_{\mu,\sigma}|_{S_{[gx]}} \to S_{[gx]}$ is an
$\Ad(g)M^\dagger$--homogeneous holomorphic vector bundle.  Each
$\Ad(g)(an)$ is trivial on $S_{[gx]} = gS_{[x]}$ and multiplies all fibers of
$\E_{\mu,\sigma}|_{S_{[gx]}}$ by the same scalar $e^{\rho_\ga + i\sigma}(a)$.
Now the complex structure on $p^{-1}S_{[gx]}$ is invariant by the action of
$\Ad(g)N_{[x]} = N_{[gx]}$\,, so $\E_{\mu,\sigma}|_{S_{[gx]}} \to S_{[gx]}$
is an $N_{[gx]}$--homogeneous holomorphic vector bundle.  Finally, the
assignment of complex structures to the $p^{-1}S_{[gx]}$ is $G$--invariant
by uniqueness, thus also real analytic. 
\end{proof}

If $z \in Y = G(x)$ we have the holomorphic tangent space $T_z$ to $S_{[z]}$
at $z$. 
Evidently $\{T_z\}_{z \in Y}$ is a $G$--invariant complex tangent space
distribution on $Y$, so it is real analytic.  Thus 
$\T := \bigcup_{z \in Y} T_z$
is a $G$--homogeneous real analytic sub--bundle of the complexified
tangent bundle of $Y$.  Given non-negative integers $p$ and $q$, the
space of {\em partially smooth} $(p,q)$--{\em forms} on $Y$ with values
in $\E_{\mu,\sigma}$ is 
\begin{equation}\label{8.1.6c}
\begin{aligned}
A^{p,q}(Y;\E_{\mu,\sigma}): &\text{ measurable sections $\alpha$ of }
	\E_{\mu,\sigma}\otimes \Lambda^p\T^* \otimes \Lambda^q\overline{\T^*}\\
&\text{ where $\alpha$ is $C^\infty$ on each holomorphic arc component of $Y$.} 
\end{aligned}
\end{equation}
If $\alpha \in A^{p,q}(Y;\E_{\mu,\sigma})$ and $z \in Y$ then 
$\alpha|_{S_{[z]}}$ is a smooth $(p,q)$--form on $S_{[z]}$ with values in
$\E_{\mu,\sigma}|_{S_{[z]}}$\,, in the ordinary sense.  The
$\overline{\partial}$ operator of $X$ specifies operators
$\overline{\partial}: A^{p,q}(Y;\E_{\mu,\sigma}) \to 
A^{p,q+1}(Y;\E_{\mu,\sigma})$.

We need hermitian metrics for the harmonic theory.  Let $\theta$ be a
Cartan involution of $G$ with $\theta(H) = H$ and denote $K = \{g \in G \mid
\theta(g) = g\}$ as usual.  Then $K \cap N_{[x]} = K \cap M^\dagger$
can be assumed to contain $U$, and we have an $M^\dagger$--invariant hermitian 
metric on the complex manifold $S_{[x]}$\,.  Every holomorphic arc component of
$G(x)$ is an $S_{[kx]}$\,, $k \in K$.  Give $S_{[kx]}$ the hermitian
metric such that the $k: S_{[x]} \to S_{[kx]}$ are hermitian isometries.
In other words, we have a $K$--invariant hermitian metric on the fibers of
the bundle $\T \to Y$.  Similarly the unitary structure of $E_\mu$ specifies
an $M^\dagger$--invariant hermitian metric on the fibers of 
$\E_{\mu,\sigma} \to Y$.  Now we have $K$--invariant hermitian metrics on
the fibers of the bundles $\E_{\mu,\sigma}\otimes \Lambda^p\T^* \otimes
 \Lambda^q\overline{\T^*} \to Y$.  As in (\ref{7.1.5}), that specifies
Hodge--Kodaira operators 
\begin{equation}\label{8.1.7a}
A^{p,q}(Y;\E_{\mu,\sigma}) \overset{\#}{\rightarrow}
	A^{n-p,n-q}(Y;\E^*_{\mu,\sigma}) \overset{\widetilde{\#}}{\rightarrow}
	A^{p,q}(Y;\E_{\mu,\sigma})
\end{equation}
where $n = \dim_\C S_{[x]}$\,.  It also specifies a pre Hilbert space
\begin{equation}\label{8.1.7b}
A_2^{p,q}(Y;\E_{\mu,\sigma}) = \left \{ \alpha \in
	A^{p,q}(Y;\E_{\mu,\sigma}) \left | \int_{K/Z}\left ( \int_{S_{[kx]}}
	\alpha \bar{\wedge} \#\alpha \right ) d(kZ) < \right . \infty \right \}
\end{equation}
whose inner product is $\langle \alpha , \beta \rangle = 
\int_{K/Z}\left ( \int_{S_{[kx]}} \alpha \bar{\wedge} \#\beta\right ) d(kZ)$.

We define {\em square integrable partially-{$(p,q)$}-form} on $Y$ with
values in $\E_{\mu,\sigma}$ to mean an element of 
\begin{equation}\label{8.1.8}
L_2^{p,q}(Y;\E_{\mu,\sigma}): \text{ Hilbert
	space completion of } A_2^{p,q}(Y;\E_{\mu,\sigma}).
\end{equation}
$\overline{\partial}$ is densely defined on $L_2^{p,q}(\E_{\mu,\sigma})$
with formal adjoint $\overline{\partial}^* = - \widetilde{\#}\overline{\partial}
\#$; this follows from the corresponding standard fact (\ref{7.1.7}) over each
holomorphic arc component.  The analogue of the Hodge--Kodaira--Laplacian
is
\begin{equation}\label{8.1.8a}
\square = (\overline{\partial} +
	\overline{\partial}^*)^2 = \overline{\partial}\overline{\partial}^*
	+ \overline{\partial}^*\overline{\partial}\,, 
\end{equation}
which is elliptic and essentially self adjoint over every holomorphic
arc component.  Now $\square$ is essentially self adjoint on
$L_2^{p,q}(Y;\E_{\mu,\sigma})$ from the domain consisting of $C^\infty$ 
forms with
support compact modulo $Z$.  We write $\square$ for the closure, which is the
unique self--adjoint extension on $L_2^{p,q}(Y;\E_{\mu,\sigma})$.  The kernel
\begin{equation}\label{8.1.8b}
H_2^{p,q}(Y;\E_{\mu,\sigma}) = \{\omega \in L_2^{p,q}(Y;\E_{\mu,\sigma}) \mid
	\square\,\omega = 0\}
\end{equation}
is the space of {\em square integrable partially harmonic $(p,q)$--forms}
on $Y$ with values in $\E_{\mu,\sigma}$.  $H_2^{p,q}(Y;\E_{\mu,\sigma})$ is the
subspace of $A_2^{p,q}(Y;\E_{\mu,\sigma})$ consisting of all elements $\omega$ 
such that $\omega|_{S_{[kx]}}$ is harmonic a.e. in $K/Z$.  It is a closed
subspace of $L_2^{p,q}(Y;\E_{\mu,\sigma})$ and there is an orthogonal
direct sum decomposition 
\begin{equation}\label{8.1.8c}
L_2^{p,q}(Y;\E_{\mu,\sigma}) = 
     c\ell\, \overline{\partial}L_2^{p,q-1}(Y;\E_{\mu,\sigma}) \oplus
	\overline{\partial}^*L_2^{p,q+1}(Y;\E_{\mu,\sigma}) \oplus
	H_2^{p,q}(Y;\E_{\mu,\sigma})
\end{equation}
obtained by applying (\ref{7.1.9}) to each holomorphic arc component.

\begin{lemma}\label{8.1.9}
The action $[\widetilde{\pi}^{p,q}_{\mu,\sigma}\alpha](z)
= g(\alpha(g^{-1}z))$ of $G$ on $L_2^{p,q}(Y;\E_{\mu,\sigma})$ is a
unitary representation.
\end{lemma}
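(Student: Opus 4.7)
The plan is to realize $\widetilde{\pi}^{p,q}_{\mu,\sigma}$ as a unitarily induced representation $\Ind_{UAN}^G({}'\widetilde{\gamma})$, following the pattern of the proof of Theorem \ref{1.2.19} for the principal series.

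First I would identify the space of partially smooth sections of $\E_{\mu,\sigma} \otimes \Lambda^p\T^* \otimes \Lambda^q\overline{\T^*} \to Y = G/UAN$ with Borel--measurable functions $f:G \to V$, where $V = E_\mu \otimes \Lambda^p T_x^* \otimes \Lambda^q \overline{T_x^*}$, satisfying $f(guan) = \widetilde{\gamma}(uan)^{-1}f(g)$; here $\widetilde{\gamma}$ is the isotropy representation of $UAN$ on $V$. A key simplification is that $AN$ fixes $S_{[x]}$ pointwise, hence acts trivially on $T_x = T_xS_{[x]}$: for $a \in A$ and $m \in M^\dagger$, $a(mx) = m(ax) = mx$ since $A$ centralizes $M$ and fixes $x$; and for $n \in N$ and $m \in M^\dagger$, $n(mx) = m(m^{-1}nm)(x) = mx$ since $M$ normalizes $N$ and $N$ fixes $x$. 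Consequently $\widetilde{\gamma}(uan) = e^{\rho_\ga+i\sigma}(a)\,\widetilde{\mu}(u)$ where $\widetilde{\mu} = \mu \otimes \Lambda^p\Ad^*|_{T_x^*} \otimes \Lambda^q\overline{\Ad^*|_{T_x^*}}$. Since $Z \subset Z_G(G^0)$ acts trivially on $T_x$ and $U/Z$ is compact, $U$ acts on $T_x$ through a compact group preserving the hermitian metric; combined with unitarity of $\mu$, this makes $\widetilde{\mu}$ a unitary representation of $U$ on $V$.

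Next I would factor $\widetilde{\gamma} = {}'\widetilde{\gamma} \cdot e^{\rho_\ga}$ where ${}'\widetilde{\gamma}(uan) = e^{i\sigma}(a)\,\widetilde{\mu}(u)$ is a unitary representation of $UAN$ on $V$. Because $U$ and $N$ are unimodular and $\ga$ acts on $\gn$ with trace $-2\rho_\ga$, the modular function of $UAN$ is $\delta_{UAN}(uan) = e^{-2\rho_\ga}(a)$; the scalar factor $e^{\rho_\ga}(a)$ is therefore exactly the Mackey density compensation $\delta_{UAN}(uan)^{-1/2}$. Mackey's theorem then gives that $\Ind_{UAN}^G({}'\widetilde{\gamma})$ is a continuous unitary representation of $G$, realized on measurable $f$ transforming as above with $L^2$ norm against a $G$-quasi-invariant measure on $G/UAN$. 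Left translation on such $f$ corresponds, under the identification of the previous paragraph, to the prescribed action $(\widetilde{\pi}^{p,q}_{\mu,\sigma}(g)\alpha)(z) = g(\alpha(g^{-1}z))$.

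The main obstacle is matching the geometric inner product (\ref{8.1.7b}) with the Mackey $L^2$ inner product. From Lemma \ref{8.1.3}, $G = K \cdot N_{[x]}$ with $N_{[x]} = M^\dagger AN$ and $K \cap N_{[x]} = K \cap M^\dagger$, a subgroup containing $U$ and compact modulo $Z$. This exhibits $Y$ as the associated bundle $K \times_{K \cap M^\dagger}(M^\dagger/U)$ over the compact base $K/(K \cap M^\dagger) = G/N_{[x]}$, with fiber over $k(K \cap M^\dagger)$ the holomorphic arc component $S_{[kx]} = k \cdot (M^\dagger/U)$. The iterated integral in (\ref{8.1.7b})---namely $\int_{K/Z}$ composed with $\int_{S_{[kx]}}$---factors, up to the finite constant $\mathrm{vol}((K \cap M^\dagger)/Z)$, as the product of the $K$-invariant measure on $K/(K \cap M^\dagger)$ and the $M^\dagger$-invariant measure on $M^\dagger/U$. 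A short Fubini computation, in which the $e^{\rho_\ga}$ in $\widetilde{\gamma}$ provides exactly the density converting this product measure into a $G$-quasi-invariant measure on $Y$, shows that (\ref{8.1.7b}) agrees with the Mackey $L^2$ inner product up to a positive scalar. Strong continuity then follows from the general theory of unitary induction, or may be checked directly on smooth compactly supported sections.
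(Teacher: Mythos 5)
Your proposal is correct and takes essentially the same route as the paper: identify $\widetilde{\pi}^{p,q}_{\mu,\sigma}$ with the induced representation $\Ind_{UAN}^G({}'\gamma^{p,q}_{\mu,\sigma})$ by splitting the isotropy action on the fiber $E_\mu\otimes\Lambda^p T_x^*\otimes\Lambda^q\overline{T}_x^*$ into the unitary representation ${}'\gamma^{p,q}_{\mu,\sigma}=\mu^{p,q}\otimes e^{i\sigma}$ of $UAN$ times the scalar $e^{\rho_\ga}(a)$, and then observe that this scalar is exactly the half-density (square root of the Jacobian determinant of $uan$ on $\gg/(\gu+\ga+\gn)$) needed to make the induced representation unitary with respect to the geometric inner product. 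Your extra remarks — that $AN$ acts trivially on $T_x$, that $e^{\rho_\ga}$ matches $\delta_{UAN}^{-1/2}$, and the Fubini factoring of the integral over $Y$ through the fibration $K/Z\to K/(K\cap M^\dagger)$ — are details the paper leaves implicit; they make the argument cleaner but do not change its structure.
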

\begin{proof}
$\E_{\mu,\sigma}\otimes \Lambda^p\T^* \otimes \Lambda^q\overline{\T}^*$ has 
fiber $E_\mu^{p,q} := E_\mu \otimes \Lambda^p T_x^* \otimes 
\Lambda^q\overline{T}_x^*$ over $x$.  If $\mu^{p,q}$ denotes the 
representation of $U$ on $E_\mu^{p,q}$ then $UAN$ acts on $E_\mu^{p,q}$ by
$$
\gamma_{\mu,\sigma}^{p,q}(uan) = e^{\rho_\ga + i\sigma}(a)\mu^{p,q}(u)
	= e^{\rho_\ga}(a)\cdot {'\gamma}_{\mu,\sigma}^{p,q}(uan)
$$
where $'\gamma_{\mu,\sigma}^{p,q} = \mu^{p,q}\otimes e^{i\sigma}$ is
unitary. Since $e^{\rho_\ga}(a)$ is the square root of the determinant of
$uan$ on the real tangent space $\gg/(\gu + \ga + \gn)$ to $Y$ at $x$,
now $\widetilde{\pi}^{p,q}_{\mu,\sigma}$ is the unitarily induced 
representation $\Ind_{UAN}^G({'\gamma}_{\mu,\sigma}^{p,q})$.
\end{proof}

The representation $\widetilde{\pi}^{p,q}_{\mu,\sigma}$ commutes
with $\overline{\partial}$, hence also with $\overline{\partial}^*$, so
$H_2^{p,q}(Y;\E_{\mu,\sigma})$ is a closed $G$--invariant subspace of
$L_2^{p,q}(Y;\E_{\mu,\sigma})$.  Thus we have
\begin{equation}\label{8.1.10}
{\pi}^{p,q}_{\mu,\sigma}:\,\text{ unitary representation
	of $G$ on } H_2^{p,q}(Y;\E_{\mu,\sigma})\,\,\text{ and }\,\,
	{\pi}_{\mu,\sigma}^q = {\pi}^{0,q}_{\mu,\sigma}
\end{equation}
The program of Section \ref{sec8} is to represent the various $H$--series of
unitary representation classes of $G$ by the various ${\pi}_{\mu,\sigma}^q$\,.

\subsection{}\label{ssec8b}\setcounter{equation}{0}
We set up ${\pi}_{\mu,\sigma}^q$ as an induced representation from
$N_{[x]} = M^\dagger AN$.  Write 
$\E_\mu = \E_{\mu,\sigma}|_{S_{[x]}} \to S_{[x]}$\,.  It is the 
$M^\dagger$--homogeneous hermitian holomorphic vector bundle defined by
$[\mu] \in \widehat{U}$ as in Lemma \ref{7.1.4}.  That gives us the unitary
representations $\eta_\mu^q$ of $M^\dagger$ on $H_2^{0,q}(S_{[x]};\E_\mu)$.
The formula $\eta_{\mu,\sigma}^q(man) = e^{i\sigma}(a)\eta_\mu^q(m)$
defines a unitary representation of $N_{[x]} = M^\dagger AN$ on
$H_2^{0,q}(S_{[x]};\E_\mu)$.

\begin{theorem}\label{8.2.2} $[\pi_{\mu,\sigma}^q] = 
[\Ind_{N_{[x]}}^G(\eta_{\mu,\sigma}^q)]$\,.
\end{theorem}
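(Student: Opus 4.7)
The plan is to proceed by induction in stages, mirroring the argument used for Theorem \ref{1.2.19}. Lemma \ref{8.1.9} already realizes the full representation $\widetilde{\pi}_{\mu,\sigma}^{0,q}$ of $G$ on $L_2^{0,q}(Y;\E_{\mu,\sigma})$ as $\Ind_{UAN}^G({'\gamma}_{\mu,\sigma}^{0,q})$. Since $UAN \subset M^\dagger AN = N_{[x]}$, I would first apply induction in stages to write
\[
\widetilde{\pi}_{\mu,\sigma}^{0,q} \;=\; \Ind_{N_{[x]}}^G\bigl(\widetilde{\eta}_{\mu,\sigma}^q\bigr), \qquad \widetilde{\eta}_{\mu,\sigma}^q := \Ind_{UAN}^{N_{[x]}}\bigl({'\gamma}_{\mu,\sigma}^{0,q}\bigr).
\]
Here $G/N_{[x]}$ enumerates the holomorphic arc components of $Y$, and by the general fact recalled in \S \ref{ssec6d} that $K$ is transitive on this space, the outer induction can be realized as $L^2$ sections over $K/(K\cap N_{[x]})$.

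Next I would identify $\widetilde{\eta}_{\mu,\sigma}^q$ with the natural unitary representation of $N_{[x]} = M^\dagger AN$ on $L_2^{0,q}(S_{[x]};\E_\mu)$, where $AN$ acts by the character $e^{i\sigma}$ (with the standard $e^{\rho_\ga}$ shift producing unitarity, exactly as in Lemma \ref{8.1.9}) and $M^\dagger$ acts through the construction of \S \ref{sec7}. Three points need checking: that $N_{[x]}/UAN = M^\dagger/U = S_{[x]}$ as $M^\dagger$-spaces; that the partially-holomorphic structure on $p^{-1}(S_{[x]})$ given by Lemma \ref{8.1.5} coincides, by uniqueness, with the $M^\dagger$-homogeneous holomorphic structure on $\E_\mu \to S_{[x]}$ furnished by Lemma \ref{7.1.4} (so that the $\overline{\partial}$-operators match); and that the chosen $K$-invariant hermitian metrics on $\T$ and $\E_{\mu,\sigma}$ restrict on $S_{[x]}$ to the $(K\cap M^\dagger)$-invariant metrics used to define $\eta_\mu^q$.

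Once this identification is in place, the $K$-invariant disintegration in (\ref{8.1.7b}) factors $L_2^{0,q}(Y;\E_{\mu,\sigma})$ as the induced Hilbert bundle with fiber $L_2^{0,q}(S_{[x]};\E_\mu)\otimes \C_{e^{i\sigma}}$ over $G/N_{[x]}$. Since $\overline{\partial}$ on $Y$ restricts fiberwise to the ordinary $\overline{\partial}$ on each $S_{[gx]}$, the partial Kodaira--Hodge operator $\square$ of (\ref{8.1.8a}) acts fiberwise as the full Kodaira--Hodge operator of $\E_\mu \to S_{[x]}$; thus $\omega \in H_2^{0,q}(Y;\E_{\mu,\sigma})$ precisely when its fiber values lie in $H_2^{0,q}(S_{[x]};\E_\mu)$ almost everywhere. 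That closed $N_{[x]}$-invariant subspace is the representation space of $\eta_{\mu,\sigma}^q$, and since induction commutes with passage to closed invariant subspaces of the inducing representation, one obtains $[\pi_{\mu,\sigma}^q] = [\Ind_{N_{[x]}}^G(\eta_{\mu,\sigma}^q)]$.

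The hard part will be the intermediate identification step: verifying that the two induction procedures (from $UAN$ and from $N_{[x]}$) use compatible $K$-invariant metrics, compatible complex structures, and compatible modular bookkeeping, so that the natural restriction map $A^{0,q}(Y;\E_{\mu,\sigma})|_{S_{[x]}} \to A^{0,q}(S_{[x]};\E_\mu)$ is simultaneously $\overline{\partial}$-equivariant and fiberwise isometric. The uniqueness clause in Lemma \ref{8.1.5} handles the complex-structure side, but the metric and modular-function check requires tracing through the definitions in \S \ref{ssec8a} carefully; once that is settled, the rest is routine functional analysis, using Andreotti--Vesentini's decomposition (\ref{8.1.8c}) applied fiberwise together with standard Frobenius reciprocity for continuous unitary induction.
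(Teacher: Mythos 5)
Your proposal follows essentially the same route as the paper: start from Lemma \ref{8.1.9}, apply induction in stages through $UAN \subset N_{[x]} = M^\dagger AN \subset G$, identify the intermediate induced representation with the action on $L_2^{0,q}(S_{[x]};\E_\mu)$, and pass to the harmonic subspace by observing that $\square$ acts fiberwise as the Kodaira--Hodge Laplacian on each arc component. The one thing you flag as the ``hard part'' --- matching metrics and modular factors --- is what the paper handles concretely by writing down the explicit intertwiner $\Gamma(f)(g) = f(g)(1)$ and verifying its transformation law and norm identity directly, rather than invoking general Frobenius reciprocity.
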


\begin{proof}
Let $\widetilde{\pi} = \widetilde{\pi}_{\mu,\sigma}^{0,q}$\,, the
representation of $G$ on $L_2^{p,q}(Y;\E_{\mu,\sigma})$.  Let $'\gamma$ denote 
the representation of $UAN$ on $E_\mu \otimes \Lambda^q(\overline{T}_x^*)$\,;
it is the ${'\gamma}_{\mu,\sigma}^{0,q}\otimes e^{i\sigma}$ of the proof of
Lemma \ref{8.1.9}.  That lemma was proved (if $p = 0$) by showing
$[\widetilde{\pi}] = [\Ind_{UAN}^G({'\gamma})]$.

Let $\widetilde{\eta}$ denote the representation of $M^\dagger$ on
$L_2^{0,q}(S_{[x]};\E_\mu)$, and $'\eta$ the representation of
$M^\dagger AN$ by ${'\eta}(man) = e^{i\sigma}(a)\widetilde{\eta}(m)$.  Then
$[\widetilde{\eta}] = [\Ind_U^{M^\dagger}(\mu^{0,q})]$, and so $[{'\eta}]
= [\Ind_{UAN}^{M^\dagger AN}(\gamma)]$.

Induction by stages now says that $\widetilde{\pi}$ is unitarily 
equivalent to $\Ind_{M^\dagger AN}^{G}('\eta)$.  We need the equivalence.
Let $f$ be in the representation space of 
$\Ind_{M^\dagger AN}^{G}('\eta)$.  In other words 
$f: G \to L_2^{0,q}(S_{[x]};\E_\mu)$ is Borel measurable,
$f$ transforms by
$f(gman) = e^{-\rho_\ga}(a)\cdot {'\eta}(man)^{-1}f(g)$ for $g \in G$ and
$man \in M^\dagger AN$, and we have global norms 
$\int_{K/Z} ||f(k)||^2 d(kZ) < \infty$.  For almost all $g \in G$ we may view
$f(g) \in L_2^{0,q}(S_{[x]}; \E_\mu)$ as a Borel--measurable function
$M^\dagger AN \to E_\mu^q = E_\mu\otimes \Lambda^q(\overline{T}_x^*)$ such that
$$
\begin{aligned}
f(g)(puan) = &{'\gamma}(uan)^{-1}[f(g)(p)] \text{ for } p \in M^\dagger AN\,, 
	uan \in UAN \\
&\text{ and } \int_{M^\dagger/U}||f(g)(m)||^2d(mU) < \infty.
\end{aligned}
$$
Now define
\begin{equation}\label{8.2.3}
F = \Gamma(f): G \to E_\mu^q = E_\mu\otimes \Lambda^q(\overline{T}_x^*) 
	\text{ by } F(g) = f(g)(1).
\end{equation}
Then $F$ is Borel measurable.  Use ${'\eta}=\Ind_{UAN}^{M^\dagger AN}('\gamma)$
to compute
$$
\begin{aligned}
F(guan) &= f(guan)(1) = [e^{-\rho_\ga}(a)\cdot {'\eta}(uan)^{-1}f(g)](1) \\
&= e^{-\rho_\ga}(a)\cdot  {'\gamma}(uan)^{-1}[f(g)(1)] 
		= e^{-\rho_\ga}(a)\cdot {'\gamma}(uan)^{-1}F(g)
\end{aligned}
$$
and
{\small
$$
\int_{K/Z}\Bigl \{ \int_{M^\dagger /U} ||F(km)||^2\,d(mU)\Bigr \} d(kZ)
= \int_{K/Z}\Bigl \{ \int_{M^\dagger /U} ||f(k)(m)||^2\, d(mU)\Bigr \} d(kZ)
< \infty.
$$
}
Thus $f \mapsto \Gamma(f) = F$ is the desired equivalence
$\widetilde{\pi} \simeq \Ind_{M^\dagger AN}^{G}('\eta)$.

In the construction just above, $f$ is in the representation space of
$\Ind_{N{[x]}}^g(\eta^q_{\mu,\sigma})$ precisely when almost every $f(g)$ 
is annihilated by the Hodge--Kodaira--Laplace operator of 
$\E_\mu \to S_{[x]}$\,.  That is equivalent to $\square\,\Gamma(f) = 0$.
Thus the equivalence $\Gamma$ of (\ref{8.2.3}) restricts to the equivalence
asserted in Theorem \ref{8.2.2}.
\end{proof}

\subsection{}\label{ssec8c}\setcounter{equation}{0}
We now come to the geometric realization of the various $H$--series
of unitary representations of $G$.  Note that these are the standard
induced representations.  They are unitary, in fact tempered.

We construct a particular positive $\gh_\C$--root system $\Sigma^+$\,.
The choice $P = MAN$ us a choice $\Sigma_\ga^+$ of positive $\ga$--root
system on $\gg$.  Let $\overline{M} = M/Z_G(G^0)$, and choose a simple
$\gt_\C$--root system $\Pi_\gt$ on $\gm_\C$ such that the parabolic
subalgebra $\gq\cap \overline{\gm}_\C$ of $\overline{\gm}_\C$ is 
specified by $\Pi_\gt$ and a subset $\Phi_\gt$\,.  $\Sigma^+$ will be
the positive $\gh_\C$--root system on $\gg_\C$ determined by $\Sigma_\ga^+$
and $\Sigma_\gt^+$\,, and $\Pi$ is the simple root system for $\Sigma^+$.
From Proposition \ref{6.7.4} the measurable orbit $G(x)$ is integrable
exactly when $\gq = \gq_\Phi$ with 
$\Phi = \Phi_\gt \cup (\Pi \setminus \Pi_\gt)$.  As we had done before 
for $\gg$ let
$$
\rho_\gt = \tfrac{1}{2}\sum_{\phi\in\Sigma_\gt^+} \phi\,,
\Delta_{M,T} = \prod_{\gt^+}(e^{\phi/2}-e^{-\phi/2})\,,
\varpi_\gt(\nu) = \prod_{\gt^+} \langle\nu,\phi\rangle.
$$
Replacing $G$ be a $\Z_2$ extension if necessary, Lemma \ref{4.3.6}
ensures that $e^{\rho_\gt}$ and $\Delta_{M,T}$ are well defined on $T$.
As we did before for $(G,H)$ denote
\begin{equation}\label{8.3.2}
L_\gt = \{\nu\in i\gt^*\mid e^\nu \text{ is well defined on } T^0\} \text{ and }
L_\gt'' = \{\nu \in L_\gt \mid \varpi_\gt(\nu) \ne 0\}.
\end{equation}
Then $\rho_\gt \in L_\gt''$.  If $\nu+\rho_\gt \in L_\gt''$ then $\Sigma_\gt^+$ 
specifies $q_M(\nu+\rho_\gt) \in \Z$ as in (\ref{7.2.2}).

Since $U = Z_M(M^0)U^0$ and $U\cap M^0 = U^0$, $\widehat{U}$ consists of all
$[\chi\otimes\mu^0]$ with $\chi \in \widehat{Z_M(M^0)}$ consistent with
$[\mu^0] \in \widehat{U^0}$. 

We review some aspects of our basic setup and then come to our main result.

$G$ is a general real reductive Lie group as defined in 
{\rm \S \ref{ssec3a}}, $Q$ is a parabolic subgroup of $\overline{G}_\C$\,,
and $Y = G(x) \subset X = \overline{G}_\C$ is a measurable integrable orbit
partially complex orbit of flag type as described in {\rm \S \ref{ssec6e}}.  
$H = T\times A \in \Car(G)$, $P=MAN$ is an associated cuspidal 
parabolic subgroup of $G$, and we suppose that
$U = \{m \in M \mid m(x)=x\}$ is compact modulo $Z_G(G^0)$.
$S_{[x]}$ is the holomorphic arc component of $Y$ through $x$ and its
$G$--normalizer is $N_{[x]} = M^\dagger AN$ denotes the $G$--normalizer of 
the holomorphic arc component $S_{[x]}$. 

Recall that $\pi^q_{\mu,\sigma}$ denotes the unitary
representation of $G$ on  $H_2^{0,q}(Y;\E_{\mu,\sigma})$ and
that $^H\pi_{\mu,\sigma}^q$ denotes the sum of its irreducible 
subrepresentations.

\begin{theorem}\label{8.3.4}
Let $[\mu] \in \widehat{U}$, say $[\mu] = [\chi\otimes\mu^0]$ as above.
Let $\nu$ be the highest weight of $\mu^0$ in the $\gt_\C$--root system
$\Sigma_\gt^+\cap\Phi_\gt^r$ of $\gu_\C$.  Then $\nu \in L_\gt$ and
$\mu \in \widehat{U}_\zeta$ where $\zeta \in \widehat{Z}$ agrees with $e^\nu$ 
on $Z\cap M^0$ and where $[\chi] \in \widehat{Z_M(M^0)}_\zeta$\,.  
Let $\sigma \in \ga^*$ and suppose that $\nu + \rho_\gt \in L_\gt''$\,.  

{\rm 1.} If $q \ne q(\nu+\rho_\gt)$ then $H_2^{0,q}(Y;E_{\mu,\sigma}) = 0$,
so $\pi^q_{\mu,\sigma}$ does not occur

{\rm 2.} If  $q = q(\nu+\rho_\gt)$ then $G$ acts on
$H_2^{0,q}(Y;E_{\mu,\sigma})$ by the $H$--series representation
$\pi_{\chi,\nu + \rho_\gt, \sigma}$.  Every $H$--series representation
is obtained in this way.
\end{theorem}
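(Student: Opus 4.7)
The plan is to reduce Theorem 8.3.4 to the discrete series realization of Theorem 7.2.3 applied fiberwise to the holomorphic arc components of $Y$, then to globalize via the induced representation formula of Theorem 8.2.2. The whole argument is essentially a matter of composing results already established in \S\S 3--8.

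First I would verify that Theorem 7.2.3 applies to the triple $(M^\dagger, S_{[x]}, \E_\mu)$, where $\E_\mu := \E_{\mu,\sigma}|_{S_{[x]}}$. By Corollary 6.7.7 and Lemma 8.1.3, $S_{[x]}$ is a measurable integrable open $M^\dagger$-orbit in the sub-flag $\overline{M}_\C(x)$; the group $M^\dagger$ lies in class $\widetilde{\cH}$ and has the compact Cartan subgroup $T$ modulo $Z$; the isotropy $U = Z_M(M^0)U^0$ is compact modulo $Z$; and by Lemma 8.1.5 the restriction $\E_\mu$ is the $M^\dagger$-homogeneous holomorphic bundle produced from $[\mu]$ via Lemma 7.1.4. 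Applying Theorem 7.2.3 to this triple with $\nu + \rho_\gt \in L_\gt''$, one obtains $H_2^{0,q}(S_{[x]};\E_\mu) = 0$ for $q \ne q_0 := q_M(\nu + \rho_\gt)$, and for $q = q_0$ the $M^\dagger$-action on $H_2^{0,q_0}(S_{[x]};\E_\mu)$ is the relative discrete series class $[\chi \otimes \eta_{\nu + \rho_\gt}] \in \widehat{M^\dagger}_{disc}$ in the notation of Proposition 3.5.2 (note that $(M^\dagger)^\dagger = M^\dagger$, so the Section 3.5 induction step is trivial for $M^\dagger$ itself).

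Next I would invoke Theorem 8.2.2, which gives
\[
[\pi^q_{\mu,\sigma}] = [\Ind_{N_{[x]}}^G(\eta^q_{\mu,\sigma})], \qquad \eta^q_{\mu,\sigma}(man) = e^{i\sigma}(a)\eta^q_\mu(m),\ N_{[x]} = M^\dagger AN.
\]
Assertion (1) is immediate: for $q \ne q_0$ the inducing representation is zero. For $q = q_0$, induction by stages through $P = MAN$ combined with the definition (4.4.3a), $\eta_{\chi,\nu + \rho_\gt} = \Ind_{M^\dagger}^M(\chi \otimes \eta_{\nu + \rho_\gt})$, yields
\[
[\pi^{q_0}_{\mu,\sigma}] = [\Ind_P^G(\eta_{\chi,\nu + \rho_\gt} \otimes e^{i\sigma})] = [\pi_{\chi,\nu + \rho_\gt,\sigma}]
\]
by the definition in Theorem 4.4.4, proving assertion (2) for the given input. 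For surjectivity, given any $H$-series class $[\pi_{\chi,\nu',\sigma}]$ with $\nu' \in L_\gt''$, write $\nu' = \nu + \rho_\gt$ and, using the parameterization of admissible pairs $(X,x)$ in the paragraph after Proposition 6.7.4, choose $\Pi_\gt$ and $\Phi_\gt$ so that $\nu$ is $\gu_\C$-dominant. Taking $[\mu^0]$ with highest weight $\nu$ and $[\mu] = [\chi \otimes \mu^0]$, the preceding analysis realizes $[\pi_{\chi,\nu',\sigma}]$ as $[\pi^{q_0}_{\mu,\sigma}]$.

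The main obstacle I anticipate is the careful matching of normalizations and indexing between Sections 7 and 8. In particular one must verify that the $M^\dagger$-discrete-series class of Section 7 induces under $\Ind_{M^\dagger}^M$ to the $M$-discrete-series class $[\eta_{\chi,\nu + \rho_\gt}]$ of Section 4 as an irreducible, rather than to a reducible sum: this is exactly what Proposition 3.5.5 guarantees under the regularity hypothesis $\nu + \rho_\gt \in L_\gt''$, which forces the distinct $N_M(T)/M^\dagger$-translates of $\nu + \rho_\gt$ to yield $W_{M^0,T^0}$-inequivalent parameters. One must also verify that the holomorphic structure on $\E_\mu \to S_{[x]}$ induced by Lemma 8.1.5 coincides as $M^\dagger$-homogeneous holomorphic bundle with the canonical one produced by Lemma 7.1.4, but this follows from the uniqueness clause of that lemma.
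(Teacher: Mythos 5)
Your proposal follows essentially the same route as the paper's proof: apply Theorem \ref{7.2.3} to $M^\dagger$ acting on the holomorphic arc component $S_{[x]}$ to identify $\eta_\mu^q$ (vanishing for $q\ne q_M(\nu+\rho_\gt)$, the relative discrete series class $[\chi\otimes\eta_{\nu+\rho_\gt}]$ for $q=q_M(\nu+\rho_\gt)$), then globalize via Theorem \ref{8.2.2} and induction by stages through $P=MAN$ to land on $[\pi_{\chi,\nu+\rho_\gt,\sigma}]$. Your additional remarks --- that $(M^\dagger)^\dagger = M^\dagger$ trivializes the \S\ref{ssec3e} step, that irreducibility of $\Ind_{M^\dagger}^M$ is secured by Proposition \ref{3.5.5} under the regularity hypothesis, and that the holomorphic structures of Lemmas \ref{7.1.4} and \ref{8.1.5} agree by uniqueness --- are correct observations that the paper leaves implicit, and your surjectivity argument via the parametrization following Proposition \ref{6.7.4} is the intended one.
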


\begin{proof}
Theorem \ref{7.2.3} says that the representation $\eta_\mu^q$ of $M^\dagger$ on
$H_2^{0,q}(S_{[x]};\E_\mu)$ is trivial if $q \ne q_M(\nu + \rho_\gt)$, and if 
$q = q_M(\nu + \rho_\gt)$ it is equivalent to the relative discrete series 
representation $\eta_{\chi,\nu + \rho_\gt}$ of $M^\dagger$.  Then the 
representation of $M$ on $H_2^{0,q}(S_{[x]};\E_\mu)$ is the relative
discrete series representation (which we temporarily denote 
$\eta'_{\chi,\nu + \rho_\gt}$) $\Ind_{M^\dagger}^M(\eta_{\chi,\nu + \rho_\gt})$
of $M$.  Finally, if $\sigma \in \ga^*$ then $G$ acts on 
$H_2^{0,q}(Y;E_{\mu,\sigma})$ by the $H$--series representation
$\Ind_{MAN}^G \bigr ( \Ind_{M^\dagger AN}^{MAN}
(\eta_{\chi,\nu + \rho_\gt}\otimes e^{i \sigma})\bigl ) =
\pi_{\chi,\nu + \rho_\gt, \sigma}$ by Theorem \ref{8.2.2}.
\end{proof}

\subsection{}\label{ssec8d}\setcounter{equation}{0}
Theorem \ref{8.3.4} gives explicit geometric realizations for the
standard tempered representations, i.e., for the various $H$--series
classes of unitary representations of $G$.  Theorem \ref{7.2.3} is the 
special case of the relative discrete series.  In view of the Plancherel 
Theorem \ref{5.1.6} we now have, for every $\zeta \in \widehat{Z}$,
explicit geometric realizations for a subset of $\widehat{G}_\zeta$ that
supports Plancherel measure there.
\vfill\pagebreak

\end{document}